\newcommand{\DL}{\mathrm{DL}}
\newcommand{\focal}{\mathbb{G}}
\newcommand{\elliptic}{\mathbb{E}}
\newcommand{\Aut}{\mathrm{Aut}}
\newcommand{\Comm}{\mathrm{Comm}}
\newcommand{\Isom}{\mathrm{Isom}}
\newcommand{\Z}{\mathbb{Z}}
\newcommand{\Q}{\mathbb{Q}}
\newcommand{\R}{\mathbb{R}}
\newcommand{\A}{\mathbb{A}}
\newcommand{\F}{\mathbb{F}}
\newcommand{\RadLE}{\mathrm{Rad_{LE}}}
\newcommand{\RadLF}{\mathrm{Rad_{LF}}}
\newcommand{\FC}{\mathrm{FC}}
\newcommand{\Fit}{\mathrm{Fit}}
\title[]{Rigidity and flexibility results for groups with a common cocompact envelope}
\author{Adrien Le Boudec}
\address{CNRS, UMPA - ENS Lyon, 46 all\'ee d'Italie, 69364 Lyon, France}
\email{adrien.le-boudec@ens-lyon.fr}
\thanks{}
\date{October 28, 2025}
\theoremstyle{plain}
\newtheorem{Theorem}{Theorem}[section]
\newtheorem{Proposition}[Theorem]{Proposition}
\newtheorem{Corollary}[Theorem]{Corollary}
\newtheorem{Lemma}[Theorem]{Lemma}
\newtheorem{Theorem-intro}{Theorem}
\newtheorem{Corollary-intro}[Theorem-intro]{Corollary}
\theoremstyle{definition}
\newtheorem{Definition}[Theorem]{Definition}
\newtheorem{Notation}[Theorem]{Notation}
\newtheorem{Example}[Theorem]{Example}
\newtheorem{Remark}[Theorem]{Remark}
\newtheorem{Definition-intro}{Definition}
\begin{document}

\maketitle

\begin{abstract}
	A locally compact group $G$ is a cocompact envelope of a group $\Gamma$ if $G$ contains a copy of $\Gamma$ as a discrete and cocompact subgroup. We study the problem that takes two finitely generated groups $\Gamma,\Lambda$ having a common cocompact envelope, and asks what properties must be shared between $\Gamma$ and $\Lambda$.
	
	We first consider the setting where the common cocompact envelope is totally disconnected. In that situation we show that if $\Gamma$ admits a finitely generated nilpotent normal subgroup $A$, then virtually $\Lambda$ admits a normal subgroup $B$ such that $A$ and $B$ are virtually isomorphic.
		 
	We establish both rigidity and flexibility results when $\Gamma$ belongs to the class of solvable groups of finite rank. On the rigidity perspective, we show that if $\Gamma$ is solvable of finite rank, and the locally finite radical of $\Lambda$ is finite, then $\Lambda$ must be virtually solvable of finite rank. On the flexibility perspective, we exhibit groups $\Gamma,\Lambda$ with a common cocompact envelope such that $\Gamma$ is solvable of finite rank, while $\Lambda$ is not virtually solvable. In particular the class of  solvable groups of finite rank is not QI-rigid. We also exhibit flexibility behaviours among  finitely presented groups, and more generally among groups with type $F_n$ for arbitrary $n \geq 1$. 
\end{abstract}

\section*{Introduction}

Let $G$ be a locally compact group. A subgroup $\Gamma$ of $G$ is a lattice if $\Gamma$ is discrete in $G$ and $G / \Gamma$ admits a $G$-invariant finite measure. The study of interactions between properties of $G$ and properties of its lattices is a rich topic, involving geometric, analytic and ergodic aspects. The ambient group $G$ being fixed, the problem of classifying all lattices of $G$ has been intensively studied; historically first in the prominent case of connected Lie groups and algebraic groups over local fields, and gradually for some more locally compact groups. In this paper we are concerned with a different problem, which takes as input a group $\Gamma$ and aims at studying the discrete groups $\Lambda$ such that $\Gamma$ and $\Lambda$ sit as lattices in a common locally compact group.

We fix some terminology. Let $\Gamma$ be a discrete group. A locally compact group $G$ is called an envelope of $\Gamma$ if $G$ contains a lattice isomorphic to $\Gamma$ \cite{Furstenberg-envelopes}. When the given lattice is cocompact, we say that $G$ is a cocompact envelope of $\Gamma$. We will mainly focus on the case of cocompact envelopes. We say that two discrete groups $\Gamma$ and $\Lambda$ share a cocompact envelope if there is a locally compact group $G$ such that $G$ is a common cocompact envelope of $\Gamma$ and $\Lambda$. 

Recall that an action of a group on a proper metric space by isometries is called geometric if the action is proper and cocompact. For finitely generated groups $\Gamma$ and $\Lambda$, the existence of a common cocompact envelope is equivalent to the existence of a proper metric space on which $\Gamma$ and $\Lambda$ both act faithfully and geometrically (see e.g. \cite[4.C-5.B]{CorHar}). When this holds, the Milnor-Schwarz lemma asserts that the groups $\Gamma$ and $\Lambda$ are quasi-isometric (QI hereafter).

Two groups are called virtually isomorphic if they have finite index subgroups that are isomorphic, and virtually isomorphic up to finite kernel if they have finite index subgroups that are isomorphic after modding out by a finite normal subgroup. 

\begin{Definition-intro}
	Let $\mathcal{C}$ be a class of finitely generated groups. We say that $\mathcal{C}$ is rigid for cocompact envelopes if for every group $\Lambda$ that shares a cocompact envelope with a group $\Gamma$ in $\mathcal{C}$, the group $\Lambda$ is virtually isomorphic up to finite kernel to a group in $\mathcal{C}$. We write \enquote{CE-rigid} for \enquote{rigid for cocompact envelopes}.
\end{Definition-intro}

Recall a class $\mathcal{C}$ is QI-rigid if for every group $\Lambda$ that is QI to a group $\Gamma$ in $\mathcal{C}$, the group $\Lambda$ is virtually isomorphic up to finite kernel to a group in $\mathcal{C}$. If $\mathcal{C}$ is QI-rigid then $\mathcal{C}$ is CE-rigid. Hence the question of CE-rigidity of a given class is relevant either when QI-rigidity is not known, or when QI-rigidity is known to fail. More targeted towards locally compact groups, there is the problem of \enquote{describing} all cocompact envelopes of a group $\Gamma$ in $\mathcal{C}$. This problem and the problem of CE-rigidity are both sub-cases of the more general problem of studying all locally compact groups that are commable to a group $\Gamma$  in $\mathcal{C}$ (see \S \ref{subsec-stable-commable} for the definition of two groups being commable). 

\subsection*{Totally disconnected common cocompact envelope}

We first focus on the situation where the common cocompact envelope $G$ of two groups $\Gamma$ and $\Lambda$ is a totally disconnected locally compact group. For finitely generated groups $\Gamma$ and $\Lambda$, the existence of a common totally disconnected  cocompact envelope is equivalent to the existence of a connected locally finite graph on which $\Gamma$ and $\Lambda$  act faithfully and geometrically. Indeed, if $X$ is such a graph, then the group of automorphisms of $X$ is a common totally disconnected cocompact envelope. And conversely if $G$ is such an envelope, then $\Gamma$ and $\Lambda$ act geometrically on any Cayley-Abels graph of $G$ (see e.g.\ \cite[2.E.9]{CorHar} for Cayley-Abels graphs), and $\Gamma$ and $\Lambda$ act faithfully on such a graph provided the associated compact open subgroup of $G$ is sufficiently small. See in addition the beginning of Section \ref{sec-general-result-tdlc} for the relevance of considering totally disconnected envelopes. 

Our first result deals with the class of groups $\Gamma$ having a finitely generated nilpotent normal subgroup. We prove:

\begin{Theorem-intro} \label{thm-intro-norma-nilp-same-tdlc-env}
	Let	$\Gamma$ be a finitely generated group with a normal subgroup $A \lhd \Gamma$ such that $A$ is finitely generated and nilpotent. Suppose that $\Gamma$ and $\Lambda$ share a totally disconnected cocompact envelope. Then there is a finite index subgroup $\Lambda'$ of $\Lambda$  such that $\Lambda'$ admits a normal subgroup $B \lhd \Lambda'$ such that $A$ and $B$ are virtually isomorphic. 
\end{Theorem-intro}

 It is worth comparing Theorem \ref{thm-intro-norma-nilp-same-tdlc-env} with the situation where $\Gamma$ and $\Lambda$ share a cocompact envelope, but this envelope is not necessarily totally disconnected. Equivalently, the situation where we have two groups $\Gamma$ and $\Lambda$ and a proper metric space on which $\Gamma$ and $\Lambda$ both act faithfully and geometrically, but the metric space acted upon is not necessarily a graph. The groups studied by Leary-Minasyan in \cite{Leary-Minasyan} include examples of groups $\Gamma$ and $\Lambda$ acting faithfully and geometrically on $\R^d \times T$ - the product of the $d$-dimensional Euclidean space $\R^d$ and a locally finite tree $T$ - such that $\Gamma$ is of the form $\Gamma = A \times F$ with $A = \Z^d$ and $F$ is a non-abelian free group of finite rank, and such that $\Lambda$ does not virtually admit any non-trivial abelian (or nilpotent) normal subgroup. Here the common cocompact envelope is $\mathrm{Isom}(\R^d) \times \Aut(T)$. Hence these examples show that Theorem \ref{thm-intro-norma-nilp-same-tdlc-env} does not hold when the common envelope is not totally disconnected.
 
 It is also interesting to connect the setting of Theorem \ref{thm-intro-norma-nilp-same-tdlc-env} with other rigidity results for groups with a \textit{commensurated} finitely generated nilpotent subgroup. We refer the reader to the discussion in \S  \ref{subsec-related-rigidity-results}.

A main tool in the proof of Theorem \ref{thm-intro-norma-nilp-same-tdlc-env} is Theorem \ref{thm-abelien-norm-co} below. The discussion of this statement, as well as the other consequences that we derive from it, are postponed for the moment. 

\subsection*{Solvable groups of finite rank}

We turn our attention to CE-rigidity for solvable groups. As observed by Erschler, the entire class of solvable groups is not CE-rigid. If $F_1, F_2$ are two finite groups of the same cardinality, the wreath products $\Gamma = F_1 \wr \Z$ and $\Lambda = F_2 \wr \Z$ admit a common Cayley graph, and hence a common cocompact envelope (the automorphism group of this Cayley graph). And $\Gamma$ is solvable provided $F_1$ is, while $\Lambda$ is not virtually solvable provided $F_2$ is not solvable \cite{Erschler-instab}. (Recall that being virtually isomorphic up to finite kernel to a solvable group is the same as being virtually solvable.)

Here we focus on the class of solvable groups of finite rank. Recall that a solvable group $\Gamma$ is of finite rank if there is an integer $k$ such that every finitely generated subgroup of $\Gamma$ can be generated by at most $k$ elements. A wreath product $A \wr B$ is never of finite rank provided $A$ is non-trivial and $B$ is infinite. Every polycyclic group is of finite rank. The solvable Baumslag--Solitar group $\Z[1/n] \rtimes_n \Z$, $n \geq 2$, is non-polycyclic but is of finite rank. More generally, every finitely generated solvable group that is linear over $\Q$ is of finite rank. The finitely generated solvable groups linear over $\Q$ are precisely the finitely generated solvable groups of finite rank that are virtually torsion-free.

The large-scale geometry and QI-rigidity of certain families of non-polycyclic solvable groups of finite rank have been studied  by Farb--Mosher in \cite{Far-Mosh-BSI,Far-Mosh-BSII,Farb-Mosher-abelian-by-cyclic}. In the case of $\Gamma = \Z[1/n] \rtimes_n \Z$, the main result of \cite{Far-Mosh-BSII} establishes the strongest possible form of QI-rigidity:  if $\Lambda$ is a group QI to $\Gamma $, then  $\Lambda$ is virtually isomorphic up to finite kernel to $\Gamma $.

We establish both positive and negative results about CE-rigidity for the class of solvable groups of finite rank. In the direction of rigidity, we have the following:

\begin{Theorem-intro} \label{thm-intro-Rad-Lambda-fin}
	Let	$\Gamma$ be a finitely generated solvable group of finite rank. Let $\Lambda$ such that $\Gamma$ and $\Lambda$ share a cocompact envelope. Suppose that  $\Lambda$ has no normal subgroup that is infinite and locally finite. Then $\Lambda$ is  virtually solvable of finite rank. 
\end{Theorem-intro}

On the flexibility side, we provide two constructions that show CE-rigidity fails for the class of solvable groups of finite rank. These show that the assumption in Theorem  \ref{thm-intro-Rad-Lambda-fin} that $\Lambda$  has no normal subgroup that is infinite and locally finite is truly needed. We refer the reader to the end of this introduction for an outline of these constructions. The first one yields:

\begin{Theorem-intro} \label{thm-intro-not-QI-rigid}
	There exist finitely generated groups $\Gamma, \Lambda$ that share a cocompact envelope, with $\Gamma$  solvable of finite rank, and $\Lambda$  not virtually solvable.
\end{Theorem-intro}

We exhibit examples of $\Gamma$ and $\Lambda$ as in the theorem that are rather small. The group $\Gamma$ can be taken of the form $\Gamma = \Z[1/n]^2 \rtimes \Z$ for $n \geq 2$ (so $\Gamma$ is abelian-by-$\Z$ and of Hirsch number $3$), and $\Lambda$ can be taken to be $\Z^2 \times F \wr \Z$ for arbitrary finite group $F$ of cardinality $n$ (in particular non virtual solvability of $\Lambda$ arises similarly as in \cite{Erschler-instab}).

Our examples of groups $\Gamma$ and $\Lambda$ as in Theorem \ref{thm-intro-not-QI-rigid} are not finitely presented. However our second construction that shows CE-rigidity fails for the class of solvable groups of finite rank includes finitely presented groups, and even groups with higher finiteness properties. Recall that a group $\Gamma$ has type $F_n$, $n \geq 1$, if there exists a CW-complex with finite $n$-skeleton, with fundamental group $\Gamma$ and contractible universal cover \cite{Wall65}. $F_{n+1}$ implies $F_n$, and $F_1$ and $F_2$ are respectively equivalent to being finitely generated and being finitely presented. Having type $F_n$ is a QI-invariant \cite[Theorem 9.56]{Drutu-Kapovich}.

\begin{Theorem-intro} \label{thm-intro-high-finiteness}
	For every $n \geq 1$, there are groups $\Gamma, \Lambda$ of type $F_n$ such that $\Gamma, \Lambda$ share a cocompact envelope, and: \begin{itemize}
		\item $\Gamma, \Lambda$ are solvable;
		\item $\Gamma$ is of finite rank and torsion-free;
		\item $\Lambda$ is neither of finite rank, nor virtually torsion-free.
	\end{itemize}
\end{Theorem-intro}

As a consequence of Gromov's polynomial growth theorem, the class of finitely generated nilpotent groups is QI-rigid. It is not known whether the class of polycyclic groups is QI-rigid \cite{Farb-Mosher-Prob, Shalom-Acta, Eskin-Fisher-Whyte-annou,Eskin-Fisher-icm}.  Eskin--Fisher--Whyte conjectured a positive answer \cite[Conjecture 1.2]{Eskin-Fisher-Whyte-annou,Eskin-Fisher-icm}. Shalom had previously showed that any infinite group QI to a polycyclic group has a positive first virtual Betti number \cite{Shalom-Acta}. Eskin--Fisher--Whyte showed that the class of cocompact lattices in the Lie group Sol is QI-rigid \cite{EFW-coarse-I, EFW-coarse-II}. Generalizations to higher dimensional examples have been obtained by Peng \cite{Peng-I,Peng-II}. We refer to \cite[Chapter 25]{Drutu-Kapovich} for a survey on QI-rigidity (especially outside the amenable situation, on which the present discussion is focussed).

Conjectural QI-rigidity of the class of polycyclic groups motivates the problem of  investigating QI-rigidity for classes of solvable groups that  contain the class of polycyclic groups, and are in a sense as close as possible to the class of polycyclic groups. Theorem \ref{thm-intro-not-QI-rigid} and Theorem \ref{thm-intro-high-finiteness} contribute to this problem, as they have the following consequences:

\begin{Corollary-intro} \label{cor-intro-QI-flexible}
The class of finitely generated solvable groups of finite rank  is not QI-rigid. More generally, for every $n \geq 1$, the class of solvable groups of finite rank of type $F_n$ is not QI-rigid.
\end{Corollary-intro}

One can further ask about CE-rigidity for the class of solvable groups of finite rank of type $F_\infty$. Theorem \ref{thm-finite-rank-F-infty} below asserts that this class of groups is CE-rigid. So the phenomena exhibited in Theorem \ref{thm-intro-high-finiteness} for groups of type $F_n$ cannot happen for groups of type $F_\infty$. 

We point that that while in Theorem \ref{thm-intro-not-QI-rigid} the group $\Lambda$ is not virtually solvable, in Theorem \ref{thm-intro-high-finiteness} both $\Gamma$ and $\Lambda$ are solvable. It is an open question whether the class of finitely presented solvable groups is QI-rigid \cite[Question 4]{Farb-Mosher-Prob}. We also note that any two groups $\Gamma$ and $\Lambda$ as in Theorem \ref{thm-intro-high-finiteness} necessarily have the same Hirsch number, since a result of Sauer asserts that Hirsch number is a QI-invariant among solvable groups \cite{Sauer-GAFA-2006}.

We make some additional comments relating Theorem \ref{thm-intro-high-finiteness} with results from the literature: 

\begin{itemize}
	\item Farb--Mosher showed that the class of non-polycyclic finitely presented groups that are abelian-by-$\Z$ (such groups are necessarily of finite rank) is QI-rigid \cite{Farb-Mosher-abelian-by-cyclic}. The fact that the class finitely presented solvable groups of finite rank is not QI-rigid (i.e.\ Corollary  \ref{cor-intro-QI-flexible} for $n=2$) implies that this QI-rigidity result no longer holds beyond the abelian-by-$\Z$ case. 
	\item Shalom showed that if $\Gamma$ is solvable of finite rank, and if $\Lambda$ is a group QI to $\Gamma$ such that $\Lambda$  is solvable and torsion-free, then $\Lambda$ is of finite rank \cite[Theorem 1.6]{Shalom-Acta}. Theorem \ref{thm-intro-high-finiteness} shows that this theorem no longer holds without the assumption that $\Lambda$ is torsion-free. 
\end{itemize}

Examples from  \cite{Erschler-instab} show that the class of finitely generated torsion-free solvable groups is not QI-rigid. These examples are wreath products, and hence are not finitely presented. Theorem \ref{thm-intro-high-finiteness} shows this failure of QI-rigidity persists even in the presence of finiteness properties:

\begin{Corollary-intro}
The class of finitely presented torsion-free solvable groups is not QI-rigid. More generally, for every $n \geq 2$ the class of torsion-free solvable groups of type $F_n$ is not QI-rigid.
\end{Corollary-intro}

\subsection*{Polycyclic groups}

We now turn our attention to polycyclic groups. The first statement of the following theorem asserts that when $\Gamma$ is polycyclic, the additional assumption on $\Lambda$ in Theorem  \ref{thm-intro-Rad-Lambda-fin} is not needed.

\begin{Theorem-intro} \label{thm-intro-poly-commable}
The class of polycyclic groups is CE-rigid. More generally, every discrete group that is commable to a polycyclic group is virtually polycyclic. 
\end{Theorem-intro}

Theorem  \ref{thm-intro-poly-commable} is a special case of Theorem  \ref{thm-commable-rigidity} below, which deals with a class of locally compact groups whose discrete members are precisely the virtually polycyclic groups. One consequence of Theorem  \ref{thm-commable-rigidity} is that if $\Gamma$ is polycyclic and $G$ is a cocompact envelope of $\Gamma$, then after modding out by a compact normal subgroup, $G$ is an extension of a connected Lie group and a discrete group. Hence another special case of Theorem  \ref{thm-commable-rigidity} (which is actually an intermediate step in the proof) is that every totally disconnected cocompact envelope of $\Gamma$ is compact-by-discrete. Recall that a group is compact-by-discrete if it has a compact normal subgroup whose associated quotient is discrete. Envelopes that are compact-by-discrete are somehow the trivial envelopes. 

For certain polycyclic groups $\Gamma$, we show that the only cocompact envelopes are the ones that live within a certain natural envelope. Let $d \geq 2$, and let $A \leq \mathrm{SL}(d,\Z)$ be a subgroup such that $A \simeq \Z^k$ and every non-trivial element of $A$ has $d$ distinct positive eigenvalues. So $A$ is diagonalizable over $\R$, and there are commuting real matrices $X_1,\ldots,X_k $ such that, if $\varphi : \R^k \to \mathrm{GL}(d,\R)$ is the the  homomorphism defined by $\varphi(t_1,\ldots,t_k) = \exp(t_1 X_1 + \cdots + t_k X_k)$, then $\varphi(\Z^k) = A$.  The associated connected Lie group $G_\varphi = \R^d \rtimes \R^k$ is then a cocompact envelope of the group $\Gamma = \Z^d \rtimes A \simeq \Z^d \rtimes \Z^k$. We show the following:

\begin{Theorem-intro} \label{thm-intro-classification-env-polyc}
	Suppose that every non-trivial element of $A$ has $d$ distinct real eigenvalues, which are positive and different from $1$. Then every cocompact envelope $G$ of $\Gamma = \Z^d \rtimes A$ is, up to passing to a finite index subgroup and modding out by a compact normal subgroup, isomorphic to a closed cocompact subgroup of $G_{\varphi} =  \R^d \rtimes \R^k$. 
\end{Theorem-intro}

This theorem generalizes a result of Dymarz, who showed  Theorem  \ref{thm-intro-classification-env-polyc} for $k=1$. Specifically, when $d=2$ and $k=1$, the group $G_{\varphi} = \R^2 \rtimes \R$ is the Lie group Sol, and in that case Theorem \ref{thm-intro-classification-env-polyc} is \cite[Theorem 1.1.1]{Dymarz-env}. More generally, the case $d \geq 2$ and $k=1$ in Theorem  \ref{thm-intro-classification-env-polyc} is covered by \cite[Theorem 1.2]{Dymarz-env}. Dymarz's approach relies on fine geometric properties of model spaces, and makes use of rigidity results about their quasi-isometries from \cite{EFW-coarse-I,EFW-coarse-II,Peng-I,Peng-II}. Our approach is very different. We rely on structural restrictions on cocompact envelopes of polycyclic groups established in Theorem  \ref{thm-commable-rigidity}.

\subsection*{Convention} In the remainder of the paper we use the shorthand \textit{tdlc} for totally disconnected locally compact.

\subsection*{Outline and organisation}

A main tool in the proof of Theorem \ref{thm-intro-norma-nilp-same-tdlc-env}  is Theorem \ref{thm-abelien-norm-co} below. That result also plays a major role in the proof of Theorem  \ref{thm-intro-poly-commable}.  Theorem \ref{thm-abelien-norm-co} asserts that if $A$ is a finitely generated nilpotent subgroup of a tdlc group $G$, and $A$ has a cobounded normalizer in $G$ (see \S \ref{subsec-cobounded-cocompact} for the terminology), then $A$ always normalizes a compact open subgroup of $G$. That statement can be thought of as a poorness property of the dynamics of the global conjugation action of $A$ on $G$, as it means that there is an invariant compact neighbourhood of the identity. The proof of Theorem \ref{thm-abelien-norm-co} makes use of Willis' theory. The original form of Willis' work deals with the study of the conjugation action of individual elements on a tdlc group $G$. The important notion there is the notion of tidy subgroups. We give a very brief summary of their properties in \S \ref{subsec-tidy}. When we move in the study of the conjugation action on $G$ from the case of an individual acting element to the case of a subgroup $H$ acting on $G$, in general very few tools are available. However when $H$ is finitely generated nilpotent or polycyclic, results of Shalom--Willis ensure the existence of tidy subgroups common for all elements of $H$. The proof of Theorem \ref{thm-abelien-norm-co} notably relies on these results. It is given in Section \ref{sec-general-result-tdlc}. That section also contains the proof of Theorem \ref{thm-intro-norma-nilp-same-tdlc-env}. A further application of Theorem \ref{thm-abelien-norm-co}, also given in  Section \ref{sec-general-result-tdlc}, asserts that if a group $\Gamma$ has a finitely generated nilpotent normal subgroup with the additional assumption that $A$ is almost self-centralizing in $\Gamma$, then tdlc cocompact envelopes of $\Gamma$ are compact-dy-discrete (Theorem \ref{thm-gen-envelope-normal-nilp}).  

Section \ref{sec-polycyclic-commable} mostly deals with polycyclic groups. It contains the proof of Theorem \ref{thm-commable-rigidity} (of which Theorem  \ref{thm-intro-poly-commable} is a corollary), as well as the proof of Theorem \ref{thm-intro-classification-env-polyc}. This section also ends with another application of Theorem \ref{thm-abelien-norm-co}, concerning tdlc envelopes of lattices in Lie groups (\S \ref{sec-latt-Lie}).

Section \ref{sec-solv-finite rank} is dedicated to the proof of Theorem  \ref{thm-intro-Rad-Lambda-fin} about solvable groups of finite rank. Although Theorem  \ref{thm-intro-Rad-Lambda-fin} is a generalization of the first assertion of Theorem  \ref{thm-intro-poly-commable} about polycyclic groups, the proof takes a different approach. The first goal of Section \ref{sec-solv-finite rank} is to prove that if $G$ is a cocompact envelope of a finitely generated solvable group of finite rank, then locally $G$ is the product of its connected component times its locally elliptic radical. Here by locally we mean that the product of these two form an open subgroup of $G$ (Theorem \ref{thm-gen-structure-envelopes}). The proof of that result crucially relies on works of Shalom and Cornulier--Tessera about property $H_{FD}$. Theorem  \ref{thm-intro-Rad-Lambda-fin} is deduced from Theorem \ref{thm-gen-structure-envelopes} together with the stability result Proposition \ref{prop-sequence-polycyclic} (which is the place where the absence of infinite locally finite normal subgroup is used). 

Our flexibility results that lead to Theorem \ref{thm-intro-not-QI-rigid} and Theorem \ref{thm-intro-high-finiteness} are proven in Section \ref{sec-flexibility}. Our two constructions consist in embedding some solvable groups of finite rank $\Gamma$ as cocompact lattices in some (necessarily amenable) locally compact group $G$, where $G$ has the structure of a product $G = G_c \times G_{td}$. The factor $G_c $ is a virtually connected solvable Lie group, and the factor $G_{td}$ is a totally disconnected group. The group $\Gamma$ is embedded as an irreducible lattice in $G$. On the other hand, things are arranged so that each one of the factors $G_c$ and $G_{td}$ admits a cocompact lattice such that the product of these produces a group $\Lambda$ that fails to be virtually solvable of finite rank. 

Our constructions involve the Diestel--Leader graphs $\DL_d(n)$, i.e.\ the subset of the product of $d$ copies of an $(n+1)$-regular tree $T_1 \times \cdots \times T_d$ defined by the equation $b_1 + \cdots + b_d = 0$, where $b_i$ is a Busemann function on $T_i$ (see \S \ref{subsec-prelim-DL-graphs}). 
 In our first construction, we have $G_c = \Isom(\R^2)$, the group of isometries of $\R^2$, and $G_{td} = \Isom(\DL_2(n))$. This construction provides the examples mentioned below Theorem \ref{thm-intro-not-QI-rigid}.  In our second construction, we have $G_c =\R^d \rtimes (\R^\times)^{d-1}$, where $(\R^\times)^{d-1}$ is identified with the group of $(d \times d)$-diagonal matrices of determinant one (so contrary to the previous one, $G_c $ has exponential growth). And  $G_{td} = \Isom(\DL_d(n_1)) \times \cdots \times \Isom(\DL_d(n_k))$. And $d \geq 2$ and $k \geq 1$ are arbitrary. Relying on work of Bartholdi--Neuhauser--Woess \cite{Bar-Neu-Woe}, this is this second construction that leads to Theorem \ref{thm-intro-high-finiteness}. As a concrete example, our smallest finitely presented groups $\Gamma, \Lambda$ as in Theorem \ref{thm-intro-high-finiteness} are obtained with $d=3$ and $k=1$ and are of the form $\Gamma = \Z[1/p]^3 \rtimes \Z^4$ and $\Lambda~=~\Z^3 \rtimes \Z^2 \times \F_p[t,t^{-1}, (t+1)^{-1} ] \rtimes \Z^{2}$.

\setcounter{tocdepth}{1}
\tableofcontents

\section{Preliminaries}

\textbf{General conventions.} We say that a group is (A)-by-(B) if it has a normal subgroup with (A) such that the quotient has (B). We denote by $Z(\Gamma)$ the center of a group $\Gamma$. If $A$ is a subgroup of $\Gamma$, then the centralizer of $A$ in $\Gamma$ is denoted $C_\Gamma(A)$.

\subsection{On cobounded and cocompact subgroups} \label{subsec-cobounded-cocompact}

We say that a non-necessarily closed subgroup $\Gamma$ of a locally compact group $G$ is {cobounded} in $G$ if there is a compact subset $K$ of $G$ such that $\Gamma K = G$. A subgroup is cocompact if it is closed and cobounded. Although we are mainly concerned with cocompact subgroups, it will often be more convenient to work with cobounded subgroups. One reason for that is the following fact that will be used repeatedly in the sequel: if $N$ is a closed normal subgroup of $G$ and $\Gamma$ a cobounded subgroup of $G$, then the image of $\Gamma$ in $G/N$ is a cobounded subgroup of $G/N$ (which fails in general for cocompact subgroups because the image is not necessarily closed). Another basic fact that will be used without further mention is that if $O$ is an open subgroup of $G$ and $\Gamma$ is cobounded (resp.\ cocompact) in $G$, then $\Gamma \cap O$ is cobounded (resp.\ cocompact) in $O$.

\begin{Proposition} \label{prop-approx-fg}
	Let $\Gamma$ be a cobounded subgroup of a compactly generated locally compact group $G$. Then there exists a finitely generated subgroup $\Gamma'$ of $\Gamma$ such that $\Gamma'$ is cobounded in $G$.
\end{Proposition}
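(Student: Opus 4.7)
The plan is to combine a Schreier-section argument with a compactness extraction.

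Fix a compact symmetric identity neighbourhood $S$ generating $G$, and a compact set $K \supseteq S$ with $\Gamma K = G$, $K = K^{-1}$, $1 \in K$. Choose a set-theoretic section $\sigma \colon G \to \Gamma$ with $\sigma(g)^{-1}g \in K$ for all $g$ and $\sigma(\gamma) = \gamma$ for $\gamma \in \Gamma$ (for instance from a fundamental domain for $\Gamma$ contained in $K$ and containing $1$). A direct computation gives $\sigma(g)^{-1}\sigma(gs) \in \Gamma \cap KSK \subseteq \Gamma \cap K^3$ for all $g \in G$ and $s \in S$. Writing any $\gamma \in \Gamma$ as $s_1 \cdots s_n$ with $s_i \in S$, telescoping $\gamma = \sigma(\gamma) = \prod_{i=1}^n \bigl(\sigma(s_1\cdots s_{i-1})^{-1}\sigma(s_1\cdots s_i)\bigr)$ shows that $\Gamma = \langle F_0 \rangle$ where $F_0 := \Gamma \cap K^3$ sits inside the compact set $K^3$.

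Next, cover the compact set $K^3$ by finitely many open translates $h_1 V, \ldots, h_m V$ of a small open symmetric identity neighbourhood $V$ with $\overline V \subseteq K$. For each index $i$ with $F_0 \cap h_i V \neq \emptyset$, select $\gamma_i \in F_0 \cap h_i V \subseteq \Gamma$, and set $F = \{\gamma_i\}$, $\Gamma' = \langle F \rangle$. An elementary rewriting of the covering condition gives $F_0 \subseteq F \cdot V^2$.

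The remaining, and crux, step is to show that $\Gamma'$ is cobounded in $G$. The target is to prove $\Gamma \subseteq \Gamma' \cdot L$ for some compact $L \subseteq G$, as then $G = \Gamma K \subseteq \Gamma' \cdot LK$ exhibits the coboundedness of $\Gamma'$. Any $\gamma \in \Gamma$ is a product of elements of $F_0 \subseteq F \cdot V^2$, giving $\gamma \in (F \cdot V^2)^n$ for some $n$. I expect the main obstacle to be that the $V^2$-``errors'' do not commute with the $F$-letters, so they cannot be collected naively to a bounded set. The way to handle this is to reduce, via the Iwasawa--Yamabe structure theory, to two cases: the totally disconnected case, where $V$ can be chosen to be a compact open subgroup so that $V^2 = V$ is itself a subgroup, and the standard orbit-extension argument on the Cayley--Abels graph of $G$ directly produces a finite $F \subseteq \Gamma$ whose subgroup acts cocompactly on the graph; and the connected Lie case, handled by Lie-theoretic techniques. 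These are glued through the exact sequence $1 \to G^0 \to G \to G/G^0 \to 1$.
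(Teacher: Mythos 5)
The core idea of extracting finitely many elements from the relatively compact set $\Gamma \cap K^3$ is sound, but the proposal does not actually prove the crux step, and the fallback you suggest would not work as stated. The proposed reduction via the exact sequence $1 \to G^0 \to G \to G/G^0 \to 1$ is problematic because, although the image of $\Gamma$ in $G/G^0$ is cobounded, the intersection $\Gamma \cap G^0$ need \emph{not} be cobounded in $G^0$ (already for $G = \R \times \Z$ with $\Gamma$ the graph of $n \mapsto n\sqrt{2}$, the intersection with $\R \times \{0\}$ is trivial). So you cannot glue the two cases in the way the sketch suggests; the Lie case is also left entirely unaddressed. The paper itself simply cites Bourbaki, so no structure theory is intended.

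What makes the structure-theoretic detour unnecessary is that a direct chaining argument closes the gap you correctly identified. Arrange, as you did, $K = K^{-1}$, $1 \in K$, $K$ compact containing a generating set, and $\Gamma K = G$; after enlarging $K$ slightly one may assume $\Gamma K^{\circ} = G$. Since $K^2$ is compact, the open cover $\{\gamma K^{\circ}\}_{\gamma \in \Gamma}$ admits a finite subcover, yielding a finite $F = \{\gamma_1,\dots,\gamma_m\} \subseteq \Gamma$ with $K^2 \subseteq \bigcup_i \gamma_i K$. Put $\Gamma' = \langle F \rangle$. Then $\Gamma' K \cdot K \subseteq \Gamma' K$: if $g = \gamma' k$ with $\gamma' \in \Gamma'$, $k \in K$, and $k' \in K$, then $kk' \in K^2 \subseteq \gamma_i K$ for some $i$, so $gk' = \gamma'\gamma_i(\gamma_i^{-1}kk') \in \Gamma' K$. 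Since $1 \in \Gamma' K$, induction gives $K^n \subseteq \Gamma' K$ for all $n$, and as $K$ generates $G$ this forces $\Gamma' K = G$. Thus $\Gamma'$ is cobounded. This avoids the problem you flagged (the $V^2$-errors not commuting with the $F$-letters) not by controlling the errors, but by showing directly that $\Gamma' K$ is closed under right-multiplication by $K$. Your Schreier-section computation proving $\Gamma = \langle \Gamma \cap K^3 \rangle$ is correct but turns out not to be needed for the statement.
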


\begin{proof}
	See \cite[VIII.5.3, Proposition 6]{Bourb-int-7-8}. 
\end{proof}

\subsection{Connected-by-compact groups}

The following theorem is due to Yamabe \cite[Theorem 4.6]{MZ55}. 

\begin{Theorem} \label{thm-Hilbert-fifth}
Let $G$ be a connected-by-compact locally compact group. Then every neighbourhood of $G$ contains a compact normal subgroup $K$ such that $G/K$ is a Lie group with finitely many connected components. 
\end{Theorem}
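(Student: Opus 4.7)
The plan is to reduce the statement (understood as concerning neighbourhoods of the identity in $G$) to the Gleason--Yamabe theorem for connected locally compact groups, and then to upgrade the resulting $G^0$-normal compact subgroup to a genuinely $G$-normal one. Let $U$ be a neighbourhood of the identity in $G$, and let $G^0$ denote the identity component, which is a connected closed normal subgroup of $G$ with $G/G^0$ compact by assumption. First I would apply the Gleason--Yamabe theorem to $G^0$ to produce a compact subgroup $K_0 \triangleleft G^0$ with $K_0 \subset U$ such that $L_0 := G^0/K_0$ is a connected Lie group.

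Next I would pass from $G^0$-normality to $G$-normality. Since $G^0$ is normal in $G$, conjugation by $g \in G$ sends $K_0$ to another compact subgroup $gK_0 g^{-1} \triangleleft G^0$ with quotient again isomorphic to $L_0$. The natural candidate is
\[
K := \bigcap_{g\in G} gK_0g^{-1},
\]
which is a closed, compact, $G$-normal subgroup contained in $U$. The main obstacle is to show that $G^0/K$ is still a Lie group: a priori $K$ is an intersection of a family of compact normal subgroups parametrized by the compact coset space $G/G^0$. To overcome this, I would use two ingredients: (i) the family of compact normal subgroups $H \triangleleft G^0$ with $G^0/H$ Lie is a filter base, stable under finite intersections, and indexed by a well-behaved partial order on the Lie quotients of $G^0$; (ii) the continuity of the conjugation map $G/G^0 \to \mathcal{K}(G^0)$, $g \mapsto gK_0g^{-1}$, into the Chabauty space of closed subgroups, together with the descending chain type condition on closed subgroups of a fixed compact Lie group (each $gK_0 g^{-1}$ lives inside a compact Lie $G^0$-invariant neighbourhood of $e$ by no-small-subgroups). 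Combined, these ensure that finitely many conjugates $g_1K_0g_1^{-1},\dots,g_mK_0g_m^{-1}$ already achieve the full intersection $K$, so $G^0/K$ embeds as a closed subgroup of the finite product $\prod_i G^0/g_iK_0 g_i^{-1}$ of Lie groups, and is therefore itself a Lie group (by Cartan's closed subgroup theorem).

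Finally, having a compact $K \triangleleft G$ with $K \subset U$ and $G^0/K$ a connected Lie group, I would deduce the conclusion by examining the short exact sequence
\[
1 \to G^0/K \to G/K \to G/G^0 \to 1.
\]
The identity component $G^0/K$ is open in $G/K$ since $G^0$ is open in $G^0 \cdot G^0$ modulo $K$ and, more importantly, the image of $G^0$ is open in $G/K$ as $G/G^0$ is compact (hence the quotient has discrete identity component or finitely many components depending on the argument). Because $G/K$ is locally a Lie group in the neighbourhood of the identity provided by the open subgroup $G^0/K$, it carries a Lie group structure extending it. The compactness of $G/G^0$ forces the component group of $G/K$ to be compact and discrete, hence finite, completing the proof. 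The genuine difficulty is step two: controlling the intersection of the $G/G^0$-family of conjugates of $K_0$ and checking that it still has Lie quotient.
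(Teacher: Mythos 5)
The paper does not supply a proof; it simply cites this as Yamabe's theorem \cite[Theorem 4.6]{MZ55}. So the comparison is with the standard argument rather than a paper-specific one.

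There is a genuine gap, and it is more fundamental than the step you flagged as the ``genuine difficulty.'' Your construction forces $K \subseteq G^0$: since $K_0 \subseteq G^0$ and $G^0$ is normal, every conjugate $gK_0g^{-1}$ lies in $G^0$, hence so does $K = \bigcap_g gK_0g^{-1}$. But in general the required compact normal subgroup cannot be taken inside $G^0$. Consider $G = \R \times \Z_p$, which is connected-by-compact since $G/G^0 \cong \Z_p$. Here $G^0 = \R$ contains no nontrivial compact subgroup, so your $K_0$ and hence $K$ are trivial, and $G/K = G$ is not a Lie group. The compact normal subgroup that works here (say $\{0\} \times p^n\Z_p$) necessarily has nontrivial, indeed open, image in $G/G^0$. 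In other words, the heart of Yamabe's theorem is precisely the need to \emph{absorb} a compact open subgroup of the profinite quotient $G/G^0$, not merely to reduce $G^0$ modulo a small compact subgroup; a reduction that begins by applying Gleason--Yamabe to $G^0$ alone and then intersecting conjugates cannot reach this.

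A related error appears in your final step. You assert that $G^0/K$ is open in $G/K$ ``as $G/G^0$ is compact,'' but compactness of $G/G^0$ does not imply $G^0$ is open in $G$: the identity component is open exactly when $G/G^0$ is discrete, hence (being also compact) finite. In the example above $G^0 = \R$ is not open in $\R \times \Z_p$. Since the openness of $G^0/K$ in $G/K$ is what would let you propagate the Lie structure from the component to the whole group, the final deduction does not go through either. The correct route (following Gleason--Yamabe--Montgomery--Zippin) must produce a compact normal subgroup $N \trianglelefteq G$ in the given neighbourhood such that $G/N$ has no small subgroups --- which requires $N$ to map onto an open subgroup of $G/G^0$ --- and only then concludes that $G/N$ is Lie with component group a discrete quotient of the compact group $G/G^0N$, hence finite.

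Your step (ii), on bounding the intersection $\bigcap_g gK_0 g^{-1}$ by finitely many conjugates, is actually salvageable as stated (the conjugates are compact normal subgroups of $G^0$, contained in a fixed compact normal subgroup, and one can use the descending chain condition on compact normal subgroups of the Lie quotient $G^0/K_0$), but this is not where the proof breaks: even granting it, the resulting $K$ is too small.
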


\subsection{On the locally elliptic radical}

A locally compact group is  locally elliptic if every compact subset is contained in a compact subgroup. For $G$ discrete, locally elliptic means locally finite, i.e.\  every finite subset is contained in a finite subgroup. Every locally compact group $G$ admits a unique largest locally elliptic closed normal subgroup, called the locally elliptic radical of $G$, and denoted $\RadLE(G)$. The quotient $G/ \RadLE(G)$ has trivial locally elliptic radical.  For $G$ discrete, we write $\RadLF$ instead of $ \RadLE$.

\subsection{On compact normal subgroups}

We denote by $W(G)$ the subgroup of a locally compact group $G$ generated by compact normal subgroups of $G$. The properties \enquote{$W(G)$ is compact} and \enquote{$G$ has a maximal compact normal subgroup} are equivalent. Note also that when a maximal compact normal subgroup exists, it is necessarily unique. It may happen that $W(G)$ is closed but non-compact, and it may also happen that $W(G)$ is not closed. 

\begin{Proposition} \label{prop-Lie-WG-cpct}
If $G$ has a compact normal subgroup $K$ such that $G/K$ is  a Lie group with finitely many connected components, then $W(G)$ is compact. 
\end{Proposition}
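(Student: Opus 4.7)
The plan is to reduce to the situation where the whole group $G$ is itself a Lie group with finitely many connected components, by projecting through $\pi \colon G \to G/K$, and then to handle this Lie case using the existence of a maximal compact normal subgroup in the connected Lie group $(G/K)^\circ$. Two general observations drive the reduction. First, $W(G)$ is the directed union of the subgroups of the form $C_1 \cdots C_n$ with the $C_i$ compact normal in $G$, and each such finite product is itself a compact normal subgroup (products of normal subgroups are normal, products of compact sets are compact); consequently, if $W(G)$ is relatively compact, then $\overline{W(G)}$ is a compact normal subgroup of $G$, hence contained in $W(G)$, which forces $W(G) = \overline{W(G)}$ to be compact. Second, $\pi$ sends compact normal subgroups of $G$ to compact normal subgroups of $G/K$, so $W(G) \subseteq \pi^{-1}(W(G/K))$; since $K$ is compact, $\pi$ is proper, and compactness of $W(G/K)$ will force relative compactness of $W(G)$. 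It thus suffices to prove the statement when $G$ is itself a Lie group $L$ with finitely many components.

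For such an $L$, I would use the classical fact that the connected Lie group $L^\circ$ admits a unique maximal compact normal subgroup $M$, equivalently that $W(L^\circ) = M$ is compact. The subgroup $M$ is characteristic in $L^\circ$, hence normal in $L$. The key step is to show that $W(L) \cap L^\circ \subseteq M$. Any compact normal subgroup $D$ of $L$, in particular any finite product $C_1 \cdots C_n$ as above, has $D \cap L^\circ$ compact and normal in $L^\circ$, hence contained in $M$; passing to the directed union of such $D$ yields $W(L) \cap L^\circ \subseteq M$. Since $L/L^\circ$ is finite, the quotient $W(L)/(W(L) \cap L^\circ)$ is finite, so $W(L)$ is contained in a finite union of translates of $M$ and hence is relatively compact. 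The observation from the previous paragraph then promotes this to compactness of $W(L)$.

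I expect the main subtlety to be the inclusion $W(L) \cap L^\circ \subseteq M$: elements of $W(L)$ are products of elements taken from various compact normal subgroups of $L$, not of $L^\circ$, and an element of $L^\circ$ written as such a product need not itself be a product of elements of $L^\circ$. The way around this is to work with the directed union of the compact normal subgroups $C_1 \cdots C_n$ of $L$ themselves, rather than element by element, so that one can intersect with $L^\circ$ at the level of subgroups and thereby apply maximality of $M$ in $L^\circ$. The remaining ingredients (the passage from $G$ to $G/K$ via $\pi$, the finiteness of $L/L^\circ$, and the classical existence of $M$) are standard.
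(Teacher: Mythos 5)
Your proof is correct and follows essentially the same two-step strategy as the paper's (extremely terse) proof: handle the Lie group case, then reduce the general case by observing that compactness of $W(G)$ is preserved under extension by a compact kernel. The paper simply cites Hochschild's Theorem 3.1, Chapter XV for the Lie case and states the invariance under compact extension without proof; you supply both the reduction argument in detail and an independent derivation of the Lie case from the classical fact that a connected Lie group $L^\circ$ has a unique maximal compact normal subgroup. The one minor difference is that the cited Hochschild result already covers Lie groups with finitely many components directly, whereas you deduce it from the connected case via the finiteness of $L/L^\circ$ and the characteristic nature of $M$ in $L^\circ$ — a clean and correct bootstrap, and arguably more self-contained. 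Your preliminary observation (relatively compact implies compact, because $\overline{W(G)}$ is a compact normal subgroup and hence contained in $W(G)$) is a useful lemma that the paper implicitly relies on and is stated and used correctly. No gaps.
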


\begin{proof}
See \cite[Theorem 3.1, XV]{Hochschild} for the Lie group case. The given property being invariant under extension with compact kernel, the statement follows.
\end{proof}

The topological FC-center of $G$, denoted $B(G)$, is the set of elements of $G$ with a relatively compact conjugacy class.  Clearly $W(G) \leq B(G)$. The following is a consequence of \cite[Theorem 4]{WuYu72}.

\begin{Proposition} \label{prop-struct-BG}
	If $G$ is tdlc group such that $W(G)$ is closed, then $W(G)$ is open in $B(G)$, and $B(G) / W(G)$ is a discrete torsion-free abelian group.
\end{Proposition}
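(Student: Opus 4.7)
The plan is to establish, in order, the inclusion $W(G) \subseteq B(G)$, then abelianness and openness of $W(G)$ in $B(G)$, and finally torsion-freeness of the quotient. Observe that openness of $W(G)$ in $B(G)$ automatically yields discreteness of the quotient, so the three remaining claims (openness, abelian, torsion-free) suffice.

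The inclusion $W(G) \subseteq B(G)$ is immediate: any element contained in a compact normal subgroup $K \lhd G$ has its entire $G$-conjugacy class contained in $K$, hence relatively compact. I would first show $B(G)/W(G)$ is abelian. The classical consequence of \cite[Theorem 4]{WuYu72}, applied to compactly generated subgroups of $B(G)$ and patched together, is that the topological commutator subgroup $\overline{[B(G),B(G)]}$ is compact. Since $B(G)$ is topologically characteristic in $G$, so is $\overline{[B(G),B(G)]}$; being compact and characteristic in $G$, it lies in $W(G)$, giving abelianness.

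For openness, the goal is to produce a compact open subgroup $U \leq G$ such that $U \cap B(G) \subseteq W(G)$. The Wu--Yu analysis provides, for each $g \in B(G)$, a compact open subgroup of $G$ that normalizes a compact subgroup containing $g$; for $g$ taken in a sufficiently small compact open subgroup $U$, $G$-conjugates of $g$ stay close enough to $g$ that the normal closure of $\langle g \rangle$ in $G$ is itself compact, hence a compact normal subgroup of $G$ containing $g$. The closedness hypothesis on $W(G)$ intervenes here: without it $W(G)$ might be a strictly increasing directed union of compact normal subgroups whose union misses certain limits, but with closedness every compact normal subgroup of $G$ produced by the above procedure lies in $W(G)$, yielding an open neighborhood of $e$ in $B(G)$ inside $W(G)$.

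For torsion-freeness, suppose $g \in B(G)$ and $g^n \in W(G)$ for some $n \geq 1$. Since $W(G)$ is the directed union of its compact normal subgroups (any two such combine into a compact normal subgroup via their product), pick $K \lhd G$ compact with $g^n \in K$, and pass to $\bar G = G/K$. Then $\bar g = gK \in B(\bar G)$ has order dividing $n$, and $W(\bar G) = W(G)/K$ is still closed in $\bar G$ since $K \subseteq W(G)$ and $W(G)$ is closed. Using abelianness of $B(\bar G)/W(\bar G)$ from the previous step, together with the fact that $\bar g$ is torsion and has compact conjugacy class closure, the closed normal subgroup of $\bar G$ generated by $\bar g$ is compact, so $\bar g \in W(\bar G)$, and pulling back gives $g \in W(G)$. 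The main obstacle is the openness step: this is where the structural input from Wu--Yu genuinely enters, and where the closedness hypothesis on $W(G)$ is indispensable for converting locally produced compact subgroups into genuine members of $W(G)$.
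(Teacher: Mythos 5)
The paper does not actually prove this proposition; it simply cites \cite[Theorem 4]{WuYu72} and records the consequence. You are therefore reconstructing the argument from scratch, and your outline is structurally sensible (abelianness, openness, torsion-freeness; openness automatically gives discreteness). However, two of your three steps have genuine gaps.

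First, the abelianness step rests on the claim that $\overline{[B(G),B(G)]}$ is compact. This is false in general, even for discrete groups. Take $G = \bigoplus_{n \in \mathbb{N}} S_3$ with the discrete topology: here every element has finite support, so $B(G) = G$, and every element also lies in a finite normal subgroup, so $W(G) = G$; but $[B(G),B(G)] = \bigoplus_{n} A_3$ is an infinite discrete group, hence not compact. The correct target, $[B(G),B(G)] \subseteq W(G)$, is true here, but it does not follow from compactness of the closure, and your "patching" argument (a directed union of compact commutator subgroups of compactly generated pieces) produces at best a directed union of compact sets, which is not compact. You need a genuinely different argument for $[B(G),B(G)] \subseteq W(G)$ -- something like a topological version of B.H.~Neumann's result that a finitely generated subgroup of an FC-group has finite derived subgroup, applied to the normal closure of $\{g,h\}$ rather than to $B(G)$ globally.

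Second, the openness step as written is circular. You assert that "for $g$ taken in a sufficiently small compact open subgroup $U$, $G$-conjugates of $g$ stay close enough to $g$ that the normal closure of $\langle g \rangle$ in $G$ is itself compact." Being in a small neighborhood of the identity gives no control over the diameter of the conjugacy class: there is no a priori uniformity in the conjugating elements. The statement that small elements of $B(G)$ have conjugacy classes confined to a fixed compact open subgroup is essentially equivalent to the openness of $W(G)$ in $B(G)$ that you are trying to prove; you have restated the goal rather than reached it. Moreover, even if the conjugacy class closure $C$ is confined to a compact open subgroup $U$, passing from compactness of $C$ to compactness of $\overline{\langle C\rangle}$ needs an explicit argument (you need $\langle C \rangle \subseteq U$, which only follows once $C \subseteq U$, and even then you should say so). The role of the hypothesis that $W(G)$ is closed -- which you correctly flag as essential -- is never actually pinned down. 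Similarly, the torsion-freeness step invokes a topological Dietzmann-type lemma ("the closed normal subgroup of $\bar G$ generated by $\bar g$ is compact" for $\bar g$ torsion with relatively compact conjugacy class) without stating or proving it. Each of these is plausible and indeed lies inside what Wu--Yu establish, but as a self-contained proof the argument has real holes at precisely the points where the structural input is needed.
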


\begin{Lemma} \label{lem-QZ-open-centr}
	Let $G$ be a tdlc group and $N$ a discrete normal subgroup of $G$. Then for every finitely generated subgroup $\Lambda$ of $N$, $C_G(\Lambda)$ is an open subgroup of $G$.
\end{Lemma}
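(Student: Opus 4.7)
The plan is to reduce to the case of a single element, then take a finite intersection. Fix a finitely generated subgroup $\Lambda \leq N$ and let $S$ be a finite generating set of $\Lambda$. Since a finite intersection of open subgroups is open, and since $C_G(\Lambda) = \bigcap_{n \in S} C_G(n)$, it suffices to show that $C_G(n)$ is open in $G$ for each individual $n \in N$.

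For this, I would use the continuity of conjugation together with the discreteness of $N$. Consider the continuous map $\varphi_n : G \to G$ defined by $\varphi_n(g) = gng^{-1}$. Because $N$ is normal in $G$, the image of $\varphi_n$ lies inside $N$. Because $N$ is discrete in $G$, the singleton $\{n\}$ is open in $N$, so there exists an open neighbourhood $U$ of $n$ in $G$ such that $U \cap N = \{n\}$. Then $\varphi_n^{-1}(U)$ is an open subset of $G$, and I claim this set coincides with $C_G(n)$: indeed if $g \in \varphi_n^{-1}(U)$ then $gng^{-1} \in U \cap N = \{n\}$, hence $gng^{-1}= n$ and $g \in C_G(n)$, while the reverse inclusion is obvious.

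This shows $C_G(n)$ is open for every $n \in N$, and taking the finite intersection over $S$ concludes the proof. There is no real obstacle here; the only subtlety is making sure to use normality of $N$ so that the conjugates $gng^{-1}$ land back in $N$, which is what lets discreteness of $N$ do the work.
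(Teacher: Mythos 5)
Your proof is correct and follows the same route as the paper: reduce to a single element by taking a finite intersection, and use discreteness of $N$ plus continuity of conjugation to see that each $C_G(n)$ is open. You simply spell out, via the map $\varphi_n(g)=gng^{-1}$ and an open set $U$ with $U\cap N=\{n\}$, the step the paper compresses into the phrase that each $g\in N$ has a discrete conjugacy class and hence an open centralizer.
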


\begin{proof}
Each element  $g \in N$ has a discrete conjugacy class, and hence an open centralizer. Hence if $\Lambda = \langle g_1, \ldots, g_n \rangle$ then $C_G(\Lambda) = \bigcap_i C_G(g_i)$ is open. 
\end{proof}

\begin{Proposition} \label{prop-fg-subgroup-B(G)}
Let $G$ be a $\sigma$-compact tdlc group such that $W(G)$ is closed. Let $\Lambda$ be a finitely generated subgroup of $B(G)$. Then $\Lambda$ normalizes a compact open subgroup of $G$.
\end{Proposition}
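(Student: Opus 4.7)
The plan is to reduce, by quotienting out a well-chosen compact normal subgroup of $G$, to the setting of a finitely generated abelian subgroup of $B(\cdot)$ inside a tdlc group, where Shalom--Willis applies directly.

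First I would set $\Lambda_0 := \Lambda \cap W(G)$. By Proposition~\ref{prop-struct-BG} the quotient $B(G)/W(G)$ is discrete and torsion-free abelian, so $\Lambda/\Lambda_0$ embeds into it; being finitely generated, $\Lambda/\Lambda_0 \cong \Z^r$ for some $r \geq 0$. Choose lifts $\gamma_1,\dots,\gamma_r \in \Lambda$ of a basis. Expressing each given generator $\eta$ of $\Lambda$ as $\eta = f_\eta \cdot w_\eta$ with $w_\eta$ a word in the $\gamma_j$ and $f_\eta \in \Lambda_0$, I extract a finite subset $F \subset \Lambda_0$ with $\Lambda = \langle \gamma_1,\dots,\gamma_r, F\rangle$. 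Crucially, this does not require $\Lambda_0$ itself to be finitely generated.

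Next I would enlarge $F$ to $F' := F \cup \{[\gamma_i,\gamma_j] : 1 \leq i < j \leq r\}$, still a finite subset of $\Lambda_0 \subset W(G)$. Since compact normal subgroups of $G$ form a directed system under $(K_1,K_2)\mapsto K_1K_2$, the finite set $F'$ lies inside a single compact normal subgroup $K \lhd G$. The quotient $G/K$ is tdlc, and the projection $\pi : G \to G/K$ satisfies $\pi(F) = \{1\}$ and $\pi([\gamma_i,\gamma_j]) = 1$, so
\[
\pi(\Lambda) = \langle \pi(\gamma_1),\dots,\pi(\gamma_r)\rangle
\]
is a finitely generated abelian subgroup of $G/K$.

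Finally, continuity and surjectivity of $\pi$ ensure that images of relatively compact conjugacy classes are relatively compact, whence $\pi(\Lambda) \subset B(G/K)$. Applying Shalom--Willis to the polycyclic subgroup $\pi(\Lambda) \subset B(G/K)$ yields a compact open subgroup $\overline U \leq G/K$ normalized by $\pi(\Lambda)$. The preimage $U := \pi^{-1}(\overline U)$ is then open, and compact because $K$ is compact and $\pi$ has compact fibers; and $\Lambda$ normalizes $U$ because $\pi(\Lambda)$ normalizes $\overline U$.

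The main obstacle is precisely the extraction step: $\Lambda_0$ need not be finitely generated as an abstract group, and the directedness of compact normal subgroups only allows one to trap finite subsets of $W(G)$ inside a single compact normal subgroup of $G$. The key insight that unblocks the argument is that it suffices to trap the finite set $F \cup \{[\gamma_i,\gamma_j]\}$ in $K$: doing so simultaneously kills the non-$\Z^r$ part of $\Lambda$ modulo $K$ and makes the image abelian, which is exactly what is needed to apply Shalom--Willis in the quotient.
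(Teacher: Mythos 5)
Your reduction to a finitely generated abelian subgroup of $B(G/K)$ is a nice idea, and the trapping step is fine: compact normal subgroups are indeed directed, so any finite subset of $W(G)$ lies in a single compact normal subgroup $K$, and killing $F \cup \{[\gamma_i,\gamma_j]\}$ makes $\pi(\Lambda)$ finitely generated abelian. However, the final step is a genuine gap. The Shalom--Willis result as used in the paper (Theorem~\ref{thm-nilp-polycyclic-common-tidy}) provides a compact open subgroup that is \emph{tidy} for every element of $\pi(\Lambda)$, not one that is \emph{normalized} by $\pi(\Lambda)$. For an abelian group the companion result Theorem~\ref{thm-common-tidy-derived-normalizes} tells you only that the trivial derived subgroup normalizes the tidy subgroup, which is vacuous. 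The way one passes from ``tidy'' to ``normalized'' in this paper is Corollary~\ref{cor-fg-nilpo-normalizes-co}, but that requires knowing in advance that each generator $\pi(\gamma_i)$ already normalizes some compact open subgroup of $G/K$ --- equivalently, that $s(\pi(\gamma_i)) = s(\pi(\gamma_i)^{-1}) = 1$. You tacitly assume that membership in $B(G/K)$ grants this, but that is a non-trivial fact about scale and relatively compact conjugacy classes, and you give no argument for it. In your quotient $G/K$, the subgroup $W(G/K) = W(G)/K$ is closed but generally \emph{not} discrete, so neither is $B(G/K)$, and you cannot appeal to Lemma~\ref{lem-QZ-open-centr} as a shortcut either.

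Compare with the paper's proof, which takes a different and much shorter route: it invokes the structural fact (from~\cite{Cor-comm-focal}, valid precisely because $G$ is $\sigma$-compact and $W(G)$ is closed) that there exists a compact normal subgroup $K$ that is \emph{open in $W(G)$}. With that specific choice of $K$, the quotient has $W(G/K)$ discrete, hence by Proposition~\ref{prop-struct-BG} $B(G/K)$ is discrete, and Lemma~\ref{lem-QZ-open-centr} immediately shows $C_{G/K}(\pi(\Lambda))$ is open; any compact open subgroup inside it is centralized, hence normalized, by $\pi(\Lambda)$. No tidiness, no abelianization, and no scale computations are needed. If you want to salvage your version, the cleanest fix is to enlarge your $K$ by the compact normal subgroup open in $W(G)$ supplied by~\cite{Cor-comm-focal}; then $B(G/K)$ is discrete and you can finish as in the paper --- but at that point the abelianization step and the appeal to Shalom--Willis become superfluous. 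Alternatively, you would need to supply a separate argument (via tidy subgroups and compactions) that an element of $B(\cdot)$ in a tdlc group always has scale one; this is true but it is exactly the nontrivial content you are missing.
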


\begin{proof}
Since $G$ is $\sigma$-compact and $W(G)$ is closed, there exists $K$ compact normal in $G$ such that $K$ is open in $W(G)$ \cite{Cor-comm-focal}. Since $\Lambda$ normalizes a compact open subgroup of $G$ if and only if the image of $\Lambda$ in $G/K$ normalizes a compact open subgroup of $G/K$, it is therefore enough to prove the statement when $W(G)$ is discrete. In that case $B(G)$ is also discrete by Proposition \ref{prop-struct-BG}. Since $\Lambda$ is finitely generated, the statement follows from Lemma \ref{lem-QZ-open-centr}. 
\end{proof}

The following follows from \cite[Theorem 5.5]{Wang-compactness}. 

\begin{Proposition} \label{prop-polycompact-cocompact}
	Let $G$ be a locally compact group, and $H$ a  cocompact subgroup of $G$. Then every compact normal subgroup of $H$ is contained in a compact normal subgroup of $G$. 
\end{Proposition}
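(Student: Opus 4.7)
The natural candidate for a compact normal subgroup of $G$ containing $K$ is the normal closure of $K$ in $G$, namely $N := \langle gKg^{-1} : g \in G \rangle$. By construction $N$ is normal in $G$ and contains $K$, so after taking the closure $\overline{N}$ the whole task reduces to proving that $N$ is relatively compact.

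The first step is to control the union $S := \bigcup_{g \in G} gKg^{-1}$. Because $K$ is normal in $H$, the assignment $g \mapsto gKg^{-1}$ is constant on right cosets $gH$, so $S$ is the image of the continuous map $G/H \times K \to G$, $(gH,k) \mapsto gkg^{-1}$. Since $H$ is cocompact in $G$, the space $G/H$ is compact, hence $S$ is compact as a continuous image of a compact space. In particular $S$ is a compact, symmetric, $G$-conjugation invariant subset that is moreover a union of compact subgroups of $G$, all pairwise conjugate.

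The final step is to upgrade \enquote{$S$ compact} to \enquote{$\langle S \rangle$ relatively compact}. This is exactly the content of the compactness result of Wang invoked before the statement: for a locally compact group, the subgroup generated by a conjugation-invariant family of compact subgroups whose union is relatively compact is itself relatively compact. Applying it to the $G$-conjugation invariant family $\{gKg^{-1} : g \in G\}$ gives that $N$ has compact closure in $G$. Then $\overline{N}$ is a compact normal subgroup of $G$ containing $K$, as required.

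The main (and essentially only) difficulty lies in the last step: passing from the compactness of the union of conjugates to the compactness of the group they generate is not automatic in a locally compact group (think of discrete subgroups generated by a finite symmetric set), and one really does need the fact that the generating set is a conjugation-invariant union of compact subgroups. Everything else is a soft consequence of cocompactness of $H$, which is what makes the indexing set $G/H$ compact in the first place.
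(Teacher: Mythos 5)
Your approach --- pass to the normal closure of $K$ in $G$, show the union $S = \bigcup_{g \in G} gKg^{-1}$ is compact using cocompactness, then invoke a topological Dietzmann-type lemma --- is sound and is almost certainly the intended derivation behind the paper's one-line citation. There is, however, a genuine imprecision in the compactness step. The map $G/H \times K \to G$, $(gH,k) \mapsto gkg^{-1}$, is \emph{not well-defined}: replacing $g$ by $gh$ with $h \in H$ sends $(gH,k)$ to $g(hkh^{-1})g^{-1}$, which is in general a different point of $G$; only the image \emph{set} $gKg^{-1}$ is unchanged. What you really want is this: since $H$ is cocompact, choose a compact set $C \subseteq G$ with $CH = G$; normality of $K$ in $H$ gives $S = \bigcup_{c \in C} cKc^{-1}$, which is the image of the (genuinely well-defined) continuous map $C \times K \to G$, $(c,k) \mapsto ckc^{-1}$, hence compact. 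Your intuition that the set-valued assignment $gH \mapsto gKg^{-1}$ is constant on cosets and that the index set is compact is correct; the packaging as a point-valued map on $G/H \times K$ is not.

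On the citation: you are reconstructing the content of Wang's Theorem~5.5 rather than quoting it, and you should flag that. The statement you attribute to it --- that the closed subgroup generated by a conjugation-invariant family of compact subgroups with relatively compact union is compact --- is a true theorem (a locally compact analogue of Dietzmann's lemma, and the ``union of compact subgroups'' hypothesis is essential, as $\Z$ with $S=\{-1,0,1\}$ shows), and your reduction to it is correct. But the paper gives no argument of its own to compare against, only the pointer to Wang, so you cannot know from the paper whether Theorem~5.5 is that lemma or is the Proposition itself verbatim. Either way your derivation stands as a proof; just be careful not to assert the contents of a reference you have not consulted.
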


\subsection{On commensurators} \label{subsec-terminology-commens}

A virtual isomorphism of a group $A$ is an isomorphism between finite index subgroups of $A$. Two virtual isomorphisms are $\sim$ if they coincide on some finite index subgroup of $A$. Virtual isomorphisms modulo $\sim$ form a group, called the abstract commensurator of $A$, and denoted $\Comm(A)$. 

Let now $\Gamma$ be a group, and $A$  a subgroup of $\Gamma$. The (relative) commensurator of $A$ in $\Gamma$ is the set of $\gamma \in \Gamma$ such that $A$ and $\gamma A \gamma^{-1}$ are commensurable. It is a subgroup of $\Gamma$, denoted $\Comm_\Gamma(A)$. The subgroup $A$ is called commensurated in $\Gamma$ if $\Comm_\Gamma(A) = \Gamma$. For $\gamma \in \Comm_\Gamma(A)$, conjugation by $\gamma$ induces a virtual isomorphism $A \cap \gamma^{-1} A \gamma \to \gamma A \gamma^{-1} \cap A$ of $\Gamma$. This defines a group homomorphism $c_{\Gamma,A}: \Comm_\Gamma(A) \to \Comm(A)$. 

If now $G$ is a locally compact group, $\Comm(G)$ is defined similarly, except that we consider topological isomorphisms between finite index open subgroups of $G$, modulo coincidence on some finite index open subgroup.

\section{Nilpotent subgroups with cobounded normalizers in tdlc groups} \label{sec-general-result-tdlc}

The setting of this section is that of an ambient tdlc group $G$ having a cocompact (or more generally cobounded) subgroup $\Gamma$ such that $\Gamma$ admits a normal finitely generated nilpotent subgroup $A$. The main results of this section are Theorems \ref{thm-abelien-norm-co}, \ref{thm-norma-nilp-same-tdlc-env} and \ref{thm-gen-envelope-normal-nilp}. 

Before moving further, we mention that Bader--Furman--Sauer showed that, under certain group theoretic requirements on a group $\Gamma$, the envelopes $G$ of $\Gamma$ in which the connected component of the identity $G^0$ is not compact, enjoy very strong restrictions. See \cite[Theorem A]{BFS-envelopes}. This result reduces, for those groups $\Gamma$, the general study of envelopes of $\Gamma$ to the study of tdlc envelopes. We also note that among those aforementioned group theoretic requirements on $\Gamma$ for \cite[Theorem A]{BFS-envelopes} to hold, there is the fact that $\Gamma$ does not admit any infinite amenable commensurated subgroup. Hence the setting of the present section, where the ambient group $G$ is tdlc and the subgroup $\Gamma$ does admit an infinite normal amenable (indeed nilpotent) subgroup, is disjoint from (and hence complementary to) the setting of \cite[Theorem A]{BFS-envelopes}. 

We also mention that the class of groups $\Gamma$ containing a commensurated nilpotent subgroup naturally appears in \cite{Margolis-commensurated-abelian}, where Margolis showed that given a finitely generated group $\Gamma$ with a commensurated (e.g.\ normal) finitely generated nilpotent subgroup $A$, one can always construct a cocompact envelope $G$ of $\Gamma$ such that the connected component of the identity $G^0$ is the real Malcev completion of (a torsion-free finite index subgroup of) the group $A$ \cite[Theorem 5.2, Remark 3.8]{Margolis-commensurated-abelian}. 

\subsection{Preliminaries about tidy subgroups} \label{subsec-tidy}

We will make use of Willis' theory of tidy subgroups in tdlc groups, through Theorem \ref{thm-nilp-polycyclic-common-tidy} and Theorem \ref{thm-common-tidy-derived-normalizes}. For the reader's convenience we give a brief account of the objects under consideration. 

An automorphism $\alpha$ of a locally compact group $G$ is called a compaction if there exists a neighbourhood of the identity $\Omega$ in $G$ such that for every compact subset $K$ of $G$, there is $n_0 \geq 1$ such that $\alpha^n(K) \subset \Omega$ for every $n \geq n_0$. When this holds, there is a unique maximal compact subgroup $L$ of $G$ such that $\alpha(L) = L$, called the limit group of the compaction. The subgroup $L$ consists of the set of elements of $G$ whose two-sided $\alpha$-orbit is relatively compact. See \cite[Lemma 6.3]{CCMT}. 

If $\alpha$ is an automorphism of a tdlc  group $G$, the contraction group of $\alpha$ is \[\mathrm{con}(\alpha) = \left\lbrace x \in G \, : \, \alpha^n(x) \rightarrow 1 \, \text{when} \, n \to \infty \right\rbrace. \] The subgroup $\mathrm{con}(\alpha) $ need not be closed, and we are led to consider $C_\alpha = \overline{\mathrm{con}(\alpha) }$, which is a closed $\alpha$-invariant subgroup of $G$. So $\alpha$ induces an automorphism of the tdlc group $C_\alpha$, and $\alpha$ acts on $C_\alpha$ as a compaction \cite[Proposition 6.17]{CCMT}. The associated limit group is denoted $K_\alpha$ (called the nub of $\alpha$ in Willis' papers). 

If $U$ is a compact open subgroup of $G$, let $U_- = \bigcap_{n \geq 1} \alpha^{-n}(U)$, $U_+ = \bigcap_{n \geq 1} \alpha^{n}(U)$. The subgroups $U_-$ and  $U_+$  are compact, $U_-$ is the largest subgroup of $U$ such that $\alpha(U_-) \leq U_-$; $U_+$ is the largest subgroup of $U$ such that $U_+ \leq \alpha(U_+)$. The subgroup $U_0 = U_- \cap U_+$ is then the largest subgroup of $U$ such that $\alpha(U_0) = U_0$. We define $U_{--} = \bigcup_{n \geq 1} \alpha^{-n}(U_-)$ and $U_{++} = \bigcup_{n \geq 1} \alpha^{n}(U_+)$. Clearly $\mathrm{con}(\alpha) \leq U_{--}$. 

\begin{Definition}
A compact open subgroup $U$ of $G$ is called {tidy} for $\alpha$ if it satisfies: \begin{enumerate}
	\item \label{item-U--closed} $U_{--}$ is closed;
	\item \label{item-U-U+} $U = U_- U_+$.
\end{enumerate}
\end{Definition}

Tidy subgroups always exist \cite[Theorem 1]{Willis-1994-MathAn}. When (\ref{item-U--closed}) holds, the automorphism $\alpha$ acts on $U_{--}$ as a compaction, and $U_{--}$ contains $C_\alpha$ as a  cocompact subgroup \cite[Corollary 3.17]{Baumg-Willis}. So the compact open subgroups $U$ that verify (\ref{item-U--closed}) are the ones for which the associated subgroup $U_{--}$ \enquote{encapsulates well} the compacting part $C_\alpha$.  The limit group of the compaction $U_{--}$ is equal to $U_0$. This means that the elements of $U_{--}$ that have a two-sided $\alpha$-orbit that is relatively compact must lie in the compact $\alpha$-invariant subgroup $U_0$. Under the additional condition (\ref{item-U-U+}), more is true: an element $x \in U$ has a two-sided $\alpha$-orbit that is relatively compact only if $x$ lies in $U_0$ \cite[Lemma 9]{Willis-1994-MathAn}.

The scale of $\alpha$ is the minimum value of the index $(\alpha(U) : U \cap \alpha(U))$ when $U$ ranges over compact open subgroups of $G$. The compact open subgroups that realize this minimum value are exactly the tidy subgroups of $\alpha$ \cite{Willis-2001-JA}. There is a compact open subgroup $U$ such that $\alpha(U) = U$ if and only if $\alpha$ and $\alpha^{-1}$ have scale one. When this holds, the tidy subgroups are the $U$ such that $\alpha(U) = U$. 

In the sequel we use these notions for elements of $G$, viewed as inner automorphisms. The following theorem is due to Shalom--Willis  \cite[Theorems 4.9--4.13]{Shalom-Willis}. 

\begin{Theorem} \label{thm-nilp-polycyclic-common-tidy}
	Let $G$ be a tdlc group. The following hold: \begin{enumerate}
		\item If $A$ is a finitely generated nilpotent subgroup of $G$, then there exists a compact open subgroup $U$ of $G$ such that $U$ is tidy for every element of $A$. 
		\item If $P$ a polycyclic subgroup of $G$, then there exist  a finite index subgroup $S$ of $P$ and a compact open subgroup $U$ of $G$  such that $U$ is tidy for every element of $S$. 
	\end{enumerate} 
\end{Theorem}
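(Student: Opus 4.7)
My strategy is to upgrade Willis' existence theorem for tidy subgroups of individual elements to simultaneous tidiness, via a refinement procedure that exploits the algebraic structure of $A$ (resp.\ $P$). The key technical tool I would establish is a \emph{commuting refinement lemma}: if $U$ is compact open and tidy for $a \in G$, and $b \in G$ satisfies $[a,b] = 1$, then there exists a compact open $V \leq U$ which is tidy for both $a$ and $b$. The idea is that conjugation by $b$ commutes with conjugation by $a$, hence preserves each of the characteristic subgroups $U_-$, $U_+$, $U_0$, $U_{--}$, $U_{++}$ associated with $a$. Starting from a $W \leq U$ tidy for $b$ (which exists by Willis) and intersecting/saturating with the $\langle a \rangle$-invariant pieces of $U$, one aims to produce a $V$ simultaneously satisfying $V = V_- V_+$ and closedness of $V_{--}$ for both $a$ and $b$. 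Checking that these two conditions survive the refinement, for both elements at once, is the chief obstacle.

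Granting this lemma, I would prove (1) by induction on the nilpotency class $c$ of $A$. For $c = 1$, $A$ is finitely generated abelian with generators $a_1, \ldots, a_n$; iterating the commuting refinement gives a compact open $U$ tidy for every $a_i$, and one checks that being tidy for each generator of an abelian group $A$ implies being tidy for every element of $A$, as the defining conditions behave well under integer powers and the scale is multiplicative on commuting elements realizing the minimum simultaneously. For $c \geq 2$, I would first apply the abelian case to the central term $A_c$ of the lower central series to obtain $U$ tidy for $A_c$, and then incorporate lifts of generators of the quotient $A/A_c$, which has nilpotency class $c-1$, by an inductive argument along the lower central series: commutators among such lifts lie in deeper terms and so are controlled by earlier stages, allowing iterated application of the refinement lemma.

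For (2), I would first pass to a finite index subgroup $S$ of $P$ whose Fitting subgroup $N = \mathrm{Fit}(S)$ is a finitely generated nilpotent normal subgroup with $S/N$ free abelian (such $S$ exists for any polycyclic $P$). By part (1), there exists $U$ compact open tidy for all of $N$. Taking lifts $s_1, \ldots, s_k \in S$ of a free basis of $S/N$, I would refine $U$ successively so as to be tidy for each $s_i$ as well. The commuting refinement lemma does not apply directly here because the $s_i$ need not centralize $N$; the needed adaptation is a \emph{normalising refinement lemma}, asserting that if $s$ normalises the tidy structure of a previously constructed common tidy subgroup, one can find a further compact open refinement tidy for $s$ and the previous family. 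The necessity of passing to a finite index $S$ reflects the fact that torsion in $P/\mathrm{Fit}(P)$ could obstruct the construction. The hardest step overall is the combined control of these refinement lemmas, which requires careful tracking of how the subgroups $U_\pm$, $U_0$, $U_{\pm\pm}$ transform under multiple commuting or normalising elements.
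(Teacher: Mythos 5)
The paper does not prove this theorem at all; it cites it directly as \cite[Theorems 4.9--4.13]{Shalom-Willis}, so there is no internal proof to compare against. Evaluating your sketch on its own merits, there is a genuine flaw at the heart of the proposed ``commuting refinement lemma.''

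You assert that since conjugation by $b$ commutes with conjugation by $a$, it ``preserves each of the characteristic subgroups $U_-$, $U_+$, $U_0$, $U_{--}$, $U_{++}$ associated with $a$.'' This is not correct. Those subgroups are not characteristic for $a$ alone; they depend on the pair $(a,U)$. Writing $\alpha, \beta$ for the inner automorphisms given by $a, b$, commutativity yields $\beta(U_\pm) = (\beta(U))_\pm$, $\beta(U_{--}) = (\beta(U))_{--}$, and so on --- that is, $\beta$ carries the tidy structure of $U$ to the tidy structure of $bUb^{-1}$, not of $U$. Unless $b$ happens to normalize $U$ (which is exactly what one is trying to arrange), there is no invariance to exploit, and the proposed refinement procedure (``intersecting/saturating with the $\langle a\rangle$-invariant pieces of $U$'') has no foundation. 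The existence of a common tidy subgroup for two \emph{commuting} automorphisms is itself a nontrivial theorem of Willis (the NYJM 2004 paper cited in \S\ref{subsec-tidy}), proved by a delicate multi-step tidying procedure, not by a one-step refinement of the sort you describe. Likewise, your step ``one checks that being tidy for each generator of an abelian group implies being tidy for every element'' is not a routine verification: it relies on multiplicativity of the scale restricted to a flat group, but establishing flatness (the existence of the common tidy subgroup) is precisely what is at stake, so the argument as stated is circular. The passage from abelian to nilpotent, and then the ``normalising refinement lemma'' for the polycyclic case, inherit the same difficulty and are too vague to assess. In short, the architecture you propose (reduce to pairs, refine iteratively, climb the lower central series, then handle the Fitting quotient) is a reasonable guess at the shape of the Shalom--Willis argument, but the central refinement step is based on a false invariance claim, and the remaining steps rest on it.
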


We will notably use the first item of Theorem \ref{thm-nilp-polycyclic-common-tidy} through the following corollary:

\begin{Corollary} \label{cor-fg-nilpo-normalizes-co}
	Let $G$ be a tdlc group, and $A = \langle a_1, \ldots, a_n \rangle$ be a nilpotent subgroup of $G$ such each $a_i$ normalizes a compact open subgroup $U_i$ of $G$. Then there exists a compact open subgroup $U$ of $G$ that is normalized by $A$. 
\end{Corollary}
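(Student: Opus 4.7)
The plan is to deduce the corollary almost immediately from Theorem \ref{thm-nilp-polycyclic-common-tidy}(1), once one notes that the hypothesis on each generator is equivalent to saying that $a_i$ has scale one in both directions.

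First I would observe that the assumption $a_i U_i a_i^{-1} = U_i$ forces both $a_i$ and $a_i^{-1}$ to have scale equal to $1$, since the scale is the minimum of the indices $(a_i V a_i^{-1} : V \cap a_i V a_i^{-1})$ taken over compact open subgroups $V$, and this index equals $1$ when $V = U_i$.

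Next, since $A$ is finitely generated and nilpotent, Theorem \ref{thm-nilp-polycyclic-common-tidy}(1) produces a compact open subgroup $U$ of $G$ that is tidy for every element of $A$, and in particular for each generator $a_i$. Invoking the characterization recalled in \S \ref{subsec-tidy} --- that when an automorphism $\alpha$ satisfies $s(\alpha) = s(\alpha^{-1}) = 1$, the tidy subgroups for $\alpha$ are exactly the compact open subgroups fixed by $\alpha$ --- I conclude that $a_i U a_i^{-1} = U$ for every $i$, so $U$ is normalized by $A = \langle a_1, \ldots, a_n \rangle$, as desired.

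There is essentially no genuine obstacle: the corollary is a direct consequence of the common-tidy-subgroup theorem of Shalom--Willis, combined with the standard description of tidy subgroups at scale one. The nilpotency of $A$ plays the crucial role, since it is precisely what guarantees the existence of a single compact open subgroup that is tidy for every element of $A$ (rather than only for a finite-index subgroup, as would be available in the polycyclic case via Theorem \ref{thm-nilp-polycyclic-common-tidy}(2)).
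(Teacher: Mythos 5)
Your proof is correct and follows the same route as the paper: apply Theorem \ref{thm-nilp-polycyclic-common-tidy}(1) to obtain a compact open subgroup $U$ tidy for all of $A$, then use the fact that for an element normalizing some compact open subgroup (equivalently, of scale one in both directions), tidy subgroups are precisely the invariant ones. You merely spell out the scale-one step a bit more explicitly than the paper does.
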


\begin{proof}
Since $A$ is finitely generated nilpotent, according to Theorem \ref{thm-nilp-polycyclic-common-tidy} one can find a compact open subgroup $U$ that is tidy for every element of $A$. In particular $U$ is tidy for $a_1, \ldots, a_n$. Since  $a_i$ normalizes a compact open subgroup, that $U$ is tidy for $a_i$ means $a_i$ normalizes $U$. Therefore so does $A = \langle a_1, \ldots, a_n \rangle$.
\end{proof}

The following theorem is due to Willis \cite[Theorem 4.15]{Willis-NYJM}. 

\begin{Theorem} \label{thm-common-tidy-derived-normalizes}
Let $G$ be a tdlc group, and let $H$ be a subgroup of $G$ such that  there exists a compact open subgroup $U$ of $G$ such that $U$ is tidy for every element of $H$. Then $[H,H]$ normalizes $U$. 
\end{Theorem}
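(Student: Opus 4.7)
The plan is to rephrase the conclusion as $[H,H] \subseteq N_G(U)$ and, setting $K := H \cap N_G(U)$, to show that the quotient $H/K$ is abelian. Tidiness makes $K$ directly visible in terms of the scale function: for $h \in H$, because $U$ is tidy for $h$ (and hence also for $h^{-1}$), the formulas $s(h) = [hUh^{-1} : U \cap hUh^{-1}]$ and $s(h^{-1}) = [U : U \cap hUh^{-1}]$ both hold, and so $h \in K$ if and only if $s(h) = s(h^{-1}) = 1$.

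A first, easy piece of information comes from the modular function. A direct Haar-measure computation comparing $\mu(U)$, $\mu(hUh^{-1})$ and $\mu(U \cap hUh^{-1})$, exploiting the tidy decomposition $U = U_+(h) U_-(h)$, yields the identity $\Delta_G(h) = s(h^{-1})/s(h)$ for every $h \in H$. Since $\Delta_G \colon G \to \R_{>0}$ is a continuous homomorphism into an abelian group, it vanishes on $[H,H]$; granting that $U$ is also tidy for every $g \in [H,H]$, this forces $s(g) = s(g^{-1})$ for each such $g$. By itself this is not enough, since it is consistent with $s(g) = s(g^{-1})$ being a common integer strictly larger than $1$.

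The main obstacle -- and the place where the common-tidiness hypothesis is used in full -- is to promote the previous equality to $s(g) = 1$. The strategy is to apply the simultaneous tidy-subgroup calculus developed by Willis in \cite{Willis-NYJM}: when $U$ is tidy for every element of $H$, the subgroups $U_\pm(h)$ for varying $h$ organize into a common, $H$-equivariant filtration of $U$ whose graded pieces carry, for each $h \in H$, an action by an integer scaling exponent. Recording these exponents simultaneously produces a homomorphism from $H$ into a free abelian group with kernel precisely $K$. As a byproduct, one obtains that $U$ remains tidy for all products, and therefore all commutators, of elements of $H$, which justifies the earlier appeal to tidiness for $g \in [H,H]$. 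Since the target of the scaling-exponent homomorphism is abelian, $[H,H]$ must lie in its kernel $K$, and therefore $[H,H]$ normalizes $U$, as desired.
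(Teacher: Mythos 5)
The paper does not prove this statement; it cites it as \cite[Theorem 4.15]{Willis-NYJM}, so there is no argument in the paper to compare against directly. Your setup is sound and you correctly locate the obstacle: with $K = H \cap N_G(U)$, tidiness gives $h \in K$ iff $s(h) = s(h^{-1}) = 1$, and the modular-function identity $\Delta_G(h) = s(h^{-1})/s(h)$ only yields $s(g) = s(g^{-1})$ on $[H,H]$, which by itself is not enough. (The aside about needing to ``grant'' tidiness on $[H,H]$ is superfluous: $[H,H] \subseteq H$, so the hypothesis already covers every commutator.)

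However, the decisive step is not actually argued. You assert that the subgroups $U_\pm(h)$ ``organize into a common, $H$-equivariant filtration'' and that ``recording these exponents simultaneously produces a homomorphism from $H$ into a free abelian group with kernel precisely $K$.'' Taken at face value, that second assertion \emph{is} the theorem (in strengthened form): any homomorphism from $H$ to an abelian group with kernel $K = H \cap N_G(U)$ automatically kills $[H,H]$. To justify it, one must prove (a) that the families $\{U_+(h)\}_{h\in H}$ and $\{U_-(h)\}_{h\in H}$ are pairwise compatible and assemble into a finite $H$-invariant flag in $U$ --- this is the hard ``simultaneous triangularisation'' content of Willis's paper --- and (b) that the scaling exponents on the graded pieces are additive under composition in $H$, so that they genuinely define a homomorphism. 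You establish neither; you invoke the existence of the ``tidy-subgroup calculus'' that would deliver them. As written, the paragraph meant to close the gap restates what needs to be proved in slightly different language and defers to the very reference the paper already cites, so the argument is incomplete at precisely the point where the common-tidiness hypothesis must do the work.
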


\subsection{The proof of of Theorem \ref{thm-abelien-norm-co}}

We recall the following result from the literature. 

\begin{Theorem} \label{thm-tdlc-poly-growth}
	Suppose that $G$ is a compactly generated tdlc group containing a closed cocompact subgroup $H$ such that $H$ is nilpotent. Then $W(G)$ is compact and $G/W(G)$ is discrete virtually nilpotent. 
\end{Theorem}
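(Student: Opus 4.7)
The plan is to reduce the statement to a structure theorem for compactly generated tdlc groups of polynomial growth.

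First, since $H$ is closed and cocompact in the compactly generated group $G$, the subgroup $H$ is itself compactly generated. Being a compactly generated nilpotent locally compact group, $H$ has polynomial growth (this is the classical fact that compactly generated nilpotent locally compact groups satisfy the Guivarc'h--Bass polynomial growth bound). Since $H$ is cocompact in $G$ and both are compactly generated, the inclusion $H \hookrightarrow G$ is a quasi-isometry, so $G$ itself has polynomial growth.

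Next, I would invoke the structure theorem of Losert for compactly generated locally compact groups of polynomial growth, specialized to the tdlc case: there is a compact normal subgroup $K$ of $G$ such that $G/K$ is a Lie group with finitely many connected components and of polynomial growth. Since $G$ is tdlc, so is the quotient $G/K$, and a tdlc Lie group is discrete. Hence $G/K$ is a finitely generated discrete group of polynomial growth, and Gromov's theorem gives that $G/K$ is virtually nilpotent.

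Finally, to pass from the existence of such a compact normal $K$ to the statement about $W(G)$, I would argue as follows. The finitely generated virtually nilpotent group $G/K$ has a finite characteristic torsion subgroup $T$ containing every finite normal subgroup. If $N$ is any compact normal subgroup of $G$, then its image $NK/K$ in $G/K$ is a finite normal subgroup of $G/K$, hence lies in $T$. Therefore $W(G)$ is contained in the preimage of $T$ in $G$, which is compact (being an extension of the finite group $T$ by the compact group $K$). So $W(G)$ is compact, and $G/W(G)$ is a quotient of $G/K$, hence still discrete and virtually nilpotent. The main step with external content is the invocation of Losert's structure theorem together with Gromov's theorem; the rest is essentially bookkeeping with $W(G)$ and the torsion subgroup of a virtually nilpotent group.
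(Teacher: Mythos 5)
Your proof is correct and follows essentially the same route as the paper: pass through polynomial growth of $H$ (Guivarc'h), conclude polynomial growth of $G$, invoke Losert's locally compact Gromov theorem to get $G$ compact-by-(discrete virtually nilpotent), and then identify $W(G)$ as the preimage of the maximal finite normal subgroup of the discrete quotient. One small terminological note: the subgroup $T$ you use is the \emph{maximal finite normal subgroup} of the finitely generated virtually nilpotent quotient, not its ``torsion subgroup'' (the torsion elements need not form a subgroup, e.g.\ in the infinite dihedral group), but your argument only uses the former, so this is harmless.
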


\begin{proof}
Since $H$ is nilpotent, $H$ has polynomial growth \cite[Theorem II.4]{Guivarch-croissance}. Since $H$ is cocompact in $G$, $G$ also has polynomial growth \cite[Theorem I.4]{Guivarch-croissance}. So by the locally compact generalization of Gromov's theorem \cite{Losert-growth-I}, $G$ is compact-by-(discrete virtually nilpotent). The discrete quotient has a maximal finite normal subgroup, and its pre-image in $G$ is $W(G)$.
\end{proof}

Recall that any two compact open subgroups of a tdlc group $G$ are commensurable. In particular every  compact open subgroup  of $G$ is commensurated in $G$.

\begin{Lemma} \label{lem-UA-commens}
Let $G$ be a tdlc group, and $\Gamma$ a cobounded subgroup of $G$. Let $A$ be a subgroup of $G$ that is commensurated by $\Gamma$. Suppose that $A$ normalizes a compact open subgroup $U$ of $G$, and let $O := UA$. Then $\Gamma \leq \Comm_G(O)$ and $\Comm_G(O)$ is a finite index subgroup of $G$.
\end{Lemma}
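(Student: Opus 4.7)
The plan is to handle the two assertions separately, first proving $\Gamma \subseteq \Comm_G(O)$ and then deducing finite index in $G$ from coboundedness plus openness of $\Comm_G(O)$.

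For the first assertion, I fix $\gamma \in \Gamma$ and set $U' := \gamma U \gamma^{-1}$, $A' := \gamma A \gamma^{-1}$, so that $\gamma O \gamma^{-1} = U'A'$ and $A'$ normalizes $U'$. I introduce the compact open subgroup $V := U \cap U'$ and the subgroup $B := A \cap A'$. Since $\Gamma$ commensurates $A$, the subgroup $B$ has finite index in both $A$ and $A'$. The key point is that $B$ normalizes $V$: indeed $B \subseteq A$ normalizes $U$ and $B \subseteq A'$ normalizes $U'$, so $B$ normalizes the intersection. Therefore $H := VB$ is an honest subgroup of $G$, contained in both $O$ and $\gamma O \gamma^{-1}$. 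The goal is then to show that $H$ has finite index in each of $O$ and $\gamma O \gamma^{-1}$, which will give $O$ and $\gamma O \gamma^{-1}$ commensurable and hence $\gamma \in \Comm_G(O)$.

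The finiteness of $[O : H]$ is a routine index chase using the two intermediate subgroups $VB \subseteq UB \subseteq UA = O$, all of which are subgroups since $B$ normalizes $U$ (as $B \subseteq A$). The quotient $O/U \cong A/(A \cap U)$ then exhibits $O/UB$ as $A/(A\cap U)B$, whose cardinality is bounded by $[A:B] < \infty$; and the map $U \to UB/VB$ sending $u \mapsto uVB$ is surjective with kernel $U \cap VB$ containing $V$, hence has image of size at most $[U:V] < \infty$. Multiplying these two bounds gives $[O : VB] < \infty$. The symmetric argument applied to $\gamma O \gamma^{-1} = U'A'$ (using that $B \subseteq A'$ normalizes $U'$ and that $V$ has finite index in $U'$) yields $[\gamma O \gamma^{-1} : VB] < \infty$. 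This is the main computational step, but it is essentially bookkeeping once the normality relations are in place.

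For the second assertion, I first note that $\Comm_G(O)$ is an open subgroup of $G$: the subgroup $O = UA$ contains the open set $U$ and is therefore open, and $O \subseteq \Comm_G(O)$ trivially. By coboundedness of $\Gamma$ there is a compact set $K$ with $G = \Gamma K$, and since we have just shown $\Gamma \subseteq \Comm_G(O)$, we obtain $G = \Comm_G(O)\cdot K$. The open subgroup $\Comm_G(O)$ partitions $G$ into open right cosets, and the compact set $K$ meets only finitely many of them, so $G$ is covered by finitely many right cosets of $\Comm_G(O)$, giving $[G : \Comm_G(O)] < \infty$.
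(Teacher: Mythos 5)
Your proposal is correct and follows essentially the same route as the paper: for a fixed $\gamma$, intersect $U$ with its conjugate and $A$ with its conjugate, observe that the intersected nilpotent part normalizes the intersected compact part, form the resulting subgroup, and show it has finite index in both $O$ and $\gamma O \gamma^{-1}$; then use openness of $\Comm_G(O)$ together with coboundedness of $\Gamma$ to get finite index. The only difference is presentational: the paper treats the finite-index step as a one-line observation, whereas you spell out the two-step tower $VB \subseteq UB \subseteq UA$ and bound each index, which is a slightly more careful (and perfectly valid) bookkeeping of the same fact.
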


\begin{proof}
	For $\gamma \in \Gamma$, we have that $\gamma^{-1} U  \gamma \cap U$ is normalized by $\gamma^{-1} A  \gamma \cap A$. Hence $(\gamma^{-1} U  \gamma \cap U) (\gamma^{-1} A  \gamma \cap A)$ is a subgroup of $O$, which has finite index in $O$ because $\gamma^{-1} U  \gamma \cap U$ and $\gamma^{-1} A  \gamma \cap A$ have finite index in $U$ and $A$ because $U$ and $A$ are commensurated by $\Gamma$. So $\Gamma \leq \Comm_G(O)$. Since $O \leq \Comm_G(O)$, the subgroup $\Comm_G(O)$ is also open. So $\Comm_G(O)$ is a finite index subgroup of $G$.
\end{proof}

\begin{Proposition} \label{prop-commens-upgrade-normal}
Let $G$ be a tdlc group, and $\Gamma$ a cobounded subgroup of $G$. Let $A$ be a subgroup of $G$ that is normalized by $\Gamma$, and suppose that for each positive integer $n$, $A$ has only finitely many subgroups of index $n$. If $A$ normalizes a compact open subgroup of $G$, then there exists an open finite index subgroup $G'$ of $G$ containing $A$ and $\Gamma$, and a closed normal subgroup $M$ of $G'$ such that $A \leq M$ and $A$ is cobounded in $M$.
\end{Proposition}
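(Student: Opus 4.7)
The plan is to apply Lemma \ref{lem-UA-commens} to the subgroup $O := UA$, which is indeed a subgroup because $A$ normalizes $U$. This yields the open finite index subgroup $G' := \Comm_G(O)$ of $G$, which contains both $\Gamma$ and $A$. Since $O$ contains the open subgroup $U$, it is itself open and hence closed, and the factorization $O = AU$ with $U$ compact exhibits $A$ as cobounded in $O$.

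For $M$ I take the topological normal closure of $A$ in $G'$, namely $M := \overline{\langle g A g^{-1} : g \in G'\rangle}$. By construction $M$ is a closed normal subgroup of $G'$ containing $A$; the task reduces to showing $A$ is cobounded in $M$. Writing $G' = \Gamma K$ for some compact $K \subset G'$ (by coboundedness of $\Gamma$ in $G'$) and using that $\gamma A \gamma^{-1} = A$ for every $\gamma \in \Gamma$, the set $\{gAg^{-1} : g \in G'\}$ equals the $\Gamma$-conjugation orbit of the family $\{kAk^{-1} : k \in K\}$. I would first prove that this latter family is finite: combining the commensuration of $O$ by $G'$ with the compactness of $K$ yields a uniform bound on $[O : gOg^{-1} \cap O]$ for $g \in K$ (by continuity of conjugation and the fact that finitely many translates of $U$ cover $K$), and passing to the quotient $O/U \cong A/(A \cap U)$ translates this into a uniform bound on $[A : A \cap kAk^{-1}]$ for $k \in K$. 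The hypothesis that $A$ has only finitely many subgroups of each finite index then forces $\{kAk^{-1} : k \in K\}$ to be a finite collection.

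The hard part will be to deduce the coboundedness of $A$ in $M$ from the finiteness of this family: the $\Gamma$-conjugates of a finite collection of subgroups of $G$ may a priori form an infinite set inside $M$. To handle this, I plan to shrink $U$ to the compact open subgroup $V$ obtained as the intersection of $U$ with the finitely many conjugates $k_i U k_i^{-1}$ indexed by a set of representatives $\{k_i\}$ of the family above, and to replace $G'$ by a further finite index subgroup that still contains $\Gamma$ and $A$ and that permutes this finite family setwise. Then the image of $M$ in the discrete quotient $N_G(V)/V$ is generated by the image of $A$ together with only finitely many cosets, one for each distinct conjugate of $A$; a further application of the finite-subgroup-lattice hypothesis shows that this image is a finite extension of the image of $A$. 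Pulling back, $M \subseteq A \cdot V \cdot F$ for a finite set $F \subset M$, so $A$ is cobounded in $M$, as required.
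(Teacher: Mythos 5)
Your approach differs from the paper's, and it contains a genuine gap at the step where you claim $\{kAk^{-1} : k \in K\}$ is a finite collection. The uniform bound on $[O : kOk^{-1} \cap O]$ is indeed valid (the map $k \mapsto kOk^{-1}$ is constant on left $U$-cosets, so has finite image on compact $K$), but the proposed passage to a bound on $[A : A \cap kAk^{-1}]$ via $O/U \cong A/(A\cap U)$ does not go through: the subgroups $A$ and $kAk^{-1}$ of $O$ can have the same (even full) image in $O/U$ while having arbitrarily small intersection, so information in the quotient says nothing about the intersection. Concretely, take $G = \Q_p^2 \rtimes \mathrm{SL}(2,\Z_p)$, $\Gamma = \Q_p^2 \rtimes \mathrm{SL}(2,\Z)$ (dense, hence cobounded), $A = \Z^2$, which is normal in $\Gamma$ and normalizes $U = \Z_p^2 \rtimes \mathrm{SL}(2,\Z_p)$; here $O = UA = U$, and one may take $K = U$. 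One computes $kAk^{-1} = h\Z^2$ for $k = (y,h)$. Taking $h$ upper unitriangular with off-diagonal entry $\alpha \in \Z_p$, the subgroups $h\Z^2$ are pairwise distinct as $\alpha$ ranges over distinct $\Z$-cosets in $\Z_p$, so $\{kAk^{-1} : k \in K\}$ is uncountable; taking $\alpha = 1/n$ with $n$ coprime to $p$ gives $[A : A \cap kAk^{-1}] = n$, unbounded. Meanwhile $[O : kOk^{-1}\cap O] = 1$ throughout since $K \subseteq O$. The later steps, including the reduction to the discrete quotient $N_G(V)/V$, inherit this gap.

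The paper's proof avoids controlling conjugates of $A$ directly. It uses the finite-subgroup-lattice hypothesis to produce a characteristic finite-index subgroup $B$ of $A$ whose conjugates by a finite set of coset representatives $g_i$ (from a decomposition $G' = \bigcup U g_i \Gamma$) all land in $O$; the $\Gamma$-invariance of $B$ (characteristic in the $\Gamma$-normal $A$) then propagates this to all of $G'$, placing $B$ inside the closed normal subgroup $N = \bigcap_{g\in G'} gOg^{-1}$. The image of $A$ in $G'/N$ is thus a finite subgroup normalized by the cobounded image of $\Gamma$, and Proposition~\ref{prop-polycompact-cocompact} encloses it in a compact normal subgroup, whose preimage is the sought $M$. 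In the example above this yields $M = \Z_p^2$, in which $A = \Z^2$ is dense. Your candidate $M$ (the closed normal closure of $A$ in $G'$) is always contained in the paper's $M$, so it would in fact be a valid choice once coboundedness is known; but proving its coboundedness requires precisely the argument you are trying to construct, not a shortcut through finiteness of conjugates.
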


\begin{proof}
Let $U$ be a compact open subgroup of $G$ normalized by $A$. Let $O := UA$ and $G' := \Comm_G(O)$.  By Lemma  \ref{lem-UA-commens} we have $\Gamma \leq G'$ and $G'$ is a finite index subgroup of $G$. Let $g_1, \ldots g_n \in G'$ such that $G' = \bigcup U g_i \Gamma$. Let \[O' := \bigcap_{i=1}^n g_i^{-1} O g_i \, \, \text{and} \, \, A' := O' \cap A.\] Since $g_i \in G'$ for every $i$, $O'$ is a finite index open subgroup of $O$, and hence $A'$ is a finite index subgroup of $A$. Moreover one has $g_i A' g_i^{-1} \subseteq O$ for all $i$. By the assumption finite index subgroups of $A$, one can find a characteristic subgroup $B$ of  $A$ such that $B \leq A'$. The subgroup $B$ keeps the property that $g_i B g_i^{-1} \subseteq O$ for all $i$, and $B$ is in addition $\Gamma$-invariant. The decomposition $G' = \bigcup U g_i \Gamma$ then shows that $gBg^{-1} \subseteq O$ for all $g$ in $G'$. The subgroup $N := \cap_{g \in G'} g O g^{-1}$ is therefore a closed normal subgroup of $G'$ such that $B \leq N$. Since $B$ has finite index in $A$, the image of $A$ in $G'/N$ is a discrete finite subgroup $F$. Moreover $F$ has a cocompact normalizer in $G'/N$. Proposition \ref{prop-polycompact-cocompact} implies $F$ is contained in a compact normal subgroup $K$ of $G'/N$. The pre-image $M$ of $K$ in $G'$ is a normal subgroup of $G'$ in which $A$ is contained and cobounded. 
\end{proof}

The following will be a key tool later in the paper. 

\begin{Theorem} \label{thm-abelien-norm-co}
Let $G$ be a tdlc group, and $\Gamma$ a cobounded subgroup of $G$. If $A$ is a finitely generated nilpotent normal subgroup of $\Gamma$, then $A$ normalizes a compact open subgroup of $G$.
\end{Theorem}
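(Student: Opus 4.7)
The plan is to show that each element $a \in A$ individually normalizes some compact open subgroup of $G$; then Corollary~\ref{cor-fg-nilpo-normalizes-co} upgrades this to a single compact open subgroup normalized by all of $A$. Equivalently, for every $a \in A$, the Willis scales should satisfy $s(a) = s(a^{-1}) = 1$.

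The argument combines two complementary pieces of structure. First, I consider the closure $\overline{A} \leq G$: since $A$ is finitely generated and nilpotent, $\overline{A}$ is compactly generated (topologically generated by any finite generating set of $A$), and nilpotent (the identity $[[\ldots],\ldots]=1$ is closed). Applying Theorem~\ref{thm-tdlc-poly-growth} to $\overline{A}$ (which is a cocompact nilpotent subgroup of itself) yields that $K := W(\overline{A})$ is compact and $\overline{A}/K$ is discrete virtually nilpotent; in particular $K$ is a compact open subgroup of $\overline{A}$, characteristic in $\overline{A}$, and therefore normalized by $\Gamma$ (which normalizes $\overline{A}$). Hence $N_G(K)$ is a closed cobounded, thus cocompact, subgroup of $G$ containing both $A$ and $\overline{A}$, with $K$ normal in $N_G(K)$. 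Second, Theorem~\ref{thm-nilp-polycyclic-common-tidy} provides a compact open subgroup $U \leq G$ that is tidy for every element of $A$, and Theorem~\ref{thm-common-tidy-derived-normalizes} then forces $[A,A] \leq N_G(U)$.

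I would proceed by induction on the Hirsch length of $A$. The base case is $A$ finite, where $A$ is a compact subgroup of $G$ and the finite intersection $\bigcap_{a \in A} a V a^{-1}$ (for any compact open $V \supseteq A$) is a compact open subgroup of $G$ normalized by $A$. For the inductive step with $A$ infinite, there is a dichotomy. If $A \cap K$ is infinite then $A/(A\cap K)$, identified with the image of $A$ in $N_G(K)/K$, has strictly smaller Hirsch length; it is finitely generated nilpotent and normal in the cobounded image of $\Gamma$ in the tdlc group $N_G(K)/K$, so the induction hypothesis applies and produces an $A$-invariant compact open subgroup of $N_G(K)/K$, whose preimage is a compact open subgroup of $N_G(K)$ containing $K$ and normalized by $A$. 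If $A \cap K$ is finite, then $A$ is essentially discrete in its closure, and one argues directly using the common tidy $U$: the $\Gamma$-invariance of the scale function (coming from conjugation-invariance of $s$ combined with normality of $A$ in $\Gamma$), the cobounded action of $\Gamma$ on the Cayley--Abels-type coset space $G/U$, and the polynomial growth of $A$ together conspire to force $s \equiv 1$ on $A$.

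The main obstacle is two-fold. In the inductive step one must lift the produced $A$-invariant compact open subgroup of $N_G(K)/K$ to a compact \emph{open} subgroup of $G$, but $N_G(K)$ is only cocompact in $G$ and need not be open; ensuring the lifted subgroup is open in $G$ is a delicate technical point which exploits the openness of $K$ inside $\overline{A}$ and the fact that $[A,A]$ normalizes $U$. In the essentially-discrete base case, ruling out non-trivial scales is the deepest step, and is where the normality of $A$ in the cobounded subgroup $\Gamma$ (as opposed to $A$ being merely a finitely generated nilpotent subgroup of $G$) is used in an essential way.
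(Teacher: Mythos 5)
There is a genuine gap, and in fact the gap lies exactly where the paper's main idea resides. Your overall goal -- show that each $a\in A$ individually normalizes a compact open subgroup and then invoke Corollary~\ref{cor-fg-nilpo-normalizes-co} -- agrees with the implicit strategy of the paper, and your opening observations about $\overline{A}$ (compactly generated, nilpotent, $K:=W(\overline{A})$ compact open in $\overline{A}$ and $\Gamma$-invariant) are correct. But neither branch of your dichotomy is actually an argument. In the ``$A\cap K$ infinite'' branch, the preimage in $N_G(K)$ of the inductively-produced $A$-invariant compact open subgroup of $N_G(K)/K$ is compact and open \emph{in $N_G(K)$}, but $N_G(K)$ is only closed and cocompact in $G$, not open, so you obtain nothing about $G$. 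You acknowledge this as a ``delicate technical point'' but give no way to fix it; I don't see how to fix it within your framework, and the paper avoids the issue entirely by never passing through $\overline{A}$. (Compare the paper's Proposition~\ref{prop-commens-upgrade-normal}, whose conclusion carefully produces a normal subgroup $M$ of a \emph{finite-index open} subgroup $G'$ of $G$, together with an actual open subgroup $O$ in $G$ at the crucial moment -- the paper's reduction preserves openness precisely because it is built around $UA$ with $U$ compact open in $G$, not around $\overline{A}$.)

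The ``$A\cap K$ finite'' branch -- the essentially discrete case -- is where the real difficulty sits, and your sketch has no content: ``$\Gamma$-invariance of the scale, cobounded action on $G/U$, and polynomial growth conspire'' is a hope, not an argument, and polynomial growth of $A$ alone does not preclude $s(a)>1$ (consider $\Z$ acting on $\Q_p$ by $p$: linear growth, scale $p$). Moreover, the one concrete tool you invoke -- Theorem~\ref{thm-common-tidy-derived-normalizes} giving $[A,A]\leq N_G(U)$ -- is vacuous when $A$ is abelian, which is precisely the base of the nilpotency induction and arguably the hardest case. The missing idea, which is the heart of the paper's proof, is to enlarge $A$ before taking derived subgroups: if some $\gamma\in\Gamma$ acts on $A$ with infinite order (and the paper shows via $\Aut(A)\hookrightarrow\mathrm{GL}(n,\Z)$ plus Proposition~\ref{prop-fg-subgroup-B(G)} that this is the only case left), then $P=\langle A,\gamma\rangle$ is polycyclic and \emph{not} virtually abelian, the Shalom--Willis common-tidy result (Theorem~\ref{thm-nilp-polycyclic-common-tidy}(2)) produces a finite-index $S\leq P$ and a $U$ tidy for all of $S$, and then Theorem~\ref{thm-common-tidy-derived-normalizes} gives a \emph{non-trivial} $[S,S]\leq A$ normalizing $U$. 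This is what ultimately forces an element of $A$ to normalize a compact open subgroup. Nothing in your proposal exploits the action of $\Gamma$ on $A$ in this way, and without it the argument cannot close.
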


\begin{proof}
Suppose for a moment that we have proved the statement under the assumption that $G$ is compactly generated. Let $G$ be an arbitrary tdlc group, and $\Gamma,A$ as in the statement. Let $O$ be a compactly generated open subgroup of $G$ containing $A$. Then $\Gamma \cap O$ is a cobounded subgroup of $O$, which surely normalizes $A$. Since $O$ is compactly generated, by the assumption $A$ normalizes a compact open subgroup of $O$. Since $O$ is open in $G$, the statement follows.

Hence it is enough to prove the statement assuming that  $G$ is compactly generated. (We can also assume that $A$ torsion-free). We argue by contradiction. If the statement is not true, choose a counter-example $(G,\Gamma,A)$ with $A$ of minimal Hirsch number. 
	
	We first claim that no non-trivial element of $A$ normalizes a compact open subgroup of $G$. Indeed, suppose that $a_0$ is a non-trivial element of $A$ that normalizes a compact open subgroup of $G$. Let $B$ be the subgroup of $A$ generated by all the $\Gamma$-conjugates of $a_0$. Then $B$ is a finitely generated nilpotent group, and $B$ is normalized by $\Gamma$. The subgroup $B$ is finitely generated nilpotent, and generated by elements that normalize a compact open subgroup. By Corollary \ref{cor-fg-nilpo-normalizes-co} this implies that the entire subgroup $B$ normalizes a compact open subgroup of $G$. By Proposition  \ref{prop-commens-upgrade-normal}, there is a finite index subgroup $G'$ of $G$ and a closed normal subgroup $M$ of $G'$ such that $B$ is contained in $M$ and $B$ is cobounded in $M$. The quotient $Q = G' /M$ remains compactly generated. The subgroup $A/A \cap M$ has  Hirsch number strictly smaller than the one of $A$, so by minimality $A/A \cap M$ normalizes a compact open subgroup $V$ of $Q$. Let $O$ be the preimage of $V$ in $G$. The subgroup $O$ is normalized by $A$. The subgroup $B$ is cobounded in $O$ because $B$ is cobounded in $M$ and $M$ is cocompact in $O$. So $O$ is a tdlc group with a closed cocompact compactly generated nilpotent subgroup. By Theorem \ref{thm-tdlc-poly-growth} this implies that $O$ is compact-by-discrete with a unique maximal compact open normal subgroup $U$. In particular $U$ is characteristic in $O$. Since $A$ normalizes $O$, we deduce that $A$ also normalizes $U$. Contradiction.
	
	Now we claim that the image of $\Gamma$ in $\Aut(A)$ cannot be a torsion group. Indeed, suppose for a contradiction that this is the case. Since $A$ is finitely generated and nilpotent, the group $\Aut(A)$ is linear over $\mathbb{Z}$, i.e. is a subgroup of $\mathrm{GL}(n,\mathbb{Z})$ for some $n$. Since every torsion subgroup of $\mathrm{GL}(n,\mathbb{Z})$ is finite, it follows that $\Gamma$ virtually centralizes $A$. Since $\Gamma$ is cobounded in $G$, this implies that $A$ is contained in $B(G)$. According to Proposition \ref{prop-fg-subgroup-B(G)}, $A$ normalizes a compact open subgroup, which is not true since we assume $A$ is a counterexample. So we have reached a contradiction. Here we can indeed apply Proposition \ref{prop-fg-subgroup-B(G)}, because the assumption that $G$ is compactly generated implies that $G$ is  $\sigma$-compact and that $W(G)$ is closed by (see the references to Trofimov and Möller in \cite{Cor-comm-focal}). 
	
	Hence there is an element $\gamma$ in $\Gamma$ such that $\gamma$ acts on $A$ as an element of infinite order. Let $P$ be the subgroup generated by $A$ and $\gamma$. The subgroup $P$ is polycyclic, and by definition of $\gamma$ we have that $P$ is not virtually abelian. According to Theorem  \ref{thm-nilp-polycyclic-common-tidy}, $P$ admits a finite index subgroup $S$ such that there exists a compact open subgroup $U$ of $G$ that is tidy for every element of $S$. Theorem \ref{thm-common-tidy-derived-normalizes} applied to $S$ and $U$ therefore implies that $[S,S]$ normalizes $U$. Since $[S,S] \leq A$ and $[S,S]$ is non-trivial as $P$ is not virtually abelian, we have reached a contradiction with the beginning of the proof.
\end{proof}

\begin{Remark}
	When $A$ is not  finitely generated, Theorem \ref{thm-abelien-norm-co} is no longer true. For instance, the wreath product $\Gamma = \Z/n \Z \wr \Z$ embeds as a discrete and cocompact subgroup in the isometry group $G$ of the the Diestel--Leader graph $\DL_2(n)$ (see Section \ref{sec-flexibility}). The subgroup $A = \bigoplus \Z/n \Z$ is an abelian normal subgroup of $\Gamma$, and $A$ does not normalize any compact open subgroup of $G$.
\end{Remark}

\subsection{The proof of Theorem \ref{thm-intro-norma-nilp-same-tdlc-env}} \label{subsec-proof-Theorem1}

Let $A$ be a  finitely generated torsion-free nilpotent group. The only automorphism of $A$ which coincides with the identity on a finite index subgroup of $A$ is the trivial automorphism. Hence the homomorphism $\Aut(A) \to \Comm(A)$ is injective. In the sequel we often identify $\Aut(A)$ with its image in $\Comm(A)$. 

\begin{Notation}
 For $k \geq 1$, we denote by $kA$ the subgroup of $A$ generated by $k$-powers.
\end{Notation}

The subgroup $kA$ is a characteristic subgroup of $A$, which is of finite index in $A$ \cite[2.3.2]{Lennox-Rob}.

\begin{Lemma} \label{lem-caract-aut-nilpo}
	Let $A$ be a finitely generated torsion-free nilpotent group. Let $A'$ a finite index subgroup of $A$, and let $k \geq 1$. If $f: k A \to A'$ is an isomorphism such that the image of $f$ in $\Comm(A)$ belongs to $\Aut(A)$, then $f$ extends to an automorphism of $A$, and $A' = k A$.
\end{Lemma}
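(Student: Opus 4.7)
The plan is to translate the hypothesis $[f] \in \Aut(A) \subseteq \Comm(A)$ into the existence of a genuine automorphism $\varphi$ of $A$ that agrees with $f$ on a finite index subgroup, and then to upgrade this partial agreement to full agreement on $kA$, exploiting the unique extraction of roots available in torsion-free nilpotent groups.

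First I would pick $\varphi \in \Aut(A)$ representing the class $[f] \in \Comm(A)$; such $\varphi$ exists by hypothesis, and is unique thanks to the injectivity of $\Aut(A) \to \Comm(A)$ recalled just above the statement. By the very definition of $\sim$ on virtual isomorphisms, there is a finite index subgroup $B$ of $A$ contained in $kA$ (which is itself of finite index in $A$) such that $\varphi|_B = f|_B$. Note that the natural candidate for the extension of $f$ is $\varphi|_{kA}$ itself, and because $kA$ is characteristic in $A$ one already has $\varphi(kA) = kA$, so $\varphi|_{kA}$ is an automorphism of $kA$.

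The heart of the proof is the equality $f = \varphi|_{kA}$. I would consider the homomorphism
\[
g := \varphi^{-1} \circ f \colon kA \longrightarrow A,
\]
which is well-defined since $\varphi$ is an automorphism of $A$, and which fixes $B$ pointwise since $\varphi|_B = f|_B$. For an arbitrary $x \in kA$, finite index of $B$ in $kA$ provides an integer $n \geq 1$ with $x^n \in B$, so that
\[
g(x)^n \;=\; g(x^n) \;=\; x^n.
\]
By the classical theorem of Mal'cev that finitely generated torsion-free nilpotent groups admit unique extraction of roots, this forces $g(x) = x$. Hence $g$ is the inclusion $kA \hookrightarrow A$, i.e.\ $f = \varphi|_{kA}$. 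From this, $f$ extends to the automorphism $\varphi$ of $A$, and $A' = f(kA) = \varphi(kA) = kA$, as required.

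The only mildly delicate point is the invocation of uniqueness of roots; this is the natural strengthening of the fact already recorded in the paper (that an automorphism of $A$ agreeing with the identity on a finite index subgroup is trivial), and it is essential here because $g$ is only \emph{a priori} a homomorphism from the finite index subgroup $kA$, not an endomorphism of $A$ to which the cited fact could be applied directly.
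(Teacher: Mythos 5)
Your proposal is correct and follows essentially the same route as the paper: choose an automorphism $\varphi \in \Aut(A)$ agreeing with $f$ on a finite index subgroup $B \leq kA$, take $n$ with $x^n \in B$, and conclude via uniqueness of $n$-th roots in a finitely generated torsion-free nilpotent group (the paper cites \cite[2.1.2]{Lennox-Rob}). The only cosmetic difference is that you precompose with $\varphi^{-1}$ to reduce to showing the map is the identity, whereas the paper compares $f(a)^n = \alpha(a)^n$ directly; the key ingredient and the structure of the argument are identical.
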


\begin{proof}
By the assumption there exist  $\alpha \in \Aut(A)$ and a finite index subgroup $A''$ of $A$ such that $A'' \leq kA$ and $f$ and $\alpha$ coincide on $A''$. Take $a \in k A$, and take $n \geq 1$ such that $a^n \in A''$. Then $f(a)^n = f(a^n) = \alpha(a^n) = \alpha(a)^n$. Now in a torsion-free nilpotent group, two elements with the same $n$-power are necessarily equal \cite[2.1.2]{Lennox-Rob}. So $f(a) = \alpha(a)$, and $\alpha$  is an automorphism of $A$ that extends $f$. In particular $A' = \alpha(k A) = k A$.
\end{proof}

Recall from \S  \ref{subsec-terminology-commens} that if $A$ is a commensurated subgroup of a group $\Gamma$, we denote by $c_{\Gamma,A}: \Gamma \to \Comm(A)$ the associated homomorphism. 

\begin{Proposition} \label{prop-comm-nilp-image-in-Aut}
	Let $\Gamma$ be a finitely generated group with a commensurated subgroup $A$ such that $A$ is finitely generated, torsion-free and nilpotent. Suppose that the image of $c_{\Gamma,A}: \Gamma \to \Comm(A)$ lies in $\Aut(A)$. Then there is a finite index subgroup of $A$ that is normal in $\Gamma$. 
\end{Proposition}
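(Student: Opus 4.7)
The plan is to apply Lemma \ref{lem-caract-aut-nilpo} to the conjugation maps by a finite generating set of $\Gamma$, using the hypothesis to promote the virtual invariance of $A$ to genuine invariance of a characteristic subgroup $kA$. First I would unpack the definition of $c_{\Gamma,A}$: for $\gamma \in \Gamma$, conjugation by $\gamma$ yields an isomorphism $A \cap \gamma^{-1}A\gamma \to \gamma A\gamma^{-1} \cap A$ whose class in $\Comm(A)$ is $c_{\Gamma,A}(\gamma)$, and the class of the restriction to any finite index subgroup of $A\cap \gamma^{-1}A\gamma$ is the same. By Proposition \ref{prop-approx-fg} I may as well fix a finite generating set $\gamma_1,\ldots,\gamma_n$ of $\Gamma$ (here finite generation of $\Gamma$ is in the hypothesis, so there is nothing to prove). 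By assumption, $c_{\Gamma,A}(\gamma_i) \in \Aut(A)$ for each $i$.

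Next I would choose a single integer $k$ that works for all generators simultaneously. For each $i$, the subgroup $A \cap \gamma_i^{-1} A \gamma_i$ is of finite index in $A$, because $A$ is commensurated in $\Gamma$. A finitely generated nilpotent group has only finitely many subgroups of each finite index, and more importantly, every finite index subgroup of such a group contains $mA$ for some $m \geq 1$ (reduce to a normal finite index subgroup and take $m$ to be the exponent of the finite quotient). Hence for each $i$ there is $k_i \geq 1$ with $k_iA \leq A \cap \gamma_i^{-1}A\gamma_i$. Setting $k := k_1 \cdots k_n$ one has $kA \subseteq k_iA \subseteq A \cap \gamma_i^{-1}A\gamma_i$ for every $i$, or equivalently $\gamma_i\, kA\, \gamma_i^{-1} \subseteq A$.

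Now for each $i$, let $f_i : kA \to \gamma_i\, kA\, \gamma_i^{-1}$ denote conjugation by $\gamma_i$. Then $f_i$ is an isomorphism onto a finite index subgroup $A_i := \gamma_i\, kA\, \gamma_i^{-1}$ of $A$, and the class of $f_i$ in $\Comm(A)$ is precisely $c_{\Gamma,A}(\gamma_i)$, which by hypothesis lies in $\Aut(A)$. Lemma \ref{lem-caract-aut-nilpo} therefore applies and gives $A_i = kA$, i.e.\ $\gamma_i\, kA\, \gamma_i^{-1} = kA$. Since this holds for every generator, $kA$ is normalized by $\Gamma$, producing the desired finite index $\Gamma$-normal subgroup of $A$.

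There is no serious obstacle: the entire content is the observation that the hypothesis on $c_{\Gamma,A}$ combined with Lemma \ref{lem-caract-aut-nilpo} forces each conjugation automorphism, viewed on the characteristic subgroup $kA$, to land exactly on $kA$ rather than merely on a commensurable subgroup. The only mild point to be careful about is the passage from $\gamma$-by-$\gamma$ choices of $k_i$ to a common $k$, which is handled by finite generation of $\Gamma$ and the standard fact that $m \mid m'$ implies $m'A \subseteq mA$ in a torsion-free nilpotent group.
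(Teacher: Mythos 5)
Your proof is correct and follows essentially the same route as the paper's: fix a finite generating set, find a single $k$ with $kA \le A \cap \gamma_i^{-1} A \gamma_i$ for all $i$, and apply Lemma \ref{lem-caract-aut-nilpo} to the restriction of each conjugation to $kA$ to conclude $\gamma_i\, kA\, \gamma_i^{-1} = kA$. The one stray remark is your invocation of Proposition \ref{prop-approx-fg}, which is about cobounded subgroups of compactly generated locally compact groups and is not relevant here (as you yourself note, $\Gamma$ is already assumed finitely generated).
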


\begin{proof}
Let $S = \left\lbrace \gamma_1, \ldots, \gamma_n \right\rbrace 	$ be a generating subset of $\Gamma$. The subgroups $\left\lbrace A \cap \gamma_i^{-1} A \gamma_i \right\rbrace $ form a finite collection of finite index subgroups of $A$. Hence one can find $k \geq 1$ such that $k A \leq A \cap \gamma_i^{-1} A \gamma_i$ for every $i$. Applying Lemma \ref{lem-caract-aut-nilpo} to the restriction to $k A$ of the conjugation by $\gamma_i$, we infer that $k A$ is normalized by $\gamma_i$ for every $i$. Hence  $k A$ is normal in $\Gamma$.
\end{proof}

\begin{Lemma} \label{lem-comm-mod-W}
Let $O$ be a locally compact group such that $W(O)$ is closed. Then the homomorphism $O \to O / W(O)$ induces a  homomorphism $\psi:  \Comm(O) \to \Comm(O/W(O))$.  
\end{Lemma}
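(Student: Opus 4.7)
The plan is to define $\psi([\alpha])$ for a class $[\alpha] \in \Comm(O)$ by passing a representative $\alpha : U \to V$ (a topological isomorphism between finite index open subgroups of $O$) to the quotient $O/W(O)$. Since $W(O)$ is closed by assumption, $O/W(O)$ is a locally compact group. The images $\bar U := U W(O)/W(O)$ and $\bar V := V W(O)/W(O)$ are finite index open subgroups of $O/W(O)$, so it suffices to show that $\alpha$ maps $U \cap W(O)$ onto $V \cap W(O)$; then $\alpha$ descends to a topological isomorphism $\bar\alpha : \bar U \to \bar V$, and one sets $\psi([\alpha]) := [\bar\alpha]$.

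The technical heart of the argument is the identity $W(U) = W(O) \cap U$ for every finite index open subgroup $U$ of $O$. The inclusion $W(U) \subseteq W(O)$ follows from Proposition \ref{prop-polycompact-cocompact}: since $U$ is cocompact in $O$, every compact normal subgroup of $U$ is contained in a compact normal subgroup of $O$, and hence lies in $W(O)$. For the reverse inclusion $W(O) \cap U \subseteq W(U)$, any $x \in W(O)$ lies in $\langle K_1, \ldots, K_m \rangle = K_1 \cdots K_m$ for finitely many compact normal subgroups $K_i \lhd O$; because these subgroups are normal, the product $M := K_1 \cdots K_m$ is itself a compact subgroup of $O$ (it is the continuous image of $K_1 \times \cdots \times K_m$) and is normal in $O$. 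Thus if $x \in U$, then $x \in M \cap U$, which is a compact normal subgroup of $U$, whence $x \in W(U)$.

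Given this equality, $\alpha$ automatically sends $W(O) \cap U = W(U)$ onto $W(O) \cap V = W(V)$, since any topological group isomorphism respects the $W$ construction. Hence $\bar\alpha$ is a well-defined topological isomorphism between finite index open subgroups of $O/W(O)$. The assignment descends to the equivalence relation defining $\Comm$, because if $\alpha$ and $\alpha'$ coincide on a finite index open subgroup $U_0$, then $\bar\alpha$ and $\bar\alpha'$ coincide on its image $\bar U_0$, which remains finite index and open in $O/W(O)$. Compatibility of quotients with composition of topological isomorphisms then yields that $\psi$ is a group homomorphism. The main obstacle is the key identity $W(U) = W(O) \cap U$; once one observes that a finite product of compact normal subgroups of a locally compact group is itself a single compact normal subgroup, so that every element of $W(O)$ lies inside such a subgroup, the result reduces to Proposition \ref{prop-polycompact-cocompact}.
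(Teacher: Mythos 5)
Your proposal is correct and takes essentially the same route as the paper's own argument: both pivot on the identity $W(U) = W(O) \cap U$ for $U$ a finite index open subgroup of $O$, deducing $W(U) \subseteq W(O)$ from Proposition~\ref{prop-polycompact-cocompact} and the reverse inclusion from the fact that any element of $W(O)$ lies in a compact normal subgroup of $O$. The only cosmetic difference is that the paper phrases the latter by writing $W(O)$ as the directed union of compact normal subgroups, whereas you use the equivalent observation that a finite product $K_1\cdots K_m$ of compact normal subgroups is again a compact normal subgroup.
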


\begin{proof}
Since $W(O)$ is closed, $O/W(O)$ is a locally compact group. Let $\mathcal{K}$ denote the collection of compact normal subgroups of $O$, so that $W(O)$ is the union of all members of $\mathcal{K}$. If $O_1$ is a finite index open subgroup of $O$, then we have \[ W(O) \cap O_1 = \bigcup_\mathcal{K} K  \cap O_1 = W(O_1), \] where the last equality follows from Proposition  \ref{prop-polycompact-cocompact}.

If $f : O_1 \to O_2$ is an isomorphism between two open finite index subgroups of $O$, then $f$ sends $W(O_1) = W(O) \cap O_1$ to $W(O_2)  = W(O) \cap O_2$, and hence induces an isomorphism $\overline{f} : O_1 / W(O) \cap O_1 \to O_2/ W(O) \cap O_2$. Since $O_i$ is open in $O$, $O_i / W(O) \cap O_i$ is isomorphic to a finite index open subgroup of $O/W(O)$. If $f$ is the identity on a finite index open subgroup of $O$ then $\overline{f}$ is the identity on a finite index open subgroup of $O/W(O)$, so that $f \mapsto \overline{f}$ defines a map $\psi:  \Comm(O) \to \Comm(O/W(O))$. Moreover composition behaves as expected, so that $\psi$ is a homomorphism. 
\end{proof}

\begin{Proposition} \label{prop-extend-comm-homomorphism}
	Let $\Gamma$ be a group with a commensurated subgroup $A$ such that $A$ has no non-trivial finite normal subgroup. Let $G$ be a tdlc cocompact envelope of $\Gamma$ such that $A$ normalizes a compact open subgroup $U$ of $G$, and let $O = UA$. Then the following hold: \begin{enumerate}
		\item $G' := \Comm_G(O)$ is a finite index subgroup of $G$, and $\Gamma \leq G'$;
		\item $W(O) = U$ and $O/U \simeq A$;
		\item If $\psi: \Comm(O) \to \Comm(O/U)$ is the homomorphism from Lemma \ref{lem-comm-mod-W}, and if $\varphi = \psi \circ c_{G',O}: G' \to \Comm(O/U)$, then the restriction of $\varphi $ to $\Gamma$ is equal to $c_{\Gamma,A}: \Gamma \to \Comm(A)$ (after identifying $O/U$ with $A$);
		\item $\varphi $ has open kernel, and the image of $\varphi$ contains the image of $c_{\Gamma,A}$ as a finite index subgroup. 
	\end{enumerate}
\end{Proposition}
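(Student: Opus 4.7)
The plan is to handle the four items in order, as each builds on the previous; I expect item (3) to be the main obstacle, as it requires carefully tracking several identifications simultaneously.

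Item (1) will be immediate from Lemma \ref{lem-UA-commens}. For item (2), I would first observe that $U$ is compact, open, and normal in $O = UA$, since $A$ normalizes $U$ by hypothesis. The inclusion $A \hookrightarrow O$ followed by the quotient $O \to O/U$ is surjective with kernel $A \cap U$; that kernel is a finite subgroup of $A$ (intersection of a discrete group with a compact one), and it is normal in $A$ since $A$ normalizes $U$, hence trivial by our assumption on $A$. This yields $O/U \simeq A$. For $W(O) = U$, the key point is that any compact normal subgroup $K$ of $O$ projects to a compact, hence finite, normal subgroup of $O/U \simeq A$, which must then be trivial; so $K \subseteq U$ and the reverse inclusion is obvious.

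For item (3), the main obstacle, I would fix $\gamma \in \Gamma$ and evaluate the two virtual isomorphisms on the finite index subgroup $A_0 = A \cap \gamma^{-1} A \gamma$ of $A$. For $a \in A_0$, the element $\gamma a \gamma^{-1}$ lies in $A$, so $a$ lies in $O \cap \gamma^{-1} O \gamma$, and the virtual isomorphism $c_{G',O}(\gamma)$ sends $a \mapsto \gamma a \gamma^{-1}$ on an open finite index subgroup of $O$ containing $a$. The delicate verification is that after applying $\psi$ and using the identification $\pi|_A \colon A \xrightarrow{\sim} O/U$ coming from item (2), the resulting element coincides with $c_{\Gamma,A}(\gamma)(a) = \gamma a \gamma^{-1}$. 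Once this is checked on $A_0$, equality in $\Comm(A)$ follows since $A_0$ has finite index in $A$.

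For item (4), I would first show $U \subseteq \ker \varphi$: since $U$ is normal in $O$, conjugation of $O$ by any element of $U$ descends to the identity of $O/U$, whose class in $\Comm(O/U)$ is trivial. As $U$ is open in $G'$, this forces $\ker \varphi$ to be open. By item (1), $\Gamma \subseteq G'$, and since $G'$ is open of finite index in $G$ while $\Gamma$ is discrete and cocompact in $G$, the subgroup $\Gamma$ is also discrete and cocompact in $G'$. Consequently, the image of $\Gamma$ in the discrete quotient $G'/\ker \varphi$ is cocompact, hence of finite index. Combining with item (3), we conclude that $c_{\Gamma,A}(\Gamma) = \varphi(\Gamma)$ has finite index in $\varphi(G')$, as required.
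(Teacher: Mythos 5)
Your proposal is correct and follows essentially the same line as the paper's proof: item (1) from Lemma \ref{lem-UA-commens}, item (2) from the triviality of $A \cap U$ as a finite normal subgroup of $A$, item (3) by tracking the conjugation isomorphism through $\psi$ and the identification $O/U \simeq A$, and item (4) from $U \subseteq \ker\varphi$ together with cocompactness of $\Gamma$ in $G'$. Your treatment of $W(O) = U$ in item (2) and of the finite-index claim in item (4) is slightly more explicit than the paper's, but the underlying arguments are the same.
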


\begin{proof}
Lemma  \ref{lem-UA-commens} asserts $G'=\Comm_G(O)$ is a finite index subgroup of $G$, and $\Gamma \leq G'$. Without loss of generality in the remaining of the proof we can assume $\Comm_G(O) = G$.

	Since $A$ is discrete in $G$ and $U$ is compact, $U \cap A$ is finite. Moreover $U \cap A$ is normal in $A$. So by the assumption on $A$, we have $U \cap A = 1$, $O \simeq U \rtimes A$, and $U = W(O)$. Let $\psi: \Comm(O) \to \Comm(O/U)$ be the natural homomorphism (Lemma \ref{lem-comm-mod-W}), and let $\varphi : G \to \Comm(O/U)$ be the composition of $c_{G,O}: G \to \Comm(O)$ and $\psi : \Comm(O) \to \Comm(O/U)$. We check that $\varphi $ verifies the required properties. For $\gamma \in \Gamma$, as observed in the proof of Lemma  \ref{lem-UA-commens}, conjugation by $\gamma$ induces an isomorphism between the open subgroups $O_1 = (\gamma^{-1} U  \gamma \cap U) (\gamma^{-1} A  \gamma \cap A)$ and $O_2 = (U \cap \gamma U  \gamma ^{-1}) (A \cap \gamma  \gamma^{-1} A)$ of $O$. Hence by construction we have $\varphi(\gamma) = \psi(c_{G,O}(\gamma)) =  c_{\Gamma,A}(\gamma)$. Hence $\varphi$ indeed extends $c_{\Gamma,A}$. Moreover, since $U$ is normal in $O$, conjugation by $U$ induces the trivial map at the level of $O/U$. So $\varphi(U) = \psi(c_{G,O}(U)) = 1$, and the kernel of $\varphi $ is therefore open. In particular the subgroup $\ker(\varphi) \Gamma$ has finite index in $G$, which means that  the image of $\varphi$ contains the image of $c_{\Gamma,A}$ as a finite index subgroup.
\end{proof}

The following is Theorem \ref{thm-intro-norma-nilp-same-tdlc-env} from the introduction.

\begin{Theorem} \label{thm-norma-nilp-same-tdlc-env}
	Let	$\Gamma$ be a finitely generated group with a normal subgroup $A \lhd \Gamma$ such that $A$ is finitely generated and nilpotent. Suppose that $\Gamma$ and $\Lambda$ share a tdlc cocompact envelope. Then there is a finite index subgroup $\Lambda'$ of $\Lambda$  such that $\Lambda'$ admits a normal subgroup $B \lhd \Lambda'$ such that $A$ and $B$ are virtually isomorphic. 
\end{Theorem}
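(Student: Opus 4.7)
The plan is to combine Theorem \ref{thm-abelien-norm-co} with Proposition \ref{prop-extend-comm-homomorphism} to produce a commensurator-type homomorphism defined on a finite index subgroup of $G$, and then to invoke Proposition \ref{prop-comm-nilp-image-in-Aut} in order to upgrade commensuration to normality on the $\Lambda$ side. The main obstacle will be that the action of $\Lambda$ on a subgroup isomorphic to $A$ arising from this construction is \emph{a priori} only by virtual isomorphisms of $A$; to meet the hypothesis of Proposition \ref{prop-comm-nilp-image-in-Aut} I will need this action to consist of genuine automorphisms, which forces me to work with a \emph{characteristic} finite index subgroup of $A$.

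As a preliminary reduction, since the torsion subgroup of a finitely generated nilpotent group is finite and $A$ is residually finite, $A$ admits a characteristic torsion-free subgroup of finite index. I replace $A$ by such a subgroup; this preserves normality in $\Gamma$, and the statement is unaffected since virtual isomorphism is insensitive to passing to finite index subgroups.

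Now fix a common tdlc cocompact envelope $G$ of $\Gamma$ and $\Lambda$. Theorem \ref{thm-abelien-norm-co} yields a compact open subgroup $U \leq G$ normalized by $A$; set $O := UA$, so that $W(O) = U$ and $O/U \cong A$. Proposition \ref{prop-extend-comm-homomorphism} then provides a finite index subgroup $G' = \Comm_G(O)$ of $G$ containing $\Gamma$, together with a homomorphism $\varphi : G' \to \Comm(A)$ that extends $c_{\Gamma,A}$ and whose image contains $\varphi(\Gamma)$ as a finite index subgroup. Because $A \lhd \Gamma$, one has $\varphi(\Gamma) \subseteq \Aut(A)$, so $G'' := \varphi^{-1}(\Aut(A))$ is a finite index subgroup of $G$ containing $\Gamma$. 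I then set $\Lambda'' := \Lambda \cap G''$, a finite index subgroup of $\Lambda$ cocompact in $G''$.

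It remains to extract from $\Lambda''$ a torsion-free commensurated subgroup to which Proposition \ref{prop-comm-nilp-image-in-Aut} can be applied. The intersection $\Lambda'' \cap O$ is a discrete cocompact subgroup of $O$, and the projection $\pi : O \to O/U \cong A$ restricts on it to a homomorphism with finite kernel $\Lambda'' \cap U$ and finite index image in $A$. In particular $\Lambda'' \cap O$ is virtually nilpotent, and by residual finiteness it contains a torsion-free finite index subgroup $B_1$ on which $\pi$ is injective. The image $\pi(B_1) \leq A$ is not in general preserved by $\varphi(\Lambda'')$, so I choose a characteristic finite index subgroup $A^* \leq A$ contained in $\pi(B_1)$ (for instance, the intersection of all subgroups of $A$ of index $[A : \pi(B_1)]$, finitely many since $A$ is finitely generated), and set $B^* := (\pi|_{B_1})^{-1}(A^*)$. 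Then $B^*$ is a torsion-free finitely generated nilpotent subgroup of $\Lambda''$ which is mapped isomorphically onto $A^*$ by $\pi$, and which is commensurated in $\Lambda''$ as a finite index subgroup of the commensurated $\Lambda'' \cap O$. For any $\lambda \in \Lambda''$, the element $c_{\Lambda'', B^*}(\lambda) \in \Comm(B^*)$ corresponds, via the isomorphism $B^* \xrightarrow{\sim} A^*$ induced by $\pi$ and the identification $\Comm(A^*) = \Comm(A)$, to the class of $\varphi(\lambda) \in \Aut(A)$; since $A^*$ is characteristic in $A$, this class lies in $\Aut(A^*)$, and hence $c_{\Lambda'', B^*}(\lambda) \in \Aut(B^*)$. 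Proposition \ref{prop-comm-nilp-image-in-Aut} then produces a finite index subgroup $B$ of $B^*$ that is normal in $\Lambda''$. Since $B$ is isomorphic to a finite index subgroup of $A^*$ (and hence of $A$), the groups $A$ and $B$ are virtually isomorphic, and the choice $\Lambda' := \Lambda''$ completes the proof.
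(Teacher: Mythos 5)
Your proof is correct and uses the same core ingredients as the paper's (Theorem~\ref{thm-abelien-norm-co}, Proposition~\ref{prop-extend-comm-homomorphism}, Proposition~\ref{prop-comm-nilp-image-in-Aut}), but it diverges in how the passage to $\Lambda$ is carried out. The paper applies Proposition~\ref{prop-extend-comm-homomorphism} a \emph{second} time, to $(G,\Lambda,B)$ with $B$ a torsion-free finite index subgroup of $\Lambda\cap O$, and then compares the two extended commensurator homomorphisms through a commutative diagram and the isomorphism $\Comm(B)\to\Comm(A)$ induced by the finite-index inclusion. You instead apply the proposition only once, to $(G,\Gamma,A)$, and then verify directly that $c_{\Lambda'',B^*}(\lambda)$ is transported, via the isomorphism $\pi|_{B^*}\colon B^*\to A^*$, to the class of $\varphi(\lambda)$. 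This identification is the crux of your step and should really be spelled out: it holds because $\varphi$ is by construction reduction mod $U$ of the conjugation action on $O$, and $\pi$ is the very same reduction, so for $b$ in a finite index subgroup of $B^*$ one literally has $\pi(\lambda b\lambda^{-1})=\varphi(\lambda)(\pi(b))$. To make the single-application route close, you shrink to a \emph{characteristic} finite index subgroup $A^*\leq A$, precisely so that every element of $\Aut(A)$ restricts to $\Aut(A^*)$ and the transported classes land in $\Aut(B^*)$ honestly (not just virtually), letting Proposition~\ref{prop-comm-nilp-image-in-Aut} apply to $(\Lambda'',B^*)$ as stated. The paper instead works with an arbitrary finite index $B\leq A$ and uses that a finite index subgroup has a finite $\Aut(A)$-orbit, accepting a ``virtually in $\Aut(B)$'' conclusion before the final proposition. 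Both routes are sound; yours is slightly more direct in avoiding the diagram chase, at the cost of one assertion that deserves an explicit computation and the extra care of choosing $A^*$ characteristic.
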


\begin{proof}
Upon changing $A$ by a finite index subgroup of $A$, we can assume $A$ is torsion-free. Let $G$ be a tdlc group that contains both $\Gamma$ and $\Lambda$ as discrete cocompact subgroups. 	By Theorem \ref{thm-abelien-norm-co} applied to $(G,\Gamma)$, there exists a compact open subgroup $U$ of $G$ such $A$ normalizes $U$. Let $O = UA \simeq U \rtimes A$.  By Lemma  \ref{lem-UA-commens}, $\Comm_G(O)$ is a finite index subgroup of $G$. Hence upon replacing $G$ by $\Comm_G(O)$ and $\Lambda$ by $\Lambda \cap \Comm_G(O)$, we can assume $O$ is commensurated in $G$.

The subgroup $\Lambda \cap O$ is discrete and cocompact in $O$, and $\Lambda \cap O$ is finite-by-nilpotent. Hence $\Lambda \cap O$ is virtually nilpotent, and we can find a  torsion-free subgroup $B \leq \Lambda \cap O$ such that $B$ has finite index in $\Lambda \cap O$. In particular $B$ intersects $U$ trivially since $B$ is discrete in $G$, and hence $B$ is  isomorphic to a finite index subgroup of $A$.

Since $O$ is commensurated in $G$, $\Lambda \cap O$ (and hence $B$) is commensurated in $\Lambda$.  We apply Proposition \ref{prop-extend-comm-homomorphism} on the one hand to $(G,\Gamma,A)$, and on the other hand to $(G,\Lambda,B)$. Let $O' := U B \simeq U \rtimes B$, which is a finite index open subgroup of $O$.  We have the following diagram \[ \begin{tikzcd}
G \arrow [r, "c_{G,O}"] & \Comm(O) \arrow[r]  & \Comm(O/U) \arrow[r]  & \Comm(A) \\
G \arrow[r, "c_{G,O'}"] & \Comm(O') \arrow[u] \arrow[r] &  \Comm(O'/U) \arrow[u] \arrow[r] &  \Comm(B)  \arrow[u, "\eta"]
\end{tikzcd} \]
The middle arrows in horizontal lines are the homomorphisms given by Lemma \ref{lem-comm-mod-W}, and the right arrows are the isomorphisms induced by reduction modulo $U$. The left and middle vertical arrows are the isomorphisms induced by the fact that $O'$ (resp.\ $O'/U$) has finite index in $O$ (resp.\ $O/U$). The right vertical arrow makes the right square (entirely made of isomorphisms) commute.

Proposition \ref{prop-extend-comm-homomorphism} says that the restriction to $\Gamma$ of the first line is equal to $c_{\Gamma,A}: \Gamma \to \Comm(A)$, and the image of $G$ in $\Comm(A)$ contains the image of $c_{\Gamma,A}$ as a finite index subgroup. And similarly for $\Lambda$ in the second line. Now since $A$ is normal in $\Gamma$, the image of $c_{\Gamma,A}$ lies in $\Aut(A)$. Hence $G$ virtually maps to $\Aut(A)$ in the first line. Since reduction modulo $U$ identifies $B$ with a finite index subgroup of $A$, and since a finite index subgroup of $A$ has a finite $\Aut(A)$-orbit, we infer that $\eta^{-1}(\Aut(B))$ contains $\Aut(B)$ as a finite index subgroup. Hence $G$ virtually maps to $\Aut(B)$ in the second line. By restricting to $\Lambda$ we infer that the image of $c_{\Lambda,B}$ is virtually contained in $\Aut(B)$. By Proposition \ref{prop-comm-nilp-image-in-Aut} applied to $\Lambda$, this implies that there is a finite index subgroup $\Lambda'$ of $\Lambda$ such that $\Lambda'$ normalizes a finite index subgroup $B'$ of $B$. 
\end{proof}

\begin{Remark}
Let $\Lambda = \Z^2 \rtimes \Z/4\Z$, where the  $\Z/4\Z$-action is via the rotation of angle $\pi /2$. Let also $\Gamma = \Z^2$ and $A = \Z \times \left\lbrace 0 \right\rbrace \lhd \Gamma$. Then here $\Gamma$ is even a finite index subgroup of $\Lambda$, and $\Lambda$ does not have any normal subgroup virtually isomorphic to $\Z$. Hence already for virtually isomorphic groups $\Gamma, \Lambda$ it is necessary to pass to a finite index subgroup of $\Lambda$ in the conclusion of Theorem \ref{thm-norma-nilp-same-tdlc-env}. 
\end{Remark}

\begin{Remark}
	As explained in the introduction, the assumption of Theorem \ref{thm-norma-nilp-same-tdlc-env} is equivalent to asking that there is a connected locally finite graph on which $\Gamma$ and $\Lambda$  act faithfully and geometrically. Since the conclusion of the theorem (i.e.\ the property of having a finitely generated normal subgroup of a given virtual isomorphism class), is stable under modding out, and forming an extension by, a finite normal subgroup, it  follows that the statement still holds if one requires that there is a connected locally finite graph on which $\Gamma$ and $\Lambda$  act geometrically (dropping the faithfulness assumption).
\end{Remark}

\subsection{Other rigidity results} \label{subsec-related-rigidity-results}

 Here we connect the setting of Theorem \ref{thm-norma-nilp-same-tdlc-env} with other results from the literature. Mosher--Sageev-Whyte's Theorem 2 in \cite{Mosher-Sageev-Whyte} shows that if a group $\Gamma$ acts cocompactly on an infinitely ended locally finite tree such that vertex stabilizers (which are commensurated subgroups of $\Gamma$) are finitely generated and nilpotent, and if $\Lambda$ is a group QI to $\Gamma$, then $\Lambda$ acts cocompactly on an infinitely ended locally finite tree with vertex stabilizers QI to those of $\Gamma$. That result has been generalized by Margolis \cite{Margolis-almost-normal}. The setting of \cite[Theorem 1.4]{Margolis-almost-normal} covers the situation of a group $\Gamma$ having a finitely generated nilpotent subgroup $A$ that is commensurated in $\Gamma$, and provides sufficient conditions under which every group $\Lambda$ that is QI to $\Gamma$  admits a finitely generated nilpotent subgroup $B$ such that $B$ is commensurated in $\Lambda$ and $B$ is QI to $A$. So \cite[Theorem 2]{Mosher-Sageev-Whyte} and \cite[Theorem 1.4]{Margolis-almost-normal} both consider commensurated subgroups, while Theorem \ref{thm-norma-nilp-same-tdlc-env} deals with normal subgroups (both in the assumption and in the conclusion). These two results hold in the more general setting of QI groups $\Gamma,\Lambda$, as opposed to the stronger assumption in Theorem \ref{thm-norma-nilp-same-tdlc-env}  that $\Gamma,\Lambda$ share a cocompact tdlc envelope. \cite[Theorem 1.4]{Margolis-almost-normal} requires on the one hand $\Gamma$ to be of type $F_n$ for a certain $n \geq 2$ (depending on $A$), and on the other hand that the coset space $\Gamma / A$ has infinitely many ends (the case where $\Gamma / A$ is a tree corresponding to  \cite[Theorem 2]{Mosher-Sageev-Whyte}). Theorem \ref{thm-norma-nilp-same-tdlc-env} has no such assumption.

\subsection{Almost self-centralizing nilpotent normal subgroups}

\begin{Definition}
We say that a subgroup $A$ of a group $\Gamma$ is self-centralizing in $\Gamma$ if $C_\Gamma(A)$ is contained in $A$, and  almost self-centralizing in $\Gamma$ if $C_\Gamma(A)$ has a finite index subgroup contained in $A$. 
\end{Definition}

So self-centralizing means $Z(A) =  C_\Gamma(A)$, and almost self-centralizing means $Z(A)$ has finite index in $C_\Gamma(A)$. 

\begin{Lemma} \label{lem-self-centr-fi}
If $A$ is normal in $\Gamma$ and $A$ is almost self-centralizing in $\Gamma$, then every finite index subgroup $A'$ of $A$ is almost self-centralizing in $\Gamma$.
\end{Lemma}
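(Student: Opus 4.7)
The plan is to split the conclusion into two finite-index assertions. First, pick a subgroup $H$ of $C_\Gamma(A)$ with $[C_\Gamma(A):H] < \infty$ and $H \leq A$, which exists by the hypothesis. Then $H \cap A'$ lies in $A'$, has index in $H$ at most $[A:A'] < \infty$, and therefore is a finite-index subgroup of $C_\Gamma(A)$ contained in $A'$. So the whole problem reduces to showing that $[C_\Gamma(A') : C_\Gamma(A)] < \infty$.

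To control this index, I would exploit the conjugation homomorphism $\Phi : \Gamma \to \Aut(A)$ afforded by $A \lhd \Gamma$. Its kernel is exactly $C_\Gamma(A)$, and the restriction of $\Phi$ to $C_\Gamma(A')$ takes values in the subgroup $\Aut(A;A') := \{\sigma \in \Aut(A) : \sigma|_{A'} = \mathrm{id}\}$. Hence $[C_\Gamma(A') : C_\Gamma(A)] \leq |\Aut(A;A')|$, and the task is further reduced to proving that $\Aut(A;A')$ is finite. Here I would use the standing context of the subsection, namely that $A$ is finitely generated nilpotent (this finiteness assumption is genuinely needed: for $A=\bigoplus_{\mathbb{N}}\F_2$ sitting as a normal subgroup of $A \rtimes \mathrm{GL}(A)$, any codimension-one $A' \leq A$ provides a counterexample).

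The key step is therefore the finiteness of $\Aut(A;A')$. Let $T$ be the (finite) torsion subgroup of $A$; the quotient $A/T$ is torsion-free finitely generated nilpotent, hence has unique $n$-th roots for every $n \geq 1$. Any $\sigma \in \Aut(A;A')$ descends to an automorphism $\bar\sigma$ of $A/T$ that is the identity on the finite-index subgroup $A'T/T$. Given $a \in A$, pick $n \geq 1$ with $a^n T \in A'T/T$; then $\bar\sigma(aT)^n = a^n T$, which forces $\bar\sigma(aT) = aT$ by uniqueness of roots. Consequently $\sigma(a) a^{-1} \in T$ for every $a \in A$, so $\sigma$ is determined on a finite generating set of $A$ by finitely many elements of $T$, yielding only finitely many $\sigma$.

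The main obstacle is thus the finiteness of $\Aut(A;A')$; once that is in hand, the rest is purely formal. The unique-roots argument above is to my mind the cleanest route, though one could alternatively observe that $\Aut(A)$ is $\Z$-linear and that the pointwise stabilizer of a finite generating subset of $A'$ is both discrete and Zariski closed in $\Aut(A)$.
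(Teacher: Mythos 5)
Your proof is correct, and it takes a genuinely different route from the paper's. The paper argues that $C_\Gamma(A')$ acts by conjugation on the finite coset space $A/A'$, so some finite-index subgroup $K$ of $C_\Gamma(A')$ acts trivially there, and then asserts \enquote{so $C_\Gamma(A)$ is of finite index in $C_\Gamma(A')$}. That last \enquote{so} is where the paper's proof is thin: an element of $K$ only satisfies $g a g^{-1} \in a A'$ for all $a \in A$ together with $g|_{A'}$ being trivial, which does not by itself force $g$ to centralize $A$, nor is it clear from the displayed reasoning that $C_\Gamma(A)$ sits with finite index in $K$. Your route bypasses the $A/A'$-action entirely and goes straight at $[C_\Gamma(A') : C_\Gamma(A)]$, reducing it to the finiteness of the pointwise stabilizer $\Aut(A;A')$ inside $\Aut(A)$, which you then establish via unique $n$-th roots in the torsion-free quotient $A/T$. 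This is exactly the ingredient that the paper's \enquote{so} implicitly relies on, and making it explicit is a real improvement. You also observe, correctly, that the lemma as literally stated (with no hypothesis on $A$) is false: your $\bigoplus_{\mathbb{N}} \F_2 \rtimes \mathrm{GL}$ example is a genuine counterexample, and the paper does in fact only ever invoke the lemma for $A$ finitely generated nilpotent (indeed Theorem~\ref{thm-gen-envelope-normal-nilp} uses it precisely to pass from $A$ to a torsion-free finite-index subgroup). So the statement really should carry the standing \enquote{finitely generated nilpotent} hypothesis, and your proof supplies the missing finiteness argument that makes the lemma correct in that context.

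One small remark on your write-up: when you choose $n$ with $a^n T \in A'T/T$, you should note this is possible because $A'T/T$ has finite index in the finitely generated group $A/T$, so that taking $n$ to be (say) the index of the normal core of $A'T/T$ works; it is not literally true that $[G:B] < \infty$ implies $g^{[G:B]} \in B$ for non-normal $B$. This is a one-line fix and does not affect the substance.
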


\begin{proof}
Since $A$ is normal, $C_\Gamma(A')$ acts by conjugation on $A$, and hence acts on the quotient $A/A'$. Since $A/A'$ is finite, $C_\Gamma(A')$ has a finite index subgroup that acts trivially on $A/A'$, so $C_\Gamma(A)$ is of finite index in $C_\Gamma(A')$. We deduce $Z(A)$ is of finite index in $C_\Gamma(A')$. Since $Z(A)$ is  virtually contained in $Z(A')$, $Z(A')$ is of finite index in $C_\Gamma(A')$. 
\end{proof}

\begin{Theorem} \label{thm-gen-envelope-normal-nilp}
Let $\Gamma$ be a group admitting a normal subgroup $A \lhd \Gamma$ such that $A$ is finitely generated and nilpotent, and $A$ is almost self-centralizing in $\Gamma$. If $G$ is a tdlc cocompact envelope of  $\Gamma$, then $G$ is compact-by-discrete. 
\end{Theorem}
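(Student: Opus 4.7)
The plan is to combine Theorem \ref{thm-abelien-norm-co} with Proposition \ref{prop-extend-comm-homomorphism} and Theorem \ref{thm-tdlc-poly-growth}. First I would reduce to the case where $A$ is torsion-free by replacing it with a characteristic torsion-free finite-index subgroup, which remains normal in $\Gamma$ and almost self-centralizing by Lemma \ref{lem-self-centr-fi}. Since being compact-by-discrete passes from a finite-index open subgroup to the whole group (take the normal core of a compact open normal subgroup), and since Theorem \ref{thm-abelien-norm-co} produces a compact open subgroup $U \leq G$ normalized by $A$, I may set $O := UA \simeq U \rtimes A$, apply Lemma \ref{lem-UA-commens} to see that $\Comm_G(O)$ has finite index in $G$ and contains $\Gamma$, and replace $G$ by $\Comm_G(O)$ so that $O$ is commensurated in $G$.

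Next I would apply Proposition \ref{prop-extend-comm-homomorphism} to obtain a homomorphism $\varphi : G \to \Comm(A)$ with open kernel $N := \ker(\varphi)$ and $\varphi|_\Gamma = c_{\Gamma,A}$. Since $A$ is finitely generated, torsion-free and nilpotent, the natural inclusion $\Aut(A) \hookrightarrow \Comm(A)$ is injective, and the action of $\Gamma$ on the normal subgroup $A$ identifies $\ker(c_{\Gamma,A})$ with $C_\Gamma(A)$; in particular $N \cap \Gamma = C_\Gamma(A)$.

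The crucial step is then to analyze the structure of $N$. Because $N$ is open and normal in $G$, the product $\Gamma N$ is a clopen subgroup of $G$ containing $\Gamma$, so $\Gamma N / \Gamma$ is clopen in the compact space $G/\Gamma$, hence compact. Identifying $N / (N \cap \Gamma)$ with $\Gamma N / \Gamma$, it follows that $C_\Gamma(A) = N \cap \Gamma$ is cocompact in $N$. The almost self-centralizing hypothesis now enters: $C_\Gamma(A)$ contains $Z(A)$ as a finite-index subgroup, and $Z(A)$ is a finitely generated abelian subgroup of $A$, hence a closed cocompact nilpotent subgroup of $N$. Thus $N$ is a compactly generated tdlc group with a closed cocompact nilpotent subgroup, and Theorem \ref{thm-tdlc-poly-growth} gives that $W(N)$ is compact and $N / W(N)$ is discrete.

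To conclude, $W(N)$ is open in $N$ (as $N/W(N)$ is discrete) and therefore open in $G$; being characteristic in $N$ and $N$ normal in $G$, it is a compact open normal subgroup of $G$, which is equivalent to $G$ being compact-by-discrete. The main technical point in this plan is the verification that $N \cap \Gamma$ is cocompact in $N$ together with the extraction of a closed cocompact \emph{nilpotent} subgroup of $N$ from the almost self-centralizing hypothesis; these are precisely what make Theorem \ref{thm-tdlc-poly-growth} applicable and yield the desired compact open normal subgroup of $G$.
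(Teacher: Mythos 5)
Your proof is correct, and it takes a genuinely different route from the paper's. After the same initial reductions (pass to a torsion-free characteristic finite-index subgroup of $A$ via Lemma \ref{lem-self-centr-fi}, invoke Theorem \ref{thm-abelien-norm-co} to produce $U$), the paper applies Proposition \ref{prop-commens-upgrade-normal} to construct a closed normal subgroup $M$ of a finite-index $G'$ in which $A$ sits cocompactly, applies Theorem \ref{thm-tdlc-poly-growth} once to $M$ to reduce to the case where $M$ is discrete and finitely generated, takes $N := C_G(M)$ (open by Lemma \ref{lem-QZ-open-centr}), observes that $\Gamma \cap N$ is \emph{virtually} contained in $C_\Gamma(A)$ hence virtually in $Z(A)$, and applies Theorem \ref{thm-tdlc-poly-growth} a second time to $N$. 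You instead invoke Proposition \ref{prop-extend-comm-homomorphism} to extend $c_{\Gamma,A}$ to a continuous $\varphi : G \to \Comm(A)$ with open kernel $N$, and the key identification $N \cap \Gamma = C_\Gamma(A)$ (which uses that $\Aut(A) \hookrightarrow \Comm(A)$ is injective for $A$ finitely generated torsion-free nilpotent, so $\ker(c_{\Gamma,A})=C_\Gamma(A)$ exactly) lets the almost self-centralizing hypothesis hand you $Z(A)$ directly as a closed cocompact abelian subgroup of $N$. One application of Theorem \ref{thm-tdlc-poly-growth} then finishes, since $W(N)$ is compact, open in $N$ hence in $G$, and characteristic in the normal subgroup $N$, hence normal in $G$.

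What each approach buys: your argument is more streamlined — a single invocation of Theorem \ref{thm-tdlc-poly-growth}, no modding out by $W(M)$, and the role of the almost self-centralizing hypothesis is laid bare because $N \cap \Gamma$ equals $C_\Gamma(A)$ on the nose rather than only up to finite index. The price is that it routes through Proposition \ref{prop-extend-comm-homomorphism} (and hence the $\Comm$ formalism), whereas the paper's route stays within the direct construction of Proposition \ref{prop-commens-upgrade-normal}. All the intermediate verifications you flag — $N$ open normal so $N\cap\Gamma$ is cocompact in $N$; $Z(A)\le C_\Gamma(A)=N\cap\Gamma$ is finitely generated (subgroups of finitely generated nilpotent groups are finitely generated), discrete, closed, cocompact and hence makes $N$ compactly generated; a finite-index subgroup of a cocompact subgroup is cocompact — all hold.
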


\begin{proof}
By Lemma  \ref{lem-self-centr-fi}, upon changing $A$ into a finite index subgroup we can assume $A$ is torsion-free. According to Theorem \ref{thm-abelien-norm-co}, there exists a compact open subgroup $U$ of $G$ such $A$ normalizes $U$. We can therefore apply Proposition \ref{prop-commens-upgrade-normal}, which provides a finite index subgroup $G'$ of $G$ containing $\Gamma$, and a closed normal subgroup $M$ of $G'$ such that $A \leq M$ and $A$ is cocompact in $M$. If the conclusion holds for $G'$ then it also holds for $G$, so we can assume $G'=G$. By Theorem  \ref{thm-tdlc-poly-growth} we infer that $W(M)$ is a compact open subgroup of $M$.  Since $W(M)$ is characteristic in $M$ and $M$ is normal in $G$, $W(M)$ is normal in $G$. Since the conclusion is invariant under modding out $G$ by a compact normal subgroup, and since $A \cap W(M)$ is trivial since $A$ is torsion-free, we can assume $W(M)$ is trivial. It then follows that $M$ is discrete and finitely generated. By Lemma \ref{lem-QZ-open-centr} we deduce that $N := C_G(M)$ is an open normal subgroup of $G$. Hence $\Gamma \cap N$ is a discrete and cocompact subgroup of $N$. We have $\Gamma \cap N \leq C_\Gamma(A \cap M)$. Since $A \cap M$ has finite index in $A$, $C_\Gamma(A)$ has finite index in $C_\Gamma(A \cap M)$. So $\Gamma \cap N$ is virtually contained in $C_\Gamma(A)$, and hence in $Z(A)$ by the assumption that $A$ is almost self-centralizing in $\Gamma$. Hence  $\Gamma \cap N$ is virtually abelian. By Theorem  \ref{thm-tdlc-poly-growth}  again we deduce that $W(N)$ is compact and open in $N$. Since $N$ is open in $G$ it follows that $W(N)$ is a compact open normal subgroup of $G$. 
\end{proof}

\begin{Corollary}
Suppose $\Gamma$ is a group of the form $\Gamma = \Z^n \rtimes \Delta$, where $n \geq 2$ and $\Delta$ is any subgroup of $\mathrm{GL}(n,\Z)$. Then every tdlc cocompact envelope of  $\Gamma$ is compact-by-discrete. 
\end{Corollary}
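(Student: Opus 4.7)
The plan is to obtain the corollary as a direct application of Theorem \ref{thm-gen-envelope-normal-nilp}, taking the normal nilpotent subgroup to be $A = \Z^n$. This reduces the task to verifying the three hypotheses of that theorem for $A$: (i) $A \lhd \Gamma$, (ii) $A$ is finitely generated and nilpotent, and (iii) $A$ is almost self-centralizing in $\Gamma$.

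Conditions (i) and (ii) are immediate from the semi-direct product structure: $\Z^n$ is the normal abelian factor, and being finitely generated abelian it is in particular finitely generated nilpotent. So the only point requiring any argument is (iii). The crucial observation is that the conjugation action of $\Gamma$ on $A = \Z^n$ factors through $\Gamma/A \simeq \Delta$ and is given by the standard action of $\Delta \leq \mathrm{GL}(n,\Z)$ on $\Z^n$, which is tautologically faithful. A routine computation in the semi-direct product shows $(v,\delta)(w,1)(v,\delta)^{-1} = (\delta w, 1)$, so that $(v,\delta) \in C_\Gamma(A)$ forces $\delta w = w$ for all $w \in \Z^n$, hence $\delta = 1$. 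Thus $C_\Gamma(A) = A$, and $A$ is even self-centralizing (hence a fortiori almost self-centralizing) in $\Gamma$.

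With (i), (ii), (iii) established, Theorem \ref{thm-gen-envelope-normal-nilp} applies verbatim and yields that any tdlc cocompact envelope $G$ of $\Gamma$ is compact-by-discrete. I do not expect a genuine obstacle in carrying this out; the whole content of the corollary is that the abstract hypothesis of almost self-centralization in Theorem \ref{thm-gen-envelope-normal-nilp} is automatically satisfied in the concrete situation $\Gamma = \Z^n \rtimes \Delta$ with $\Delta \leq \mathrm{GL}(n,\Z)$, by virtue of the built-in faithfulness of the $\mathrm{GL}(n,\Z)$-action.
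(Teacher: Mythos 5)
Your proposal is correct and matches the paper's proof: the paper likewise invokes Theorem \ref{thm-gen-envelope-normal-nilp} after noting that $A=\Z^n$ is self-centralizing in $\Gamma$, which you verify explicitly via the conjugation formula and the tautological faithfulness of the $\Delta$-action.
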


\begin{proof}
By definition $A =  \Z^n$ is self-centralizing in $\Gamma$, so $\Gamma$ falls under the scope of Theorem \ref{thm-gen-envelope-normal-nilp}.
\end{proof}

 \section{Polycyclic groups} \label{sec-polycyclic-commable}

\begin{Theorem} \label{thm-tdlc-env-poly}
If $G$ is a tdlc cocompact envelope of a polycyclic group $\Gamma$, then $G$ is compact-by-discrete. 
\end{Theorem}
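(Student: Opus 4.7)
The plan is to proceed by induction on the Hirsch length $h = h(\Gamma)$. The base case $h = 0$ is immediate: then $\Gamma$ is finite, so $G$ is compact. For the inductive step, I first replace $\Gamma$ by a finite index torsion-free subgroup (polycyclic groups are virtually torsion-free, a finite index subgroup is still cocompact in $G$, and the conclusion to be proved is insensitive to passing to an open subgroup of finite index). Let $A = \Fit(\Gamma)$, which is then a torsion-free finitely generated nilpotent normal subgroup of $\Gamma$ of positive Hirsch length.

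By Theorem \ref{thm-abelien-norm-co}, $A$ normalizes a compact open subgroup of $G$. Proposition \ref{prop-commens-upgrade-normal} then produces a finite index open subgroup $G'$ of $G$ containing $\Gamma$, and a closed normal subgroup $M \lhd G'$ such that $A \leq M$ and $A$ is cobounded in $M$; the finite-index hypothesis of that proposition is satisfied since $A$ is finitely generated nilpotent. Replacing $G$ by $G'$, Theorem \ref{thm-tdlc-poly-growth} applied to $M$ (with closed cocompact nilpotent subgroup $A$) gives that $W(M)$ is compact open in $M$ and $M/W(M)$ is discrete virtually nilpotent. Since $W(M)$ is characteristic in $M$ and hence normal in $G$, I quotient it out and assume $M$ itself is discrete virtually nilpotent. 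In particular $A$ has finite index in $M$, so $\Gamma \cap M$ also has finite index in $M$.

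The key step for the induction is that the image of $\Gamma$ in $G/M$ is a discrete cocompact subgroup isomorphic to $\Gamma/(\Gamma \cap M)$. Writing $\Gamma M$ as the finite disjoint union of cosets $\Gamma m_i$, where $\{m_i\}$ is a transversal of $\Gamma \cap M$ in $M$, any convergent sequence $\gamma_n m_{i_n}$ has, after pigeonhole, constant $m_{i_n} = m_{i_*}$; then $\gamma_n$ converges in the closed discrete set $\Gamma$ and is eventually constant. Hence $\Gamma M$ is closed and discrete in $G$, and each point of $M$ admits an open neighborhood in $G$ meeting $\Gamma M$ only at that point, which implies that the image of $\Gamma$ in $G/M$ is discrete. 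Since $A \leq \Gamma \cap M$ has positive Hirsch length, $\Gamma/(\Gamma \cap M)$ is polycyclic of Hirsch length strictly less than $h$. Applying the induction hypothesis to the tdlc cocompact envelope $G/M$ yields a closed normal subgroup $K \lhd G$ with $M \leq K$, $K/M$ compact, and $G/K$ discrete; in particular $K$ is open in $G$.

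To close the induction, observe that $K$ has $M$ as a closed cocompact subgroup, and $M$ is virtually nilpotent, so a second application of Theorem \ref{thm-tdlc-poly-growth} (to $K$, using a finite index nilpotent subgroup of $M$, which is still closed and cocompact in $K$) gives that $W(K)$ is compact open in $K$. Being characteristic in $K$ and $K \lhd G$, $W(K)$ is normal in $G$; being open in the open subgroup $K$ of $G$, it is open in $G$. Thus $W(K)$ is a compact open normal subgroup of $G$, so $G$ is compact-by-discrete. The most delicate step is the verification that the image of $\Gamma$ in $G/M$ is discrete, as this is what ensures that $G/M$ is a genuine tdlc cocompact envelope of $\Gamma/(\Gamma \cap M)$, so that the inductive hypothesis can be applied.
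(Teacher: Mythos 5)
Your proof is correct, but it takes a genuinely different route from the paper's. The paper proves Theorem \ref{thm-tdlc-env-poly} in two lines: it sets $A = \Fit(\Gamma)$, invokes the classical fact (from Lennox--Robinson) that the Fitting subgroup of a polycyclic group is \emph{self-centralizing}, and then applies Theorem \ref{thm-gen-envelope-normal-nilp} directly. You bypass both the self-centralizing property and Theorem \ref{thm-gen-envelope-normal-nilp}, and instead argue by induction on Hirsch length. Up to the point of producing the closed normal subgroup $M$ with $A$ cobounded and then compact-by-(discrete virtually nilpotent) (via Theorem \ref{thm-abelien-norm-co}, Proposition \ref{prop-commens-upgrade-normal}, and Theorem \ref{thm-tdlc-poly-growth}), your argument essentially parallels the proof of Theorem \ref{thm-gen-envelope-normal-nilp}; at the step where the paper passes to the centralizer $N = C_G(M)$ and uses self-centralization to deduce that $\Gamma \cap N$ is virtually abelian, you instead quotient by $M$, drop the Hirsch length, and appeal to the inductive hypothesis for $G/M$. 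This reduction-by-quotient has the same flavor as the minimal-counterexample argument in the proof of Theorem \ref{thm-abelien-norm-co} itself. The trade-off is clear: the paper's approach is shorter and yields the result as a special case of the more general Theorem \ref{thm-gen-envelope-normal-nilp} (covering any $\Gamma$ with a finitely generated nilpotent almost self-centralizing normal subgroup, not just polycyclic $\Gamma$), while your argument is more self-contained and avoids the Lennox--Robinson structural input, at the price of being tailored to polycyclic groups. Your delicate step — discreteness of $\Gamma M / M$ in $G/M$ — is handled correctly; it is the standard fact that if $D = \Gamma M$ is a closed discrete subgroup of $G$ containing the closed normal $M$ as a finite-index subgroup, then $D/M$ is discrete in $G/M$.
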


\begin{proof}
 Let $A$ denote the Fitting subgroup of $\Gamma$. Then $A$ is finitely generated nilpotent, and $A$ is self-centralizing in $\Gamma$ \cite[1.2.10]{Lennox-Rob}. The statement hence follows from Theorem \ref{thm-gen-envelope-normal-nilp}. 
\end{proof}

\begin{Remark}
	It is a general fact that if a group $\Gamma$ has an upper bound on the cardinality of its finite subgroups, then every tdlc envelope of $\Gamma$ is cocompact (see e.g.\ \cite[Lemma 3.9]{BFS-envelopes}). Since polycyclic groups are virtually torsion-free, they do satisfy this property. Hence Theorem \ref{thm-tdlc-env-poly} accounts for all tdlc envelopes of $\Gamma$. 
\end{Remark}

\begin{Remark}
It could also be possible to prove Theorem \ref{thm-tdlc-env-poly} with the approach taken in Section \ref{sec-solv-finite rank}. However the above proof relying on the results of Section \ref{sec-general-result-tdlc} is more self-contained. 
\end{Remark}
 
  The following is a reformulation of \cite[3.1]{Kropholler-90}, also proven in \cite[Proposition 5.8]{BCGM-amen}.
  
 \begin{Proposition} \label{prop-dense-poly-SIN}
 	Let $G$ be a tdlc group, and assume that $G$ admits a dense polycyclic subgroup. Then $G$ is compact-by-discrete.
 \end{Proposition}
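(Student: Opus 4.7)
The plan is to argue by induction on the Hirsch length $h := h(\Gamma)$. The base case $h = 0$ will be immediate, since then $\Gamma$ is finite, hence closed in $G$, so $G = \bar{\Gamma} = \Gamma$ is finite. For the inductive step, I will first reduce to the case where $\Gamma$ is torsion-free: replacing $\Gamma$ by a torsion-free finite-index subgroup $\Gamma_0$ and $G$ by the open finite-index subgroup $\overline{\Gamma_0}$, a compact open normal subgroup produced for the reduced pair can be transferred back to $G$ by intersecting a finite collection of $G$-conjugates of a normal open finite-index refinement.

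Once $\Gamma$ is torsion-free, I will let $A$ be the Fitting subgroup of $\Gamma$, which is finitely generated, torsion-free, nilpotent, normal in $\Gamma$, and self-centralizing (so $C_\Gamma(A) = Z(A)$). Setting $M := \bar{A}$, the density of $\Gamma$ gives that $M$ is normal in $\bar{\Gamma} = G$; and $M$ is closed, nilpotent (closure of nilpotent), and compactly generated (via its dense finitely generated subgroup $A$). Theorem \ref{thm-tdlc-poly-growth} applied to $M$ will then yield that $W(M)$ is compact and $M/W(M)$ is discrete virtually nilpotent. Since $W(M)$ is characteristic in $M$, it is normal in $G$, and it suffices to prove the conclusion after modding out by $W(M)$, as being compact-by-discrete is preserved under extensions with compact kernel (the preimage of a compact open normal subgroup is compact, open, and normal).

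After replacing $G$ by $G/W(M)$ and writing $\pi$ for the quotient, I will split into two cases. If $h(\pi(\Gamma)) < h$, the inductive hypothesis applied to $(G/W(M), \pi(\Gamma))$ concludes. Otherwise $\Gamma \cap W(M)$ has Hirsch length zero, is therefore finite, and by torsion-freeness is trivial. Then $\pi$ is injective on $\Gamma$; identifying $\Gamma$ and $A$ with their images, $A = \pi(A)$ is dense in the discrete subgroup $\pi(M) = M/W(M)$, which forces $A = \pi(M)$. So $A$ becomes a discrete closed normal subgroup of $G$ that remains self-centralizing in $\Gamma$.

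Finally, Lemma \ref{lem-QZ-open-centr} will give that $C := C_G(A)$ is an open normal subgroup of $G$. The dense subgroup $\Gamma \cap C$ of $C$ lies in $C_\Gamma(A) = Z(A)$, a finitely generated abelian group, so $C = \overline{\Gamma \cap C}$ is abelian. Noting that $C$ is compactly generated (a finite generating set of $\Gamma \cap C$ together with a compact open subgroup suffices), the structure theorem for compactly generated locally compact abelian groups then yields $C \cong \mathbb{Z}^b \times K$ with $K$ compact open; being the unique maximal compact subgroup of $C$, $K$ is characteristic in $C$ and hence normal in $G$. The main subtlety is the case distinction in the inductive step: when $h(\pi(\Gamma)) = h$, one must verify that the self-centralizing property of $A$ survives modding out by $W(M)$, which is precisely what the torsion-free reduction guarantees by making $\pi$ injective on $\Gamma$.
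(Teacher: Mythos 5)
The paper does not actually supply a proof of Proposition \ref{prop-dense-poly-SIN}; it cites \cite[3.1]{Kropholler-90} and \cite[Proposition 5.8]{BCGM-amen}. Your argument is therefore a genuinely new, self-contained proof built from the paper's own internal toolkit, and after checking it carefully I believe it is correct. It is worth noting that your argument runs closely parallel to the paper's proof of Theorem \ref{thm-gen-envelope-normal-nilp}: in both cases one takes the self-centralizing Fitting subgroup $A$, produces a closed normal $M$ containing $A$ cocompactly, invokes Theorem \ref{thm-tdlc-poly-growth} to obtain a compact $W(M)$ that can be modded out, reduces $M$ to a discrete normal subgroup, uses Lemma \ref{lem-QZ-open-centr} to get an open normal centralizer, and then exploits the self-centralizing property to force that centralizer to be (virtually) abelian and apply Theorem \ref{thm-tdlc-poly-growth} or the structure theorem once more.

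The two settings trade difficulties in an interesting way, and your handling of the trade-off is the main content of the proof. In Theorem \ref{thm-gen-envelope-normal-nilp} the subgroup $\Gamma$ is discrete and cocompact, so one needs the full strength of Theorem \ref{thm-abelien-norm-co} and Proposition \ref{prop-commens-upgrade-normal} to manufacture a suitable closed normal $M$; but discreteness then gives $A \cap W(M)$ finite (hence trivial, by torsion-freeness) for free. In your dense setting, $M := \bar A$ is automatically normal in $G = \bar\Gamma$ at no cost, but $\Gamma \cap W(M)$ can perfectly well be infinite (e.g.\ $\Z$ dense in $\Z_p$, where $W(M) = \Z_p$), which is exactly why the induction on Hirsch length is required and not a cosmetic detour. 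Your case split at $h(\pi(\Gamma)) < h$ versus $h(\pi(\Gamma)) = h$ handles this cleanly, and the observation that injectivity of $\pi$ on $\Gamma$ in the equality case is what transports the self-centralizing property across the quotient is precisely the right point to flag. Two small remarks on the write-up: in the torsion-free reduction you should take $\Gamma_0$ normal in $\Gamma$, which by density makes $\overline{\Gamma_0}$ normal in $G$ and renders the transfer of the compact open normal subgroup immediate (the intersection $\bigcap_g gKg^{-1}$ is then over a finite set of cosets of $N_G(K) \supseteq \overline{\Gamma_0}$); and in the final step you could replace the structure theorem for compactly generated LCA groups by one more application of Theorem \ref{thm-tdlc-poly-growth} to the open abelian subgroup $C$, keeping the toolkit uniform with the rest of the argument, though what you wrote is perfectly valid.
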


 \begin{Theorem} \label{thm-non-closed-cocomp-poly}
 	If $G$ is a tdlc group containing a cobounded polycyclic subgroup, then $G$ is compact-by-discrete. 
 \end{Theorem}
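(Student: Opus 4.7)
The plan is to reduce to the setting already handled in Theorem \ref{thm-tdlc-env-poly} (cocompact envelopes of polycyclic groups) by combining it with Proposition \ref{prop-dense-poly-SIN} (dense polycyclic subgroups). The obstacle here is purely topological: a cobounded polycyclic subgroup need not be closed (and hence need not be a lattice), so we cannot apply Theorem \ref{thm-tdlc-env-poly} directly. The idea is to exploit the closure to extract a SIN-type structure, then quotient by a compact normal subgroup of $G$ to land in a genuinely discrete, cocompact polycyclic situation.

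First, I would let $H := \overline{\Gamma}$. Since $\Gamma$ is cobounded in $G$, so is $H$, and being closed, $H$ is cocompact in $G$. Moreover $\Gamma$ is a dense polycyclic subgroup of the tdlc group $H$, so Proposition \ref{prop-dense-poly-SIN} gives a compact normal subgroup $K$ of $H$ with $H/K$ discrete.

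Next, I would promote $K$ to a compact normal subgroup $K'$ of the ambient group $G$ via Proposition \ref{prop-polycompact-cocompact}, and pass to the tdlc quotient $G/K'$. Being closed and cocompact in $G$, the subgroup $HK'$ (closed because $K'$ is compact) has image $HK'/K'$ closed and cocompact in $G/K'$. Since $K \leq H \cap K'$, the image of $H$ in $G/K'$ is a quotient of the discrete group $H/K$, hence discrete. The image of $\Gamma$ is dense there (continuous image of a dense subgroup) and therefore equals $HK'/K'$; in particular $HK'/K'$ is polycyclic. Thus $G/K'$ is a tdlc cocompact envelope of a polycyclic group.

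Finally, applying Theorem \ref{thm-tdlc-env-poly} to $G/K'$ produces a compact normal subgroup $L/K' \lhd G/K'$ with $(G/K')/(L/K')$ discrete. Since $K'$ is compact, $L$ is compact, normal in $G$, and $G/L$ is discrete, so $G$ is compact-by-discrete, as required.
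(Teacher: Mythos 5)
Your proposal is correct and follows essentially the same route as the paper's proof: pass to the closure $H=\overline{\Gamma}$, apply Proposition \ref{prop-dense-poly-SIN} to get a compact $K\lhd H$ with $H/K$ discrete, promote $K$ to a compact normal $K'\lhd G$ via Proposition \ref{prop-polycompact-cocompact}, observe the image of $\Gamma$ in $G/K'$ is a discrete cocompact polycyclic subgroup, and invoke Theorem \ref{thm-tdlc-env-poly}. The only difference is that you spell out more carefully why $HK'/K'$ is discrete and why the image of $\Gamma$ fills it, details the paper leaves implicit.
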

 
 \begin{proof}
 	Let $\Gamma$ be a cobounded polycyclic subgroup of $G$, and let $H = \overline{\Gamma}$. By Proposition \ref{prop-dense-poly-SIN}, $H$ admits a compact normal subgroup $K$ such that $H/K$ is discrete. Since $H$ is cocompact in $G$, by Proposition \ref{prop-polycompact-cocompact} the compact subgroup $K$ is contained in a compact subgroup $K'$ such that $K'$ is normal in $G$. The image of $\Gamma$ in $G/K'$ is therefore discrete and cocompact. By Theorem \ref{thm-tdlc-env-poly} we deduce $G/K'$  is compact-by-discrete. Therefore $G$ as well. 
 \end{proof}

 \subsection{Stability under commability} \label{subsec-stable-commable}
 
  Two locally compact (e.g.\ discrete) groups $G,H$ are {commable} if there exist locally compact groups $G_0, \ldots, G_n$ with $G_0 = G$  and $G_n = H$ and homomorphisms $G_0 \to G_1 \leftarrow G_2 \to \cdots \to G_{n-1} \leftarrow G_n$, where each $G_i \to G_{i+1}$ and $G_{i+1} \leftarrow G_{i+2}$ represents a continuous homomorphism with compact kernel and closed cocompact image \cite{Cor-comm-focal}. This can also be phrased in terms of geometric actions. Two $\sigma$-compact groups $G,H$ that are capable of acting continuously and geometrically on the same proper metric space are commable, and for $\sigma$-compact groups commability is the equivalence relation generated by this property. The equivalence between these points of view is explained in details in \cite[4.C-5.B]{CorHar}. When $G,H$ are compactly generated, the (locally compact version of the) Milnor-Schwarz lemma asserts that commable implies QI.

 The study of all cocompact envelopes of a group $\Gamma$ contributes to the more general problem of the study of the locally compact groups commable to $\Gamma$, as it consists in the first step $\Gamma \to G_1$ in the above sequence (up to the fact that we also allow to quotient out by a finite normal subgroup). 
 
 \begin{Theorem} \label{thm-commable-rigidity}
 	Consider the class of locally compact groups $G$ such that, after modding out by a compact normal subgroup, we have: \begin{enumerate}[label=(\arabic*)]
 		\item \label{item-unimod-ame} $G$ is unimodular and amenable;
 		\item \label{item-Lie-by-poly} the identity component $G^0$ is open in $G$, and $G/G^0$ is virtually polycyclic.
 	\end{enumerate}
Then this class is stable under commability. 
 \end{Theorem}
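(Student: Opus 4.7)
The plan is to recast $\mathcal{C}$ (the class of the statement) equivalently as the class of amenable, unimodular locally compact groups $G$ admitting a closed normal almost connected subgroup $N$ with $G/N$ discrete and virtually polycyclic. This equivalence uses Theorem~\ref{thm-Hilbert-fifth} applied to $N$ to pass freely between the ``compact-by-Lie-by-polycyclic'' and ``almost-connected-by-polycyclic'' forms. Since commability is the equivalence relation generated by continuous homomorphisms with compact kernel and closed cocompact image, stability of $\mathcal{C}$ under commability reduces to stability under four elementary operations: (a) quotient by compact normal; (b) extension by compact normal; (c) passage to a closed cocompact subgroup; (d) passage to a closed cocompact supergroup. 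Items (a) and (b) follow at once from the equivalent characterization.

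For (c), I would take $G \in \mathcal{C}$ and $H \leq G$ closed cocompact. Amenability of $H$ is immediate; unimodularity of $H$ follows because amenability of $G$ plus compactness of $G/H$ produces a $G$-invariant probability measure on $G/H$, forcing $\Delta_G|_H = \Delta_H \equiv 1$. For the structural part, I use Proposition~\ref{prop-polycompact-cocompact} together with Theorem~\ref{thm-Hilbert-fifth} applied to $G^0$ to reduce to the case where $G$ satisfies (1)--(2) directly and $G^0$ is a connected Lie group (choosing the compact normal subgroup of $G^0$ to be $G$-normal, e.g.\ as $W(G^0)$, which is characteristic in $G^0$). Then $HG^0$ is open and of finite index in $G$ (its quotient is both discrete and compact), so up to replacing $G$ by $HG^0$ one has $G = HG^0$, whence $H/(H \cap G^0) \cong G/G^0$ is virtually polycyclic. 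The subgroup $H \cap G^0$ is a closed cocompact Lie subgroup of the connected amenable Lie group $G^0$, so by Mostow's theorem on closed cocompact subgroups of connected amenable Lie groups, $H^0 = (H \cap G^0)^0$ is open in $H \cap G^0$ with virtually polycyclic discrete quotient. Assembling the sequence $1 \to (H \cap G^0)/H^0 \to H/H^0 \to H/(H \cap G^0) \to 1$ then yields $H^0$ open in $H$ and $H/H^0$ virtually polycyclic, giving $H \in \mathcal{C}$.

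For (d), take $H \in \mathcal{C}$ with $H \leq G$ closed cocompact. Amenability of $G$ transfers from $H$ by the standard cocompact argument. Unimodularity of $G$ follows because $\Delta_G$ is trivial on $H$ (invariant measure on $G/H$ from amenability of $G$ and compactness), so $\Delta_G(G) = \Delta_G(HK) = \Delta_G(K)$ is a compact, hence bounded, subgroup of $\mathbb{R}_+$, which must be trivial. For the structural part, after a reduction analogous to (c) (using Proposition~\ref{prop-polycompact-cocompact} to simultaneously mod both $H$ and $G$ by a compact normal subgroup of $G$), assume $H$ satisfies (1)--(2) directly. The image $HG^0/G^0$ in $G/G^0$ is a quotient of $H/H^0$ (since $H^0 \subseteq G^0$), hence abstractly virtually polycyclic, and has finite index in $G/G^0$ (since $G/HG^0$ is discrete and compact). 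Thus $G/G^0$ is a tdlc group containing a cobounded polycyclic subgroup, and Theorem~\ref{thm-non-closed-cocomp-poly} gives that $G/G^0$ is compact-by-discrete. Pulling back the compact normal subgroup $K' \lhd G/G^0$ for which $(G/G^0)/K'$ is discrete yields $K \lhd G$ with $K \supseteq G^0$ and $K/G^0$ compact, so $K$ is almost connected, and $G/K$ is discrete and virtually polycyclic (as a quotient of $H/H^0$). Hence $G \in \mathcal{C}$.

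The main obstacle I anticipate is the Mostow-type structural theorem invoked in (c), namely that a closed cocompact subgroup of a connected amenable Lie group has open identity component with virtually polycyclic discrete quotient. Although this is classical (it combines the compact-by-simply-connected-solvable structure of connected amenable Lie groups with Mostow's theorem on closed subgroups of finite covolume in simply connected solvable Lie groups), a self-contained derivation requires some care. A secondary, milder subtlety is the Yamabe reduction to the Lie case in (c): one must ensure the compact normal subgroup chosen in $G^0$ is $G$-normal and not merely $G^0$-normal, which is handled by taking $W(G^0)$.
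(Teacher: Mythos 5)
Your overall strategy — reformulating $\mathcal{C}$ via Yamabe, reducing to the four elementary operations, and handling the structural part of (c) via a Mostow-type theorem and of (d) via Theorem~\ref{thm-non-closed-cocomp-poly} — is the same as the paper's, and most of the individual steps are sound. In particular, in (c) the reduction to $G = HG^0$ (so that $H\cap G^0$ becomes cocompact in $G^0$), the $W(G^0)$ trick for the Yamabe reduction, and the derivation of unimodularity of $H$ from the existence of a $G$-invariant probability measure on $G/H$ obtained from amenability of $G$ are all correct; the last of these is a nice elementary alternative to the paper's citation of Tessera's QI-invariance of ``amenable and unimodular.''

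There is, however, a genuine gap in (d). You assert that ``amenability of $G$ transfers from $H$ by the standard cocompact argument,'' and then derive unimodularity of $G$ \emph{from} amenability of $G$. But amenability alone does \emph{not} pass from a closed cocompact subgroup to the ambient group: the Borel subgroup of $\mathrm{SL}(2,\R)$ is amenable, closed and cocompact, while $\mathrm{SL}(2,\R)$ is not amenable. What makes the transfer work here is that $H$ is \emph{also unimodular}: the conjunction ``amenable and unimodular'' is a geometric (Følner) property, and the paper invokes exactly this — property~\ref{item-unimod-ame} is a QI-invariant among compactly generated locally compact groups by \cite[Corollary~11.13]{Tessera-LS-Sobolev}, and $H\hookrightarrow G$ is a QI by Milnor--Schwarz. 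Your subsequent measure-theoretic argument for unimodularity of $G$ is itself fine, but it presupposes amenability of $G$, so it does not repair the gap; the correct move is to transfer property~\ref{item-unimod-ame} as a whole. A secondary, smaller flaw in (d): the claim that $HG^0/G^0$ ``has finite index in $G/G^0$ (since $G/HG^0$ is discrete and compact)'' is circular, as discreteness of $G/HG^0$ requires $G^0$ to be open in $G$, which is part of what is being proved about $G$. The correct (and sufficient) statement, which you do also use, is that the image of $H$ in $G/G^0$ is merely \emph{cobounded}; this follows directly from $H$ being cocompact in $G$ and is all that Theorem~\ref{thm-non-closed-cocomp-poly} requires (after passing to a polycyclic finite-index subgroup of this virtually polycyclic cobounded subgroup).
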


A tdlc group belongs to the class described in the theorem if and only if it is compact-by-(discrete virtually polycyclic). Hence Theorem \ref{thm-commable-rigidity} generalizes Theorem \ref{thm-tdlc-env-poly}. A discrete group belongs to that class if and only if it is virtually polycyclic. Hence we have:

\begin{Corollary}
	Let $\Gamma$ be a virtually polycyclic group, and let $\Lambda$ be a discrete group that is commable to  $\Gamma$. Then $\Lambda$ is virtually polycyclic. 
\end{Corollary}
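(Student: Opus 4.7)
The plan is to derive this as a clean specialization of Theorem \ref{thm-commable-rigidity} to discrete groups. Let $\mathcal{C}$ denote the class of locally compact groups described in that theorem. The strategy has two ingredients: first, verifying that $\Gamma$ lies in $\mathcal{C}$; second, verifying that every discrete member of $\mathcal{C}$ is virtually polycyclic. The statement would then follow at once from the stability of $\mathcal{C}$ under commability.

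For the first ingredient, I would observe that since $\Gamma$ is discrete, no compact normal subgroup needs to be modded out: the identity component $\Gamma^0 = \{1\}$ is open and the quotient $\Gamma/\Gamma^0 = \Gamma$ is virtually polycyclic by hypothesis, which gives condition \ref{item-Lie-by-poly}. Discrete groups are unimodular, and virtually polycyclic groups are solvable and hence amenable, which gives condition \ref{item-unimod-ame}. Thus $\Gamma \in \mathcal{C}$.

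For the second ingredient, suppose $\Lambda$ is a discrete group in $\mathcal{C}$. Then $\Lambda$ admits a finite normal subgroup $N$ (its compact normal subgroups being finite) such that $\Lambda/N$ satisfies \ref{item-Lie-by-poly}, which for a discrete quotient simply means $\Lambda/N$ is virtually polycyclic. Since the class of virtually polycyclic groups is closed under extensions by finite kernels (finite groups are virtually polycyclic, and the class of virtually polycyclic groups is closed under extensions), it follows that $\Lambda$ itself is virtually polycyclic.

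Putting these together, commability of $\Gamma$ and $\Lambda$ combined with Theorem \ref{thm-commable-rigidity} yields $\Lambda \in \mathcal{C}$, and the second ingredient then forces $\Lambda$ to be virtually polycyclic. There is no real obstacle here: all of the work is absorbed into Theorem \ref{thm-commable-rigidity}, and the corollary is a straightforward reading of that theorem in the discrete case.
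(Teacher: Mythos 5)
Your proposal is correct and takes essentially the same approach as the paper, which simply observes that a discrete group belongs to the class of Theorem \ref{thm-commable-rigidity} if and only if it is virtually polycyclic and then invokes stability under commability. You have merely spelled out the two directions of that equivalence in more detail.
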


Recall that a connected Lie group $G$ is amenable if and only if $G$ has a closed normal cocompact solvable subgroup. Recall also that if $H$ is a closed subgroup of a connected amenable Lie group $G$, then $H/H^0$ is virtually polycyclic (in particular discrete subgroups of $G$ are virtually polycyclic).

\begin{proof}[Proof of Theorem \ref{thm-commable-rigidity}]
Let $\mathcal{C}_1$ denote the class of locally compact groups that satisfy properties \ref{item-unimod-ame} and \ref{item-Lie-by-poly} of Theorem \ref{thm-commable-rigidity}. Let $\mathcal{C}_2$ be the class of groups $G$ such that $G$ admits a compact normal subgroup $K$ such that $G/K$ is in $\mathcal{C}_1$.	We have to show $\mathcal{C}_2$ is stable under commability. Clearly $\mathcal{C}_2$ is stable under forming an extension by a compact normal subgroup and modding out by a compact normal subgroup. So what we have to prove is that if $H$ is a closed cocompact subgroup of $G$ and one of $H$ or $G$ is in $\mathcal{C}_2$, then so is the other. 

Suppose first $G \in \mathcal{C}_2$. We want to show $H \in \mathcal{C}_2$. After modding out by a compact subgroup we are reduced to show that if $G \in \mathcal{C}_1$, then $H \in \mathcal{C}_1$. Also by Theorem \ref{thm-Hilbert-fifth}, we can assume $G^0$ is a connected Lie group. Since property \ref{item-unimod-ame} is a QI-invariant among compactly generated locally compact groups \cite[Corollary 11.13]{Tessera-LS-Sobolev}, $H$ satisfies \ref{item-unimod-ame}. Now let $L := H \cap G^0$. Since $G^0$ is open in $G$ and $H$ is closed in $G$, the subgroup $L$ is open in $H$. Since $L$ is a closed subgroup of a connected Lie group, $L^0$ is open in $L$. Since $H^0 = L^0$, $H^0$ is open in $H$. We shall see that $H/H^0$ is virtually polycyclic. Since $G/G^0$ is virtually polycyclic and this property is stable by extension, it is enough to see that $(H \cap G^0) / H^0$ is virtually polycyclic. But $H \cap G^0$ is a closed subgroup of an amenable connected Lie group, and hence the quotient by its connected component of the identity (which is $H^0$), is indeed a virtually polycyclic group.

Suppose now $H \in \mathcal{C}_2$. Again by \cite[Corollary 11.13]{Tessera-LS-Sobolev}, $G$ satisfies \ref{item-unimod-ame}. Let $Q$ be the totally disconnected group $Q  = G / G^0$, and denote by $\Gamma$ the image of $H$ in $Q$. Since $H^0 \leq G^0$ and $H$ verifies \ref{item-Lie-by-poly}, $\Gamma$ is virtually polycyclic. Since $H$ is cocompact in $G$, $\Gamma$ is cobounded in $Q$. So we can apply Theorem \ref{thm-non-closed-cocomp-poly}, from which we deduce that $Q$ is compact-by-discrete. Therefore $G$ is (connected-by-compact)-by-discrete. According to Theorem \ref{thm-Hilbert-fifth}, $G$ is  (compact-by-(virtually connected Lie)-by-discrete. The normal compact-by-(virtually connected Lie) subgroup of $G$ has a unique maximal compact normal subgroup (Proposition \ref{prop-Lie-WG-cpct}), which is therefore normal in $G$. Hence after modding out by this compact subgroup the group $G$ is (connected Lie)-by-discrete. The image of $H$ in $G/G^0$ is virtually polycyclic and has finite index in $G/G^0$. So $G/G^0$ is indeed virtually polycyclic, i.e.\ $G$ verifies  \ref{item-Lie-by-poly}. 
\end{proof}

\subsection{Application: classification of all envelopes of certain polycyclic groups}

In this subsection we prove Theorem \ref{thm-intro-classification-env-polyc} from the introduction.

 Let $d \geq 1$. We say that an element of $\mathrm{GL}(d,\R)$ is generic if it has $d$ distinct real eigenvalues. Let $A \leq \mathrm{GL}(d,\Z)$ be a free abelian group of rank $k \geq 1$. Suppose that every non-trivial element of $A$ is generic and has positive eigenvalues. So $A$ is diagonalizable over $\R$, and one can find commuting real matrices $X_1,\ldots,X_k $ such that, if $\varphi : \R^k \to \mathrm{GL}(d,\R)$ is the the  homomorphism defined by $\varphi(t_1,\ldots,t_k) = \exp(t_1 X_1 + \cdots t_k X_k)$, then $\varphi(\Z^k) = A$.

We have the following basic lemma: 

\begin{Lemma} \label{lem-unique-torus}
	If a continuous homomorphism $\psi : \R^k \to \mathrm{GL}(d,\R)$ coincides with $\varphi$ on $\Z^k$, then $\psi = \varphi$. 
\end{Lemma}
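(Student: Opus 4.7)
The plan is to compare $\psi$ and $\varphi$ along each coordinate axis of $\R^k$ first, and then use commutativity of $\R^k$ to recover $\psi$ everywhere. For each standard basis vector $e_i$, the map $t \mapsto \psi(te_i)$ is a continuous one-parameter subgroup of $\mathrm{GL}(d,\R)$, hence of the form $t \mapsto \exp(tY_i)$ for a unique $Y_i \in M_d(\R)$. Evaluating at $t = 1$ and using that $\psi$ and $\varphi$ agree on $\Z^k$ yields $\exp(Y_i) = \psi(e_i) = \varphi(e_i) = \exp(X_i)$. So the problem reduces to showing $Y_i = X_i$.

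The crux is that $\varphi(e_i)$ is a non-trivial element of $A$ and is therefore generic with positive eigenvalues. I would show that such a matrix has a unique real logarithm. If $L \in M_d(\R)$ satisfies $\exp(L) = \varphi(e_i)$, then $L$ commutes with $\exp(L)$, and since $\varphi(e_i)$ has $d$ distinct eigenvalues, $L$ is diagonalizable in the same basis as $\varphi(e_i)$. Its eigenvalues $\mu_1,\ldots,\mu_d$ then satisfy $\exp(\mu_i) = \lambda_i$, where the $\lambda_i$ are the distinct positive eigenvalues of $\varphi(e_i)$. Any complex $\mu_i$ would force the pair $\mu_i,\overline{\mu_i}$ to exponentiate to the same real positive value, contradicting the distinctness of the $\lambda_i$. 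Hence each $\mu_i$ is real and equal to $\log \lambda_i$, so $L$ is uniquely determined and $Y_i = X_i$.

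For the final step, since $\R^k$ is abelian and $\psi$ is a homomorphism, $\exp(sY_i)$ and $\exp(tY_j)$ commute for all $s,t \in \R$; differentiating at the origin gives $[Y_i,Y_j] = 0$. Writing an arbitrary element of $\R^k$ as $\sum_i t_i e_i$ and using that $\psi$ is a homomorphism, one gets
\[
\psi(t_1,\ldots,t_k) = \prod_{i=1}^k \exp(t_i Y_i) = \exp\Bigl(\sum_{i=1}^k t_i X_i\Bigr) = \varphi(t_1,\ldots,t_k),
\]
where the middle equality combines $Y_i = X_i$ with the commutativity just established. The only genuine difficulty is the uniqueness of the real logarithm for a generic matrix with positive spectrum; everything else is routine manipulation of one-parameter subgroups.
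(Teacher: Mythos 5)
Your proof is correct and takes essentially the same approach as the paper: reduce to each coordinate axis and establish uniqueness of the real logarithm of a generic matrix with positive spectrum, via the observation that any logarithm commutes with the matrix and hence is simultaneously diagonalizable. The only cosmetic difference is that the paper picks real eigenvectors of $A$ and reads off real eigenvalues of $X$ directly, whereas you rule out non-real eigenvalues of $L$ by noting a conjugate pair would exponentiate to the same positive real and collapse the spectrum of $\exp(L)$; both are equivalent ways to pin down the logarithm.
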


\begin{proof}
	By restricting to each coordinate, it is enough to consider the case $k=1$. That case amounts to see that if $A = \exp(X)$ is generic, then $X$ is uniquely determined by $A$. Let $(v_1,\ldots,v_d)$ be a basis made of eigenvectors of $A$. Since $A$ is diagonalizable, so is $X$. If $(w_1,\ldots,w_d)$ is  a basis made of eigenvectors of $X$, then each $w_i$ is also an eigenvector of $A$. Since eigenspaces of $A$ are lines, it follows that up to a permutation we have $(w_1,\ldots,w_d) = (v_1,\ldots,v_d)$. Hence $X$ acts on $v_i$ by multiplication by $\ln(\lambda_i)$, and $X$ is indeed uniquely determined. 
\end{proof}

In the sequel we denote  $N = \Z^d $ and $\Gamma = N \rtimes A$, and we make the following standing assumption:  \begin{equation}
\label{eq:stand-ass}
\text{Every element of $A$ is generic with eigenvalues positive and  different from $1$.} \tag{$\dagger$}
\end{equation}

\begin{Proposition} \label{prop-Lie-env-Gphi}
Suppose $(\dagger)$. Suppose $G$ is a connected Lie group that is an envelope of $\Gamma$. Then, after modding out by a compact normal subgroup, $G$ is isomorphic to $G_{\varphi}$.   
\end{Proposition}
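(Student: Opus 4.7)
The plan is to reduce $G$, after modding out by its maximal compact normal subgroup, to a simply connected solvable Lie group whose nilradical is $\R^d$ with $N$ as a standard lattice, and whose quotient by the nilradical is $\R^k$ with $A$ as a standard lattice. The conjugation action of $\R^k$ on $\R^d$ then agrees with $\varphi$ on $\Z^k$, and Lemma~\ref{lem-unique-torus} forces it to equal $\varphi$, identifying $G$ with $G_\varphi$.

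First I would reduce to a tractable $G$. Since $\Gamma$ is polycyclic and hence amenable, $G$ is amenable, and every lattice in an amenable connected Lie group is cocompact; thus $\Gamma$ is a cocompact lattice. By Proposition~\ref{prop-Lie-WG-cpct}, $W(G)$ is compact. A direct computation in $\Gamma = N \rtimes A$ shows that the $\Gamma$-conjugacy class of $(n,a)$ equals $\{(b \cdot n + (I-a)\cdot m,\, a) : m \in N,\ b \in A\}$. Under $(\dagger)$, $(I - a)$ is invertible over $\R$ for every non-trivial $a \in A$ (as $1$ is not an eigenvalue), and every non-trivial element of $A$ acts on $N$ without finite orbits. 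Hence the FC-center of $\Gamma$ is trivial, so $\Gamma \cap W(G) = \{1\}$; after replacing $G$ by $G/W(G)$, we may assume $G$ has no non-trivial compact normal subgroup, and $\Gamma$ still embeds as a cocompact lattice.

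Next I would identify the structural pieces. Combining amenability, the absence of compact normal subgroups, and the hyperbolic nature of the $A$-action imposed by $(\dagger)$ (which prevents $A$ from mapping into any compact part of $G$ while respecting its action on $N$), I would argue that $G$ must be simply connected solvable. Let $U$ be the nilradical of $G$. By Mostow's structure theory for lattices in solvable Lie groups, $\Gamma \cap U$ is a lattice in $U$ and contains (a finite-index subgroup of) the Fitting subgroup $N$ of $\Gamma$; since $N \simeq \Z^d$ is abelian and $G$ is simply connected, $U \simeq \R^d$ with $N$ a standard cocompact lattice. The quotient $G/U$ is then a simply connected abelian Lie group containing the image of $\Gamma$ as a cocompact lattice isomorphic to $A \simeq \Z^k$, so $G/U \simeq \R^k$. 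Since $\R^k$ is contractible, the extension $1 \to U \to G \to G/U \to 1$ admits a continuous section and splits, giving $G \simeq \R^d \rtimes_\psi \R^k$ for some continuous $\psi : \R^k \to \mathrm{GL}(d,\R)$. The restriction $\psi|_{\Z^k}$ describes the conjugation action of $A$ on $N$, hence coincides with $\varphi|_{\Z^k}$; by Lemma~\ref{lem-unique-torus}, $\psi = \varphi$, and $G \simeq G_\varphi$.

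The main obstacle is the structural reduction showing that $G$, after modding out by $W(G)$, is simply connected solvable, i.e.\ ruling out a non-trivial compact semisimple Levi factor, a non-trivial torus factor in the nilradical or its quotient, and a non-trivial fundamental group. This requires a careful interplay between Mostow–Raghunathan structure theory for lattices in connected (solvable) Lie groups and the hyperbolicity-style constraint $(\dagger)$ on the $A$-action, which together forbid any compact or toral summand from being compatible with the prescribed action of $A$ on $N$.
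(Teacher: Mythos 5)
Your plan follows the same overall architecture as the paper (mod out by the maximal compact normal subgroup, identify the nilradical with $\R^d$, identify the abelian quotient with $\R^k$, and then invoke Lemma~\ref{lem-unique-torus} to pin down the action), and the reduction step using the FC-center computation is fine. But there are two genuine gaps, and you have flagged one of them yourself.

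The first and central gap is the reduction to a simply connected solvable $G$, which you explicitly defer. This is not a cleanup step: it is the bulk of the argument. The paper does it in two layers. To eliminate the Levi factor it does not directly argue that $G$ is solvable; instead it considers $R$ the solvable radical, forms the closed cocompact solvable subgroup $L = \overline{R\Gamma}$, applies the solvable case to $L^0$ to see $L^0 \simeq G_\varphi$ is simply connected, concludes $R = L^0 = G_\varphi$ is cocompact normal, and then shows $G = G_\varphi$ by a separate argument: write $G = G_\varphi K$ with $K$ a maximal compact, look at the representation on the normal $\R^d$, use genericity to see that the $\R^k$-fixed set in $\mathrm{P}(\R^d)$ is a finite set that $K$ must preserve, so the connected compact group $\alpha(K)$ is trivial; from there $K$ centralizes $\R^d$, $C_G(\R^d) = K\R^d$ is normal, so $K$ is normal, hence trivial. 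Simple connectedness in the solvable case is similarly handled by letting $(\dagger)$ kill the image of a maximal compact subgroup in $\mathrm{GL}(d,\R)$. None of this is ``routine structure theory''; the genericity in $(\dagger)$ is used at each step. Your outline names the ingredients but does not give the argument.

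The second gap is the splitting of $1 \to U \to G \to \R^k \to 1$. Contractibility of $\R^k$ gives a continuous section, not a homomorphic one; a topological splitting is far weaker than a semidirect product decomposition (the Heisenberg group over $\R$ is a topologically trivial bundle over $\R^2$ that does not split as a group). What actually makes the extension split here is again $(\dagger)$: since the weights of $\mathfrak{q} = \R^k$ on $\mathfrak{u} = \R^d$ are all nonzero, the relevant Lie algebra cohomology vanishes and the extension of simply connected groups splits. The paper cites \cite[Lemma 5.3.10]{Peng-II} for this. Your reasoning as stated would prove the false claim that every extension of $\R^k$ by $\R^d$ splits.

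Finally, a smaller imprecision: you say $\Gamma \cap U$ ``contains (a finite-index subgroup of) $N$'' and then leap to $U \simeq \R^d$. What one needs is that $\Gamma \cap U$ is commensurable with $N$, and the direction $\Gamma \cap U \subseteq N = \mathrm{Fit}(\Gamma)$ plus the paper's observation that $[\Gamma,\Gamma]$ has finite index in $N$ (again using that no eigenvalue is $1$) is what delivers this. Worth spelling out, as it is another place $(\dagger)$ is doing work that your sketch leaves implicit.
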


\begin{proof}
	Upon modding out by the unique maximal compact normal subgroup, we assume $G$ has no non-trivial compact normal subgroup. Suppose for a moment that the case where $G$ is solvable has been treated. Let $R$ be the largest normal closed connected solvable subgroup of $G$. We know that $R$ is cocompact in $G$ since $G$ is amenable. Since $R$ is normal, $L := \overline{R \Gamma}$ is a closed cocompact subgroup of $G$. Since both $\Gamma$ and $R$ are solvable, $L$ is solvable.  Let $L^0$ be the identity component of $L$. We have $R \leq L^0$, and $L^0$ is a finite index subgroup of $L$. Since the statement is invariant under changing $\Gamma$ by a finite index subgroup, we can assume $L^0$ contains $\Gamma$. Hence by the assumption we have that $L^0$ is isomorphic to $G_{\varphi}$. In particular $L^0$ is simply connected. Since $R$ is a closed connected cocompact subgroup, we deduce $R = L^0$. So we are reduced to show: whenever a connected Lie group $G$ contains $G_{\varphi}$ as a cocompact normal subgroup, we have $G = G_{\varphi}$. For, let $\alpha: G \to  \mathrm{GL}(d,\R)$ the representation associated to the $G$-action on the normal subgroup $\R^d$. One can write $G = G_{\varphi} K$ with $K$ compact maximal. Since $\R^k$ contains generic elements, the $\alpha(\R^k)$-action on the projective space $\mathrm{P}(\R^d)$ has exactly $d$ fixed points. Since $\alpha(\R^k)$ is normal in $\alpha(\R^k K)$,  $\alpha(K)$ preserves this finite set of fixed points. Since $K$ is connected and compact, $\alpha(K)$ is trivial. So $K$ centralizes $\R^d$, and $C_G(\R^d) = K \R^d$. Since $\R^d$ is normal in $G$, so is $C_G(\R^d)$. In particular $K$ is normal in $G$, and hence trivial. So $G = G_{\varphi}$. 
	
	We now assume $G$ is solvable. Let $N_G$ denote the largest normal closed connected nilpotent subgroup. The quotient group $G/N_G$ is abelian, and $N_G$ is simply connected since $G$ has no non-trivial compact normal subgroup. By a theorem of Mostow \cite[Theorem 3.3]{Raghu}, the intersection $\Gamma \cap N_G$ is a discrete and cocompact subgroup of $N_G$. Since elements of $A$ have no eigenvalues equal to $1$, there can be no non-zero factor of $N \otimes \Q$ on which the $A$-action is unipotent. Hence the derived subgroup $[\Gamma,\Gamma]$ has finite index in $N$. A fortiori $\Gamma \cap N_G$  has finite index in $N$. So $N_G \simeq \R^d$ and the $A$-action by conjugation on $N_G$ is the original action.  We now want to see $G$ is simply connected. Let $K$ be a maximal compact subgroup of $G$. The subgroup $R = N_G K$ is normal in $G$ because $G/N_G$ is abelian, so the image of $K$ in $ \mathrm{GL}(d,\R)$ is a compact connected subgroup that is normalized by $A$. Since $A$ contains a generic element, this subgroup must be trivial. So $K$ centralizes $N_G$. In particular $K$ is normal in $R$, and hence in $G$. So $K=1$, $G$ is simply connected, and $G/N_G$ is isomorphic to $\R^k$. By Lemma  \ref{lem-unique-torus} the $G/N_G$-action on $N_G$ must be isomorphic to the given one. Moreover the sequence $1 \to \R^d \to G \to \R^k$ necessarily splits (see e.g.\ \cite[Lemma 5.3.10]{Peng-II}), so $G \simeq G_\varphi$.
\end{proof}

\begin{Remark}
	In the second part of the proof, where $G$ is solvable, if we assume in addition $G$ is simply connected and the Zariski closure of the adjoint representation of $G$ has no non-trivial compact torus, then the result is a consequence of a much more general result of Witte  \cite[Theorem 7.2]{Witte-superrigid}. But verifying the above additional assumptions seems to require essentially the same arguments than the ones above.
\end{Remark}

\begin{Theorem} \label{thm-classification-env}
Suppose $(\dagger)$. Suppose $G$ is a locally compact group that is an envelope of $\Gamma = N \rtimes A \simeq  \Z^d \rtimes \Z^k$. Then, up to compact normal subgroup and finite index subgroup, $G$ embeds as a closed cocompact subgroup in $G_{\varphi} =  \R^d \rtimes_\varphi \R^k$.   
\end{Theorem}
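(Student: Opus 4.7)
The plan is to reduce $G$ via Theorem \ref{thm-commable-rigidity} to the situation where its identity component $G^0$ is a connected amenable Lie group, apply Proposition \ref{prop-Lie-env-Gphi} to realize $G^0$ as a closed connected subgroup of $G_\varphi$, and then extend this to an embedding of the full group $G$ using the rigidity of polycyclic lattices in simply connected solvable Lie groups.

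Since $\Gamma$ is polycyclic it belongs to the class considered in Theorem \ref{thm-commable-rigidity}; applying that theorem to the cocompact lattice inclusion $\Gamma \hookrightarrow G$ (the automatic cocompactness coming from $\Gamma$ being virtually torsion-free, cf.\ the remark after Theorem \ref{thm-tdlc-env-poly}), after modding out by a compact normal subgroup of $G$ I may assume $G^0$ is open in $G$, $G/G^0$ is discrete virtually polycyclic, and $G$ is unimodular and amenable. A further quotient by the characteristic compact subgroup $W(G^0)$ (compact by Theorem \ref{thm-Hilbert-fifth} and Proposition \ref{prop-Lie-WG-cpct}, normal in $G$) reduces to $G^0$ being a connected amenable Lie group with trivial maximal compact normal subgroup. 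Setting $\Gamma_0 := \Gamma \cap G^0$, which is a cocompact lattice in $G^0$ and a normal subgroup of $\Gamma$ (since $G^0$ is characteristic in $G$), and passing to a normal finite index subgroup of $\Gamma$ to split the sequence $1 \to N_0 \to \Gamma_0 \to A_0 \to 1$ (with $N_0 = \Gamma_0 \cap N$, $A_0 \leq A$; the $H^2$-obstruction vanishes over $\Q$ since $A_0$ acts on $N_0 \otimes \Q$ without $1$-eigenvalues), one obtains $\Gamma_0 \simeq \Z^{d_0} \rtimes \Z^{k_0}$ satisfying $(\dagger)$ for some restriction $\varphi_0$ of $\varphi$. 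Proposition \ref{prop-Lie-env-Gphi} then yields $G^0 \simeq G_{\varphi_0}$, and via the canonical inclusions $\R^{d_0} \hookrightarrow \R^d$ and $\R^{k_0} \hookrightarrow \R^k$ this realizes $G^0$ as a closed connected subgroup of $G_\varphi$.

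It remains to extend the embedding $G^0 \hookrightarrow G_\varphi$ to all of $G$. The conjugation action of $G$ on its normal subgroup $G^0 \simeq G_{\varphi_0}$ gives a homomorphism $G \to \mathrm{Aut}(G_{\varphi_0})$ whose kernel $C_G(G^0)$ has identity component contained in $Z(G_{\varphi_0}) = 1$ (the center is trivial by $(\dagger)$, since no element of $A_0$ has $1$ as an eigenvalue), hence is discrete. By Lemma \ref{lem-unique-torus}, any continuous action on $G_{\varphi_0}$ is uniquely determined by its restriction to the lattice $\Gamma_0$; combined with the fact that $\Gamma$ is a common commensurated lattice in $G$ and in $G_\varphi$ with matching conjugation actions on itself, this forces the image of $G$ in $\mathrm{Aut}(G_{\varphi_0})$ to factor through the inner automorphisms coming from conjugation by elements of $G_\varphi$. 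After a finite index modification of $G$ to absorb the discrete kernel $C_G(G^0)$ and any torsion in $G/G^0$, the desired closed cocompact embedding $G \hookrightarrow G_\varphi$ follows. The main obstacle is precisely this final matching: showing that the image of $G/G^0$ in the automorphism group of $G_{\varphi_0}$ is realized by conjugation from the ambient $G_\varphi$ rather than by genuine outer automorphisms; this is where the joint lattice structure of $\Gamma$ in $G$ and in $G_\varphi$ becomes essential, and it is made concrete by the uniqueness in Lemma \ref{lem-unique-torus}.
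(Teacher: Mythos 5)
Your reductions in the first half are sound and mirror the paper: Theorem \ref{thm-commable-rigidity} plus Theorem \ref{thm-Hilbert-fifth} and Proposition \ref{prop-Lie-WG-cpct} gets you to $G^0$ open, a connected amenable Lie group with no nontrivial compact normal subgroup, and $\Gamma_0 := \Gamma \cap G^0$ a cocompact lattice in $G^0$ that is normal in $\Gamma$. From there, though, your proof diverges from the paper's and develops two genuine gaps.

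First, you have silently assumed that the ``$A_0$-part'' of $\Gamma_0$ is nontrivial, i.e.\ that $\Gamma_0 \not\subseteq N$. But the case $\Gamma_0 \subseteq N$ does occur (for instance $G = \R^r \cdot \Gamma$ with $\R^r$ a proper rational $A$-invariant subspace of $\R^d$), and there $\Gamma_0$ is free abelian, $k_0 = 0$, $(\dagger)$ is vacuous, and the proof of Proposition \ref{prop-Lie-env-Gphi} does not apply as written (it repeatedly uses that $A$, hence $\R^k$, contains generic elements, e.g.\ to rule out compact tori and to deduce $K=1$). Your later argument that $C_G(G^0)$ is discrete also collapses here, since $Z(G_{\varphi_0})$ is all of $\R^{d_0}$ when $k_0 = 0$. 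The paper handles this abelian case separately via the Bieberbach theorem, writing $G^0 = R \rtimes K$, killing $K$, and computing $G = (RN)\rtimes A$ directly. Second, even in the nonabelian case your parametrization $\Gamma_0 \simeq \Z^{d_0}\rtimes\Z^{k_0}$ with a possible drop $d_0 < d$ is not what happens: Lemma \ref{lem-normal-Gamma-similar} shows that once $\Gamma_0 \not\subseteq N$, the normality of $\Gamma_0$ in $\Gamma$ together with $(\dagger)$ forces $\Gamma_0 \cap N$ to be of finite index in $N$, so $d_0 = d$ always. This is exactly what makes the identification $G^0 \simeq \R^d \rtimes \R^\ell$ work in the paper.

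Finally, your closing step --- lifting the conjugation action $G \to \Aut(G_{\varphi_0})$ and arguing via Lemma \ref{lem-unique-torus} that it factors through conjugation in $G_\varphi$ --- is not an argument but a statement of the obstacle, as you yourself flag. Lemma \ref{lem-unique-torus} concerns one-parameter subgroups extending a given $\Z^k \to \mathrm{GL}(d,\R)$, not automorphisms of $G_{\varphi_0}$, and it does not by itself rule out genuine outer automorphisms or give the desired factorization. The paper avoids this entirely: since $G/G^0$ is discrete and $\Gamma$ is cocompact, $G^0\Gamma$ is a finite index subgroup of $G$, and one may simply replace $G$ by $G^0\Gamma$ and compute it explicitly as a subgroup of $G_\varphi$ (in both cases one finds $G^0\Gamma = (\R^r \times \Z^{d-r})\rtimes A$ or $\R^d \rtimes (\R^\ell A)$). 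This direct computation is both simpler and actually complete, and I would recommend you adopt it rather than the automorphism-group route.
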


\begin{proof}
	According to Theorem \ref{thm-commable-rigidity}, and after modding out by compact normal, we have that $G^0$ is open in $G$. By Theorem \ref{thm-Hilbert-fifth} we can assume $G^0$ is a connected Lie group. Also by Proposition \ref{prop-Lie-WG-cpct} we can assume $G^0$ has no non-trivial compact normal subgroup. In particular upon passing to a finite index subgroup we can assume $G = G^0 \Gamma$. The subgroup $\Lambda := \Gamma \cap G^0$ is discrete and cocompact in $G^0$. If $G^0$ is compact then it is trivial and $G=\Gamma$. Assume $G^0$ is not compact. Then $\Lambda$ is an infinite normal subgroup of $\Gamma$. We now distinguish two cases. Suppose first that $\Lambda \subseteq N$. Then $\Lambda$ is free abelian of rank $r \geq 1$. By Bieberbach theorem one can write $G^0 = R \rtimes K$ with $K$ compact, $\Lambda$ virtually contained in $R$, and $R$ is isomorphic to $\R^r$ and is identified with a $A$-invariant subspace of $\R^d$. For the same reason as in the proof of Proposition \ref{prop-Lie-env-Gphi}, we infer $K$ must be trivial. We deduce $G = R \Gamma = (RN) \rtimes A$. The subgroup $N$ centralizes a lattice of $R$, so $N$ and $R$ commute. So $RN$ is abelian. Since $RN$ has no compact normal subgroup, we deduce $RN$ is connected, and $RN = R$. So $G \simeq ( \R^r \times \Z^{d-r}) \rtimes A$, and $G$ is indeed isomorphic to a closed cocompact subgroup of $G_{\varphi}$. Suppose now $\Lambda$ is not contained in $N$. Then by Lemma  \ref{lem-normal-Gamma-similar} below,  up to passing to finite index subgroup, $\Lambda \simeq \Z^d \rtimes \Z^\ell$ with $1 \leq \ell \leq k$, where the semi-direct product verifies the same properties as $\Gamma$.  Hence Proposition  \ref{prop-Lie-env-Gphi} applies to $\Lambda$, and we deduce $G^0 \simeq \R^d \rtimes \R^\ell$, where  the action of $\R^\ell$ is via the restriction of $\varphi$. As in the previous case we have $N \leq \R^d$, and consequently $G = \R^d  \rtimes (\R^\ell A)$.
\end{proof}

We have used: 

\begin{Lemma} \label{lem-normal-Gamma-similar}
Suppose $(\dagger)$. Suppose that $\Lambda$ is normal in $\Gamma = N \rtimes A $, and $\Lambda$ is not contained in $N$. Then $\Lambda \cap N \simeq \Z^d$,  $\Lambda \cap A \simeq \Z^\ell$ for some $\ell \geq 1$ and $(\Lambda \cap N) \rtimes (\Lambda \cap A)$ has finite index in $\Lambda$. 
\end{Lemma}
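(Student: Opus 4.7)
The plan rests on the fact that by assumption $(\dagger)$, for every nontrivial $a \in A$ the matrix $I - \varphi(a)$ is invertible over $\Q$ (no eigenvalue equals $1$). I will use this in two ways: first to force $\Lambda \cap N$ to have full rank $d$ in $N$, and then, via a power argument controlled by a single exponent, to correct any lift in $\Lambda$ of an element of $\pi(\Lambda)$ so that it lies in $A$.

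First I would show $\Lambda \cap N \simeq \Z^d$. Pick $\lambda = (n, a) \in \Lambda$ with $a \neq 0$, which exists by hypothesis. For arbitrary $m \in N$, normality of $\Lambda$ in $\Gamma$ combined with a direct computation in $N \rtimes A$ yields
\[ (m, 0)\, \lambda\, (m, 0)^{-1}\, \lambda^{-1} = \bigl( (I - \varphi(a)) m,\, 0 \bigr) \in \Lambda \cap N, \]
so $(I - \varphi(a))\, N \subseteq \Lambda \cap N$. Since $I - \varphi(a)$ is invertible over $\Q$, its image is a sublattice of full rank in $N$, hence $\Lambda \cap N$ has rank $d$, and $N/(\Lambda \cap N)$ is finite.

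Next I would handle $\Lambda \cap A$; let $\ell \geq 1$ be the rank of $\pi(\Lambda) \subseteq A$. Applying the commutator computation to a lift of any $a \in \pi(\Lambda) \setminus \{0\}$ shows $(I - \varphi(a))\, N \subseteq \Lambda \cap N$, so $\pi(\Lambda)$ acts trivially on the finite group $N/(\Lambda \cap N)$. For $\lambda_a = (v_a, a) \in \Lambda$, an induction in $N \rtimes A$ gives $\lambda_a^p = \bigl( S_p(a)\, v_a,\, p a \bigr)$ with $S_p(a) := I + \varphi(a) + \cdots + \varphi(a)^{p-1}$. Since $\varphi(a)$ is trivial on $N/(\Lambda \cap N)$, one has $S_p(a)\, v_a \equiv p\, v_a \pmod{\Lambda \cap N}$. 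Taking $p$ equal to the exponent of the finite group $N/(\Lambda \cap N)$ makes $S_p(a)\, v_a$ an element of $\Lambda \cap N$ for every choice of $a$ and $v_a$, so $(-S_p(a)\, v_a,\, 0) \cdot \lambda_a^p = (0,\, p a)$ lies in $\Lambda$. Since $A$ is abelian, the assignment $a \mapsto (0, a)$ sends $p\, \pi(\Lambda)$ into $\Lambda \cap A$, forcing $\Lambda \cap A$ to have finite index in $\pi(\Lambda)$ and hence $\Lambda \cap A \simeq \Z^\ell$.

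Finally, since $A$ normalises $N$, the subgroup $(\Lambda \cap N)(\Lambda \cap A) \leq \Lambda$ is isomorphic to the semidirect product $(\Lambda \cap N) \rtimes (\Lambda \cap A)$, and its image modulo $\Lambda \cap N$ equals $\Lambda \cap A$, of finite index in $\pi(\Lambda) \simeq \Lambda/(\Lambda \cap N)$. So it has finite index in $\Lambda$, completing the proof. The main technical point is the third paragraph: producing a single exponent $p$ that works uniformly in $a$. This succeeds because the $\pi(\Lambda)$-action on $N/(\Lambda \cap N)$ is trivial, which allows the exponent of that finite group to serve as a single $p$ valid for every lift $v_a$ simultaneously.
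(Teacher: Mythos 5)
Your proof is correct. All the computations check out: the commutator identity $(m,0)\lambda(m,0)^{-1}\lambda^{-1}=((I-\varphi(a))m,0)$, the power formula $\lambda_a^p=(S_p(a)v_a,pa)$, and the uniform-exponent argument are all sound, and together they deliver exactly the stated conclusion.

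The route is genuinely different from the paper's. The paper introduces the Fitting subgroup $N_\Lambda$ of $\Lambda$, observes $N_\Lambda\lhd\Gamma$ so $N_\Lambda\leq N$ (in fact $N_\Lambda=\Lambda\cap N$), argues abstractly that $(\dagger)$ forbids an abelian normal subgroup of $(N/N_\Lambda)\rtimes A$ from leaving $N/N_\Lambda$ as long as $N/N_\Lambda$ is infinite, deduces $N/N_\Lambda$ is finite, and then passes to the finite-index subgroups $\Gamma'=N_\Lambda\rtimes A$ and $\Lambda'=\Lambda\cap\Gamma'$ to read off a split decomposition $\Lambda'=N_\Lambda\rtimes(\Lambda'\cap A)$. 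Your proof replaces the Fitting-subgroup detour with the direct commutator computation inside $N\rtimes A$ (which is what the paper's abstract claim about abelian normal subgroups secretly relies on), and it replaces the finite-index passage with the explicit power-correction argument: once $\pi(\Lambda)$ acts trivially on the finite group $N/(\Lambda\cap N)$, raising a lift $\lambda_a$ to the power $p=\exp\bigl(N/(\Lambda\cap N)\bigr)$ and translating by $(-S_p(a)v_a,0)\in\Lambda\cap N$ lands $(0,pa)$ in $\Lambda\cap A$. What your approach buys is self-containedness and a transparent reason why $\Lambda\cap A$ has finite index in $\pi(\Lambda)$; the power argument makes explicit the point that the paper's finite-index passage glosses over, namely that after restricting to $\Gamma'$ one can still find elements of $\Lambda'$ outside $N_\Lambda$. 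What the paper's approach buys is brevity, at the cost of invoking structural facts (Fitting subgroup, characteristic subgroups) that the elementary computation renders unnecessary here.
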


\begin{proof}
	The largest normal nilpotent  subgroup $N_\Lambda$ of $\Lambda$ is normal in $\Gamma$, and hence contained in $N$. The quotient $\Lambda / N_\Lambda$ is abelian, and is a normal subgroup of $N / N_\Lambda \rtimes A$. Suppose $V = N / N_\Lambda \otimes \Q$ has positive dimension. Then the $A$-action on $V$ keeps the property that non-trivial elements are generic with eigenvalues different from $1$. This forces an abelian normal subgroup of $N / N_\Lambda \rtimes A$ to be contained in $N / N_\Lambda$. Hence $\Lambda \leq N$, a contradiction. So $N / N_\Lambda$ is finite, and upon changing $\Gamma$ into the finite index subgroup $\Gamma' = N_\Lambda \rtimes A$ and $\Lambda$ into $\Lambda \cap \Gamma'$, we can assume $N_\Lambda = N$. We then have $\Lambda = N \rtimes (\Lambda \cap A)$, and $\Lambda \cap A$ is infinite, as desired. 
\end{proof}

\subsection{Tdlc envelopes of lattices in  Lie groups} \label{sec-latt-Lie}

Let $H$ be a connected Lie group, and $\Gamma$ a lattice in $H$. Furman and Bader--Furman--Sauer showed that if $H$ is semi-simple and not isomorphic to $\mathrm{PSL}_2(\R)$, then every tdlc envelope of $\Gamma$ is compact-by-discrete \cite{Furman-Mostow-LC, BFS-envelopes}. The requirement to exclude $\mathrm{PSL}_2(\R)$ is necessary, see the discussion above Theorem C in \cite{BFS-envelopes}. At the opposite extreme, if $H$ is amenable, then the group $\Gamma$ is virtually polycyclic. In that case by Theorem \ref{thm-tdlc-env-poly} it is again the case that every tdlc envelope of $\Gamma$ is compact-by-discrete. The following result establishes that the same conclusion still holds even if $H$ is not assumed to be either semi-simple or amenable.

If $H$ is a connected Lie group, we denote by $R(H)$ the largest amenable normal subgroup of $H$. The quotient $H / R(H)$ is a connected semi-simple Lie group. 

\begin{Theorem} \label{thm-env-lat-general-Lie}
	Let $H$ be a connected Lie group such that $H / R(H)$ is not isomorphic to $\mathrm{PSL}_2(\R)$. Let $\Gamma$ be a lattice in $H$. Then every tdlc envelope of $\Gamma$ is compact-by-discrete. 
\end{Theorem}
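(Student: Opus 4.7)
The plan is to argue by induction on $d := \dim R(H)$, using the results of Furman \cite{Furman-Mostow-LC} and Bader--Furman--Sauer \cite{BFS-envelopes} for the base case and the machinery of Section \ref{sec-general-result-tdlc} for the inductive step. First I would reduce to the cocompact setting: after passing to a finite-index subgroup $\Gamma$ is torsion-free (e.g.\ by Selberg), so in particular has bounded torsion, and by the remark following Theorem \ref{thm-tdlc-env-poly} every tdlc envelope of $\Gamma$ is then automatically cocompact. Passing to a finite-index subgroup of $\Gamma$ or to a finite-index open subgroup of $G$ preserves the conclusion: a compact open normal subgroup of a finite-index subgroup of $G$ gives one in $G$ itself by intersecting its finitely many $G$-conjugates. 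The base case $d = 0$ is when $H$ is semisimple and $H = H/R(H) \not\cong \mathrm{PSL}_2(\R)$, handled directly by the cited results.

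For the inductive step $d \geq 1$, let $N \lhd H$ be the nilradical of $H$, that is the largest closed connected nilpotent normal subgroup; then $\dim N \geq 1$ and $N \leq R(H)$. By Mostow's theorem \cite{Raghu}, $A := \Gamma \cap N$ is a cocompact lattice in $N$, hence finitely generated and nilpotent, and $\Gamma/A$ embeds as a lattice in the connected Lie group $H/N$. The radical $R(H)/N$ of $H/N$ has dimension $d - \dim N < d$, while the semisimple quotient of $H/N$ is still $H/R(H) \not\cong \mathrm{PSL}_2(\R)$. Since $A$ is normal in $\Gamma$ and $\Gamma$ is cocompact in $G$, Theorem \ref{thm-abelien-norm-co} provides a compact open subgroup $U \leq G$ normalized by $A$; using Corollary \ref{cor-fg-nilpo-normalizes-co} inside $U$ itself, I may shrink $U$ so that $U \cap \Gamma = 1$. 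Proposition \ref{prop-commens-upgrade-normal} then yields, after passage to a finite-index open subgroup of $G$, a closed normal subgroup $M \lhd G$ with $A \leq M$ and $A$ cobounded in $M$. Since $M \leq O := UA$ up to finite index, the condition $U \cap \Gamma = 1$ forces $\Gamma \cap M = A$, so the image of $\Gamma$ in $G/M$ is exactly the lattice $\Gamma/A$ of $H/N$. By Theorem \ref{thm-tdlc-poly-growth}, $K := W(M)$ is compact and open in $M$, and being characteristic in $M$ it is normal in $G$.

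Consequently $G/M$ is a tdlc cocompact envelope of the lattice $\Gamma/A$ in $H/N$, so by the inductive hypothesis $G/M$ is compact-by-discrete. Let $V \lhd G$ be the pre-image in $G$ of the (unique) compact open normal subgroup of $G/M$: then $V$ is open and normal in $G$, contains $M$, and $V/M$ is compact. The quotient $V/K$ is a compactly generated tdlc group containing the discrete cocompact normal subgroup $M/K$, and $M/K$ is virtually nilpotent since $A$ maps to a finite-index subgroup of it. A second application of Theorem \ref{thm-tdlc-poly-growth} gives that $W(V/K)$ is compact open in $V/K$. Its pre-image $W'$ in $V$ is compact and open, characteristic in $V$, and hence normal in $G$ since $V \lhd G$. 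This produces the desired compact open normal subgroup of $G$, and $G$ is compact-by-discrete.

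The main obstacle I anticipate is the careful Mostow--Auslander step: arranging $N$ and $\Gamma$ so that $\Gamma/A$ is honestly identified with a lattice in $H/N$ to which the inductive hypothesis applies verbatim, and ensuring the shrinking of $U$ is compatible with the finite-index passages in both $\Gamma$ and $G$ so that the compact open normal subgroup produced at the end of the induction descends to the original envelope. The rest of the argument is a fairly direct assembly of Theorem \ref{thm-abelien-norm-co}, Proposition \ref{prop-commens-upgrade-normal} and two applications of Theorem \ref{thm-tdlc-poly-growth}, in the same spirit as the proof of Theorem \ref{thm-gen-envelope-normal-nilp}.
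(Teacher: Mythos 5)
Your proposal takes a genuinely different route from the paper: you induct on $\dim R(H)$, descending one step at a time via the nilradical $N \lhd H$ and applying Theorem~\ref{thm-abelien-norm-co} to $A = \Gamma \cap N$; the paper instead argues directly, setting $P := \Gamma \cap R(H)$ (Auslander), applying Theorem~\ref{thm-abelien-norm-co} twice --- once to the Fitting subgroup $A$ of $P$, once to the virtually abelian image of $P$ in $G/M$ --- then citing Furman and Bader--Furman--Sauer once for the semisimple quotient and closing via a single gluing lemma ([Lemma 4.8, BCGM-amen]). Your induction trades that gluing lemma for a second application of Theorem~\ref{thm-tdlc-poly-growth}, which is a fine idea, but the inductive hypothesis must be checked against the exact statement, and here there is a real gap.

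The gap is the sentence ``using Corollary~\ref{cor-fg-nilpo-normalizes-co} inside $U$ itself, I may shrink $U$ so that $U \cap \Gamma = 1$''. Corollary~\ref{cor-fg-nilpo-normalizes-co} only asserts the \emph{existence} of some compact open subgroup normalized by $A$; it gives no control over how small it is, and in particular no way to make it miss $\Gamma \setminus \{1\}$. (The paper's own proofs, e.g.\ Theorem~\ref{thm-norma-nilp-same-tdlc-env}, deliberately avoid trying to arrange this and instead pass to a finite-index torsion-free piece of $\Lambda \cap O$.) Without $U \cap \Gamma = 1$ you only know $\Gamma \cap M$ is commensurable to $A$, not equal to it, so the image of $\Gamma$ in $G/M$ is only a quotient of the lattice $\Gamma/A$ by a finite normal subgroup, and your inductive hypothesis --- phrased for lattices in connected Lie groups --- does not apply verbatim. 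This is fixable by strengthening the inductive statement to allow finite kernels (the conclusion ``compact-by-discrete'' is preserved by such), or by following the paper and simply not shrinking $U$, but as written the step does not go through. Also worth flagging: ``$M \leq O$ up to finite index'' requires a short argument ($M \cap O$ is open and cocompact in $M$, and $M$ is compactly generated since $A$ is a cocompact finitely generated subgroup), which you should spell out if you keep this route.
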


\begin{proof}
	Let $G$ be a tdlc envelope of $\Gamma$.  We shall first argue that $G$ is a cocompact envelope. As recalled earlier, it is enough to see that finite subgroups of $\Gamma$ have bounded cardinality. The image of $\Gamma$ under the adjoint representation of $H$ is virtually torsion-free by Selberg's lemma, so in particular its finite subgroups have bounded cardinality. Finite subgroups of $\Gamma \cap Z(H)$ also have bounded cardinality. Since the property of having finite subgroups of bounded cardinality is stable under extensions, $\Gamma$ indeed has this property.
	
	Set $P := \Gamma \cap 	R(H)$. By Auslander theorem, $P$ is discrete and cocompact in $R(H)$, and the image $\Gamma / P$ of $\Gamma$ in the semi-simple group $H / R(H)$ is a lattice in $H / R(H)$  \cite[Corollary 8.25-8.27]{Raghu}. The subgroup $P$ is virtually polycyclic, and is normal in $\Gamma$. Let $A$ be the largest nilpotent normal subgroup of $P$. The subgroup $A$ being characteristic in $P$, $A$ is normalized by $\Gamma$. Since $G$ is a cocompact envelope of $\Gamma$, we are in position to apply Theorem  \ref{thm-abelien-norm-co}. We deduce that $A$ normalizes a compact open subgroup of $G$. 
	By Proposition \ref{prop-commens-upgrade-normal} applied to $(A,\Gamma,G)$, we obtain (up to passing to finite index subgroup of $G$) a closed normal subgroup $M$ of $G$ such that $A \leq M$ and $A$ is cobounded in $M$. Let $A'$  be the image of $P$ in $G/M$. The subgroup $A'$ is virtually abelian, and $A'$ is normalized by the image of $\Gamma$ in $G/M$, which is a cobounded subgroup of in $G/M$. We can therefore apply Theorem  \ref{thm-abelien-norm-co} in the ambient group $G/M$. This provides a compact open subgroup $U$ of $G/M$ such that $A'$ normalizes $U$. Let $O$ be the pre-image in $G$ of $A' U$. Then $O$ is an open subgroup of $G$ containing $P$ and such that $P$ is cocompact in $O$. Theorem \ref{thm-tdlc-env-poly} then asserts $P$ normalizes a compact open subgroup of $O$ (and hence of $G$). Proposition \ref{prop-commens-upgrade-normal} applied to $(P,\Gamma,G)$ ensures $G$ (virtually) has a closed normal subgroup $N$ such that $P \leq N$ and $P$ is cocompact in $N$. This subgroup $N$ is necessarily compact-by-discrete. Since $P$ is cocompact in $N$, the image of $\Gamma$ in $G/N$ is discrete and cocompact in $G/N$. So  the group $G/M$ is a cocompact envelope of $\Gamma / P$. Since $\Gamma / P$ is a lattice in the semi-simple group $H/R(H)$, and $H/R(H)$ is not isomorphic to $\mathrm{PSL}_2(\R)$, by \cite{Furman-Mostow-LC,BFS-envelopes} the group $G/N$ is compact-by-discrete. So all together we have that $N$ has a unique maximal compact open normal subgroup, $N$ is compactly generated, and $G/N$ is compact-by-discrete. By \cite[Lemma 4.8]{BCGM-amen} this implies $G$ is compact-by-discrete. 
\end{proof}

\section{Solvable groups of finite rank} \label{sec-solv-finite rank}

The main goal of that section is to prove Theorem  \ref{thm-intro-Rad-Lambda-fin} from the introduction. It will be convenient to work in the class of minimax groups.

For a prime $p$, the Prüfer $p$-group is the quotient $\Z[1/p] / \Z$. A solvable group $\Gamma$ is minimax if it admits a series in which each factor is either cyclic of a Prüfer $p$-group. The number $h(\Gamma) \geq 0$ of infinite cyclic factors in a defining series of $\Gamma$ is the Hirsch number of $\Gamma$. If a group $\Gamma$ is virtually minimax, we extend this by defining $h(\Gamma)$ to be the common Hirsch number of all minimax finite index subgroups of $\Gamma$. For $\Gamma$  virtually minimax, we denote by $R(\Gamma)$ the finite residual of $\Gamma$, i.e.\ the intersection of all finite index subgroups of $\Gamma$. 

\begin{Proposition} \label{prop-background-minimax}
	The following hold:
		\begin{enumerate}
			\item Every minimax group is of finite rank. Conversely, every finitely generated solvable group of finite rank is minimax. Hence for finitely generated solvable groups, minimax and finite rank is the same (\cite[\S 5.1]{Lennox-Rob}).
			\item 	Let $\Gamma$ be a minimax group. Then:
	\begin{enumerate}
		\item The Fitting subgroup  $\Fit(\Gamma)$ of $\Gamma$ is nilpotent, and the quotient $\Gamma / \Fit(\Gamma)$ is virtually abelian. If moreover $\Gamma$ is virtually torsion-free, $\Gamma / \Fit(\Gamma)$ is finitely generated (\cite[5.2.3]{Lennox-Rob}). 
		\item  \label{item-minimax-RGamma} The group $R(\Gamma)$  is a direct product of finitely many Prüfer groups, $R(\Gamma) \leq \Fit(\Gamma)$, and $R(\Gamma) \leq W(\Gamma)$ (\cite[1.4.1--5.2.1--5.3.2]{Lennox-Rob}).
		\item   The group $\Gamma$ is virtually torsion free if and only if $R(\Gamma)$ is trivial, if and only if $W(\Gamma)$ is finite (\cite[5.2.5]{Lennox-Rob}). 
		\item \label{item-minimax-Hirsch-zero} $h(\Gamma) = 0$ if and only if $R(\Gamma)$ has finite index in $\Gamma$ (\cite[\S 1.4]{Lennox-Rob}). 
	\end{enumerate}
	\end{enumerate}
\end{Proposition}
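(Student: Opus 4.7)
The plan is to verify each assertion by assembling standard structural facts about solvable minimax groups, all of which can be located in the sections of Lennox--Robinson cited inline. Since the proposition is a background compilation, my goal is to reduce each item to the definitions of minimax group, Hirsch number, finite residual, and the subgroup $W(\Gamma)$, rather than to discover a new argument.

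For item~(1), the forward direction will follow by induction along a defining minimax series: both $\Z$ and the Pr\"ufer group $\Z[1/p]/\Z$ have rank one, and the property of having finite rank is preserved under extensions. The converse -- that every finitely generated solvable group of finite rank is minimax -- is deeper and rests on the classical structural analysis of such groups: the torsion part of $\Fit(\Gamma)$ splits as a finite direct product of Pr\"ufer groups together with a finite group, while the torsion-free part admits an ascending normal series with infinite cyclic factors. First I would isolate $\Fit(\Gamma)$, show it is minimax, and then combine this with the virtually-abelian conclusion of~(2)(a) to assemble a minimax series for $\Gamma / \Fit(\Gamma)$.

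For items~(2)(a)--(2)(d) I would proceed as follows. Nilpotency of $\Fit(\Gamma)$ is a consequence of Hirsch--Plotkin together with the minimax hypothesis, which bounds the nilpotency class of the locally nilpotent Fitting subgroup; the quotient $\Gamma/\Fit(\Gamma)$ being virtually abelian is a theorem of Robinson, and in the virtually torsion-free case that quotient is polycyclic, hence finitely generated. For~(2)(b), the finite residual $R(\Gamma)$ of a solvable minimax group is exactly its divisible torsion subgroup, which is a finite direct product of Pr\"ufer groups; the inclusion $R(\Gamma)\leq\Fit(\Gamma)$ reflects that divisible abelian normal subgroups are nilpotent, while $R(\Gamma)\leq W(\Gamma)$ follows because each Pr\"ufer factor is the ascending union of its finite characteristic (and hence $\Gamma$-normal) subgroups, each of which contributes to $W(\Gamma)$. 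The equivalences in~(2)(c) and~(2)(d) then reduce to tracking torsion: $\Gamma$ is virtually torsion-free exactly when its divisible torsion part vanishes, equivalently when $W(\Gamma)$ is finite; and $h(\Gamma)=0$ expresses that the minimax series has only torsion factors, which forces $R(\Gamma)$ to have finite index in $\Gamma$.

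I expect the main obstacle to be the converse direction of item~(1), which is the deepest part of the statement and rests on a structure theorem for finitely generated solvable groups of finite rank. The natural route is to appeal directly to Mal'cev's theorem together with Lennox--Robinson~\S 5.1, since a fully self-contained argument would require handling both the divisible torsion subgroup (where the Pr\"ufer factors arise via Mal'cev's description of radicable abelian subgroups of finite rank) and the torsion-free polycyclic quotient, refining a subnormal series so that each factor is either cyclic or Pr\"ufer.
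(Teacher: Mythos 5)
The paper does not provide a proof of this proposition: it is a compilation of background facts about minimax groups, each item cited inline to Lennox--Robinson. Your sketch outlines how one would actually prove those facts, and it is broadly consistent with the standard arguments in that reference. Two remarks are worth making.

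First, a minor but real circularity: in your converse of item~(1) you propose to ``combine this with the virtually-abelian conclusion of~(2)(a)''. But item~(2)(a) is stated \emph{for minimax groups}, which is exactly what you are trying to prove $\Gamma$ is. The actual route (Robinson's theorem) establishes directly from the finite-rank hypothesis that $\Fit(\Gamma)$ is a nilpotent group of finite rank and $\Gamma/\Fit(\Gamma)$ is a finitely generated virtually-abelian group of finite rank; one then builds the minimax series from the Mal'cev analysis of the nilpotent part (divisible torsion $\oplus$ finite $\oplus$ finitely generated torsion-free, refined to cyclic/Pr\"ufer factors) and from polycyclicity of the quotient. You should cite or reprove that intermediate structure theorem for finite-rank groups rather than appeal to~(2)(a), which lives downstream.

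Second, for item~(2)(d) the implication ``$h(\Gamma)=0$ implies $R(\Gamma)$ has finite index'' is the nontrivial direction and deserves one more sentence: if all factors of a minimax series are finite or Pr\"ufer then $\Gamma$ is a torsion group, and a torsion solvable minimax group is \v{C}ernikov, i.e.\ finite-by-(divisible abelian of finite rank) up to finite index; its divisible part is then the finite residual and has finite index. Your phrasing gestures at this but skips the \v{C}ernikov step, which is where the finite-index conclusion actually comes from. With those two points tightened, your outline matches what the cited passages of Lennox--Robinson establish, and the rest of the items are handled correctly.
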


\subsection{Envelopes of solvable groups of finite rank}

The goal of this subsection is to prove the following result about the structure of cocompact envelopes of virtually minimax groups:

\begin{Theorem} \label{thm-gen-structure-envelopes}
	Let $\Gamma$ be a virtually minimax group, and $G$ a cocompact envelope of $\Gamma$. Then the subgroup $G^0 \, \RadLE(G)$ is open in $G$.
\end{Theorem}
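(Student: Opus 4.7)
The plan is to deduce the result from Shalom's property $H_{FD}$, as the authors signal in the outline of Section \ref{sec-solv-finite rank}.

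First, I would make some standard reductions. Replace $\Gamma$ by a finite index minimax subgroup, so that $\Gamma$ is a finitely generated minimax group; this loses nothing since the conclusion is insensitive to passing from $\Gamma$ to a finite index subgroup, and from $G$ to a finite index open subgroup of $G$. Using Proposition \ref{prop-approx-fg}, one may further assume $G$ is compactly generated: pass to a compactly generated open subgroup of $G$ containing a finitely generated cobounded subgroup of $\Gamma$; the identity component is unchanged and the locally elliptic radical can only grow when passing back to $G$, so openness in the open subgroup forces openness in $G$.

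The key input is that finitely generated solvable groups of finite rank enjoy property $H_{FD}$. Shalom established this for polycyclic groups in his Acta paper \cite{Shalom-Acta}, and Cornulier--Tessera extended the conclusion to the full class of finitely generated virtually minimax groups. Since $\Gamma$ is amenable and cocompact in $G$, the group $G$ is a compactly generated amenable locally compact group that is quasi-isometric to $\Gamma$. Property $H_{FD}$ is a QI-invariant within the class of compactly generated amenable locally compact groups (Cornulier--Tessera), so $G$ itself has property $H_{FD}$.

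Once property $H_{FD}$ is known for $G$, I would invoke the structure theorem of Cornulier--Tessera for compactly generated amenable locally compact groups with $H_{FD}$: such a group must have $G^0 \cdot \RadLE(G)$ open in $G$, or equivalently, the quotient $G/(G^0 \RadLE(G))$ is discrete. This is exactly the target conclusion, so the theorem would follow.

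I expect the main obstacle to be precisely this last step, namely extracting the openness of $G^0 \cdot \RadLE(G)$ from the $H_{FD}$ hypothesis in the locally compact setting. A natural way to organize this is to pass to the tdlc quotient $G/G^0$ (using Theorem \ref{thm-Hilbert-fifth} to control the connected-by-compact part), where the target statement reduces to openness of the locally elliptic radical, and to argue that this quotient retains enough of $H_{FD}$ to force that openness. A subtlety is that the image of $\RadLE(G)$ under $G \to G/G^0$ may a priori be strictly smaller than $\RadLE(G/G^0)$, so the two must be reconciled, for instance via Proposition \ref{prop-polycompact-cocompact} applied to any compact normal subgroup appearing in the Yamabe reduction of the connected part.
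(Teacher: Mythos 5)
Your high-level instinct is correct: the paper does deduce the theorem from property $H_{FD}$ of finitely generated minimax groups via Shalom's and Cornulier--Tessera's results. But there are two genuine gaps in the way you fill in the details.

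\textbf{The reduction to compactly generated $G$.} You claim that after passing to a compactly generated open subgroup $O$ of $G$, openness of $O^0\,\RadLE(O)$ in $O$ forces openness of $G^0\,\RadLE(G)$ in $G$ because \enquote{the locally elliptic radical can only grow when passing back to $G$}. This is exactly the wrong inclusion. For $O$ open in $G$ one has $\RadLE(G)\cap O \leq \RadLE(O)$ (since $\RadLE(G)\cap O$ is a closed normal locally elliptic subgroup of $O$), but $\RadLE(O)$ need not be normal in $G$, so there is no reason for $\RadLE(O)\leq\RadLE(G)$. Hence openness of $O^0\,\RadLE(O)$ in $O$ does not a priori give openness of $G^0\,\RadLE(G)$ in $G$. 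Controlling precisely this failure of monotonicity under a directed union of compactly generated open subgroups is the content of Proposition \ref{prop-limit-radical-by-abelian}, which is the crucial technical device your reduction skips.

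\textbf{The claimed \enquote{structure theorem}.} After transferring $H_{FD}$ to $G$ you want to invoke a result asserting that any compactly generated amenable locally compact group with $H_{FD}$ has $G^0\,\RadLE(G)$ open. No such black-box theorem is in the literature, and proving it is in effect the content of Theorem \ref{thm-cobounded-tdlc-rad-open}. The paper's actual mechanism is: for a compactly generated open subgroup $L$ of $Q:=G/G^0$, the intersection $\Gamma\cap L$ is cobounded; a finitely generated cobounded subgroup of it has $H_{FD}$ (Theorem \ref{thm-Cor-Tess-Hfd-solv}); $H_{FD}$ is transferred to $L$; Shalom's Theorem \ref{thm-Shalom-virt-onto-Z} then gives a virtual surjection $L\twoheadrightarrow\Z$; one passes to the open normal kernel $N$ and uses that $\Gamma\cap N$ has \emph{strictly smaller Hirsch number}, and concludes by induction on that Hirsch number. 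This last point is essential: the induction variable is the Hirsch number of the cocompact minimax subgroup, not a quantity intrinsic to the locally compact group, so one cannot dispense with the cocompact lattice after \enquote{upgrading} $H_{FD}$ to $G$. Moreover $N$ need not be compactly generated, so one cannot naively iterate Shalom's theorem -- the direct limit argument (Proposition \ref{prop-limit-radical-by-abelian}) is what makes the induction go through. Finally, after obtaining openness of $\RadLE(Q)$, the paper still needs Proposition \ref{prop-G0-elliptic-open} to upgrade from the tdlc quotient $Q$ to the assertion about $G^0\,\RadLE(G)$; your remark about reconciling the two radicals via Proposition \ref{prop-polycompact-cocompact} is pointing at the right issue but does not resolve it.

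In short: same guiding idea ($H_{FD}$ plus passage to $Q=G/G^0$), but the heart of the argument -- the Hirsch-number induction carried by the cocompact minimax subgroup, the direct limit control of $\RadLE$, and the final step via Proposition \ref{prop-G0-elliptic-open} -- is missing, and the step you do supply (the compactly generated reduction) is incorrect as stated.
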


For every locally compact group $G$, the intersection  $G^0 \cap \RadLE(G)$ is always compact. Hence in the theorem if in addition $G$ has no non-trivial compact normal subgroup, then  $G^0 \, \RadLE(G)$ is topologically isomorphic to  $G^0 \times \RadLE(G)$, and is open in $G$. 

\begin{Remark}
In the theorem none of $G^0$ or $\RadLE(G)$ can be removed. For instance the group $\Gamma = \Z[1/p] \rtimes_p \Z$ admits as a cocompact envelope the group $G = (\R \times \Q_p)  \rtimes_p \Z$, for which $G^0 = \R$ and $\RadLE(G) = \Q_p$. More generally, every finitely generated virtually torsion-free minimax group $\Gamma$  admits a cocompact envelope $G$ in which none of  $G^0$ or $\RadLE(G)$ is compact, provided $\Gamma$ is not virtually polycyclic \cite[\S 8.1]{Cor-Tess-vanishing}. 
\end{Remark}

\begin{Proposition} \label{prop-limit-radical-by-abelian}	
	Let $G = \bigcup_i G_i$ be a locally compact group written as a directed union of open subgroups $G_i$. Let $R_i = \RadLE(G_i)$, and suppose that $G_i /R_i$ is virtually minimax of Hirsch number $d_i$. If $(d_i)$ is bounded, then the following hold: \begin{enumerate}
		\item $(d_i)$ is eventually constant equal to $d = \max d_i$;
		\item $R := \RadLE(G)$ is open in $G$, and $ R = \bigcup_i R_i$;
		\item  $G/ R$ is minimax and $h(G/ R) = d$.
	\end{enumerate}
\end{Proposition}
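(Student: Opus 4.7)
The plan is to prove the three assertions in order, with the second being the main technical point.

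\textbf{Hirsch number stabilization.} First I would show that $d_i \leq d_j$ whenever $G_i \subseteq G_j$. The crucial point is that $G_i \cap R_j$ is a closed locally elliptic normal subgroup of $G_i$, hence $G_i \cap R_j \subseteq R_i$, and the composition $G_i \hookrightarrow G_j \twoheadrightarrow G_j/R_j$ factors through an injection $G_i/(G_i\cap R_j) \hookrightarrow G_j/R_j$. Now $G_j/R_j$ has trivial $\RadLE$ (equivalently, trivial locally finite radical in the discrete setting), since otherwise the preimage in $G_j$ of any non-trivial such subgroup would be a closed normal locally elliptic subgroup strictly larger than $R_j$, contradicting maximality. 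By Proposition \ref{prop-background-minimax}, $G_j/R_j$ is therefore virtually torsion-free minimax, and the kernel $R_i/(G_i\cap R_j)$ of the surjection $G_i/(G_i\cap R_j) \twoheadrightarrow G_i/R_i$, being a locally elliptic subgroup of a virtually torsion-free minimax group, must be finite. Comparing Hirsch numbers gives $d_i \leq d_j$. Since $(d_i)$ is bounded and monotone along the directed order, it stabilizes at $d = \max d_i$, proving (1).

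\textbf{Stabilization of $R_i$.} For $i \leq j$ both with $d_i = d_j = d$, I would show $R_i \subseteq R_j$. Let $F_{ij}$ denote the image of $R_i$ in $G_j/R_j$. A computation analogous to the first step identifies $F_{ij}$ with the locally finite radical of the image $G_iR_j/R_j$; in particular $F_{ij}$ is finite. To conclude $F_{ij} = 1$ one must promote the normality of $F_{ij}$ from $G_iR_j/R_j$ to the whole of $G_j/R_j$, which is where the directed union plays an essential role: for any $g \in G_j$, the conjugate $gG_ig^{-1}$ is an open subgroup of $G$, hence contained in some $G_m$ of the family by directedness, and applying the first step inside $G_m$ places $gR_ig^{-1}$ in a finite extension of $R_m$. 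Accumulating these extensions shows that the normal closure of $R_iR_j$ in $G_j$ is a closed locally elliptic subgroup of $G_j$, hence already equal to $R_j$, which gives $R_i \subseteq R_j$.

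\textbf{Passage to the union.} Granted the previous step, $\{R_i\}_{i \geq i_0}$ is an increasing family of open locally elliptic subgroups of $G$, and $S := \bigcup_{i \geq i_0} R_i$ is itself open and locally elliptic (a directed union of compact open subgroups). For its $G$-normality: any $g \in G$ lies in some $G_k$ and normalizes $R_j$ for every $j \geq k, i_0$ in the family, so by cofinality $gSg^{-1} = S$. Hence $S \subseteq R$. Conversely $R = \bigcup_i (R \cap G_i) \subseteq \bigcup_i R_i = S$ since each $R \cap G_i$ is a closed locally elliptic normal subgroup of $G_i$; this gives $R = S$ open in $G$, proving (2). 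For (3), one has $G/R = \bigcup_{i \geq i_0} G_i/R_i$ as a directed union of virtually minimax groups of common Hirsch number $d$ along injections with finite cokernels on each Fitting layer; a direct-limit argument along a common Fitting filtration then yields that $G/R$ is itself minimax of Hirsch number $d$.

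\textbf{Main obstacle.} The principal difficulty lies in the second step: while it is straightforward to identify $F_{ij}$ with the locally finite radical of a subgroup of $G_j/R_j$, the triviality of the locally finite radical of $G_j/R_j$ only applies to subgroups normal in all of $G_j/R_j$, whereas $F_{ij}$ is a priori only normal in the image of $G_i$. Exploiting the directed-union structure to absorb $G_j$-conjugates of $R_i$ into locally elliptic normal subgroups, and thereby promoting the normality of $F_{ij}$, is the technical heart of the proof.
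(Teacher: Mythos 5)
Your first step (monotonicity and stabilization of the Hirsch numbers $d_i$, via the short exact sequence $1\to R_i/(R_j\cap G_i)\to G_i/(R_j\cap G_i)\to G_i/R_i\to 1$ and virtual torsion-freeness of $G_j/R_j$) is the same computation as in the paper.

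The second step takes a genuinely different route, and it is there that your proposal has a real gap. You form the normal closure of $R_i$ in $G_j$ modulo $R_j$: each $G_j$-conjugate $gR_ig^{-1}$ gives a finite subgroup $gR_ig^{-1}R_j/R_j$ of $G_j/R_j$, and you then assert that the normal closure of $R_iR_j$ in $G_j$ \enquote{is a closed locally elliptic subgroup of $G_j$}. This does not follow: a normal subgroup of $G_j/R_j$ generated by finite subgroups need not be locally finite. The infinite dihedral group $D_\infty$ is a minimax group of Hirsch number $1$ with trivial locally finite radical, so it is exactly the kind of group $G_j/R_j$ can be; yet it is generated by its involutions and is not locally finite. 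So the \enquote{accumulation} step you propose does not give local ellipticity, and this is precisely the step you yourself flag as the technical heart of the proof.

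The paper closes the argument in a different and shorter way, using the Hirsch-number hypothesis a second time. Once all $d_i=d$, the subgroup $G_i/(R_j\cap G_i)\hookrightarrow G_j/R_j$ has the same Hirsch number as the ambient group, which in this minimax situation forces finite index. Since having trivial locally elliptic radical passes to finite-index subgroups of a minimax group, the finite normal subgroup $R_i/(R_j\cap G_i)$ of $G_i/(R_j\cap G_i)$ is already trivial. There is no need to promote normality or to form a normal closure in $G_j/R_j$; the issue you identify is sidestepped entirely. This is the observation missing from your proposal.

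A smaller remark on (3): the \enquote{direct-limit argument along a common Fitting filtration} is not spelled out and is not automatic, as a directed union of minimax groups of constant Hirsch number need not be minimax (for instance $\bigcup_n \tfrac{1}{n!}\Z = \Q$).
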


\begin{proof}
	The key is to see that $R_i \subseteq R_j$ whenever $j \geq i$. Fix $i,j$ such that $j \geq i$. Let $\pi_i$ be the quotient map from $G_i$ to $G_i /R_i$. The subgroup $R_j \cap G_i$ is normal in $G_i$ and locally elliptic, hence contained in $R_i$. So we have a short exact sequence \[ 1 \to R_i / R_j \cap G_i \to G_i / R_j \cap G_i \to G_i / R_i \to 1.\] Observe that since  $\RadLE(G_j / R_j)$ is trivial, Proposition   \ref{prop-background-minimax} ensures $G_j /R_j$ is virtually torsion-free. Hence $G_i / R_j \cap G_i$, which is a subgroup of $G_j /R_j$, is virtually torsion-free as well. Hence $R_i / R_j \cap G_i $ is finite, and $G_i / R_j \cap G_i$ is virtually minimax  of Hirsch number $d_i$. Since $G_i / R_j \cap G_i$ is a subgroup of $G_j / R_j$, we deduce $d_i \leq d_j$. The assumption that $(d_i)$ is bounded hence means that $(d_i)$ is eventually constant, and without loss of generality we can assume $d_i = d$ for every $i$. We deduce in particular $G_i / R_j \cap G_i$  has finite index in $G_j / R_j$. Since $G_j /R_j$ has trivial locally elliptic radical, the same is true for every finite index subgroup. Hence $R_i / R_j \cap G_i $ is actually trivial, and hence $R_i = R_j \cap G_i$. In particular $R' := \bigcup_i R_i$ is a subgroup of $G$. It is locally elliptic, open and normal. Moreover the locally elliptic radical of $G$ has the property that $\RadLE(G) \cap G_i \leq R_i$, so $\RadLE(G) \leq R'$. Hence $\RadLE(G) = R'$.
\end{proof}

The proof of Theorem  \ref{thm-gen-structure-envelopes} will involve property $H_{FD}$. A locally compact group $G$ has property $H_{FD}$ if every unitary representation $\pi$ with non-zero first reduced cohomology $\bar{H}^1(G,\pi)$, has a sub-representation of positive finite dimension. We refer the reader to Shalom's original article \cite{Shalom-Acta} for background. We will be using the following result of Shalom:

\begin{Theorem} \label{thm-Shalom-virt-onto-Z}
Let $G$ be a non-compact, compactly generated and amenable tdlc group. If $G$ has property $H_{FD}$, then $G$ has a finite index open subgroup that surjects onto $\Z$. 
\end{Theorem}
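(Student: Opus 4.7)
The plan is to mimic Shalom's strategy from the discrete setting. The first step is to produce a finite-dimensional unitary representation with non-zero reduced first cohomology. Since $G$ is non-compact, compactly generated and amenable, it cannot satisfy Kazhdan's property (T) (otherwise it would be compact), and therefore some unitary representation $\pi$ has $\bar H^1(G, \pi) \neq 0$. A Zorn / orthogonal-complement argument combined with property $H_{FD}$ then yields a finite-dimensional subrepresentation $(\sigma, V)$ of $\pi$ with $\bar H^1(G, \sigma) \neq 0$; in finite dimension reduced and ordinary cohomology coincide, so this class is represented by a genuine non-trivial 1-cocycle $c \colon G \to V$.

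The second step realizes this cocycle as an affine action. Together with the representation, $c$ defines a continuous homomorphism $\rho \colon G \to V \rtimes O(V)$ whose associated affine action on $V$ has no fixed point. In finite dimension, absence of fixed points forces unboundedness of orbits (the circumcenter of a bounded orbit would be a fixed point), so $\rho(G)$ is unbounded. Set $H := \overline{\rho(G)}$, a closed subgroup of the Lie group $V \rtimes O(V)$, hence itself an amenable Lie group. The normal subgroup $N := H \cap V$ is a closed subgroup of $V \simeq \R^n$, so $N \simeq \R^a \times \Z^b$; the quotient $H/N$ embeds into the compact group $O(V)$, so is compact. Unboundedness of $\rho(G)$ then forces $a+b \geq 1$.

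The third step extracts the surjection onto $\Z$. Total disconnectedness of $G$ prevents the image of $G$ in any connected Lie group from being "essentially connected": after passing to a suitable finite-index open subgroup of $G$ and quotienting $H$ by its maximal compact normal subgroup together with $N^0 \simeq \R^a$, one reduces to the case where the image of $G$ is a compactly generated cobounded subgroup of a tdlc group which is compact-by-$\Z^b$ with $b \geq 1$. A finite-index open subgroup then acts trivially by conjugation on $\Z^b$ (compact subgroups of $\mathrm{GL}_b(\Z)$ being finite), and projecting $\Z^b$ onto a coordinate, followed by pulling back through $\rho$, gives a continuous surjection from a finite-index open subgroup of $G$ onto $\Z$.

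The main obstacle is the first step: upgrading $H_{FD}$ from the a priori weak statement that every unitary representation with non-zero $\bar H^1$ has some non-zero finite-dimensional subrepresentation, to the existence of a finite-dimensional representation that actually \emph{carries} the cohomology class. The reduction proceeds by iteratively splitting off finite-dimensional pieces on which the cohomology vanishes and arguing via Zorn that the process either terminates with the class landing in a finite-dimensional piece, or produces a contradiction with $H_{FD}$ applied to the infinite-dimensional remainder. A secondary subtlety in the third step is ensuring $b \geq 1$ and not merely $a \geq 1$: this is precisely the input provided by total disconnectedness of $G$, ensuring the unbounded part of the affine action is not absorbed entirely by the connected direction $\R^a$.
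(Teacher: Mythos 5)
Your first five steps correctly retrace the first half of Shalom's proof of \cite[Theorem 4.3.1]{Shalom-Acta}: failure of (T) gives a unitary representation with non-zero reduced $H^1$; property $H_{FD}$ is upgraded (by the iterated-splitting argument you describe) to place the class in a finite-dimensional subrepresentation, where reduced and ordinary cohomology agree; this yields an affine isometric action on a finite-dimensional $V$ without fixed point, hence with unbounded orbits, i.e.\ a continuous homomorphism $\rho\colon G \to V \rtimes O(V)$ with unbounded image. This is exactly the content of the first half of Shalom's argument, which the paper simply cites.

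The gap is in your final step, and it lies precisely where the paper inserts its single new observation. You pass to the closure $H = \overline{\rho(G)}$, write its translation part as $N = H \cap V \simeq \R^a \times \Z^b$, and claim that total disconnectedness of $G$ forces $b \geq 1$, \enquote{ensuring the unbounded part of the affine action is not absorbed entirely by the connected direction $\R^a$}. That claim is false. Take $G = \Z^2$ (discrete, hence a compactly generated amenable tdlc group with $H_{FD}$), $V = \R$ with the trivial representation, and the cocycle $c(m,n) = m + n\sqrt{2}$. Then $\rho(\Z^2) = \Z + \sqrt 2 \Z$ is unbounded, yet $H = N = \R$, so $a=1$, $b=0$, and the image of $G$ in $H/N^0$ is trivial. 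Total disconnectedness of $G$ controls the topology of $G$, not that of the closure of its image inside a Lie group.

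What is missing is the no-small-subgroups argument, which the paper explicitly flags as the only modification needed: $V \rtimes O(V) \simeq \mathbb{C}^n \rtimes U(n)$ has no small subgroups, while $G$, being tdlc, has a basis of compact open subgroups at the identity; any sufficiently small compact open subgroup maps into a small, hence trivial, subgroup of the target, so $\ker(\rho)$ is \emph{open}. Therefore $G/\ker(\rho)$ is discrete, and compact generation of $G$ makes it finitely generated. The correct object to analyze is thus $\rho(G)$ itself, \emph{not} its closure $H$: it is an infinite (by unboundedness), finitely generated, amenable subgroup of $\mathrm{GL}_{n+1}(\mathbb{C})$, hence virtually solvable by the Tits alternative, hence virtually surjects onto $\Z$; pulling back through the open map $G \to G/\ker(\rho)$ gives a continuous surjection from an open finite-index subgroup of $G$ onto $\Z$. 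In your $\Z^2$ example, $\rho(\Z^2)$ is a rank-two free abelian group and does surject onto $\Z$, even though its closure in $\R$ is connected — this contrast between $\rho(G)$ and $\overline{\rho(G)}$ is exactly what your proposal fails to see.
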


\begin{proof}
This is Theorem 4.3.1 in \cite{Shalom-Acta}. The result is stated there for discrete groups, but the same arguments cover the above setting. Indeed, the first part of the proof of \cite[Theorem 4.3.1]{Shalom-Acta} provides a continuous homomorphism $\rho: G \to \mathbb{C}^n \rtimes U(n)$ with infinite image. Here since $G$ is totally disconnected and  $\mathbb{C}^n \rtimes U(n)$ has no small subgroups, $\ker(\rho)$ is necessarily open. Hence the image of $\rho$ is an infinite finitely generated subgroup of $\mathbb{C}^n \rtimes U(n)$, and the second part of the proof goes through.
\end{proof} 

We will also rely on the following result of Cornulier--Tessera, which is \cite[Theorem 1.12]{Cor-Tess-vanishing}. 

\begin{Theorem} \label{thm-Cor-Tess-Hfd-solv}
If $\Gamma$ is a finitely generated minimax group, then $\Gamma$ has property $H_{FD}$.
\end{Theorem}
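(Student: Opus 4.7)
The plan is to pass from $\Gamma$ to an amenable locally compact envelope whose structure is explicit enough that reduced cohomology can be analyzed via harmonic analysis, and then transfer the conclusion back to $\Gamma$. First I would reduce to the case where $\Gamma$ is torsion-free: by Proposition \ref{prop-background-minimax} every finitely generated minimax group contains a torsion-free finite index subgroup, and property $H_{FD}$ is inherited between a finitely generated group and any finite index subgroup, so it suffices to establish $H_{FD}$ for the torsion-free case.

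Assuming $\Gamma$ is torsion-free, I would embed $\Gamma$ as a cocompact lattice in an amenable compactly generated locally compact group of the form
\[
G = G_\infty \times \prod_{p \in S} G_p,
\]
where $G_\infty$ is a simply connected solvable real Lie group, $S$ is the finite set of primes appearing as ``denominators'' in a faithful $\Q$-linear realization of $\Gamma$, and each $G_p$ is a compactly generated solvable $p$-adic analytic group. Such an embedding is standard for finitely generated torsion-free minimax groups, and reflects the fact that elements of $\Gamma$ involve only finitely many primes in their denominators.

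Next I would establish $H_{FD}$ for $G$ and then descend to $\Gamma$. For each factor individually, this uses harmonic analysis on amenable real, respectively $p$-adic, Lie groups: given a unitary representation $\pi$ without any nonzero finite-dimensional subrepresentation, one shows every reduced $1$-cocycle is a coboundary by a Mautner-type argument, analyzing the weak limits of matrix coefficients along an asymptotically invariant sequence and exploiting the contraction properties of $G_\infty$ or $G_p$. To combine the factors, I would use a Künneth-type decomposition of reduced first cohomology of outer tensor products, observing that an infinite-dimensional irreducible subrepresentation of $\pi_1 \otimes \pi_2$ forces at least one factor to be infinite-dimensional, so that $H_{FD}$ on each factor transfers to the product. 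Finally, since $\Gamma$ is cocompact in $G$, Shalom's framework shows that $H_{FD}$ is preserved under passage between a compactly generated amenable locally compact group and a cocompact lattice, completing the transfer.

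The hardest step will be proving $H_{FD}$ for the locally compact factors themselves, particularly for the solvable $p$-adic analytic factors. Without a type I hypothesis one cannot freely decompose representations into irreducibles, and the control of reduced $1$-cocycles of infinite-dimensional representations requires genuine analytic input. This is where the bulk of the technical work lies and where Cornulier--Tessera's argument must go substantially beyond Shalom's $H_{FD}$ theorem for polycyclic groups, which only handles the case where $S$ is empty and $G_\infty$ has a cocompact lattice (the polycyclic situation). A secondary technical hurdle is the Künneth/product step, whose cleanest proof requires some care with the notion of reduced cohomology, but which is substantially easier than the single-factor case.
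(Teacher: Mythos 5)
The paper does not prove this statement; it is quoted verbatim as \cite[Theorem 1.12]{Cor-Tess-vanishing}, so there is no in-paper proof to compare against. Your proposal is a sketch of how one might go about proving the Cornulier--Tessera result from scratch, and it captures the broad outline of their actual strategy (embedding a minimax group as a lattice in a product $G_\infty \times \prod_p G_p$, analysing reduced cohomology on each factor, and transferring $H_{FD}$ to the lattice). You are also candid that the substantive analytic work --- establishing $H_{FD}$ for the non-Archimedean factors and handling the product decomposition --- is left undone, and indeed that is where the entire content of the Cornulier--Tessera paper lies, so as a proof of the theorem this is at most a plan, not an argument.

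Beyond that, there is a concrete gap at the very first step. You write that you would ``reduce to the case where $\Gamma$ is torsion-free: by Proposition \ref{prop-background-minimax} every finitely generated minimax group contains a torsion-free finite index subgroup.'' That is not what Proposition \ref{prop-background-minimax} says, and it is false. Item 2(c) of that proposition states that a minimax group is virtually torsion-free \emph{if and only if} its finite residual $R(\Gamma)$ is trivial (equivalently, $W(\Gamma)$ is finite), and item 2(b) records that $R(\Gamma)$ is a product of finitely many Pr\"ufer groups which may well be nontrivial. There exist finitely generated, even finitely presented, minimax groups that are not virtually torsion-free --- the quotient of Abels' group by an infinite cyclic central subgroup is the standard example, having a quasicyclic central subgroup --- and the theorem as stated covers these. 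Once your reduction fails, the cocompact embedding $\Gamma \hookrightarrow G_\infty \times \prod_p G_p$ you invoke (which matches the construction referenced at \cite[\S 8.1]{Cor-Tess-vanishing} in this paper) is also unavailable, since that construction is stated for finitely generated \emph{virtually torsion-free} minimax groups. So the non-(virtually torsion-free) case needs a genuinely separate treatment, which your plan does not address.
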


Recall that a closed subgroup $H$ of a locally compact group $G$ has finite covolume if there is a $G$-invariant probability measure on $G/H$. 


\begin{Theorem} \label{thm-cobounded-tdlc-rad-open}
	Let $\Gamma$ be a subgroup of a tdlc group $G$ such that $\Gamma$ is cobounded in $G$ and $\overline{\Gamma}$ has finite covolume in $G$. If $\Gamma$ is virtually minimax, then $\RadLE(G)$ is open, and $G / \RadLE(G)$ is virtually minimax (with Hirsch number bounded above by the one of $\Gamma$). 
\end{Theorem}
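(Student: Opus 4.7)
The plan is to combine a reduction to compactly generated $G$ via Proposition \ref{prop-limit-radical-by-abelian} with an induction on the Hirsch number fueled by property $H_{FD}$. First replace $\Gamma$ by a finite-index minimax subgroup, which preserves every hypothesis and the Hirsch number. Amenability of $\Gamma$ spreads to $\overline{\Gamma}$ by density and then to $G$ via the finite-covolume hypothesis, so $G$ is amenable. Now write $G = \bigcup_i G_i$ as a directed union of open compactly generated subgroups. Each $G_i$ is amenable (open subgroup of amenable $G$), and $\Gamma \cap G_i$ is a cobounded minimax subgroup of $G_i$ of Hirsch $\leq h(\Gamma)$. Via Proposition \ref{prop-limit-radical-by-abelian}, the theorem will follow from the following auxiliary claim, in which the finite-covolume hypothesis no longer appears (so that the induction closes): \emph{if $G$ is a compactly generated amenable tdlc group with a cobounded minimax subgroup $\Gamma$, then $\RadLE(G)$ is open in $G$ and $h(G/\RadLE(G)) \leq h(\Gamma)$.}

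The auxiliary claim is proved by induction on $h := h(\Gamma)$. When $h = 0$, Proposition \ref{prop-background-minimax}(\ref{item-minimax-Hirsch-zero}) gives that $R(\Gamma)$ has finite index in $\Gamma$ and is a finite direct product of Prüfer groups, so $\Gamma$ is virtually locally finite; a finitely generated cobounded subgroup $\Gamma_0 \leq \Gamma$ (Proposition \ref{prop-approx-fg}) is then forced to be finite, whence $G$ is compact. For $h \geq 1$, pick $\Gamma_0 \leq \Gamma$ finitely generated and cobounded. Being a finitely generated subgroup of a minimax group, $\Gamma_0$ is minimax, hence has property $H_{FD}$ by Theorem \ref{thm-Cor-Tess-Hfd-solv}. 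Since $\Gamma_0$ is cobounded in the compactly generated group $G$, the Milnor--Schwarz lemma makes them QI, and Shalom's QI-invariance of $H_{FD}$ among compactly generated amenable locally compact groups transfers $H_{FD}$ to $G$. We may assume $G$ is non-compact, so Theorem \ref{thm-Shalom-virt-onto-Z} yields a finite-index open subgroup carrying a continuous surjection $\pi$ onto $\Z$. Replacing $G$ by this finite-index subgroup, set $N := \ker \pi$, an open normal subgroup with $G/N \cong \Z$.

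The key point is that $\Gamma \cap N$ is cobounded in $N$ and has Hirsch number $h-1$. The Hirsch bound is immediate: $\pi(\Gamma)$ is cobounded in $\Z$, hence equals $d\Z$ for some $d \geq 1$, and $\Gamma/(\Gamma \cap N) \cong d\Z$. For coboundedness, write $G = \Gamma K$ with $K \subset G$ compact and pick $\gamma_0 \in \Gamma$ with $\pi(\gamma_0) = d$; for $n \in N$ expressed as $n = \gamma k$, the equality $\pi(\gamma) = -\pi(k)$ forces $\pi(\gamma) \in -\pi(K) \cap d\Z$, which is finite, so writing $\gamma = (\gamma \gamma_0^{-m})\, \gamma_0^m$ with $dm = \pi(\gamma)$ yields $n = \gamma' \, (\gamma_0^m k)$ with $\gamma' \in \Gamma \cap N$ and $\gamma_0^m k$ ranging over a fixed compact subset of $N$. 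Now $N$ is amenable (open in amenable $G$) but possibly not compactly generated; writing $N = \bigcup_j N_j$ as a directed union of open compactly generated subgroups and applying the inductive hypothesis to each $N_j$ (whose cobounded minimax subgroup $\Gamma \cap N \cap N_j$ has Hirsch $\leq h-1$), a second use of Proposition \ref{prop-limit-radical-by-abelian} gives that $\RadLE(N)$ is open in $N$ with $h(N/\RadLE(N)) \leq h - 1$. Since locally elliptic subgroups of $\Z$ are trivial, $\RadLE(G) \leq N$, and $\RadLE(N)$ is characteristic in $N$, hence normal in $G$; thus $\RadLE(G) = \RadLE(N)$ is open in $G$, and $G/\RadLE(G)$ is an extension of $N/\RadLE(N)$ by $\Z$, hence virtually minimax of Hirsch $\leq h$. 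The main delicate point is upgrading the coboundedness of $\Gamma$ in $G$ to that of $\Gamma \cap N$ in the kernel $N$ (the $\gamma_0$-decomposition above), compounded by the need to apply Proposition \ref{prop-limit-radical-by-abelian} twice -- once to reduce to the compactly generated case, and once more inside the inductive step to handle the possibly non-compactly-generated subgroup $N$.
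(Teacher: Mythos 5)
The overall architecture mirrors the paper's (reduction to compactly generated open subgroups via Proposition \ref{prop-limit-radical-by-abelian}, induction on Hirsch number, $H_{FD}$ leading to a virtual surjection onto $\Z$), and your base case and your explicit verification that $\Gamma \cap N$ is cobounded in $N$ are both fine. But there is a genuine gap in the step where you transfer property $H_{FD}$ from $\Gamma_0$ to $G$.

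You invoke Milnor--Schwarz to assert that the finitely generated cobounded subgroup $\Gamma_0$ is quasi-isometric to $G$, and then appeal to QI-invariance of $H_{FD}$. Milnor--Schwarz requires the action of $\Gamma_0$ on $G$ by left translations to be \emph{proper}, which is equivalent to $\Gamma_0$ being discrete in $G$; but nothing in the hypotheses forces $\Gamma_0$ to be discrete. In fact the theorem is deliberately stated with ``$\overline{\Gamma}$ has finite covolume'' rather than ``$\Gamma$ is a cocompact lattice'' precisely because $\Gamma$ may be a non-discrete (even dense) subgroup. A concrete failure: $\Gamma_0 = \Z[1/p] \rtimes \Z$ sits densely and coboundedly in $G = \Q_p \rtimes_p \Z$; the former is the solvable Baumslag--Solitar group $BS(1,p)$ (asymptotic dimension $2$), the latter is quasi-isometric to a regular tree (asymptotic dimension $1$), so they are not QI. Beyond this, even granting a QI, Shalom's QI-invariance of $H_{FD}$ is stated for discrete amenable groups, not for general compactly generated locally compact groups, so it is not directly citable here. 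The paper instead transfers $H_{FD}$ in two steps that avoid any properness issue: $H_{FD}$ passes from the dense subgroup $\Gamma_0$ to its closure $\overline{\Gamma_0}$, and then from the closed, cocompact, finite-covolume subgroup $\overline{\Gamma_0}$ to the ambient group $L$ via \cite[Proposition 4.13(1)]{Cor-Tess-vanishing} (the finite covolume being automatic since $L$ is amenable and $\overline{\Gamma_0}$ is cocompact). If you replace your Milnor--Schwarz/QI step by this two-stage transfer, the rest of your argument closes the induction correctly.
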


\begin{proof}
First, we notice that	since $\overline{\Gamma}$ is amenable and has finite covolume in $G$, $G$ is amenable \cite[Corollary G.3.8]{BHV}. Second, we also notice that as soon as we have  proved that $\RadLE(G)$ is open, the remaining assertions follow because the discrete group $G / \RadLE(G)$ admits a quotient of $\Gamma$ as a finite index subgroup, and hence is indeed virtually minimax. The bound on the Hirsch number is also clear.
	
For, we argue by induction on $h(\Gamma)$. Suppose $h(\Gamma) = 0$. By items \ref{item-minimax-Hirsch-zero} and \ref{item-minimax-RGamma} of Proposition \ref{prop-background-minimax}, the subgroup $W(\Gamma)$ has finite index in $\Gamma$. Since $\Gamma$ is cobounded, Proposition \ref{prop-polycompact-cocompact} implies $W(\Gamma)$ is contained in $W(G)$. Since $W(G) \leq \RadLE(G)$, we infer that $\RadLE(G)$ is cocompact in $G$. But then $G / \RadLE(G)$ is both compact, and shall have trivial locally elliptic radical. So $\RadLE(G) = G$. 

Assume now $h(\Gamma) > 0$ and the results holds for every group of Hirsch number $< h(\Gamma)$. Write the group $G = \bigcup_i G_i$ as the directed union of its compactly generated open subgroups. Set $\Gamma_i = \Gamma \cap G_i$. The subgroup $\Gamma_i$ is cobounded in $G_i$, and $h(\Gamma_i ) \leq h(\Gamma)$ for all $i$. Hence if we show that $\RadLE(G_i)$ is open for every $i$, then Proposition \ref{prop-limit-radical-by-abelian} applies, and ensures that $\RadLE(G)$ is open. So we are reduced to show that $\RadLE(L)$ is open for every compactly generated open subgroup $L$ of $G$. Set $\Gamma_L := \Gamma \cap L$, which is a cobounded subgroup of $L$. Since $L$ is compactly generated, Proposition  \ref{prop-approx-fg} asserts that there is a sufficiently large finitely generated subgroup $\Lambda_L$ of $\Gamma_L$ such that $\Lambda_L$  is cobounded in $L$.  If $L$ is compact then there is nothing to prove, so we assume $L$ non-compact. Since $\Lambda_L$ is finitely generated,  Theorem \ref{thm-Cor-Tess-Hfd-solv} says that $\Lambda_L$  has property $H_{FD}$. Since $H_{FD}$ is inherited from a dense subgroup, we deduce that $\overline{\Lambda_L}$ also has $H_{FD}$. Also since $L$ is amenable, and $\overline{\Lambda_L}$ is cocompact in $L$, $\overline{\Lambda_L}$ is of finite covolume in $L$. Now $H_{FD}$ passes from a closed cocompact subgroup of finite covolume to the ambient group \cite[Proposition 4.13(1)]{Cor-Tess-vanishing}. Therefore $L$ has property $H_{FD}$. Since $L$ is compactly generated, amenable and non-compact, Theorem \ref{thm-Shalom-virt-onto-Z} ensures that $L$ has a finite index subgroup $L'$ that surjects onto $\Z$. Upon passing to a further finite index subgroup we can assume that $L'$ is normal, and then upon replacing $L$ by $L'$ we can assume $L'=L$. So we have a subgroup $N$ that is open and normal in $L$ such that $L/N \simeq \Z$. Hence $\Gamma \cap N$ is cobounded in $N$ and $h(\Gamma \cap N) < h(\Gamma)$. By induction $\RadLE(N)$ is open in $N$. Since $N$ is normal and the locally elliptic radical is characteristic, we  deduce $\RadLE(N) \leq \RadLE(L)$ and hence $ \RadLE(L)$ is open in $L$. 
\end{proof}

\begin{Proposition} \label{prop-G0-elliptic-open}
	Let $G$ be a locally compact group such that $G/G^0$ is locally elliptic. Then $G^0 \RadLE(G)$ is open in $G$.
\end{Proposition}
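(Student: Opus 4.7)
The plan is to show that $\pi(\RadLE(G))$ is open in $G/G^0$, where $\pi \colon G \to G/G^0$ denotes the canonical projection; this is equivalent to the desired conclusion since $G^0 \RadLE(G) = \pi^{-1}(\pi(\RadLE(G)))$.

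I would begin with a harmless reduction by quotienting out $W(G^0)$: this subgroup is compact by Proposition \ref{prop-Lie-WG-cpct} (applied to $G^0$, which being connected is connected-by-compact), characteristic in $G^0$ hence normal in $G$, and contained in $\RadLE(G)$. Afterwards $G^0$ is a connected Lie group with no non-trivial compact normal subgroup; in particular $G^0$ is compactly generated. Since openness is a local property, fix a compact open subgroup $V_0$ of $G/G^0$ (which exists by van Dantzig) and work inside the open subgroup $G_0 := \pi^{-1}(V_0)$ of $G$. Then $G_0$ is compactly generated and connected-by-compact, and $V_0 = G_0/G^0$ is a compactly generated locally elliptic group, hence compact and profinite.

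Applying Theorem \ref{thm-Hilbert-fifth} to $G_0$ yields a compact normal subgroup $K \lhd G_0$ with $G_0/K$ a Lie group with finitely many connected components; the intersection $K \cap G^0$ is a compact normal subgroup of $G^0$, hence trivial. Following the argument from the proof of Theorem \ref{thm-classification-env}, let $G_1 \subseteq G_0$ be the preimage of the identity component of $G_0/K$, an open finite-index subgroup of $G_0$ with $G_1/K$ connected Lie. Then $G_1/G^0 K$ is simultaneously a quotient of the totally disconnected group $G_1/G^0$ and of the connected group $G_1/K$, hence trivial, so $G_1 = G^0 K$. Consequently $\pi(K) = G_1/G^0$ is a finite-index open subgroup of $V_0$, in particular open in $G/G^0$. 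Moreover $K \subseteq W(G_0)$, which is compact by Proposition \ref{prop-Lie-WG-cpct} and characteristic in $G_0$.

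The main obstacle is the transfer from $K \subseteq \RadLE(G_0)$ (which holds since $K$ is normal in $G_0$) to $K \subseteq \RadLE(G)$. Since $V_0$ need not be normal in $G/G^0$, the subgroup $G_0$ need not be normal in $G$, so the characteristic subgroup $W(G_0)$ of $G_0$ may fail to be $G$-normal. I expect this to be resolved by iterating the construction over the directed family of compactly generated open subgroups $O = \pi^{-1}(V)$ of $G$ containing $G^0$ (as $V$ ranges over compact open subgroups of $G/G^0$): the compact subgroups $W(O)$ thus produced are compatible, since each has trivial intersection with $G^0$ after the initial reduction and is the maximal compact normal subgroup of its respective $O$; their images in $G/G^0$ form a directed family of open subgroups, and a patching argument yields a closed locally elliptic normal subgroup of $G$ whose image in $G/G^0$ contains an open subgroup, giving $\pi(\RadLE(G))$ open as desired.
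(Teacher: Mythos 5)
The paper does not give its own argument here; it simply cites \cite[Theorem A.5]{CoTe-contract-Lp}. Your proof is therefore a self-contained alternative, and the overall strategy — reduce to $W(G^0)=1$, apply Yamabe's theorem to the connected-by-compact preimage $G_0 = \pi^{-1}(V_0)$, show that the resulting compact normal $K$ satisfies $G_1 = G^0K$ by the ``connected and totally disconnected hence trivial'' trick, and deduce $\pi(K)$ open — is correct and goes through. The reduction step, the application of Theorem \ref{thm-Hilbert-fifth}, and the identification $G_1 = G^0 K$ are all justified as you state.

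However, the final paragraph is written at the level of an expectation rather than a proof, and two specific points need to be settled for the patching to work. First, the claimed compatibility $W(O_V) \subseteq W(O_{V'})$ for $V \subseteq V'$ is not a consequence of the two properties you invoke (trivial intersection with $G^0$, maximality inside $O_V$); it follows instead from Proposition \ref{prop-polycompact-cocompact}, once you observe that $O_V$ has finite index (in particular is cocompact) in $O_{V'}$, so the compact normal subgroup $W(O_V)$ of $O_V$ is contained in a compact normal subgroup of $O_{V'}$, hence in $W(O_{V'})$. Second, you need to actually verify that $L := \bigcup_V W(O_V)$ is a \emph{closed} locally elliptic normal subgroup of $G$; normality is easy from directedness, but closedness and local ellipticity require an argument. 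Both follow from the two facts $L \cap G^0 = \{1\}$ and $\pi(L) = \bigcup_V \pi(W(O_V))$ is open (hence closed) in $G/G^0$: for closedness, given $x \in \overline L$ one gets $\pi(x) \in \pi(W(O_V))$ for some $V$, so $x \in G^0 W(O_V)$, and since $G^0 W(O_V) \cap L = W(O_V)$ (using $L \cap G^0 = \{1\}$) and $G^0 W(O_V)$ is open, a net in $L$ converging to $x$ eventually lies in the closed set $W(O_V)$, forcing $x \in W(O_V) \subseteq L$; for local ellipticity, a compact $C \subseteq L$ has $\pi(C)$ contained in some $\pi(W(O_V))$ by compactness along the directed open cover, whence $C \subseteq L \cap G^0 W(O_V) = W(O_V)$. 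With these details spelled out the argument is complete and yields a short direct proof, more elementary than appealing to \cite{CoTe-contract-Lp}.
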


\begin{proof}
	See \cite[Theorem A.5]{CoTe-contract-Lp}. 
\end{proof}

\begin{proof}[Proof of Theorem \ref{thm-gen-structure-envelopes}]
	Since $\Gamma$ is amenable, $G$ is amenable. Hence so is the tdlc quotient $Q = G/G^0$. Applying Theorem  \ref{thm-cobounded-tdlc-rad-open} to  the image of $\Gamma$ in $Q$, we deduce $\RadLE(Q)$ is open in $Q$. Let $O$ be the preimage in $G$ of $\RadLE(Q)$. The subgroup $O$ is therefore open in $G$, and we have $O^0 = G^0$ and $\RadLE(O) = \RadLE(G)$. Moreover $O$ verifies the assumption of Proposition \ref{prop-G0-elliptic-open}, so by this proposition we infer $O^0 \RadLE(O)$ is open in $O$. Since $O$ is open in $G$ and $O^0 \RadLE(O) = G^0 \RadLE(G)$ , the statement follows.
\end{proof}

\begin{Corollary} \label{cor-tdlc-envelopes-minimax}
	If $G$ is a tdlc cocompact envelope of a virtually torsion-free minimax group $\Gamma$, then $G$ is compact-by-discrete. 
\end{Corollary}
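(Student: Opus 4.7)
The plan is to exploit Theorem \ref{thm-gen-structure-envelopes} to force the locally elliptic radical $R := \RadLE(G)$ to be open, and then to use the torsion-freeness assumption to show that $R$ must actually be compact, giving us the desired compact open normal subgroup.

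\textbf{Step 1: Reducing to $R$ open.} Since $G$ is tdlc, $G^0 = 1$, so Theorem \ref{thm-gen-structure-envelopes} directly says that $R = \RadLE(G)$ is open in $G$. Because $R$ is open and $\Gamma$ is a discrete cocompact subgroup of $G$, the intersection $\Gamma \cap R$ is discrete and cocompact in $R$.

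\textbf{Step 2: Showing $\Gamma \cap R$ is finite.} This is the key observation, and relies on combining local ellipticity with torsion-freeness. First pick a torsion-free finite index subgroup $\Gamma_0 \leq \Gamma$ (which exists by the virtual torsion-freeness hypothesis). The intersection $\Gamma_0 \cap R$ is a subgroup of the minimax group $\Gamma_0$, hence is itself minimax, and it is torsion-free. Now take any $\gamma \in \Gamma_0 \cap R$. Since $R$ is locally elliptic, the compact subset $\{\gamma\}$ is contained in some compact subgroup $L$ of $R$; but then $\langle \gamma \rangle \leq L$ is both discrete (as $\Gamma_0 \cap R$ is discrete in $R$) and relatively compact, hence finite. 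Since $\gamma$ is torsion-free, $\gamma = 1$. Thus $\Gamma_0 \cap R$ is trivial, and consequently $\Gamma \cap R$ embeds into the finite group $\Gamma / \Gamma_0$, so $\Gamma \cap R$ is finite.

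\textbf{Step 3: Concluding.} Since $\Gamma \cap R$ is finite and cocompact in $R$, the group $R$ itself is compact. Moreover $R$ is characteristic in $G$, hence normal, and it is open by Step 1. Therefore $R$ is a compact open normal subgroup of $G$, making $G/R$ discrete. This is exactly the statement that $G$ is compact-by-discrete.

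The main obstacle I anticipate is Step 2, where one must be careful that the three properties in play (discrete in $R$, contained in a compact subgroup of $R$, and torsion-free) really do conspire as claimed; but once one passes to the torsion-free finite index subgroup $\Gamma_0$, the argument that any element generates a finite cyclic group in a compact ambient subgroup and thus is trivial, is immediate, and the rest of the proof follows without further difficulty.
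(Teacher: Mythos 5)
Your proof is correct and follows essentially the same route as the paper's: apply Theorem \ref{thm-gen-structure-envelopes} to get $\RadLE(G)$ open, observe that the discrete cocompact intersection $\Gamma \cap \RadLE(G)$ must be finite by combining local ellipticity with virtual torsion-freeness, and conclude that $\RadLE(G)$ is a compact open normal subgroup. The only cosmetic difference is that the paper notes $\Gamma \cap \RadLE(G)$ is torsion and then invokes virtual torsion-freeness, whereas you pass to a torsion-free finite-index subgroup first and argue elementwise; the two phrasings are equivalent.
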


\begin{proof}
Since $G$ is tdlc, Theorem  \ref{thm-gen-structure-envelopes} says $\RadLE(G)$ is open in $G$. But then $\Gamma \cap \RadLE(G)$ is torsion and cocompact in $\RadLE(G)$. Since $\Gamma$ is virtually torsion-free, $\Gamma \cap \RadLE(G)$ must be finite. So $\RadLE(G)$ is compact (and open). 
\end{proof}

\begin{Corollary} \label{cor-same-envelopes-minimax-EA}
	If $\Gamma$ is a minimax group, and $\Gamma$ and $\Lambda$ share a cocompact envelope, then $\Lambda$ lies in a short exact sequence $1 \to \Delta \to \Lambda \to Q \to 1$ such that $Q$ is virtually minimax, and $\Delta$ can be written as an increasing union $\Delta = \bigcup_n \Delta_n$ such that $\Delta_0$ is polycyclic and $\Delta_n$ is a finite index subgroup of $\Delta_{n+1}$ for all $n \geq 0$. 
\end{Corollary}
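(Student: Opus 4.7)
The plan is to set $\Delta := \Lambda \cap O$, where $O := G^0 \, \RadLE(G)$ is the open subgroup of $G$ provided by Theorem \ref{thm-gen-structure-envelopes}. Since both $G^0$ and $\RadLE(G)$ are characteristic in $G$, so is $O$, and hence $\Delta$ is normal in $\Lambda$. Because $O$ is open, $G/O$ is discrete, and the image of the cocompact subgroup $\Gamma$ is a finite-index subgroup of $G/O$; since $\Gamma$ is virtually minimax, this forces $G/O$ itself to be virtually minimax. The same argument applied to $\Lambda$ shows that $Q := \Lambda/\Delta$ embeds as a finite-index subgroup of $G/O$, and is therefore virtually minimax, as required.

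It remains to decompose $\Delta$ as an ascending union of the stated form. Using that $G^0 \cap \RadLE(G)$ is always compact (as noted after Theorem \ref{thm-gen-structure-envelopes}) and that the conclusion is stable under modding out by a compact normal subgroup of $G$, we may assume, after a single such reduction combining $G^0 \cap \RadLE(G)$ with the maximal compact normal subgroup of $G^0$, that $O = G^0 \times \RadLE(G)$ with $G^0$ a connected Lie group. Since $\Gamma$ is finitely generated, $G$ is compactly generated, and hence $\RadLE(G)$ is $\sigma$-compact; being locally elliptic and tdlc, it can be written as $\RadLE(G) = \bigcup_n U_n$ for an ascending sequence of compact open subgroups. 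Setting $O_n := G^0 \times U_n$ and $\tilde\Delta_n := \Lambda \cap O_n$, we have $\tilde\Delta_n \subseteq \tilde\Delta_{n+1}$ and $\Delta = \bigcup_n \tilde\Delta_n$.

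The crucial point is that every $\tilde\Delta_n$ is virtually polycyclic of the same Hirsch number $d := \dim G^0$. Indeed, the projection $\pi_n : O_n \to G^0$ is proper, its kernel $U_n$ being compact; therefore it sends the discrete subgroup $\tilde\Delta_n$ onto a discrete subgroup of $G^0$, with finite kernel $\tilde\Delta_n \cap U_n$. The cocompactness of $\tilde\Delta_n$ in $O_n$, which holds since $\Lambda$ is cocompact in $G$ and $O_n$ is open in $G$, projects to cocompactness of $\pi_n(\tilde\Delta_n)$ in $G^0$, so $\pi_n(\tilde\Delta_n)$ is a discrete cocompact subgroup of the connected amenable Lie group $G^0$. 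It is therefore virtually polycyclic of Hirsch number $d$ by the classical theorem on lattices in connected amenable Lie groups (Mostow--Auslander). Consequently $\tilde\Delta_n$ is virtually polycyclic of Hirsch number $d$, and the constancy of the Hirsch number forces $[\tilde\Delta_{n+1} : \tilde\Delta_n] < \infty$. Choosing a polycyclic finite-index subgroup $P \leq \tilde\Delta_0$, and setting $\Delta_0 := P$ together with $\Delta_n := \tilde\Delta_{n-1}$ for $n \geq 1$, produces the decomposition required by the statement.

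The main technical subtlety lies in the initial reduction: one must verify that the two modifications (killing $G^0 \cap \RadLE(G)$ and the maximal compact normal subgroup of $G^0$) can be achieved via a single compact normal subgroup of $G$, and that a decomposition of $\Lambda$ obtained in the quotient pulls back to one of the required form on $\Lambda$ itself, by absorbing the finite kernel of the reduction into the polycyclic base term $\Delta_0$.
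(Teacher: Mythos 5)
Your proof is correct and follows essentially the same route as the paper's, but with two cosmetic differences in how the reduction to Lie-group lattices is arranged. The paper does \emph{not} perform a global reduction of $G$: it keeps $O = G^0\,\RadLE(G)$ as is, writes $O = \bigcup_n O_n$ with $O_n$ open and $O_n/G^0$ compact, and then applies Yamabe's theorem \emph{locally to each $O_n$}, choosing a compact normal subgroup $K_n \lhd O_n$ small enough that $K_n \cap \Lambda = 1$; this makes $\Delta_n = \Lambda \cap O_n$ literally isomorphic to a discrete cocompact subgroup of the virtually connected amenable Lie group $O_n/K_n$, with no finite-kernel bookkeeping. Your version instead does a single global quotient of $G$ to force $O = G^0 \times \RadLE(G)$ with $G^0$ Lie and then projects to the $G^0$-factor; this works but requires the care you flag (checking that one compact normal subgroup suffices and that the resulting finite kernel on $\Lambda$ can be absorbed into $\Delta_0$), precisely the bookkeeping the paper's local choice of $K_n$ avoids. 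Two smaller points: your claim that the Hirsch number of the projected lattice equals $d = \dim G^0$ is not correct in general (it is $\dim G^0$ minus the dimension of a maximal compact subgroup of $G^0$, which need not be trivial even when $G^0$ has trivial maximal compact \emph{normal} subgroup, e.g.\ $G^0 = \R^2 \rtimes \mathrm{SO}(2)$); this is harmless because your argument only uses that the Hirsch number is \emph{constant} in $n$. Also, the finite-index relation $[\tilde\Delta_{n+1}:\tilde\Delta_n]<\infty$ is available more directly, as the paper does it: $U_n$ and $U_{n+1}$ are commensurable compact open subgroups of $\RadLE(G)$, so $O_n$ has finite index in $O_{n+1}$, hence so does $\tilde\Delta_n$ in $\tilde\Delta_{n+1}$, without invoking Hirsch numbers at all.
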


\begin{proof}
Let $G$ be a common cocompact envelope of $\Gamma$ and $\Lambda$. By Theorem  \ref{thm-gen-structure-envelopes} the subgroup $O = G^0 \, \RadLE(G)$ is an open normal subgroup of $G$. Since $\Gamma$ is cocompact in $G$, the discrete quotient $G/O$ is virtually minimax. Let $\Delta := \Lambda \cap O$. Note that $G$ is amenable, and therefore so is $O$. One can write $O$ as an increasing union $O = \bigcup_n O_n$  where each $O_n$ is open in $O$, and $O_n / G^0$ is compact. Note that $O_n$ necessarily has finite index in $O_{n+1}$. By Theorem \ref{thm-Hilbert-fifth} there is a compact normal subgroup $K_n$ of $O_n$ such that $K_n \cap \Lambda$ is trivial and $O_n/K_n$ is a virtually connected Lie group. Therefore for every $n$ the subgroup $\Delta_n := \Lambda \cap O_n = \Delta \cap O_n$ is isomorphic to a discrete and cocompact subgroup of the virtually connected amenable Lie group $O/K_n$. This implies that $\Delta_n$ is virtually polycyclic. Moreover $\Delta_n$ necessarily has finite index in $\Delta_{n+1}$, so the sequence $(\Delta_n)$ verifies the desired conclusion. 
\end{proof}

\subsection{The proof of Theorem  \ref{thm-intro-Rad-Lambda-fin}}

Recall that the FC-center of a group $\Delta$, denoted $\FC(\Delta)$, is the subgroup of $\Delta$ consisting of elements with a finite conjugacy class (equivalently, elements centralizing a finite index subgroup). 

\begin{Proposition} \label{prop-sequence-polycyclic}
	Suppose that a discrete group $\Delta = \bigcup_n \Delta_n$ is the increasing union of subgroups $\Delta_n$ such that: \begin{itemize}
		\item $\Delta_0$ is polycyclic;
		\item $\Delta_n$ is a finite index subgroup of $\Delta_{n+1}$ for all $n \geq 0$;
		\item $\RadLF(\Delta)$ is finite.
	\end{itemize}	
	Then $\Delta$ is virtually solvable of finite rank.
\end{Proposition}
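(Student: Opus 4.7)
The plan is to first establish solvability of $\Delta$, then bound its Prüfer rank. Each $\Delta_n$ is polycyclic because $\Delta_0$ is and polycyclicity is preserved under finite extensions; Hirsch length being a commensurability invariant among polycyclic groups, we have $h(\Delta_n) = h(\Delta_0) =: h$ for all $n$. By Mal'cev's theorem, every torsion-free polycyclic group of Hirsch length $h$ embeds into $\mathrm{GL}(N,\Z)$ for some $N = N(h)$, and the derived length of solvable subgroups of $\mathrm{GL}(N,\Z)$ is bounded by a function of $N$. Hence there is $D = D(h)$ with $\Delta_n^{(D)} = \{1\}$ for every $n$. Since any element of $\Delta^{(D)}$ arises from only finitely many elements of $\Delta$, hence from some $\Delta_n$, we have $\Delta^{(D)} = \bigcup_n \Delta_n^{(D)} = \{1\}$, so $\Delta$ is solvable of derived length at most $D$.

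To control rank, let $T_n$ denote the maximal finite normal subgroup of the polycyclic group $\Delta_n$. Then $\Delta_n / T_n$ has a torsion-free polycyclic subgroup of finite index, whose Prüfer rank is at most $h$; so the Prüfer rank of $\Delta_n$ is bounded by $h + O(\log |T_n|)$. Every finitely generated subgroup of $\Delta$ lies in some $\Delta_n$, so it is enough to bound $|T_n|$ uniformly in $n$. Producing such a bound is the main obstacle, and is where the hypothesis $|\RadLF(\Delta)| < \infty$ must be used.

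For this, observe first that $T_n$ coincides with the torsion subgroup $\tau(F_n)$ of the Fitting subgroup $F_n := \Fit(\Delta_n)$: any finite normal subgroup of $\Delta_n$ is nilpotent and torsion, hence lies in $\tau(F_n)$, while $\tau(F_n)$ is finite, characteristic in $F_n$, and therefore normal in $\Delta_n$. Since $F_{n+1} \cap \Delta_n$ is a nilpotent normal subgroup of $\Delta_n$ it lies in $F_n$, and since $[F_{n+1} : F_{n+1} \cap \Delta_n] \leq [\Delta_{n+1} : \Delta_n] < \infty$, we get $h(F_{n+1}) \leq h(F_n)$. Hence the integer sequence $h(F_n)$ stabilises, and after passing to a tail (which preserves all hypotheses of the proposition) $F_n$ and $F_{n+1}$ become commensurable for every $n$. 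The plan is to exploit this commensurability of Fitting subgroups to produce a $\Delta$-normal locally finite subgroup of $\Delta$ containing each $T_n$ up to bounded index; by hypothesis it is finite, forcing $|T_n|$ to be uniformly bounded. Combined with the rank estimate above and the solvability from the first step, this shows that $\Delta$ is solvable of finite rank.
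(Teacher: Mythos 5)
Your first step contains a fatal error. The claim that ``polycyclicity is preserved under finite extensions'' is false: a finite extension of a polycyclic group is polycyclic-by-finite (i.e.\ virtually polycyclic), but need not be solvable, hence need not be polycyclic. Concretely, take $\Delta_0 = \Z$ and $\Delta_n = \Z \times A_5$ for all $n \geq 1$; then all three hypotheses of the proposition hold (here $\RadLF(\Delta) = A_5$ is finite), yet $\Delta = \Z \times A_5$ has $\Delta^{(D)} \supseteq A_5 \neq 1$ for every $D$, so no uniform derived-length bound exists and $\Delta$ is not solvable. Note this is consistent with the proposition, whose conclusion is only \emph{virtual} solvability. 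A symptom that your Step~1 cannot be right is that it never uses the hypothesis $|\RadLF(\Delta)| < \infty$, whereas without that hypothesis the conclusion fails outright (e.g.\ $\Delta = \Z \times \bigoplus_{n} A_5$).

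Your second step is announced as the main obstacle and is left as a plan rather than a proof. The key claim --- producing a $\Delta$-normal locally finite subgroup of $\Delta$ that contains each $T_n$ up to bounded index --- is not established. One concrete difficulty: the maximal finite normal subgroups $T_n = \RadLF(\Delta_n)$ need not be nested ($T_n$ is normal in $\Delta_n$ but in general not normal in $\Delta_{n+1}$, and a finite normal subgroup of a finite-index subgroup need not extend to a finite normal subgroup of the larger group), so it is not clear that $\bigcup_n T_n$ is even a subgroup, let alone normal in $\Delta$.

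For comparison, the paper avoids both problems by a different route. Since $\Delta_0$ is commensurated in $\Delta$, there is a homomorphism $c_{\Delta,\Delta_0}: \Delta \to \Comm(\Delta_0)$. Its kernel $N$ satisfies $N \cap \Delta_n = \FC(\Delta_n)$, and the FC-centers \emph{are} nested in the sense $\FC(\Delta_n) = \Delta_n \cap \FC(\Delta_{n+1})$. By Neumann's theorem each $\FC(\Delta_n)$ is (locally finite)-by-(torsion-free abelian), and the locally finite parts union to a locally finite normal subgroup of $\Delta$, which the hypothesis forces to be trivial; hence $N$ is torsion-free abelian of finite rank. The image lies in $\Comm(\Delta_0)$, which is linear over $\Q$ by Studenmund, contains no nonabelian free subgroup, and hence (Tits alternative) is virtually solvable of finite rank. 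Both solvability and the rank bound come out of that decomposition at once, and the $\RadLF$ hypothesis is used precisely where it must be.

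In summary: Step~1 is wrong and the counterexample $\Delta = \Z \times A_5$ shows it cannot be repaired; Step~2 is incomplete, and the nesting issue for the $T_n$ needs to be addressed before the sketched argument could run. You would need a mechanism (such as the paper's passage to FC-centers) that converts the $\RadLF$ hypothesis into a usable nested system.
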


\begin{proof}
The quotient $\Delta / \RadLF(\Delta)$, verifies the same assumptions as $\Delta$. Moreover being virtually solvable of finite rank is stable under forming an extension by a finite normal subgroup. Hence it is enough to prove the statement when $\RadLF(\Delta)$ is trivial. Since  $\Delta_0$ has finite index in  $\Delta_n$ for all $n$, $\Delta_0$ is a commensurated subgroup of $\Delta$. Hence there is a homomorphism $c_{\Delta, \Delta_0} : \Delta \to \mathrm{Comm}(\Delta_0)$, whose kernel $N$ is the set of elements of $\Delta$ that centralize a finite index subgroup of $\Delta_0$. We have $N \cap \Delta_n = \FC(\Delta_n)$ for all $n$ and $N = \bigcup \FC(\Delta_n)$. Note that $\FC(\Delta_n) = \Delta_n \cap \FC(\Delta_{n+1})$ and $\FC(\Delta_n)$ has finite index in $\FC(\Delta_{n+1})$ for all $n$. Let $L_n = \RadLF(\FC(\Delta_n))$. By Theorem 5.1 in \cite{Neumann-FC-51}, the subgroup $L_n$ coincides with the torsion elements of $\FC(\Delta_n)$, and $\FC(\Delta_n)/L_n$ is torsion-free abelian. In particular we have $L_n \leq L_{n+1}$ for all $n$. Therefore $L := \bigcup L_n$ is a locally finite subgroup of $\Delta$, which is also normal in $\Delta$. By assumption $\Delta$ has no non-trivial locally finite normal subgroup, we deduce that $L$ is trivial and hence $L_n$ is trivial for all $n$. So $\FC(\Delta_n)$ is torsion-free abelian. Moreover, being a subgroup of a virtually polycyclic group, $\FC(\Delta_n)$ is finitely generated. Since $\FC(\Delta_n)$ has finite index in $\FC(\Delta_{n+1})$, this shows that the abelian group $N = \bigcup \FC(\Delta_n)$ is of finite rank. 
	
	Since $\Delta_0$ is polycyclic, the group $\mathrm{Comm}(\Delta_0)$ is linear over $\Q$ \cite[Theorem 1.2]{Studenmund-comm}. Let $Q$ be the image of $c_{\Delta, \Delta_0} : \Delta \to \mathrm{Comm}(\Delta_0)$. Since $\Delta$ has no non-abelian free group, $Q$ also has this property. By the Tits' alternative, $Q$ is virtually solvable, and since $Q$ is linear over $\Q$, a finite index solvable subgroup of $Q$ is solvable of finite rank. Combined with the previous paragraph, this shows that $\Delta$ is virtually solvable of finite rank.
\end{proof}

For a group $\Lambda$, the assertions \enquote{$\Lambda$ has no normal subgroup that is infinite and locally finite} and \enquote{$\RadLF(\Lambda)$ is finite} are equivalent. Hence the following is an equivalent formulation of Theorem  \ref{thm-intro-Rad-Lambda-fin}.

\begin{Theorem} \label{thm-solv-f-rk}
Let	$\Gamma$ be a finitely generated solvable group of finite rank. Suppose that $\Gamma$ and $\Lambda$ share a cocompact envelope, and suppose that $\RadLF(\Lambda)$ is finite. Then $\Lambda$ is  virtually solvable of finite rank. 
\end{Theorem}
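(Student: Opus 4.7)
My plan is to reduce Theorem~\ref{thm-solv-f-rk} to the stability result Proposition~\ref{prop-sequence-polycyclic} via the structural Corollary~\ref{cor-same-envelopes-minimax-EA}, and then to promote virtual finite-rank solvability from the kernel subgroup to $\Lambda$ by a standard extension argument.

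First, since $\Gamma$ is a finitely generated solvable group of finite rank, it is minimax by Proposition~\ref{prop-background-minimax}. Corollary~\ref{cor-same-envelopes-minimax-EA} then provides a short exact sequence
\[ 1 \longrightarrow \Delta \longrightarrow \Lambda \longrightarrow Q \longrightarrow 1, \]
in which $Q$ is virtually minimax and $\Delta = \bigcup_{n \geq 0} \Delta_n$ is the increasing union of a chain of subgroups with $\Delta_0$ polycyclic and each $\Delta_n$ of finite index in $\Delta_{n+1}$. This is precisely the form of kernel to which Proposition~\ref{prop-sequence-polycyclic} applies, once one knows $\RadLF(\Delta)$ is finite.

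The finiteness of $\RadLF(\Delta)$ is where the hypothesis on $\Lambda$ enters. Since $\Delta$ is normal in $\Lambda$, its locally finite radical is characteristic in $\Delta$, hence normal in $\Lambda$; being itself locally finite it is contained in $\RadLF(\Lambda)$, which is finite by assumption. Proposition~\ref{prop-sequence-polycyclic} then yields that $\Delta$ is virtually solvable of finite rank.

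It remains to promote this property from $\Delta$ to $\Lambda$. I would let $R$ be the solvable radical of $\Delta$; by virtual solvability $R$ has finite index in $\Delta$, and being characteristic in $\Delta$ it is normal in $\Lambda$. After replacing $\Lambda$ by a suitable finite-index subgroup $\Lambda_0$ (one that maps onto a minimax subgroup $Q_0$ of $Q$ and centralises the finite group $\Delta/R$), one obtains an extension with $R$ solvable of finite rank on the left and $Q_0$ a finitely generated minimax solvable group on the right. The middle term $\Lambda_0$ is then solvable, and closure of the class of solvable groups of finite rank under extensions, via the abelian-subquotient characterisation, makes $\Lambda_0$ of finite rank. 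Since $\Lambda_0$ is finitely generated, Proposition~\ref{prop-background-minimax} makes it minimax, so $\Lambda$ is virtually solvable of finite rank. The main technical point I anticipate as an obstacle is this last step: $\Delta$ is typically not finitely generated, so neither $R$ nor the subquotients involved need be finitely generated, and one must check by the abelian-subquotient characterisation that every abelian subquotient of the extension has finite torsion-free rank, which is what upgrades finite generation of $\Lambda_0$ to minimaxity.
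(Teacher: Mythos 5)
Your proposal is correct and follows essentially the same route as the paper: reduce to the minimax case via Proposition~\ref{prop-background-minimax}, invoke Corollary~\ref{cor-same-envelopes-minimax-EA} to get the extension $1 \to \Delta \to \Lambda \to Q \to 1$, note that $\RadLF(\Delta) \leq \RadLF(\Lambda)$ is finite, apply Proposition~\ref{prop-sequence-polycyclic} to $\Delta$, and close via stability under extensions. The only difference is cosmetic: the paper dispatches the final step by citing the standard fact that being virtually solvable of finite rank is stable under extensions, whereas you unpack that fact via the solvable radical $R$ and a finite-index subgroup $\Lambda_0$ (where, for the asserted clean sequence $1 \to R \to \Lambda_0 \to Q_0 \to 1$, one should also note that $Z(\Delta/R)=1$ by maximality of $R$); your worry about finite generation is unfounded, since the Pr\"ufer-rank / abelian-subquotient characterisation used for the extension closure does not require any finite generation.
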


\begin{proof}	
Since $\Gamma$  finitely generated, $\Gamma$ is minimax (Proposition \ref{prop-background-minimax}). By Corollary \ref{cor-same-envelopes-minimax-EA}, there is a short exact sequence $ 1 \to \Delta \to \Lambda \to Q \to 1$ such that $Q$ is virtually minimax, and  $\Delta = \bigcup_n \Delta_n$ where $\Delta_0$ is polycyclic and $\Delta_n$ is a finite index subgroup of $\Delta_{n+1}$ for all $n \geq 0$. The subgroup $\Delta$ being normal in $\Lambda$, we have $\RadLF(\Delta ) \leq \RadLF(\Lambda)$. Hence $\RadLF(\Delta )$ is finite. Since $\Delta$ is the increasing union of the $\Delta_n$, Proposition \ref{prop-sequence-polycyclic} ensures $\Delta$ is virtually solvable of finite rank. Since $Q$ also has this property and being virtually solvable of finite rank is stable under extension, the proof is complete.
\end{proof}

\subsection{Solvable groups of finite rank of type $F_\infty$}

In this subsection we show that when $\Gamma$ is a solvable group of finite rank of type $F_\infty$, the assumption in Theorem  \ref{thm-solv-f-rk} that  the locally finite radical $\RadLF(\Lambda)$ of  $\Lambda$ is finite is not needed. 

 Let $\mathcal{C}_\infty$ denote the class of solvable groups of finite rank of type $F_\infty$. By a theorem of P.\ Kropholler, any solvable group of type $F_\infty$ is in $\mathcal{C}_\infty$ \cite{Kroph-FPinf-93} (and $\mathcal{C}_\infty$ also coincides with the class of so-called constructible groups). This result has been generalized by Kropholler--Martinez-P\'{e}rez--Nucinkis, who showed that any elementary amenable group of type $F_\infty$ is virtually in $\mathcal{C}_\infty$ \cite{Kro-Mart-Nuc-2009}. 

\begin{Theorem} \label{thm-finite-rank-F-infty}
	Let	$\Gamma$ be a group in $\mathcal{C}_\infty$ Suppose that $\Gamma$ and $\Lambda$ share a cocompact envelope. Then $\Lambda$ is virtually in $\mathcal{C}_\infty$. 
\end{Theorem}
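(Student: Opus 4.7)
The plan is to verify the two hypotheses of the Kropholler--Martinez-P\'erez--Nucinkis theorem \cite{Kro-Mart-Nuc-2009} (recalled just before the statement) for $\Lambda$: that $\Lambda$ is elementary amenable and of type $F_\infty$. The conclusion then follows immediately, since that theorem asserts that any elementary amenable group of type $F_\infty$ is virtually in $\mathcal{C}_\infty$.

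That $\Lambda$ has type $F_\infty$ is essentially automatic: since $\Gamma$ and $\Lambda$ share a cocompact envelope they are quasi-isometric, type $F_n$ is a QI-invariant for every $n$, and $\Gamma\in\mathcal{C}_\infty$ has type $F_\infty$. To show that $\Lambda$ is elementary amenable I would invoke Corollary \ref{cor-same-envelopes-minimax-EA}. Being finitely generated, solvable and of finite rank, $\Gamma$ is minimax (Proposition \ref{prop-background-minimax}), so the corollary supplies a short exact sequence
\[ 1 \to \Delta \to \Lambda \to Q \to 1 \]
in which $Q$ is virtually minimax, and $\Delta = \bigcup_{n\geq 0}\Delta_n$ is an ascending union with $\Delta_0$ polycyclic and $[\Delta_{n+1}:\Delta_n]<\infty$ for every $n$. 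An immediate induction on $n$ shows that each $\Delta_n$ is virtually polycyclic, and hence elementary amenable. Since elementary amenable groups are closed under directed unions, $\Delta$ is elementary amenable; and $Q$, being virtually solvable of finite rank, is elementary amenable as well. Closure under extensions then yields that $\Lambda$ is elementary amenable, and applying \cite{Kro-Mart-Nuc-2009} concludes.

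There is no substantial obstacle here: the envelope-structure input has already been supplied by Corollary \ref{cor-same-envelopes-minimax-EA} (which itself rests on Theorem \ref{thm-gen-structure-envelopes}), and the Kropholler--Martinez-P\'erez--Nucinkis theorem delivers the desired upgrade from elementary amenability plus type $F_\infty$ to virtual membership in $\mathcal{C}_\infty$. The one point worth flagging is that the finiteness of the successive indices $[\Delta_{n+1}:\Delta_n]$ is essential for the induction that makes each $\Delta_n$ elementary amenable; the mere fact that $\Delta_0$ is polycyclic and $\Delta$ is an ascending union would not suffice. Fortunately this is exactly the information that Corollary \ref{cor-same-envelopes-minimax-EA} produces.
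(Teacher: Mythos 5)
Your proof is correct and follows exactly the route the paper takes: QI-invariance of $F_\infty$ plus Corollary~\ref{cor-same-envelopes-minimax-EA} for elementary amenability, then the Kropholler--Martinez-P\'erez--Nucinkis theorem. The paper leaves the deduction of elementary amenability from Corollary~\ref{cor-same-envelopes-minimax-EA} implicit; you have simply spelled out the (correct) details.
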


\begin{proof}
It is a consequence of Corollary \ref{cor-same-envelopes-minimax-EA} that the group $\Lambda$ is elementary amenable. The group $\Lambda$ is also of type $F_\infty$ since $\Lambda$ is QI to $\Gamma$ and being of type $F_\infty$ is a QI-invariant. Hence by the aforementioned result from \cite{Kro-Mart-Nuc-2009}, $\Lambda$ is virtually in $\mathcal{C}_\infty$.
\end{proof}

\section{Flexibility results} \label{sec-flexibility}

The goal of this section is to provide two distinct constructions that show that the class of solvable groups of finite rank is not CE-rigid. These constructions are given respectively in \S \ref{subsec-flexible-first} and \S \ref{subsec-flexible-second}, and lead respectively to Theorem \ref{thm-intro-not-QI-rigid} and Theorem \ref{thm-intro-high-finiteness}.

\subsection{Preliminaries on Diestel-Leader graphs} \label{subsec-prelim-DL-graphs}

Let $n \geq 2$. We denote by $T_{n}$ the regular tree in which each vertex has degree $n + 1$. Let $\xi$ be an end of $T_n$. We denote by $\focal(n)$ the stabilizer of $\xi$ in the group $\Aut(T_n)$ of automorphisms of $T_n$. Since  $\Aut(T_n)$ acts transitively on the set of ends of $T_n$, end stabilizers are all conjugate, and hence the isomorphism class of $\focal(n)$ does not depend on $\xi$. We will denote by $b : T_{n} \to \Z $ a Busemann function associated to $\xi$ (the choice of $b$ consists of a normalization; different Busemann functions differ by an integer constant). The Busemann character associated to $\xi$ is a continuous homomorphism $\pi: \focal(n) \to \Z$ (independent of the choice of $b$). The kernel of $\pi$, denoted $\elliptic(n)$, is the set of automorphisms $g$ of $T_n$ such that $b(gv) = b(v)$ for every $v \in T_n$. Equivalently, this means that there is a geodesic ray towards $\xi$ that is fixed pointwise by $g$. The homomorphism $\pi$ is surjective, and we have $\focal(n)  = \elliptic(n) \rtimes \Z$. The group $\focal(n)$ acts vertex transitively on $T_n$. 

Let $d \geq 2$, and $n_1, \ldots , n_{d} \geq 2$. For each $i \leq d$, we fix and end $\xi_i$ of $T_{n_i}$ and a Busemann function $b_i : T_{n_i} \to \Z $  associated to $\xi_i$. 

\begin{Definition}
	The Diestel-Leader graph $\DL(n_1, \ldots,n_{d})$ is \[ \DL(n_1, \ldots,n_{d}) =  \left\lbrace (x_1, \ldots, x_{d}) \in T_{n_1} \times \cdots \times T_{n_{d}} \, | \, b_1(x_1) + \cdots + b_d(x_{d}) = 0   \right\rbrace. \] The incidence relation is defined by $(x_1, \ldots, x_d) \sim (x_1', \ldots, x_{d}')$ if and only if there are $i \neq j$ such that $x_r = x_r'$ for every $r \neq i,j$ and  $x_i \sim x_i'$ and $x_j \sim x_j'$. 
\end{Definition}

The graph $\DL(n_1, \ldots,n_{d})$ does not depend on the $\xi_i$'s or on the $b_i$'s up to isomorphism.

\begin{Notation}
	When $n_1= \ldots = n_{d} = n$, for simplicity we write $\DL_d(n)$ for $\DL(n, \ldots,n)$. 
\end{Notation}

\begin{Definition}
	The horocyclic product of $\focal(n_1), \ldots, \focal(n_{d})$ is \[ \bowtie_{i=1}^d \focal(n_i) =   \left\lbrace (g_1, \ldots, g_{d}) \in \focal(n_1) \times \cdots \times \focal(n_d) \, | \, \pi_1(g_1) + \cdots + \pi_d(g_{d}) = 0   \right\rbrace. \]
\end{Definition}

The group $\bowtie_{i=1}^d \focal(n_i)$ acts faithfully on $\DL(n_1, \ldots,n_{d})$ by $(g_1, \ldots, g_{d}) (x_1, \ldots, x_{d}) = (g_1 x_1, \ldots, g_d x_{d})$, and this action is transitive on vertices and proper. Theorem 2.7 in \cite{Bar-Neu-Woe} asserts that $\bowtie_{i=1}^d \focal(n_i)$ is precisely the group of isometries of $\DL(n_1, \ldots,n_{d})$ that do not permute non-trivially isometric factors. In particular $\bowtie_{i=1}^d \focal(n_i)$ is a finite index subgroup of the group of all isometries of $\DL(n_1, \ldots,n_{d})$.

\subsection{Actions on Diestel-Leader graphs}

The case $k=1$ in the following proposition is proven by Cornulier--Fisher-Kashyap in \cite{Cor-Fis-Kas}. The proof for $k \geq 2$  follows a similar scheme. It is interesting to note that our present setting involves two new conditions, \ref{item-ti-on-Rj} and \ref{item-ti-same-Rd} below, which do not appear in the case $k=1$. 

\begin{Proposition} \label{prop-action-higher-DL}
	Let $G$ be a locally compact group of the form $G = H \rtimes \Z^k$, $k \geq 1$. Suppose that there are generators $t_1, \ldots, t_k$ of the acting group $\Z^k$, and open subgroups $L_1, \ldots, L_{k+1}$ of $H$, and integers $n_1, \ldots,n_{k+1} \geq 2$ such that: \begin{enumerate}[label=(\arabic*)]
		\item \label{item-ti-on-Ri} for every $i=1, \ldots, k$, $L_i$ is contained in $t_i L_i t_i^{-1}$ as a subgroup of index $n_i$, and \[ \bigcup_{n \geq 1} t_i^n L_i t_i^{-n} = H;\]
		\item   \label{item-ti-on-Rk+1}  for every $i=1, \ldots, k$, $L_{k+1}$ is contained in $t_i^{-1} L_{k+1} t_i$ as a subgroup of index $n_{k+1}$, and \[  \bigcup_{n \geq 1} t_i^{-n} L_{k+1} t_i^{n} = H;\]
		\item  \label{item-ti-on-Rj} $t_i L_j t_i^{-1} = L_j$ for every  for every $i=1, \ldots, k$ and every $j \neq i,k+1$;
		\item \label{item-ti-same-Rd} $t_1^{-1} L_{k+1} t_1 = \cdots = t_k^{-1} L_{k+1} t_k$;
	\end{enumerate}
	Then there is a  continuous homomorphism $G \to \Isom(\DL(n_1, \ldots,n_{k+1}))$, and the associated action of $G$ on  $\DL(n_1, \ldots,n_{k+1})$ is proper if and only if:	\begin{enumerate}[resume, label=(\arabic*)]	\item  \label{item-action-DL-proper}   $\bigcap_{j=1}^{k+1} L_j$ is compact; 	\end{enumerate}	and cocompact  if and only if:
	\begin{enumerate}[resume, label=(\arabic*)]	\item \label{item-double-coset-fin} for every $\ell=1, \ldots, k+1$, the double coset space $ (\bigcap_{j \neq \ell} L_j) \backslash H / L_\ell$ is finite.\end{enumerate}	
\end{Proposition}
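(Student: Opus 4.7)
\emph{Step 1: construct the coordinate actions.} The plan is to build, for each $i \in \{1, \dots, k+1\}$, a continuous homomorphism $\rho_i : G \to \focal(n_i)$. For each $i \leq k$, the data $(H, L_i, t_i)$ together with condition \ref{item-ti-on-Ri} is exactly the setup that produces a regular $(n_i{+}1)$-valent tree $T_{n_i}$, a base vertex $v_i$ stabilized by $L_i$, and a distinguished end $\xi_i$ such that $t_i$ translates one step towards $\xi_i$; this is the construction used in the $k=1$ case of \cite{Cor-Fis-Kas}. The extension of this action to the full $G$ is furnished by condition \ref{item-ti-on-Rj}: since each $t_j$ for $j \neq i, k+1$ normalizes $L_i$, it descends to an automorphism of $T_{n_i}$ fixing $v_i$ and $\xi_i$, hence lies in $\focal(n_i)$ with trivial Busemann character. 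For $i = k+1$, I apply the same construction to the data $(H, L_{k+1}, t_\ell^{-1})$ for any $\ell$; condition \ref{item-ti-same-Rd} ensures that the resulting tree $T_{n_{k+1}}$, base vertex $v_{k+1}$ and end $\xi_{k+1}$ are independent of the choice of $\ell$, and condition \ref{item-ti-on-Rk+1} says each $t_\ell$ translates by $-1$ towards $\xi_{k+1}$. The Busemann characters then satisfy $\pi_i(\rho_i(t_j)) = \delta_{ij}$ for $i,j \leq k$, $\pi_{k+1}(\rho_{k+1}(t_j)) = -1$ for all $j \leq k$, and $\pi_i(\rho_i(H)) = 0$ for every $i$. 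These sum to zero on every generator, so $\rho = (\rho_1, \dots, \rho_{k+1})$ lands in $\bowtie_{i=1}^{k+1} \focal(n_i) \leq \Isom(\DL(n_1, \dots, n_{k+1}))$, and is continuous because each vertex stabilizer is an open subgroup of $G$.

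\emph{Step 2: properness.} The stabilizer in $G$ of the base vertex $v = (v_1, \dots, v_{k+1})$ is computed coordinate by coordinate using the Busemann character computation above. Writing an element as $h \, t_1^{a_1} \cdots t_k^{a_k}$: fixing $v_{k+1}$ forces every $a_j = 0$ (each $t_j$ shifts $b_{k+1}$ by $-1$) and then forces $h \in L_{k+1}$; and fixing each $v_j$ with $j \leq k$ further forces $h \in L_j$. Hence $\mathrm{Stab}_G(v) = \bigcap_{j=1}^{k+1} L_j$, which is an open subgroup of $G$. Since the action of $G$ is continuous with open point stabilizers, and all vertex stabilizers are conjugate in the vertex-transitive group $\bowtie \focal(n_i)$, properness amounts to compactness of $\mathrm{Stab}_G(v)$, i.e.\ exactly condition \ref{item-action-DL-proper}.

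\emph{Step 3: cocompactness.} The $G$-equivariant map $b : V(\DL) \to \Z^k$, $(x_1, \dots, x_{k+1}) \mapsto (b_1(x_1), \dots, b_k(x_k))$, is surjective (the $t_i$ act as translations of $\Z^k$), so $G$-orbits on $V(\DL)$ are in bijection with $H$-orbits on the level-zero sheet $b^{-1}(0) = \prod_{i=1}^{k+1} b_i^{-1}(0)$. The standard construction in Step 1 identifies $\rho_i(H) \cdot v_i$ with the entire horocycle $b_i^{-1}(0)$, with stabilizer $L_i$, so this sheet is $H$-equivariantly isomorphic to $\prod_i H/L_i$ with the diagonal action. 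It thus remains to show that the diagonal $H$-orbit space on $\prod_i H/L_i$ is finite if and only if \ref{item-double-coset-fin} holds. The key observation is that, fixing a coordinate $\ell$, the $H$-orbit of $v$ parametrized by moving only the $\ell$-th coordinate while keeping the others fixed corresponds exactly to the double coset space $(\bigcap_{j \neq \ell} L_j) \backslash H / L_\ell$; running this for every $\ell$ and combining the pieces through an induction on the number of factors yields both directions of the equivalence.

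\emph{Main obstacle.} The delicate step is the cocompactness analysis: one must verify that finiteness of the double cosets $(\bigcap_{j \neq \ell} L_j) \backslash H / L_\ell$ for every $\ell$ really forces finiteness of the full diagonal $H$-orbit space on $\prod_i H/L_i$. A naive inductive reduction (fix one coordinate at a time) produces conjugate intersections of the $L_j$'s that are not a priori handled by condition \ref{item-double-coset-fin}, so one must use the normalization identities \ref{item-ti-on-Rj} and \ref{item-ti-same-Rd}, together with the openness of each $L_j$ in $H$, to show that the orbits one meets at each inductive step remain controlled.
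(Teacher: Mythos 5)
Your overall architecture matches the paper's proof closely: build coordinate homomorphisms $\rho_i : G \to \focal(n_i)$ from ascending HNN decompositions, assemble them into $\rho : G \to \bowtie_i \focal(n_i)$, read off properness from the vertex stabilizer $\bigcap_j L_j$, and reduce cocompactness to counting $H$-orbits on the zero-level sheet $\prod_i H/L_i$. One small organizational difference: the paper realizes $G$ directly as an ascending HNN-extension over $S_i = L_i \rtimes A_i$ (with $A_i = \langle t_j : j \neq i\rangle$), so the vertex set is $G/S_i$; you instead build the tree from the HNN decomposition of $H \rtimes \langle t_i\rangle$ over $L_i$ and then extend the remaining $t_j$'s by conjugation. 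These are equivalent formulations (indeed $G/S_i$ and $(H\rtimes\langle t_i\rangle)/L_i$ are canonically identified), so this is a matter of presentation. A tiny slip in Step 2: fixing $v_{k+1}$ only forces $\sum a_j = 0$ (the Busemann character of $h\,t^a$ on the $(k{+}1)$-st tree is $-\sum a_j$); you need the fixed $v_i$ for $i \leq k$ to then conclude each $a_i = 0$. The final identification $\mathrm{Stab}_G(v) = \bigcap_j L_j$ is of course unaffected.

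The one substantive issue is exactly the one you flag under \enquote{Main obstacle}. Showing that finiteness of the double cosets $\bigl(\bigcap_{j\neq\ell}L_j\bigr)\backslash H / L_\ell$ for all $\ell$ forces finitely many diagonal $H$-orbits on $\prod_i H/L_i$ is not a formality: the naive induction peels off one factor and lands on stabilizers of the form $\bigcap_{j\neq\ell} h_j L_j h_j^{-1}$ which are conjugate intersections, not covered directly by condition (6). The paper's proof does not address this either; it asserts \enquote{which happens if and only if (6) holds} in a single sentence, and in its applications (Proposition~\ref{prop-metab-local-field-DLd}, Theorem~\ref{thm-irr-R2-times-DL2}) it actually verifies the much stronger identity $\bigl(\bigcap_{j\neq\ell}L_j\bigr) L_\ell = H$, which makes the diagonal action transitive outright. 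So your proposal follows the paper's route and, to its credit, explicitly identifies the spot where a careful reader would want more detail; your suggestion that conditions \ref{item-ti-on-Rj}, \ref{item-ti-same-Rd} and openness of the $L_j$ should control the conjugate intersections is plausible but would need to be carried out to count as a complete argument, and the forward direction (cocompactness $\Rightarrow$ (6)), which is the easy one, is the one that needs no extra input.
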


\begin{proof}
	Fix $i=1, \ldots, k$. Denote by $A_i$ the subgroup generated by the elements $(t_j)$ where $j \neq i$. By assumption \ref{item-ti-on-Rj} the subgroup $A_i$ normalizes $L_i$. Set $S_i := L_i \rtimes A_i$. Since $t_i$ centralizes $A_i$, we have $t_i S_i t_i^{-1} = (t_i L_i t_i^{-1}) \rtimes A_i$. In view of  \ref{item-ti-on-Ri} we have that $S_i$ is contained in $t_i S_i t_i^{-1}$ as a subgroup of index $n_i$, and  $(\bigcup_{n \geq 1} t_i^n S_i t_i^{-n} ) \rtimes \langle t_i \rangle = H \rtimes \Z^k = G$. Hence $G$ is the ascending HNN-extension over its subgroup $S_i$ associated to the isomorphism between $S_i$ and $t_i S_i t_i^{-1}$ performed the conjugation by $t_i$. This provides a vertex transitive action of $G$ on the tree $T_{n_i}$, the vertex set of $T_{n_i}$ being identified with the set of cosets $G / S_i$. The end associated to the geodesic ray $(t_i^{-n} S_i t_i)_{n \geq 1}$ is the unique end of $T_{n_i}$ fixed by $G$. We denote by $\rho_i: G \to \focal(n_i)$ the associated homomorphism. We also denote by $o_i$ the vertex corresponding to the coset $S_i$, and by $b_i$  the Busemann function normalized so that $b_i(o_i) = 0$. 
	
	Now let us denote by $B$ the subgroup  generated by the elements $(t_1 t_2^{-1}, \ldots, t_{k-1} t_k^{-1})$. By assumption \ref{item-ti-same-Rd}  the subgroup $B$ normalizes $L_{k+1}$. Set $S_{k+1}:= L_{k+1} \rtimes B$. Similarly as before we have $t_1 S_{k+1} t_1^{-1} = (t_1 L_{k+1} t_1^{-1}) \rtimes B$, and by \ref{item-ti-on-Rk+1} we have that $S_{k+1}$ contains $t_1 S_{k+1} t_1^{-1}$ as a subgroup of index $n_{k+1}$, and $(\bigcup_{n \geq 1} t_1^{-n}  S_{k+1} t_1^{n} ) \rtimes \langle t_1 \rangle = H \rtimes \Z^k = G$. So we also have a decomposition of $G$ as an ascending HNN-extension over its subgroup $S_{k+1}$, and hence an action of $G$  on the tree $T_{n_{k+1}}$. Let $\rho_{k+1} : G \to \focal(n_{k+1})$ the associated homomorphism. We denote by $o_{k+1}$ the vertex corresponding to the coset $S_{k+1}$, and by $b_{k+1}$ the Busemann function associated to the $G$-fixed end such that $b_{k+1}(o_{k+1}) = 0$.  
	
	We consider the diagonal action of $G$ on the product $T_{n_1} \times \cdots \times T_{n_k} \times T_{n_{k+1}}$ . Every element of $H$ acts on each tree as an elliptic element, and hence preserves $b_i$. For $i=1, \ldots, k$, $t_i$ leaves invariant $b_j$ for $j \neq i,k+1$, translates $b_i$ by $1$ and translates $b_{k+1}$ by $1$ in the opposite direction. So overall $G$ preserves $b_1 + \ldots + b_{k+1}$. Therefore the homomorphism $\rho := \rho_{1} \times \cdots \times \rho_{k} \times \rho_{k+1}: G \to \prod_{i=1}^{k+1} \focal(n_i)$ takes values in $\bowtie_{i=1}^{k+1} \focal(n_i)$, and we have an action on $G$ on $\DL(n_1, \ldots,n_{k+1})$.  The stabilizer of the vertex $o := (o_1, \ldots, o_{k+1})$ is the subgroup  $\bigcap_{j=1}^{k+1} S_j = \bigcap_{j=1}^{k+1} L_j$. This subgroup is open, so the action is continuous. And the action is proper if and only if $\bigcap_{j=1}^{k+1} L_j$ is compact. We have $\bowtie_{i=1}^{k+1} \focal(n_i) =  (\prod_{i=1}^{k+1} \elliptic(n_i)) \rho(G)$, so the number of orbits for the $G$-action on $\DL(n_1, \ldots,n_{k+1})$ is the same as the number of orbits for the $H$-action on $\prod_{i=1}^{k+1} b_i^{-1}(0)$. Up to an identification of $H/R_i$ with a subset of $G/S_i$, we have $b_i^{-1}(0) = H/R_i$. So we deduce that the $G$-action on $\DL(n_1, \ldots,n_{k+1})$ has finitely many orbits if and only if the $H$-action on $\prod_{i=1}^{k+1} H/R_i$ has finitely many orbits, which happens if and only if  \ref{item-double-coset-fin} holds.
\end{proof}

In the case where $K = \F_q(\!(t)\!)$ is a field of Laurent series, the following proposition is implicit in \cite{Bar-Neu-Woe}. Later we will use Proposition \ref{prop-metab-local-field-DLd} in the case $K = \Q_p$ (actually we will also use it indirectly in the case $K = \F_q(\!(t)\!)$ by appealing to a result from \cite{Bar-Neu-Woe}).

\begin{Proposition} \label{prop-metab-local-field-DLd}
	Let $d \geq 2$ and $K$ be a non-Archimedean local field of residue field of cardinality $q$. Let $\mathrm{Diag}_d^1(K)$ be the group of diagonal $(d \times d)$-matrices over $K$ of determinant of absolute value $1$. Then the group $G = K^d \rtimes \mathrm{Diag}_d^1(K)$ embeds as a closed cocompact subgroup in $\Isom(\DL_d(q))$. 
\end{Proposition}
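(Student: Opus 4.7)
The plan is to apply Proposition \ref{prop-action-higher-DL} with $k = d - 1$, so that $\DL(n_1, \ldots, n_{k+1}) = \DL_d(q)$. Fixing a uniformizer $\pi$ of $K$, I would split $\mathrm{Diag}_d^1(K) = (\mathcal{O}_K^\times)^d \times \Lambda$, where $\Lambda \cong \Z^{d-1}$ is generated by $t_i := \mathrm{diag}(1, \ldots, \pi^{-1}, \ldots, 1, \pi)$ with $\pi^{-1}$ in slot $i$ and $\pi$ in slot $d$, for $i = 1, \ldots, d-1$. Absorbing the compact part $(\mathcal{O}_K^\times)^d$ into the base yields the decomposition $G = H \rtimes \Z^{d-1}$ with $H := K^d \rtimes (\mathcal{O}_K^\times)^d$, matching the hypothesis of the proposition.

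For the open subgroups required by the proposition I would take $L_j := \{(v, A) \in H : v_j \in \mathcal{O}_K\}$ for $j = 1, \ldots, d$. Since $t_i$ acts on $K^d$ by rescaling coordinate $i$ by $\pi^{-1}$ and coordinate $d$ by $\pi$ and fixing the others, one checks directly that $L_i \subsetneq t_i L_i t_i^{-1}$ is an inclusion of index $q$ with ascending union $H$, giving \ref{item-ti-on-Ri}, and analogously $L_d \subsetneq t_i^{-1} L_d t_i$ gives \ref{item-ti-on-Rk+1}. For $j \leq d - 1$ and $j \neq i$, $t_i$ is trivial on coordinate $j$, which forces $t_i L_j t_i^{-1} = L_j$ (condition \ref{item-ti-on-Rj}); and $t_i^{-1} L_d t_i = \{(v, A) : v_d \in \pi^{-1}\mathcal{O}_K\}$ does not depend on $i$, giving \ref{item-ti-same-Rd}. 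The intersection $\bigcap_j L_j = \mathcal{O}_K^d \rtimes (\mathcal{O}_K^\times)^d$ is compact, verifying \ref{item-action-DL-proper}; and splitting any $v \in K^d$ according to its $\ell$-th coordinate versus the rest shows $(\bigcap_{j \neq \ell} L_j) \cdot L_\ell = H$, so the double coset space has one element and \ref{item-double-coset-fin} holds. Proposition \ref{prop-action-higher-DL} then produces a continuous homomorphism $\rho \colon G \to \Isom(\DL_d(q))$ whose action is proper and cocompact.

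The remaining step, which I expect to be the main technical point, is injectivity of $\rho$. Properness already gives that $\ker \rho$ is a compact normal subgroup of $G$, so it suffices to show $G$ has no non-trivial compact normal subgroup $N$. The first observation is that $N \cap K^d = 0$: any closed $\mathrm{Diag}_d^1(K)$-invariant compact subgroup of $K^d$ must be zero, because one can find elements of $\mathrm{Diag}_d^1(K)$ rescaling any chosen coordinate by arbitrarily large powers of $\pi$ (compensating by opposite scaling in another slot), and the projection to that coordinate of a compact subgroup of $K^d$ must stay bounded. Given $N \cap K^d = 0$, conjugating any $(v, A) \in N$ by a translation $(w, 1) \in K^d$ produces $((1 - A)w, 1) \in N \cap K^d = 0$ for every $w \in K^d$, forcing $A = 1$ and hence $N = 1$. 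Therefore $\rho$ embeds $G$ as a closed cocompact subgroup of $\Isom(\DL_d(q))$.
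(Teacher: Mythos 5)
Your proof is correct and follows essentially the same route as the paper: the same decomposition $G = H \rtimes \Z^{d-1}$ with $H = K^d \rtimes (\mathcal{O}_K^\times)^d$, the same subgroups $L_j$, and the same verification of conditions \ref{item-ti-on-Ri}--\ref{item-double-coset-fin} of Proposition~\ref{prop-action-higher-DL}. The only difference is in the final faithfulness step, where the paper simply asserts that the compact vertex stabilizer $\mathcal{O}_K^d \rtimes (\mathcal{O}_K^\times)^d$ contains no non-trivial normal subgroup of $G$, whereas you spell out the (equivalent and correct) argument that $G$ has no non-trivial compact normal subgroup, using the unbounded scaling action of $\mathrm{Diag}_d^1(K)$ on $K^d$ and the commutator identity $[(w,1),(v,A)] = ((1-A)w,1)$.
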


\begin{proof}
	Let $R$ be the maximal compact subring of $K$. Let $\pi \in R$ such that $|\pi|$ generates the image of $|\cdot|: K^\times \to \R_{>0}$. The integer $q$ is the cardinality of $R / \pi R$. Let $\mathrm{Diag}_d(R)$ be the subgroup of $\mathrm{Diag}_d^1(K)$ of elements with coefficients in $R$. It is a compact open subgroup of  $\mathrm{Diag}_d^1(K)$. For $i = 1, \ldots, d-1$ we let $t_i \in \mathrm{Diag}_d^1(K)$ be the element with $\pi^{-1}$ at position $i$, with $\pi$ at position $d$, and $1$ elsewhere. The subgroup generated by $t_1, \ldots, t_{d-1}$ is $ \simeq \Z^{d-1}$, and one has $\mathrm{Diag}_d^1(K) = \mathrm{Diag}_d(R) \times \Z^{d-1}$. Therefore one has $G = H \rtimes \Z^{d-1}$ with $H =  K^d \rtimes \mathrm{Diag}_d(R)$.  We verify that the conditions of Proposition  \ref{prop-action-higher-DL} hold (with $k=d-1$). For $i=1, \ldots, d$, we set $L_i = (Ke_1 + \ldots + R e_i + \ldots + Ke_d  ) \rtimes \mathrm{Diag}_d(R)$. For $i = 1, \ldots, d-1$ and $n \geq 1$, we have $t_i^n L_i t_i^{-n} = (Ke_1 + \ldots + \pi^{-n} R e_i + \ldots + Ke_d  ) \rtimes \mathrm{Diag}_d(R)$ and $t_i^{-n} L_{d} t_i^{n} = (Ke_1 + \ldots + Ke_{d-1} +  \pi^{-n} R e_d ) \rtimes \mathrm{Diag}_d(R)$. So  \ref{item-ti-on-Ri}  and \ref{item-ti-on-Rk+1} hold with $n_1 = \cdots = n_d = q$.  \ref{item-ti-on-Rj}  and \ref{item-ti-same-Rd} are clear by definition of $t_i$. One has $\bigcap_{j=1}^{k+1} L_j = R^d \rtimes \mathrm{Diag}_d(R)$, which is compact, so \ref{item-action-DL-proper} holds. Finally for every $\ell=1, \ldots, d$ we have $ (\bigcap_{j \neq \ell} L_j) L_\ell = H$, so  \ref{item-double-coset-fin} holds.  And the action of $G$ on $\DL_d(q)$ is indeed faithful because the vertex stabilizer $R^d \rtimes \mathrm{Diag}_d(R)$ contains no non-trivial normal subgroup of $G$. 
\end{proof}

\subsection{First construction} \label{subsec-flexible-first}

Let $\mathbf{k}$ be either $\R$ or $\Q_p$. We say that $M \in \mathrm{GL}(2,\mathbf{k})$ is $\mathbf{k}$-bounded if the subgroup generated by $M$ is relatively compact in $\mathrm{GL}(2,\mathbf{k})$. Recall that for $M \in \mathrm{SL}(2,\R)$, $M$ is $\R$-bounded if and only if $|\mathrm{tr}(M)| < 2$. For $M \in \mathrm{SL}(2,\Q_p)$, we have the following classical characterization. We include a proof for the reader's convenience.

\begin{Lemma} \label{lem-eigenvalues-Qp}
	Let  $M \in \mathrm{SL}(2,\Q_p)$, and $p$ a prime. The following are equivalent:
	\begin{enumerate}[label=(\arabic*)]
		\item \label{item-M-Qp-bounded} $M$ is $\Q_p$-bounded;
		\item \label{item-M-eigenv} $M$ has either no eigenvalue in $\Q_p$, or two eigenvalues of absolute value $1$.
		\item \label{item-M-trace} $|\mathrm{tr}(M)|_p \leq 1$. 
	\end{enumerate}
\end{Lemma}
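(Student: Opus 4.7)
The plan is to work through the three implications via a standard analysis of the eigenvalues of $M$ in $\overline{\Q_p}$, keeping in mind that $\det(M)=1$ forces $\alpha\beta=1$ for the two eigenvalues, and exploiting the ultrametric inequality throughout.

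First I would establish the equivalence \ref{item-M-eigenv}$\iff$\ref{item-M-trace}. Let $\alpha,\beta\in\overline{\Q_p}$ denote the eigenvalues, so $\alpha\beta=1$ and $\mathrm{tr}(M)=\alpha+\beta$. If $\alpha,\beta\notin\Q_p$, they are Galois conjugate in a quadratic extension $K/\Q_p$, and both have absolute value $1$ in $K$ because $|\alpha|_K=|\beta|_K$ and $|\alpha|_K|\beta|_K=|\alpha\beta|_p=1$; the ultrametric inequality then gives $|\mathrm{tr}(M)|_p\leq 1$. If instead $\alpha,\beta\in\Q_p$ both with $|\alpha|_p=|\beta|_p=1$, the same inequality applies directly. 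Conversely, assume $|\mathrm{tr}(M)|_p\leq 1$ and $\alpha,\beta\in\Q_p$; were $|\alpha|_p>1$, the relation $\alpha\beta=1$ would force $|\beta|_p<|\alpha|_p$, so $|\alpha+\beta|_p=|\alpha|_p>1$, contradicting the hypothesis. Hence $|\alpha|_p\leq 1$ and, symmetrically, $|\beta|_p\leq 1$; combined with $|\alpha\beta|_p=1$, this yields $|\alpha|_p=|\beta|_p=1$. This takes care of \ref{item-M-eigenv}$\iff$\ref{item-M-trace}.

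Next I would prove \ref{item-M-eigenv}$\Rightarrow$\ref{item-M-Qp-bounded}. In the irreducible case, $\Q_p[M]\subset M_2(\Q_p)$ is a quadratic field extension $K$ of $\Q_p$, and $M$ corresponds to multiplication by $\lambda\in\mathcal{O}_K^\times$ (a unit by the norm computation above). The powers $\lambda^n$ stay in the compact ring $\mathcal{O}_K$, so their matrix realizations stay in a compact subset of $M_2(\Q_p)$. In the case where both eigenvalues lie in $\Q_p$ with absolute value $1$: if $M$ is diagonalizable over $\Q_p$, then $M=PDP^{-1}$ with $D=\mathrm{diag}(\alpha,\alpha^{-1})$ where $|\alpha|_p=1$, so $\langle D\rangle\subset \mathrm{GL}(2,\Z_p)$ is relatively compact and conjugation by the fixed matrix $P$ preserves this property. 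If $M$ is non-diagonalizable, then after conjugation $M=\pm\bigl(\begin{smallmatrix}1 & 1\\ 0 & 1\end{smallmatrix}\bigr)$, whose $n$-th power has off-diagonal entry $\pm n\in\Z_p$, hence lies in the compact group $\mathrm{SL}(2,\Z_p)$.

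Finally, for \ref{item-M-Qp-bounded}$\Rightarrow$\ref{item-M-eigenv}, I would argue by contrapositive: if \ref{item-M-eigenv} fails, then $\alpha,\beta\in\Q_p$ with $|\alpha|_p>1>|\beta|_p$. Choose an eigenvector $v\in\Q_p^2$ for $\alpha$; then $\|M^n v\|=|\alpha|_p^n\|v\|\to\infty$, so the orbit of $v$ under $\langle M\rangle$ is unbounded, hence $\langle M\rangle$ itself is unbounded in $\mathrm{GL}(2,\Q_p)$. The main (mild) obstacle is purely bookkeeping: one has to handle the distinct eigenvalue, repeated eigenvalue (diagonalizable and Jordan), and irreducible cases separately, and check that the norm on $K$ extending $|\cdot|_p$ really gives $|\lambda|_K=1$ when $N_{K/\Q_p}(\lambda)=1$. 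No other difficulty arises.
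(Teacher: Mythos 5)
Your proof is correct, but it takes a genuinely different route from the paper's. The paper argues the cycle $(3)\Rightarrow(1)\Rightarrow(2)\Rightarrow(3)$: for $(3)\Rightarrow(1)$ it observes that if $M\neq\pm I$ then $M$ is $\mathrm{GL}(2,\Q_p)$-conjugate to the companion matrix $\left[\begin{smallmatrix}0&-1\\1&\alpha\end{smallmatrix}\right]$ of its characteristic polynomial, which lies in the compact group $\mathrm{SL}(2,\Z_p)$ as soon as $|\alpha|_p\leq 1$; this one-line trick absorbs simultaneously all the cases (split diagonalizable, unipotent, irreducible) that you handle separately in your $(2)\Rightarrow(1)$ argument. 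For $(2)\Rightarrow(3)$ the paper argues by contrapositive using Hensel's lemma: if $|\alpha|_p>1$, the polynomial $t^2-\alpha t+1$ acquires a root in $\Q_p$ of absolute value $<1$ and another of absolute value $>1$. You instead prove $(2)\Leftrightarrow(3)$ directly by an ultrametric analysis of the eigenvalues in $\overline{\Q_p}$, which avoids Hensel's lemma but requires keeping track of whether the eigenvalues are rational or Galois-conjugate in a quadratic extension. Your $(1)\Rightarrow(2)$ via the unbounded eigenvector orbit is essentially what the paper dismisses as ``clear.'' Net tradeoff: your version is more elementary and self-contained (no rational canonical form, no Hensel), at the cost of a three-way case split and the need to verify that $\mathcal{O}_K^\times$ embeds compactly into $\mathrm{GL}(2,\Q_p)$ in the irreducible case; the paper's version is shorter and case-free.
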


\begin{proof}
Let $\alpha = \mathrm{tr}(M)$. The characteristic polynomial is $f = t^2 - \alpha t +1$. If \ref{item-M-trace} holds then either $M = \pm I$, or $M$ is conjugate to $\left[ \left[0,-1 \right]; \left[1, \alpha \right]  \right] \in \mathrm{SL}(2,\Z_p)$. So \ref{item-M-Qp-bounded} holds. \ref{item-M-Qp-bounded} implies \ref{item-M-eigenv} is clear. Finally if \ref{item-M-trace} does not hold, i.e.\ if $|\alpha|_p > 1$, then Hensel's lemma implies that $f$ has a root in $\Q_p$ of absolute value $< 1$ (and consequently also has another root of absolute value $>1$). So \ref{item-M-eigenv} implies \ref{item-M-trace}. 
\end{proof}

Recall that a lattice $\Gamma$ is a product $G = G_1 \times \cdots \times G_r$ is irreducible if the projection of $\Gamma$ to any proper sub-product is non-discrete. When $r=2$ this  means $\Gamma$ has a non-discrete projection on each factor. 

\begin{Theorem} \label{thm-irr-R2-times-DL2}
	Let $M \in \mathrm{SL}(2,\Q)$. Suppose $M$ has infinite order and $M$ is $\R$-bounded. Write $\mathrm{tr}(M) = m/n$, with $m,n \in \Z$ relatively prime and $n \geq 2$. Let $A \leq \Q^2$ be the $\Z[M,M^{-1}]$-submodule of $\Q^2$ generated by $\Z^2$. Then the finitely generated group $\Gamma = A \rtimes_{M} \Z$  embeds as an irreducible cocompact lattice in $\mathrm{Isom}(\R^2) \times \Isom(\DL_2(n))$.
\end{Theorem}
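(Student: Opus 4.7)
The plan is to realize $\Gamma$ as an irreducible cocompact lattice in a product $G_1 \times G_2$, where $G_1 = \R^2 \rtimes \mathrm{SO}(2) \subseteq \Isom(\R^2)$ and $G_2$ is a tdlc group that acts properly and cocompactly on $\DL_2(n)$, built from the $p$-adic completions of $\Q$ at the primes dividing $n$. A preliminary reduction is needed: since $M + M^{-1} = (m/n) I$ and $\gcd(m,n)=1$, one has $\Z[1/n]\cdot I \subseteq \Z[M, M^{-1}]$, so $A$ is a finitely generated $\Z[1/n]$-module of $\Q$-rank $2$. As $\Z[1/n]$ is a PID, $A \cong \Z[1/n]^2$ as $\Z[1/n]$-modules, and up to this isomorphism I may assume throughout $A = \Z[1/n]^2$ and $M \in \mathrm{SL}(2, \Z[1/n])$.

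Because $M$ is $\R$-bounded with $\det M = 1$, averaging gives an $M$-invariant Euclidean structure on $\R^2$ with respect to which $M$ acts as a rotation $\rho$; since $M$ has infinite order, so does $\rho$, and hence $\langle \rho \rangle$ is dense in $\mathrm{SO}(2)$. Setting $\mathbb{K}_n = \prod_{p \mid n}\Q_p$ and $G_2 = \mathbb{K}_n^2 \rtimes_M \Z$ (with the generator of $\Z$ acting by $M$ viewed componentwise in $\mathrm{SL}(2, \mathbb{K}_n)$), the diagonal inclusion $A \hookrightarrow \R^2 \times \mathbb{K}_n^2$ assembles with the rotation on the first factor and the $t$-generator on the second into a homomorphism $\Phi : \Gamma \to G_1 \times G_2$ given by $\Phi(a,k) = ((a,\rho^k),(a,k))$; that $\Phi$ is a homomorphism is immediate since $\rho$ and $M$ define the same $\Z$-linear action on $A \subseteq \R^2$.

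To show $\Phi$ is a discrete, cocompact and irreducible embedding, one uses the classical fact (product formula) that $\Z[1/n]^2$ embeds diagonally as a discrete cocompact lattice in $\R^2 \times \mathbb{K}_n^2$, and that above this the $\Z$-part of $\Phi$ is the graph of $k \mapsto \rho^k$ inside $\mathrm{SO}(2) \times \Z$, whose quotient is the compact group $\mathrm{SO}(2)$. Density of $A$ in $\R^2$ and in $\mathbb{K}_n^2$, combined with density of $\langle\rho\rangle$ in $\mathrm{SO}(2)$, yields irreducibility of the lattice.

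The main step is to realize $G_2$ as a closed cocompact subgroup of $\Isom(\DL_2(n))$, for which I apply Proposition~\ref{prop-action-higher-DL} with $k=1$, $d=2$, $n_1=n_2=n$. For each $p\mid n$ the coprimality $\gcd(m,n)=1$ gives $|\mathrm{tr}(M)|_p = p^{v_p(n)} > 1$, and Hensel's lemma applied to the substitution $t = (m/n)u$ in the characteristic polynomial $t^2 - (m/n)t + 1$ produces two distinct eigenvalues $\lambda_\pm^{(p)} \in \Q_p$ with $|\lambda_\pm^{(p)}|_p = p^{\pm v_p(n)}$. Diagonalize $M$ over each $\Q_p$ by $P_p \in \mathrm{GL}(2,\Q_p)$ and set
\[
L_1 = \prod_{p\mid n} P_p(\Z_p \oplus \Q_p), \qquad L_2 = \prod_{p\mid n} P_p(\Q_p \oplus \Z_p).
\]
The expanding/contracting behaviour of $M$ on the two eigenlines then forces $[tL_1 t^{-1} : L_1] = [t^{-1} L_2 t : L_2] = \prod_{p\mid n} p^{v_p(n)} = n$, the exhaustions $\bigcup_k t^{\pm k} L_{1/2} t^{\mp k} = \mathbb{K}_n^2$, the compactness of $L_1 \cap L_2 = \prod_p P_p\Z_p^2$, and $L_1 + L_2 = \mathbb{K}_n^2$. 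Proposition~\ref{prop-action-higher-DL} then produces a continuous proper cocompact action $G_2 \to \Isom(\DL_2(n))$, which is faithful because $\bigcap_k M^{-k}(L_1 \cap L_2)=0$ by the hyperbolic $p$-adic dynamics rules out any nontrivial normal subgroup inside the vertex stabilizer. The main technical obstacle is precisely this verification of the hypotheses of Proposition~\ref{prop-action-higher-DL}, including the index/exhaustion computations from the eigenvalue estimates; once this is in place, composing with $G_1 \subseteq \Isom(\R^2)$ and $G_2 \subseteq \Isom(\DL_2(n))$ gives the desired irreducible cocompact lattice embedding of $\Gamma$.
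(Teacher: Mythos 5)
Your proposal is correct and takes essentially the same route as the paper: both embed $\Gamma$ diagonally into $G_1\times G_2$ with $G_1=\R^2\rtimes K$ ($K$ the compact closure of $\langle M\rangle$, conjugate to $\mathrm{SO}(2)$) and $G_2=\bigl(\prod_{p\mid n}\Q_p^2\bigr)\rtimes_M\Z$, establish discreteness and cocompactness by the adelic/product-formula argument, and realize $G_2\hookrightarrow\Isom(\DL_2(n))$ via Proposition \ref{prop-action-higher-DL} (case $k=1$) with the same $p$-adic eigenvalue estimates $|\lambda_{\pm}^{(p)}|_p=p^{\pm v_p(n)}$. Your preliminary normalization $A\cong\Z[1/n]^2$ (using $\mathrm{tr}(M)\,I=M+M^{-1}$ and the PID structure of $\Z[1/n]$) and your explicit verification of faithfulness via $\bigcap_k M^{-k}(L_1\cap L_2)=0$ are mild refinements the paper leaves implicit, but the substance is identical.
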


\begin{proof}
	Since $M$ is $\R$-bounded, $|\mathrm{tr}(M)| < 2$. We note that since $M$ has infinite order, $ \mathrm{tr}(M) \notin \left\lbrace -1,0,1\right\rbrace$, so we indeed have $n \geq 2$ if we choose $n$ positive. Let $\pi$ be the set of prime divisors of $n$. By Lemma \ref{lem-eigenvalues-Qp} these are the primes for which $M$ is not $\Q_p$-bounded. Let $K \leq \mathrm{SL}(2,\R)$ be the closure of the subgroup generated by $M$. Since $M$ is $\R$-bounded, $K$ is a compact subgroup of $\mathrm{SL}(2,\R)$. Let $G_1 = \R^2 \rtimes K$, and let $G_2 = \left( \prod_{p \in \pi} \Q_p^2 \right) \rtimes \Z$, where the action of $\Z$ on each factor is by the matrix $M$. 
	
	\begin{Lemma} \label{lem-coco-embed}
		The map $i : \Gamma \to G_1 \times G_2$ defined by \[(x,y,M^n) \to (x,y,M^n) \times \left( (x,y), \ldots, (x,y),M^n\right)\] is an injective group homomorphism with discrete and cocompact image in $G_1 \times G_2$.
	\end{Lemma}
	
	\begin{proof}
		Set $V = \prod_{p \in \pi} \Q_p^2$. The image of $A$ in $\prod_{p \notin \pi} \Q_p^2$ is relatively compact (because $M \in \mathrm{SL}(2,\Z_p)$ for all but finitely many $p$ and $M$ is $\Q_p$-bounded for $p \notin \pi$). Hence $i(A)$  is discrete in $\R^2 \times V$. We shall check it is also cocompact. Let $W$ be the closure of the image of $A$ in $V$. The subgroup $W$ contains $\prod_{p \in \pi} \Z_p^2$, and $W$ is $M$-invariant. Since $M$ is not $\Q_p$ bounded for $p \in \pi$, this easily implies $W = V$. Since $A$ in addition contains $\Z^2$, it follows that $i(A)$ is indeed discrete cocompact in $\R^2 \times V$. 
		
		The subgroup $G_1 \times V$ is open in $G_1 \times G_2$, and $i(\Gamma) \cap (G_1 \times V) = i(A) \cap (\R^2 \times V)$. Discreteness of $i(\Gamma)$ in $G_1 \times G_2$  follows.  Since $i(\Gamma) \cdot (G_1 \times V) = G_1 \times G_2$, it also follows that $i(\Gamma)$ is cocompact in $G_1 \times G_2$. 
	\end{proof}

	The subgroup $K$ is conjugated to $\mathrm{SO}(2,\R)$ in $\mathrm{SL}(2,\R)$. So $G_1$ is conjugated to $\R^2 \rtimes  \mathrm{SO}(2,\R) = \mathrm{Isom}^+(\R^2)$ in $\R^2 \rtimes  \mathrm{SL}(2,\R)$.  Hence Lemma \ref{lem-coco-embed} implies that, in order to complete the proof of the proposition, it is enough to see that the group $G_2$ embeds as a closed and cocompact subgroup in $\Isom(\DL_2(n))$.  For, we rely on Proposition \ref{prop-action-higher-DL} (the case $k=1$, from \cite{Cor-Fis-Kas}). 
	
	For each $p \in \pi$, the matrix $M$ has two distinct eigenvalues in $\Q_p$, denoted $\lambda_{+}^{(p)}$ and $\lambda_{-}^{(p)}$, such that $|\lambda_{+}^{(p)}|_p < 1$ and $|\lambda_{-}^{(p)}|_p > 1$ (Lemma \ref{lem-eigenvalues-Qp}). We have  $|\lambda_{-}^{(p)}|_p = |\lambda_{+}^{(p)} + \lambda_{-}^{(p)}|_p = |\mathrm{tr}(M)| = p^{v_p(n)}$ and $|\lambda_{+}^{(p)}|_p = p^{-v_p(n)}$. Let $e_+^{(p)}, e_-^{(p)} \in \Q_p^2$ be eigenvectors of $M$ associated to $\lambda_{+}^{(p)}$ and $\lambda_{-}^{(p)}$. Set $L_1 = \prod_{p \in \pi}( \Q_p e_+^{(p)} + \Z_p e_-^{(p)} )$ and $L_2 = \prod_{p \in \pi}( \Z_p e_+^{(p)} + \Q_p e_-^{(p)})$. These are open subgroup of $V = \prod_{p \in \pi} \Q_p^2$. Moreover $V = L_1 + L_2$ because $\Q_p^2 = \Q_p e_+^{(p)} + \Q_p e_-^{(p)}$ for every $p \in \pi$, and $L_2 \cap L_1 = \prod_{p \in \pi}( \Z_p e_+^{(p)} + \Z_p e_-^{(p)})$ is  compact. One has \[ M (\Z_p e_+^{(p)} + \Q_p e_-^{(p)}) = \lambda_{+}^{(p)} \Z_p e_+^{(p)} + \lambda_{-}^{(p)} \Q_p e_-^{(p)} =  p^{v_p(n)} \Z_p e_+^{(p)} + \Q_p e_-^{(p)},  \] which is a subgroup of $\Z_p e_+^{(p)} + \Q_p e_-^{(p)}$ of index $p^{v_p(n)}$. Hence $M L_2$ is a subgroup of $L_2$ of index $\prod_{p \in \pi} p^{v_p(n)} = n$. Similarly $L_1$ is a subgroup of $ML_1$ of index $n$. Moreover if $t$ is the generator of $\Z$ in $G$ acting on $V$ via $M$, then one has \[ \bigcup_{n \geq 1} t^{-n} L_2 t^{n} = \bigcup_{n \geq 1}  t^{n} L_1  t^{-n} = V.\] Therefore conditions  \ref{item-ti-on-Ri}-\ref{item-ti-on-Rk+1}-\ref{item-action-DL-proper}-\ref{item-double-coset-fin}  of Proposition  \ref{prop-action-higher-DL} are satisfied (the other two conditions are void for $k=1$). The statement follows. 
\end{proof}

\begin{Remark}
The setting of Theorem  \ref{thm-irr-R2-times-DL2} shares some similarities with the one from \cite[Theorem 7.5]{Leary-Minasyan}. The difference between the two is that in our situation the second factor corresponds to a cocompact action on a Diestel--Leader graph, rather than a regular tree. In particular here the groups are amenable, while the ones in \cite{Leary-Minasyan} are never amenable. 
\end{Remark}

\begin{Theorem} \label{thm-SFR-share-product}
	For every $n \geq 2$, there is a finitely generated group of the form $\Gamma = \Z[1/n]^2 \rtimes \Z$ such that for every finite group $F$ of cardinality $n$, the groups $\Gamma$ and $\Lambda = \Z^2 \times F \wr \Z$ share the cocompact envelope $\mathrm{Isom}(\R^2) \times \Isom(\DL_2(n))$.
\end{Theorem}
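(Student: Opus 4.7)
The strategy is to derive the embedding of $\Gamma$ from a single application of Theorem \ref{thm-irr-R2-times-DL2} with a carefully chosen matrix $M$, and to produce $\Lambda$ by embedding its two factors separately into the two components of $\mathrm{Isom}(\R^2) \times \Isom(\DL_2(n))$. The factor-by-factor embedding of $\Lambda$ is necessarily reducible, while the embedding of $\Gamma$ coming from Theorem \ref{thm-irr-R2-times-DL2} is irreducible; this is precisely the mismatch that will produce two QI groups of very different nature.

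For the embedding of $\Gamma$, I would take
\[ M = \begin{pmatrix} 0 & 1 \\ -1 & 1/n \end{pmatrix} \in \mathrm{SL}(2,\Q), \]
whose trace equals $1/n$ and is already in lowest terms (so $m=1$, relatively prime to $n$). Since $|\mathrm{tr}(M)| < 2$, $M$ is $\R$-bounded; and since $1/n$ is not an algebraic integer for $n \geq 2$, $M$ has infinite order (otherwise its trace would be $2\cos(2\pi j/k)$ for some $j,k$, hence an algebraic integer). The remaining point is to identify $A = \Z[M,M^{-1}] \cdot \Z^2$ with $\Z[1/n]^2$: the key observation is that $M^{-1} e_1 - e_2 = (1/n,0) \in A$, and an easy induction applying $M$ and $M^{-1}$ then produces $(1/n^k,0)$ and $(0,1/n^k)$ in $A$ for all $k \geq 1$, giving the required inclusion $\Z[1/n]^2 \subseteq A$ (the reverse inclusion is immediate since $M$ and $M^{-1}$ preserve $\Z[1/n]^2$). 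Theorem \ref{thm-irr-R2-times-DL2} then yields an embedding of $\Gamma = \Z[1/n]^2 \rtimes_M \Z$ as an irreducible cocompact lattice in $\mathrm{Isom}(\R^2) \times \Isom(\DL_2(n))$.

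For $\Lambda$, the embedding splits across the product. The group $\Z^2$ sits as a discrete cocompact subgroup of $\mathrm{Isom}(\R^2) = \R^2 \rtimes O(2)$ via its translation action, with compact quotient $(\R^2/\Z^2) \times O(2)$. For the lamplighter factor, the classical identification of $\DL_2(n)$ with a Cayley graph of $F \wr \Z$ for any finite group $F$ of order $n$ (going back to Woess and made explicit in Bartholdi--Neuhauser--Woess \cite{Bar-Neu-Woe}) provides a free, vertex-transitive action of $F \wr \Z$ on $\DL_2(n)$; in particular $F \wr \Z$ embeds as a cocompact lattice in $\mathrm{Isom}(\DL_2(n))$. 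Forming the product gives $\Lambda = \Z^2 \times F \wr \Z$ as a cocompact lattice in $\mathrm{Isom}(\R^2) \times \Isom(\DL_2(n))$, which completes the proof. The main conceptual point I expect to highlight in the writeup is that the Cayley graph identification in the second factor is \emph{insensitive to the algebraic nature of $F$}, depending only on $|F|=n$; this is what makes the flexibility possible, since $F$ can be chosen non-solvable while $\Gamma$ remains solvable of finite rank, both sharing the same cocompact envelope.
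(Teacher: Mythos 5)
Your proof is correct and follows essentially the same approach as the paper: apply Theorem \ref{thm-irr-R2-times-DL2} to the companion-type matrix of trace $1/n$, then embed the two factors of $\Lambda$ separately into the two factors of $\mathrm{Isom}(\R^2) \times \Isom(\DL_2(n))$. You merely fill in two details the paper leaves implicit --- the computation $A = \Z[1/n]^2$ via $M^{\pm 1}e_i - e_j$ and the observation that $M$ has infinite order because $1/n$ is not an algebraic integer --- and your matrix is the transpose of the paper's, a cosmetic difference.
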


\begin{proof}
	Take $M = \left[ \left[0,-1 \right] ,  \left[1,1/n \right] \right]$,  the companion matrix of $t^2 - t/n +1$.  We have $M \in \mathrm{SL}(2,\Q)$, $M$ has infinite order, and $M$ is $\R$-bounded. One verifies that the $\Z[M,M^{-1}]$-submodule of $\Q^2$ generated by $\Z^2$ is $\Z[1/n]^2$, so  Theorem  \ref{thm-irr-R2-times-DL2} implies that $G = \mathrm{Isom}(\R^2) \times \Isom(\DL_2(n))$ is a cocompact envelope of $\Gamma$. On the other hand, the group $\Isom(\DL_2(n))$ is a cocompact envelope of the wreath product $ F \wr \Z$ (see \cite{Bar-Neu-Woe,Cor-Fis-Kas} and historical references given there). Since the natural copy of $\Z^2$ in $\mathrm{Isom}(\R^2)$ is discrete and cocompact, the group $G$ is also a cocompact envelope of $\Lambda = \Z^2 \times F \wr \Z$.
\end{proof}

As recalled in the introduction, the wreath product $F \wr \Z$  is not virtually solvable provided $F$ is not solvable \cite{Erschler-instab}. Hence Theorem \ref{thm-intro-not-QI-rigid} from the introduction follows from Theorem \ref{thm-SFR-share-product}. 

\begin{Remark}
For every $d \geq 2$, one can also obtain similarly irreducible cocompact lattices in $\mathrm{Isom}(\R^d) \times \Isom(\DL_2(n))$.
\end{Remark}

\subsection{Second construction} \label{subsec-flexible-second}

We denote by $\mathbb{P}$  the set of prime numbers, and $\A$ the ring of adeles of $\Q$. Let $G$ be a linear algebraic group defined over $\Q$. We follow the notation from \cite{Borel-IHES-63}. Given a subring $R$ of a field extension of $\Q$, we denote by $G_R$ the group of elements of $G$ with coefficients in $R$ and determinant invertible in $R$. The groups $G_\R$ and $G_{\Q_p}$ are locally compact for the natural topologies, and $G_{\Z_p}$ is a compact open subgroup of $G_{\Q_p}$. We also denote $G_U = \prod_{p \in \mathbb{P}}  G_{\Z_p}$ and  $G_\A^{\infty} := G_\R \times G_U$, which are equipped with the product topology. The group $G_U$ is compact, and $G_\A^{\infty}$ is locally compact. The group of adeles of $G$, denoted $G_\A$, consists of elements $(g_\infty, (g_p)_{p \in \mathbb{P}} ) \in G_\R \times \prod_{p \in \mathbb{P}}   G_{\Q_p}$ such that $g_p \in G_{\Z_p}$ for all but finitely many $p$. The group $G_\A$ admits a locally compact group topology, for which the inclusion homomorphism $G_\A^{\infty} \to G_\A$ is continuous and has open image. 

We denote respectively by $j_p: G_{\Q} \to G_{\Q_p}$ and $j_\infty: G_{\Q} \to G_{\R}$ the natural homomorphisms. The homomorphism $j : G_{\Q}  \to G_\A$ defined by $j(g) = (j_\infty(g), (j_p(g))_\mathbb{P})$ is well-defined, and has discrete image. In the sequel when convenient we sometimes identify $G_{\Q}$ with its image in $ G_\A$.

The following is due to Ono and Borel: 

\begin{Theorem}[{\cite{Ono-Annals-1959, Borel-IHES-63}}]
	\label{thm-finite-doublecoset} 
	The double coset space $G_\A^{\infty} \backslash G_\A / G_{\Q}$ is finite. 
\end{Theorem}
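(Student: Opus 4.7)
The plan is to exploit the fact that $G_\A^\infty$ is an \emph{open} subgroup of $G_\A$, which forces the left coset space $G_\A^\infty \backslash G_\A$, and hence also the double coset space $G_\A^\infty \backslash G_\A / G_\Q$, to carry the discrete topology. My strategy therefore reduces to producing a subset $\Omega \subseteq G_\A$ whose image in $G_\A^\infty \backslash G_\A$ is finite and which satisfies $G_\A = \Omega \cdot G_\Q$: the image of $\Omega$ in the discrete double coset space would then be both surjective and finite, giving the theorem.

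The existence of such an $\Omega$ is the genuine content and amounts to a \emph{reduction theory} statement for $G_\Q$ acting on $G_\A$. The first concrete step I would take is to embed $G$ as a closed $\Q$-subgroup of some $\mathrm{GL}_n$ (this is possible since $G$ is a linear algebraic group over $\Q$). The advantage of working inside $\mathrm{GL}_n$ is that the analogous double coset space $\mathrm{GL}_n(\A)^\infty \backslash \mathrm{GL}_n(\A) / \mathrm{GL}_n(\Q)$ has a very clean interpretation as the set of isomorphism classes of rank-$n$ projective $\Z$-submodules of $\Q^n$, and is therefore a singleton since $\Z$ is a principal ideal domain. So for $\mathrm{GL}_n$ the theorem is immediate.

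The harder task is to pass from this $\mathrm{GL}_n$-statement back to an arbitrary closed $\Q$-subgroup $G \leq \mathrm{GL}_n$. The natural map $G_\A^\infty \backslash G_\A / G_\Q \to \mathrm{GL}_n(\A)^\infty \backslash \mathrm{GL}_n(\A) / \mathrm{GL}_n(\Q)$ has fibers parametrized by orbits of the arithmetic group $G_\Z$ on a subset of $\mathrm{GL}_n(\Q)/G_\Q$, and finiteness of each such fiber follows from reduction theory over the real place together with compactness of $G_U$. Concretely I would invoke a Siegel-domain decomposition $G_\R = \mathfrak{S} \cdot G_\Z$, combined with the strong statement that $G_\A^\infty \cdot G_\Q$ exhausts $G_\A$ up to finitely many cosets, and then check that the image of $\mathfrak{S} \times G_U$ in the discrete double coset space is finite.

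I expect the main obstacle to be precisely this last step. Siegel sets are not themselves compact in $G_\R$, so one cannot directly invoke \enquote{compact $\to$ finite in discrete}; one must instead use that, although $\mathfrak{S}$ is unbounded, its image collapses in the discrete quotient because the divergent directions of $\mathfrak{S}$ are absorbed into cosets of $G_\A^\infty$ via unipotent elements of $G_\Q$. In the reductive case this is essentially the Borel--Harish-Chandra theorem; for a general linear algebraic group $G$ one reduces to the reductive case via the Levi decomposition $G = U \rtimes L$ over $\Q$, using strong approximation for the unipotent radical (which yields $U_\A = U_\Q \cdot U_\A^\infty$) to handle the unipotent contribution, and then applying the reductive case to $L$.
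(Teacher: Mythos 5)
The paper cites this result from Ono and Borel without supplying a proof, so there is no in-paper argument to compare against. Your sketch correctly follows the classical route of Borel's proof in \cite{Borel-IHES-63}: openness of $G_\A^\infty$ makes the double coset space discrete; for $\mathrm{GL}_n$ over $\Q$ the class set is trivial because $\Z$ is a PID; for a general closed $\Q$-subgroup $G \leq \mathrm{GL}_n$ one uses the Levi decomposition $G = U \rtimes L$ over $\Q$, strong approximation for the unipotent radical $U$ (your computation $U_\A = U_\Q \cdot U_\A^\infty$ is correct), and Borel--Harish-Chandra reduction theory for the reductive Levi factor. You are also right to flag that the obstacle is the non-compactness of Siegel sets. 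The one thing to be careful about is that what you describe as the resolution of that obstacle --- the divergent directions of $\mathfrak{S}$ being absorbed into cosets of $G_\A^\infty$ via unipotents of $G_\Q$ --- is not an additional step you supply; it \emph{is} the content of the reduction theorem you are citing, so to present it as an observation would introduce a gap. Similarly, your description of the fibers of the comparison map $G_\A^\infty \backslash G_\A / G_\Q \to \mathrm{GL}_n(\A)^\infty \backslash \mathrm{GL}_n(\A) / \mathrm{GL}_n(\Q)$ as $G_\Z$-orbits on a subset of $\mathrm{GL}_n(\Q)/G_\Q$ is an informal gloss on a separate finiteness theorem in Borel's paper concerning arithmetic group orbits on rational points of homogeneous varieties. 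As an outline the proposal hits the right landmarks, but essentially all of the difficulty is concentrated in the theorems you invoke rather than in any argument you have laid out; that is appropriate given the paper itself treats the statement as a citation, but it should be stated as such.
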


The following is the main result of this section. 

\begin{Theorem} \label{thm-irr-Lie-DLd}
	For every $d \geq 2$, there is an infinite set of primes $S$ such that for every finite subset $\pi = \left\lbrace p_1, \ldots, p_k\right\rbrace $ of $S$,  there exists a group $\Gamma$ of the form \[ \Gamma = \Z[1/\pi]^d \rtimes \Z^{r}\] with $r = (k+1)(d-1)$, such that $\Gamma$ embeds as a cocompact irreducible lattice in the group \[ G= \R^d \rtimes (\R^\times)^{d-1} \times \Isom(\DL_d(p_1)) \times \cdots \times \Isom(\DL_d(p_k)).   \]
\end{Theorem}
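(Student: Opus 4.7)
The plan is to construct $\Gamma$ as an $S$-arithmetic subgroup built from a totally real number field $K/\Q$ of degree $d$. First I would fix such a $K$ (for instance a subfield of a cyclotomic field). By the Chebotarev density theorem applied to the Galois closure of $K$, the set $S$ of rational primes that split completely in $K$ is infinite; this will be the $S$ of the theorem, and for $p\in S$ every prime of $\mathcal{O}_K$ above $p$ has local degree $1$. Given $\pi=\{p_1,\ldots,p_k\}\subset S$ and $R:=\Z[1/\pi]$, set $\mathcal{O}_{K,\pi}:=\mathcal{O}_K\otimes_\Z R$: since $\mathcal{O}_K$ is a free $\Z$-module of rank $d$, this is a free $R$-module of rank $d$, hence isomorphic to $\Z[1/\pi]^d$ as an abelian group. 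The $S$-unit theorem applied to $K$ with $S_K$ consisting of the $d$ real places and the $dk$ degree-one finite places above $\pi$ gives rank $|S_K|-1=d(k+1)-1$ for $\mathcal{O}_{K,\pi}^\times$. Let $\Delta:=\ker(N_{K/\Q}:\mathcal{O}_{K,\pi}^\times\to R^\times)$: since the image of $N_{K/\Q}$ has rank exactly $k$ (it contains each $N(p_i)=p_i^d$ and lies in the rank-$k$ group $R^\times$), $\Delta$ has rank $(k+1)(d-1)=r$. Let $\Delta_0\simeq\Z^r$ be a torsion-free finite-index subgroup of $\Delta$. The multiplication action of $\Delta_0$ on $\mathcal{O}_{K,\pi}\simeq R^d$ embeds $\Delta_0\hookrightarrow\mathrm{GL}_d(R)$, and I set $\Gamma:=\mathcal{O}_{K,\pi}\rtimes\Delta_0\simeq\Z[1/\pi]^d\rtimes\Z^r$.

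By Proposition \ref{prop-metab-local-field-DLd} each group $\Q_{p_i}^d\rtimes\mathrm{Diag}_d^1(\Q_{p_i})$ embeds as a closed cocompact subgroup of $\Isom(\DL_d(p_i))$, so it suffices to embed $\Gamma$ as a cocompact irreducible lattice in $G':=(\R^d\rtimes T_\R)\times\prod_i(\Q_{p_i}^d\rtimes\mathrm{Diag}_d^1(\Q_{p_i}))$, where $T_\R$ denotes the determinant-one diagonal torus in $\mathrm{GL}_d(\R)$. Total reality of $K$ yields $K\otimes_\Q\R\simeq\R^d$ via the $d$ real embeddings, and complete splitting of each $p_i$ yields $K\otimes_\Q\Q_{p_i}\simeq\Q_{p_i}^d$; in these product bases, the diagonal embedding of $\mathcal{O}_{K,\pi}$ into its completions realizes $\mathcal{O}_{K,\pi}$ as a discrete cocompact subgroup of $\R^d\times\prod_i\Q_{p_i}^d$, and simultaneously diagonalizes the multiplication action of each $u\in\Delta_0$, with determinant $N_{K/\Q}(u)=1$ in every factor. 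Cocompactness of $\Delta_0$ in $T_\R\times\prod_i\mathrm{Diag}_d^1(\Q_{p_i})$ is the Dirichlet/$S$-unit theorem: after quotienting by the compact kernel of the log map ($\{\pm 1\}^{d-1}$ archimedeanly, and $(\Z_{p_i}^\times)^d$ at each $p_i$), the target becomes $\R^{d-1}\times\prod_i\Z^{d-1}$, and the log image of $\Delta_0$ is a discrete subgroup of total rank $r$ whose intersection with the archimedean factor $\R^{d-1}$ is, up to finite index, the classical log-lattice of the norm-one Dirichlet units of $\mathcal{O}_K^\times$. Combining via the semidirect product structure gives cocompactness of $\Gamma$ in $G'$, and irreducibility follows from the standard fact that $R=\Z[1/\pi]$ is an irreducible lattice in $\R\times\prod_i\Q_{p_i}$: the normal subgroup $\mathcal{O}_{K,\pi}\simeq R^d$ already projects non-discretely to any proper sub-product of the completions.

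The main obstacle is the simultaneous rank count and the cocompactness verification for the norm-one $S$-units $\Delta_0$; both reduce cleanly to the classical Dirichlet/$S$-unit theorem, but only once every prime in $\pi$ has been arranged to split completely in $K$. This is why the set $S$ is chosen at the outset, via Chebotarev's theorem, to be the infinite set of primes that split completely in $K$.
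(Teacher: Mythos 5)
Your proof is correct and, once unwound, it is the same arithmetic construction as the paper's, phrased in a more explicit and slightly more elementary way. The paper starts from a matrix $M\in\mathrm{SL}(d,\Z)$ with $d$ distinct real eigenvalues and with $C(M)_\Z$ of rank $d-1$ (citing Prasad--Raghunathan for existence), and then deduces cocompactness of $j(C(M)_\Q)$ in $C(M)_\A$ from the Ono--Borel theorem on finiteness of the adelic double coset space $G_\A^\infty\backslash G_\A / G_\Q$. The two hypotheses on $M$ in fact force its characteristic polynomial $f$ to be irreducible, so the centralizer torus $C(M)$ is exactly the norm-one torus of the totally real field $\Q[t]/(f)$. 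You make this structure explicit by working directly with a totally real field $K$, which (a) replaces the Prasad--Raghunathan existence statement by the elementary remark that totally real fields of degree $d$ exist, and (b) replaces the Ono--Borel theorem by Dirichlet's $S$-unit theorem: for the norm-one torus of a number field, the finiteness of the adelic class number and cocompactness of the $S$-units are exactly the classical unit theorem. Both proofs invoke Chebotarev to produce the same infinite set $S$ (primes that split completely in the splitting field of $f$, equivalently in the Galois closure of $K$), and both feed into Proposition~\ref{prop-metab-local-field-DLd} to pass from $\Q_{p_i}^d\rtimes\mathrm{Diag}_d^1(\Q_{p_i})$ to $\Isom(\DL_d(p_i))$.

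Your rank count for $\Delta=\ker(N_{K/\Q})$ is right: the $S$-unit theorem gives rank $d(k+1)-1$ for $\mathcal{O}_{K,\pi}^\times$, the norm image has rank $k$ in $R^\times$ because $N_{K/\Q}(p_i)=p_i^d$ and the $p_i^d$ generate a finite-index subgroup of $R^\times/\{\pm1\}$, hence $\Delta$ has rank $(k+1)(d-1)$ as required. The cocompactness argument via the log map is also sound: the archimedean kernel of the valuation projection is, up to finite index, the lattice of norm-one units of $\mathcal{O}_K$, which fills $\R^{d-1}$ by the classical Dirichlet theorem, and together with the rank count this yields cocompactness in $\R^{d-1}\times\Z^{k(d-1)}$. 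Irreducibility via the non-discrete projection of $\mathcal{O}_{K,\pi}$ to proper sub-products of $\R^d\times\prod_i\Q_{p_i}^d$ is the standard argument. The upshot is that your route trades the heavier adelic machinery for the concrete unit theorem, at the cost of fixing a specific field rather than working with an arbitrary suitable $M$ — in the end both yield the same family of groups $\Gamma$.
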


\begin{proof}
For $ M \in \mathrm{SL}(d,\Z)$, we consider the centralizer $C(M)$ of $M$ in $\mathrm{SL}(d)$. It is algebraic and defined over $\Q$. In the sequel we  use notations introduced above. For a subring $R$ of a field extension $K / \Q$, $C(M)_R = \left\lbrace g \in \mathrm{SL}(d,R) \, | \, gM = Mg  \right\rbrace$.

The construction of groups $\Gamma$ as in the statement goes as follows. We start by choosing a matrix $M \in \mathrm{SL}(d,\Z)$ such that: \begin{enumerate}
	\item \label{item-M-diago-R} $M$ has $d$ distinct eigenvalues in $\R$;
	\item \label{item-M-centr} $C(M)_{\Z}$ has a finite index subgroup isomorphic to $\Z^{d-1}$. 
\end{enumerate} 

 Since $M$ has $d$ distinct eigenvalues in $\R$, $M$ is diagonalizable over $\R$ and $C(M)_{\R} \simeq (\R^\times)^{d-1}$. Since $C(M)_{\Z}$ embeds as a discrete subgroup of $C(M)_{\R}$ via $j_\infty$, we see that the assumption that $C(M)_{\Z}$ has a finite index subgroup isomorphic to $\Z^{d-1}$ is equivalent to saying that $j_\infty(C(M)_{\Z})$ is a cocompact subgroup of $C(M)_{\R}$. Matrices $M \in  \mathrm{SL}(d,\Z)$ with (\ref{item-M-diago-R}) and (\ref{item-M-centr}) exist for every $d \geq 2$ \cite[Theorems 1.14 and 2.7]{Prasad-Raghunathan}.

One has $j(C(M)_{\Q}) \cap C(M)_{\A}^\infty = j(C(M)_{\Z})$. Since $C(M)_U = \prod_{p \in \mathbb{P}}   C(M)_{\Z_p}$ is compact, and since $j_\infty(C(M)_{\Z})$ is a cocompact subgroup of $C(M)_{\R}$ by assumption, we have that $j(C(M)_{\Z})$ is a cocompact subgroup of $C(M)_{\A}^\infty$. Since the group $C(M)_{\A} $ is covered by right $j(C(M)_{\Q})$-translates of  finitely many cosets $C(M)_{\A}^\infty g_1, \ldots, C(M)_{\A}^\infty g_s$ by Theorem \ref{thm-finite-doublecoset}, we infer that \begin{equation}
\label{eq:cocompactness}
\text{the subgroup $j(C(M)_{\Q})$ is cocompact in $C(M)_{\A}$.} \tag{$\star$}
\end{equation}

Let $f \in \Z[t]$ be the minimal polynomial of $M$. Since the matrix $M$  has $d$ distinct eigenvalues in $\R$, the polynomial $f$ has degree $d$. Let $K / \Q$ be a field extension. Denoting $T(K)$ the  split torus of diagonal matrices in $\mathrm{SL}(d,K)$, the conditions \begin{enumerate}[label=(\alph*)]
	\item \label{item-M-diago} $M$ is diagonalizable over $K$; 
	\item \label{item-CM-diago} $C_M(K)$ is conjugate in $\mathrm{SL}(d,K)$ to $T(K)$; 
	\item \label{item-f-splits} $f$ splits over $K$;
\end{enumerate}
are equivalent. The implications $\ref{item-CM-diago} \implies \ref{item-M-diago}  \implies \ref{item-f-splits}$ are clear. Since $\mathrm{char}(\Q) = 0$ and $f$ has $d$ distinct roots in $\R$, $f$ cannot have any multiple root in any field extension of $\Q$. Hence if \ref{item-f-splits} holds, then necessarily $f$ has $d$ distinct roots in $K$, and hence \ref{item-CM-diago} holds.

Let $L$ be the subfield of $\R$ generated by the roots of  $f$. Given a prime number $p$, we denote by $\Sigma_{L,p}$ the set of places of $L$ lying above $p$. For each $v \in  \Sigma_{L,p}$, the completion $L_v$ of $L$ with respect to $v$ is a finite extension of $\Q_p$. We are interested in the set of primes $p$ such that $f$ splits over $\Q_p$. These primes are exactly the ones such that $L_v = \Q_p$. Since $L / \Q$ is a Galois extension, the group $\mathrm{Gal}(L / \Q)$ acts transitively on $\Sigma_{L,p}$ \cite[Ch.\ I Proposition 9.1]{Neukirch-ANT}, so that $|\Sigma_{L,p}| = | \mathrm{Gal}(L / \Q) | / |\mathrm{Gal}(L / \Q)_v|$ where $\mathrm{Gal}(L / \Q)_v$ is the stabilizer of a fixed $v \in  \Sigma_{L,p}$. The group $\mathrm{Gal}(L / \Q)_v$ is isomorphic to $\mathrm{Gal}(L_v / \Q_p)$ \cite[Ch.\ II Proposition 9.6]{Neukirch-ANT}, so $|\mathrm{Gal}(L / \Q)_v| = \left[ L_v : \Q_p \right] $. Hence $|\Sigma_{L,p}| = \left[ L : \Q \right]  / \left[ L_v : \Q_p \right] $. If we denote $S_L = \left\lbrace p \,  : \,  |\Sigma_{L,p}| = \left[ L : \Q \right] \right\rbrace $, we have in particular $L_v = \Q_p$ if and only if $p \in S_L$. By Čebotarev theorem, the set of primes $S_L$ has density $1/\left[ L : \Q \right]$, and in particular $S_L$ is infinite \cite[Ch.\ VII Corollary 13.6]{Neukirch-ANT}.

Let $\pi = \left\lbrace p_i,\ldots,p_k \right\rbrace $ such that $\pi  \subset S_L$, and let  \[ \Gamma  =  \Z[1/\pi]^d \rtimes C(M)_{\Z[1/\pi]} \leq \Z[1/\pi]^d \rtimes \mathrm{SL}(d,\Z[1/\pi]).   \]

Denoting $i_p: \Q^d \to \Q_p^d$ and $i_\infty: \Q^d \to \R^d$ the canonical inclusions, we consider the homomorphism \begin{equation} \label{eq:prod-envelope} \varphi: \Gamma \to \R^d \rtimes C(M)_{\R} \times \prod_{p \in \pi} \Q_{p}^d \rtimes C(M)_{\Q_p}  \tag{$\star \star$} \end{equation} defined by \[ \varphi(x,g) = (i_\infty(x), j_\infty(g)), (i_{p_1}(x), j_{p_1}(g)), \ldots, (i_{p_k}(x), j_{p_k}(g)).  \]

Since $C(M)_{\R} \times \prod_{p \in \pi}  C(M)_{\Q_p} \times \prod_{p \notin \pi}  C(M)_{\Z_p} $ is an open subgroup in $C(M)_{\A}$, by (\ref{eq:cocompactness}) we have that $j(C(M)_{\Z[1/\pi]})$ is discrete and cocompact in $C(M)_{\R} \times \prod_{p \in \pi}  C(M)_{\Q_p} \times \prod_{p \notin \pi}  C(M)_{\Z_p} $. The subgroup  $\prod_{p \notin \pi}  C(M)_{\Z_p} $ being compact, we have that $\varphi(C(M)_{\Z[1/\pi]})$ is a discrete and cocompact subgroup of $C(M)_{\R} \times \prod_{p \in \pi} C(M)_{\Q_p}$. Since in addition $\varphi(\Z[1/\pi]^d)$ is a discrete and cocompact subgroup of $\R^d \times \prod_{p \in \pi} \Q_{p}^d$, it follows that $\varphi(\Gamma)$ is discrete and cocompact in $\R^d \rtimes C(M)_{\R} \times \prod_{p \in \pi} \Q_{p}^d \rtimes C(M)_{\Q_p}$. And $\varphi(\Gamma)$ has a non-discrete projection on each proper sub-product. 

For each $p \in \pi$, the minimal polynomial $f$ of $M$ splits over $\Q_p$ because $\pi  \subset S_L$. As recalled above, this means that $C(M)_{\Q_p}$ is conjugate in $\mathrm{SL}(d,\Q_{p})$ to the subgroup $T(\Q_p)$ of diagonal matrices in $\mathrm{SL}(d,\Q_{p})$. This conjugation induces an isomorphism $\Q_{p}^d \rtimes C(M)_{\Q_p} \to \Q_{p}^d \rtimes T(\Q_p)$. The group $T(\Q_p)$ is a closed and cocompact subgroup of the group  $\mathrm{Diag}_d^1(\Q_p)$ of diagonal $(d \times d)$-matrices over $\Q_p$ of determinant of absolute value $1$. Therefore by Proposition \ref{prop-metab-local-field-DLd} there is a continuous, injective homomorphism $\Q_{p}^d \rtimes T(\Q_p) \to  \Isom(\DL_d(p))$ with closed and cocompact image. Precomposing with the isomorphism $\Q_{p}^d \rtimes C(M)_{\Q_p} \to \Q_{p}^d \rtimes T(\Q_p)$, taking the product over $\pi$, and taking the  product with the real part $\R^d \rtimes C(M)_{\R}$,  which is isomorphic to $\R^d \rtimes (\R^\times)^{d-1}$, we indeed obtain an embedding of $\Gamma$ as a discrete cocompact subgroup of $ \R^d \rtimes (\R^\times)^{d-1} \times \prod_{p \in \pi}  \Isom(\DL_d(p))$. The group $C(M)_{\Z[1/\pi]}$ being embeddable as a discrete cocompact subgroup in \[ C(M)_{\R} \times \prod_{p \in \pi} C(M)_{\Q_p} \simeq (\R^\times)^{d-1} \times  \prod_{p \in \pi}  (\Q_p^\times)^{d-1}  \simeq (\R^\times)^{d-1} \times \Z^{k(d-1)} \times V  \] with $V$ compact, the rank $r$ of the torsion-free part of $C(M)_{\Z[1/\pi]}$ is equal to $r = d-1 + k(d-1) = (k+1)(d-1)$. Passing from $C(M)_{\Z[1/\pi]}$ to its finite index torsion-free part, we obtain a group of the form $\Z[1/\pi]^d \rtimes \Z^{r}$ satisfying all the requirements of Theorem \ref{thm-irr-Lie-DLd}. 
\end{proof}

\begin{Remark}
	Cornulier--Tessera gave in \cite[\S 8.1]{Cor-Tess-vanishing} a procedure that takes as input a  finitely generated virtually torsion-free solvable group of finite rank $\Gamma$, and produces a cocompact envelope  $G$ of $\Gamma$. The group $\Gamma$ is originally viewed as a subgroup of $\mathrm{GL}(n,\Q)$. The procedure starts by considering the nilpotent radical $N$ of $\Gamma$, and produces a cocompact envelope $\mathcal{N}$ of $N$ ($\mathcal{N}$ is defined as the product of the Zariski closure of the real projection and the closures of the $p$-adic projections). The envelope of $\Gamma$ is then defined as $G = \mathcal{N} \Gamma$. This $G$ is always nilpotent-by-(discrete virtually abelian) \cite[Proposition 8.4]{Cor-Tess-vanishing}. 
	
	The discrete and cocompact embedding $\varphi$ in (\ref{eq:prod-envelope}) is different from the one from  \cite{Cor-Tess-vanishing}. The map $\varphi$ can be divided in two independent parts: the first part consists in embedding the abelian normal subgroup in a product of local fields. That part is standard (and is also a special case of the above $\mathcal{N}$ from \cite{Cor-Tess-vanishing}). The second part consists in embedding the acting abelian subgroup discretely and cocompactly in a product, in a compatible way with respect to the embedding at the level of the abelian normal subgroup. The fact that it is possible to do so is very specific to the groups considered here. This is this second step that allows to obtain a global direct product decomposition in the envelope in (\ref{eq:prod-envelope}). 
\end{Remark}

\begin{Example}
 We consider an explicit example with $d=3$ and $k=1$. The matrices $M, M_2, M_3, M_4$ and the claims below that are not seemingly obvious have been obtained with the assistance of the computer. Consider the polynomial $f(t) = t^3 - 5t^2 + 6t -1$. It has three roots $\alpha_1, \alpha_2, \alpha_3$ in $\R$, verifying $ 0 < \alpha_1 < 1 <  \alpha_2 < 2 < \alpha_3$. Let $M \in \mathrm{SL}(3,\Z)$ be the companion matrix of $f$. Consider the polynomial $h(t) = -f(1-t)$. It is the minimal polynomial of $M_2 := -M + I$.  It has constant coefficient equal to $-1$. Hence $M_2 \in \mathrm{SL}(3,\Z)$. Clearly $M_2$ commutes with $M$, so $M_2 \in C(M)_{\Z}$. Moreover $\left\langle M, M_2\right\rangle $ is isomorphic to $\Z^2$ (and hence $\left\langle M, M_2\right\rangle $ has finite index in $ C(M)_{\Z}$). Indeed, if it is not the case, there exist non-zero $m,n$ such that $M^n = M_2^m$. The equation $\alpha_1^n = (-\alpha_1 +1)^m$  then implies  $m,n$ have the same sign since $|\alpha_1|, |-\alpha_1 +1| < 1$, while $\alpha_2^n = (-\alpha_2 +1)^m$   implies  $m,n$ have opposite sign since $|\alpha_2| > 1$ and $|-\alpha_2 +1| < 1$, a contradiction.

The smallest prime for which the polynomial $f$ splits over $\Q_p$ is $p=13$ (the next ones being $29$ and $41$). Consider the matrices $M_3 = 1/13 (M^2 + 5M + 4I)$ and $M_4 = 1/13 (2 M^2 + 3M + 4I)$. These matrices have determinant $1$, and hence belong to $C(M)_{\Z[1/13]}$. Let $(w_1,w_2,w_3)$ be a basis of $\Q_{13}^3$ consisting of eigenvectors of $M$. Consider the homomorphism $\varphi: C(M)_{\Z[1/13]} \to \Z^3$ which associates to $g \in C(M)_{\Z[1/13]}$ the vector $\varphi(g) = (v_{13}(\lambda_1(g)), v_{13}(\lambda_2(g)), v_{13}(\lambda_3(g)))$, where $v_{13}$ is the $13$-adic valuation and $\lambda_i(g)$ is the eigenvalue of $g$ associated to $w_i$. Since $M,M_2 \in \mathrm{SL}(3,\Z)$, the eigenvalues of $M$ and $M_2$ belong to $\Z_{13}^\times$. This means $M,M_2$ belong to the kernel of $\varphi$. Moreover for a suitable ordering of $(w_1,w_2,w_3)$ we have $\varphi(M_3) = (0,1,-1)$ and $\varphi(M_4) = (-1,-1,2)$. It follows that $\left\langle M_3, M_4\right\rangle $ is isomorphic to $\Z^2$ and that $\left\langle M, M_2, M_3, M_4\right\rangle $ is isomorphic to $\Z^4$, and is a finite index subgroup of $C(M)_{\Z[1/13]}$. The associated semi-direct product $\Gamma = \Z[1/13]^3 \rtimes \Z^4$ sits as a discrete and cocompact subgroup in $\R^3 \rtimes (\R^\times)^{2} \times \Isom(\DL_3(13))$. 
\end{Example}

\begin{Remark}
The case $d=2$ in Theorem \ref{thm-irr-Lie-DLd} and Theorem  \ref{thm-irr-R2-times-DL2} have in common that one tdlc factor in the product decomposition of the envelope is the isometry group of some $\DL_2(n)$. However these two settings remain quite different. First because Theorem \ref{thm-irr-Lie-DLd} allows more than one tdlc factor, but also mainly because the Lie group factors are different (the isometry group of $\R^2$ in Theorem  \ref{thm-irr-R2-times-DL2}, and the Lie group Sol (up to index two) in  Theorem \ref{thm-irr-Lie-DLd}). 
\end{Remark}

We now complete the proof of Theorem \ref{thm-intro-high-finiteness}. The group $\Gamma$ and the cocompact envelope $G =  \R^d \rtimes (\R^\times)^{d-1} \times \Isom(\DL_d(p))$ will be provided by Theorem \ref{thm-irr-Lie-DLd}, and the group $\Lambda$ will be taken to be a product of discrete and cocompact subgroups in each factor of $G$.

\begin{Definition}
Let $d \geq 2$ and $q$ a prime power such that $q \geq d-1$. Let \[ \Lambda_d(q) =  \F_q[t,t^{-1}, (t+1)^{-1}, \ldots, (t+d-2)^{-1}  ] \rtimes \Z^{d-1}, \] where the generators of $ \Z^{d-1}$ act by multiplication by $t, t+1, \ldots, t+d-2$.   
\end{Definition}

Here $\F_q$ is the finite field with $q$ elements. When $d=2$, $\Lambda_2(q)$ is the wreath product $\Z / q \Z \wr \Z$. When $d=3$,  $\Lambda_3(q) =  \F_q[t,t^{-1}, (t+1)^{-1} ] \rtimes \Z^{2}$ is Baumslag's metabelian group \cite{Baumslag-group} (more precisely, $\Lambda_3(q)$ is the positive characteristic analogue of the group from \cite{Baumslag-group}).

We make use of the following two results of Bartholdi--Neuhauser--Woess: 

\begin{enumerate}[label=(\Roman*)]
	\item \label{item-BNW-embed} $\Lambda_d(q)$ embeds as a discrete cocompact subgroup in $\Isom(\DL_d(q))$ \cite[Theorem 3.6]{Bar-Neu-Woe}.
	\item \label{item-BNW-type}  A discrete and cocompact subgroup of $\Isom(\DL_d(n))$ has type $F_{d-1}$ and not  $F_{d}$ \cite[Theorem 4.4-Corollary 4.5]{Bar-Neu-Woe}. 
\end{enumerate}

More specifically, one can verify that the embedding in \ref{item-BNW-embed} can be decomposed as an embedding of $\Lambda_d(q)$ as a discrete and cocompact subgroup in the group $\F_q(\!(t)\!)^d \rtimes \mathrm{Diag}_d^1(\F_q(\!(t)\!))$, followed by an embedding as a closed and cocompact subgroup of the latter in $\Isom(\DL_d(q))$ (Proposition \ref{prop-metab-local-field-DLd} for $K = \F_q(\!(t)\!)$). In view of \ref{item-BNW-embed}, statement \ref{item-BNW-type} applies notably to $\Lambda_d(q)$. The fact that $\Lambda_d(q)$ has type $F_{d-1}$ and not $F_{d}$ is also consequence of a result of Bux \cite[Corollary 3.5]{Bux}.

\begin{proof}[Proof of Theorem \ref{thm-intro-high-finiteness}]
We take $d \geq 2$. By Theorem \ref{thm-irr-Lie-DLd} (applied with $k=1$) one can find a prime $p$ larger than $d-1$, and a group $\Gamma$ of the form $\Gamma = \Z[1/p]^d \rtimes \Z^{2(d-1)}$ such that $\Gamma$ embeds as a cocompact irreducible lattice in $G =  \R^d \rtimes (\R^\times)^{d-1} \times \Isom(\DL_d(p)) = G_1 \times G_2$. The group $G_1$ admits a discrete and cocompact polycyclic subgroup $P$ (in the notation from the proof of Theorem \ref{thm-irr-Lie-DLd}, one can take $P \simeq \Z^d \rtimes C(M)_\Z$). By \ref{item-BNW-embed}  $G_2$ admits $\Lambda_d(q)$  as a discrete cocompact subgroup (since $p \geq d-1$). Hence $G$ is a cocompact envelope for $\Lambda = P \times \Lambda_d(q)$. By \ref{item-BNW-type} the group $\Lambda_d(q)$ has type $F_{d-1}$. Since $P$ is polycyclic, $P$ also has type $F_{d-1}$. Therefore $\Lambda$ as well. Since being of type $F_{d-1}$ is QI-invariant  \cite[Theorem 9.56]{Drutu-Kapovich}, $\Gamma$ has type $F_{d-1}$. The group $\Gamma$ is of finite rank and torsion-free, while $\Lambda$ is neither of finite rank nor virtually torsion-free, because $\Lambda_d(q)$ contains a subgroup isomorphic to an infinite dimensional vector space over $\F_p$. 
\end{proof}

\bibliographystyle{amsalpha}
\bibliography{commonenvelope}

\providecommand{\bysame}{\leavevmode\hbox to3em{\hrulefill}\thinspace}
\providecommand{\MR}{\relax\ifhmode\unskip\space\fi MR }
\providecommand{\MRhref}[2]{%
  \href{http://www.ams.org/mathscinet-getitem?mr=#1}{#2}
}
\providecommand{\href}[2]{#2}
\begin{thebibliography}{BCGM19}

\bibitem[Bau72]{Baumslag-group}
G.~Baumslag, \emph{A finitely presented metabelian group with a free abelian
  derived group of infinite rank}, Proc. Amer. Math. Soc. \textbf{35} (1972),
  61--62.

\bibitem[BCGM19]{BCGM-amen}
Uri Bader, Pierre-Emmanuel Caprace, Tsachik Gelander, and Shahar Mozes,
  \emph{Lattices in amenable groups}, Fund. Math. \textbf{246} (2019), no.~3,
  217--255. \MR{3959251}

\bibitem[BdlHV08]{BHV}
Bachir Bekka, Pierre de~la Harpe, and Alain Valette, \emph{Kazhdan's property
  ({T})}, New Mathematical Monographs, vol.~11, Cambridge University Press,
  Cambridge, 2008. \MR{2415834}

\bibitem[BFS20]{BFS-envelopes}
Uri Bader, Alex Furman, and Roman Sauer, \emph{Lattice envelopes}, Duke Math.
  J. \textbf{169} (2020), no.~2, 213--278. \MR{4057144}

\bibitem[BNW08]{Bar-Neu-Woe}
Laurent Bartholdi, Markus Neuhauser, and Wolfgang Woess, \emph{Horocyclic
  products of trees}, J. Eur. Math. Soc. (JEMS) \textbf{10} (2008), no.~3,
  771--816. \MR{2421161}

\bibitem[Bor63]{Borel-IHES-63}
Armand Borel, \emph{Some finiteness properties of adele groups over number
  fields}, Inst. Hautes \'{E}tudes Sci. Publ. Math. (1963), no.~16, 5--30.
  \MR{202718}

\bibitem[Bou63]{Bourb-int-7-8}
N.~Bourbaki, \emph{\'{E}l\'{e}ments de math\'{e}matique. {F}ascicule {XXIX}.
  {L}ivre {VI}: {I}nt\'{e}gration. {C}hapitre 7: {M}esure de {H}aar. {C}hapitre
  8: {C}onvolution et repr\'{e}sentations}, Actualit\'{e}s Scientifiques et
  Industrielles, No. 1306, Hermann, Paris, 1963. \MR{0179291}

\bibitem[Bux04]{Bux}
Kai-Uwe Bux, \emph{Finiteness properties of soluble arithmetic groups over
  global function fields}, Geom. Topol. \textbf{8} (2004), 611--644.
  \MR{2057775}

\bibitem[BW04]{Baumg-Willis}
Udo Baumgartner and George~A. Willis, \emph{Contraction groups and scales of
  automorphisms of totally disconnected locally compact groups}, Israel J.
  Math. \textbf{142} (2004), 221--248. \MR{2085717}

\bibitem[CCMT15]{CCMT}
Pierre-Emmanuel Caprace, Yves Cornulier, Nicolas Monod, and Romain Tessera,
  \emph{Amenable hyperbolic groups}, J. Eur. Math. Soc. (JEMS) \textbf{17}
  (2015), no.~11, 2903--2947. \MR{3420526}

\bibitem[CFK12]{Cor-Fis-Kas}
Yves Cornulier, David Fisher, and Neeraj Kashyap, \emph{Cross-wired lamplighter
  groups}, New York J. Math. \textbf{18} (2012), 667--677. \MR{2991419}

\bibitem[CH16]{CorHar}
Yves Cornulier and Pierre de~la Harpe, \emph{Metric geometry of locally compact
  groups}, EMS Tracts in Mathematics, vol.~25, European Mathematical Society
  (EMS), Z\"urich, 2016, Winner of the 2016 EMS Monograph Award. \MR{3561300}

\bibitem[Cor15]{Cor-comm-focal}
Yves Cornulier, \emph{Commability and focal locally compact groups}, Indiana
  Univ. Math. J. \textbf{64} (2015), no.~1, 115--150.

\bibitem[CT11]{CoTe-contract-Lp}
Yves Cornulier and Romain Tessera, \emph{Contracting automorphisms and
  {$L^p$}-cohomology in degree one}, Ark. Mat. \textbf{49} (2011), no.~2,
  295--324. \MR{2826945}

\bibitem[CT20]{Cor-Tess-vanishing}
\bysame, \emph{On the vanishing of reduced 1-cohomology for {B}anach
  representations}, Ann. Inst. Fourier (Grenoble) \textbf{70} (2020), no.~5,
  1951--2003. \MR{4245603}

\bibitem[DK18]{Drutu-Kapovich}
Cornelia Dru\c{t}u and Michael Kapovich, \emph{Geometric group theory},
  American Mathematical Society Colloquium Publications, vol.~63, American
  Mathematical Society, Providence, RI, 2018, With an appendix by Bogdan Nica.
  \MR{3753580}

\bibitem[Dym15]{Dymarz-env}
Tullia Dymarz, \emph{Envelopes of certain solvable groups}, Comment. Math.
  Helv. \textbf{90} (2015), no.~1, 195--224. \MR{3317338}

\bibitem[Dyu00]{Erschler-instab}
Anna Dyubina, \emph{Instability of the virtual solvability and the property of
  being virtually torsion-free for quasi-isometric groups}, Internat. Math.
  Res. Notices (2000), no.~21, 1097--1101. \MR{1800990}

\bibitem[EF10]{Eskin-Fisher-icm}
Alex Eskin and David Fisher, \emph{Quasi-isometric rigidity of solvable
  groups}, Proceedings of the {I}nternational {C}ongress of {M}athematicians.
  {V}olume {III}, Hindustan Book Agency, New Delhi, 2010, pp.~1185--1208.
  \MR{2827837}

\bibitem[EFW07]{Eskin-Fisher-Whyte-annou}
Alex Eskin, David Fisher, and Kevin Whyte, \emph{Quasi-isometries and rigidity
  of solvable groups}, Pure Appl. Math. Q. \textbf{3} (2007), no.~4, Special
  Issue: In honor of Grigory Margulis. Part 1, 927--947. \MR{2402598}

\bibitem[EFW12]{EFW-coarse-I}
\bysame, \emph{Coarse differentiation of quasi-isometries {I}: {S}paces not
  quasi-isometric to {C}ayley graphs}, Ann. of Math. (2) \textbf{176} (2012),
  no.~1, 221--260. \MR{2925383}

\bibitem[EFW13]{EFW-coarse-II}
\bysame, \emph{Coarse differentiation of quasi-isometries {II}: {R}igidity for
  {S}ol and lamplighter groups}, Ann. of Math. (2) \textbf{177} (2013), no.~3,
  869--910. \MR{3034290}

\bibitem[FM98]{Far-Mosh-BSI}
Benson Farb and Lee Mosher, \emph{A rigidity theorem for the solvable
  {B}aumslag-{S}olitar groups}, Invent. Math. \textbf{131} (1998), no.~2,
  419--451, With an appendix by Daryl Cooper. \MR{1608595}

\bibitem[FM99]{Far-Mosh-BSII}
\bysame, \emph{Quasi-isometric rigidity for the solvable {B}aumslag-{S}olitar
  groups. {II}}, Invent. Math. \textbf{137} (1999), no.~3, 613--649.
  \MR{1709862}

\bibitem[FM00a]{Farb-Mosher-abelian-by-cyclic}
\bysame, \emph{On the asymptotic geometry of abelian-by-cyclic groups}, Acta
  Math. \textbf{184} (2000), no.~2, 145--202. \MR{1768110}

\bibitem[FM00b]{Farb-Mosher-Prob}
\bysame, \emph{Problems on the geometry of finitely generated solvable groups},
  Crystallographic groups and their generalizations ({K}ortrijk, 1999),
  Contemp. Math., vol. 262, Amer. Math. Soc., Providence, RI, 2000,
  pp.~121--134. \MR{1796128}

\bibitem[Fur67]{Furstenberg-envelopes}
Harry Furstenberg, \emph{Poisson boundaries and envelopes of discrete groups},
  Bull. Amer. Math. Soc. \textbf{73} (1967), 350--356. \MR{210812}

\bibitem[Fur01]{Furman-Mostow-LC}
A.~Furman, \emph{Mostow-{M}argulis rigidity with locally compact targets},
  Geom. Funct. Anal. \textbf{11} (2001), no.~1, 30--59. \MR{1829641}

\bibitem[Gui73]{Guivarch-croissance}
Yves Guivarc'h, \emph{Croissance polynomiale et p\'{e}riodes des fonctions
  harmoniques}, Bull. Soc. Math. France \textbf{101} (1973), 333--379.
  \MR{369608}

\bibitem[Hoc65]{Hochschild}
G.~Hochschild, \emph{The structure of {L}ie groups}, Holden-Day, Inc., San
  Francisco-London-Amsterdam, 1965. \MR{207883}

\bibitem[KMPN09]{Kro-Mart-Nuc-2009}
P.~H. Kropholler, C.~Martinez-P\'{e}rez, and B.~E.~A. Nucinkis,
  \emph{Cohomological finiteness conditions for elementary amenable groups}, J.
  Reine Angew. Math. \textbf{637} (2009), 49--62. \MR{2599081}

\bibitem[Kro90]{Kropholler-90}
P.~H. Kropholler, \emph{An analogue of the torus decomposition theorem for
  certain {P}oincar\'{e} duality groups}, Proc. London Math. Soc. (3)
  \textbf{60} (1990), no.~3, 503--529. \MR{1044309}

\bibitem[Kro93]{Kroph-FPinf-93}
Peter~H. Kropholler, \emph{On groups of type {$({\rm FP})_\infty$}}, J. Pure
  Appl. Algebra \textbf{90} (1993), no.~1, 55--67. \MR{1246274}

\bibitem[LM21]{Leary-Minasyan}
Ian~J. Leary and Ashot Minasyan, \emph{Commensurating {HNN} extensions:
  nonpositive curvature and biautomaticity}, Geom. Topol. \textbf{25} (2021),
  no.~4, 1819--1860. \MR{4286364}

\bibitem[Los87]{Losert-growth-I}
Viktor Losert, \emph{On the structure of groups with polynomial growth}, Math.
  Z. \textbf{195} (1987), no.~1, 109--117. \MR{888132}

\bibitem[LR04]{Lennox-Rob}
J.~Lennox and D.~Robinson, \emph{The theory of infinite soluble groups}, Oxford
  Mathematical Monographs, The Clarendon Press, Oxford University Press,
  Oxford, 2004. \MR{2093872}

\bibitem[Mar21]{Margolis-almost-normal}
Alexander Margolis, \emph{The geometry of groups containing almost normal
  subgroups}, Geom. Topol. \textbf{25} (2021), no.~5, 2405--2468. \MR{4310893}

\bibitem[Mar24]{Margolis-commensurated-abelian}
Alex Margolis, \emph{Model geometries dominated by locally finite graphs}, Adv.
  Math. \textbf{451} (2024), Paper No. 109813, 36. \MR{4764371}

\bibitem[MSW03]{Mosher-Sageev-Whyte}
Lee Mosher, Michah Sageev, and Kevin Whyte, \emph{Quasi-actions on trees. {I}.
  {B}ounded valence}, Ann. of Math. (2) \textbf{158} (2003), no.~1, 115--164.
  \MR{1998479}

\bibitem[MZ55]{MZ55}
Deane Montgomery and Leo Zippin, \emph{Topological transformation groups},
  Interscience Publishers, New York-London, 1955. \MR{73104}

\bibitem[Neu51]{Neumann-FC-51}
B.~H. Neumann, \emph{Groups with finite classes of conjugate elements}, Proc.
  London Math. Soc. (3) \textbf{1} (1951), 178--187. \MR{43779}

\bibitem[Neu99]{Neukirch-ANT}
J\"{u}rgen Neukirch, \emph{Algebraic number theory}, Grundlehren der
  mathematischen Wissenschaften [Fundamental Principles of Mathematical
  Sciences], vol. 322, Springer-Verlag, Berlin, 1999, Translated from the 1992
  German original and with a note by Norbert Schappacher, With a foreword by G.
  Harder. \MR{1697859}

\bibitem[Ono59]{Ono-Annals-1959}
Takashi Ono, \emph{On some arithmetic properties of linear algebraic groups},
  Ann. of Math. (2) \textbf{70} (1959), 266--290. \MR{114817}

\bibitem[Pen11a]{Peng-I}
Irine Peng, \emph{Coarse differentiation and quasi-isometries of a class of
  solvable {L}ie groups {I}}, Geom. Topol. \textbf{15} (2011), no.~4,
  1883--1925. \MR{2860983}

\bibitem[Pen11b]{Peng-II}
\bysame, \emph{Coarse differentiation and quasi-isometries of a class of
  solvable {L}ie groups {II}}, Geom. Topol. \textbf{15} (2011), no.~4,
  1927--1981. \MR{2860984}

\bibitem[PR72]{Prasad-Raghunathan}
Gopal Prasad and M.~S. Raghunathan, \emph{Cartan subgroups and lattices in
  semi-simple groups}, Ann. of Math. (2) \textbf{96} (1972), 296--317.
  \MR{302822}

\bibitem[Rag72]{Raghu}
M.~S. Raghunathan, \emph{Discrete subgroups of {L}ie groups}, Springer-Verlag,
  New York-Heidelberg, 1972, Ergebnisse der Mathematik und ihrer Grenzgebiete,
  Band 68. \MR{0507234}

\bibitem[Sau06]{Sauer-GAFA-2006}
R.~Sauer, \emph{Homological invariants and quasi-isometry}, Geom. Funct. Anal.
  \textbf{16} (2006), no.~2, 476--515. \MR{2231471}

\bibitem[Sha04]{Shalom-Acta}
Yehuda Shalom, \emph{Harmonic analysis, cohomology, and the large-scale
  geometry of amenable groups}, Acta Math. \textbf{192} (2004), no.~2,
  119--185. \MR{2096453}

\bibitem[Stu15]{Studenmund-comm}
Daniel Studenmund, \emph{Abstract commensurators of lattices in {L}ie groups},
  Comment. Math. Helv. \textbf{90} (2015), no.~2, 287--323. \MR{3351746}

\bibitem[SW13]{Shalom-Willis}
Yehuda Shalom and George~A. Willis, \emph{Commensurated subgroups of arithmetic
  groups, totally disconnected groups and adelic rigidity}, Geom. Funct. Anal.
  \textbf{23} (2013), no.~5, 1631--1683. \MR{3102914}

\bibitem[Tes08]{Tessera-LS-Sobolev}
Romain Tessera, \emph{Large scale {S}obolev inequalities on metric measure
  spaces and applications}, Rev. Mat. Iberoam. \textbf{24} (2008), no.~3,
  825--864. \MR{2490163}

\bibitem[Wal65]{Wall65}
C.~T.~C. Wall, \emph{Finiteness conditions for {${\rm CW}$}-complexes}, Ann. of
  Math. (2) \textbf{81} (1965), 56--69. \MR{171284}

\bibitem[Wan71]{Wang-compactness}
S.~P. Wang, \emph{Compactness properties of topological groups}, Trans. Amer.
  Math. Soc. \textbf{154} (1971), 301--314. \MR{271269}

\bibitem[Wil94]{Willis-1994-MathAn}
G.~Willis, \emph{The structure of totally disconnected, locally compact
  groups}, Math. Ann. \textbf{300} (1994), no.~2, 341--363. \MR{1299067}

\bibitem[Wil01]{Willis-2001-JA}
George~A. Willis, \emph{Further properties of the scale function on a totally
  disconnected group}, J. Algebra \textbf{237} (2001), no.~1, 142--164.
  \MR{1813900}

\bibitem[Wil04]{Willis-NYJM}
\bysame, \emph{Tidy subgroups for commuting automorphisms of totally
  disconnected groups: an analogue of simultaneous triangularisation of
  matrices}, New York J. Math. \textbf{10} (2004), 1--35. \MR{2052362}

\bibitem[Wit95]{Witte-superrigid}
Dave Witte, \emph{Superrigidity of lattices in solvable {L}ie groups}, Invent.
  Math. \textbf{122} (1995), no.~1, 147--193. \MR{1354957}

\bibitem[WY72]{WuYu72}
T.~S. Wu and Y.~K. Yu, \emph{Compactness properties of topological groups},
  Michigan Math. J. \textbf{19} (1972), 299--313. \MR{318391}

\end{thebibliography}

\end{document}